\documentclass{article}
\usepackage[%
journal=FoCM,    
lang=american,   
]{ems-journal}

\usepackage[margin=0pt]{subfig}
\usepackage{mathtools}
\usepackage{bm}
\usepackage{placeins,siunitx}
\usepackage{tikz,pgfplots}
\usetikzlibrary{arrows.meta,backgrounds}
\usepgfplotslibrary{groupplots}
\usepgfplotslibrary{fillbetween}
\pgfplotsset{compat=1.18}
\usepackage{thm-restate}
\usepackage{multirow}
\usepackage{tabularx}
\usepackage{dsfont}
\usepackage[left]{lineno}

\newcommand{\mypara}[1]{\paragraph{#1}}
\newcommand{\norm}[1]{\lVert #1\rVert}
\newcommand{\abs}[1]{\vert #1\vert}
\newcommand{\EE}{\mathbb{E}}
\newcommand{\RR}{\mathbb{R}}
\newcommand{\NN}{\mathbb{N}}
\newcommand{\YY}{\mathbb{Y}}
\newcommand{\PP}{\mathbb{P}}
\DeclareMathOperator{\tr}{tr}

\theoremstyle{plain}
\newtheorem{theorem}{Theorem}[section]
\newtheorem{lemma}[theorem]{Lemma}
\newtheorem{proposition}[theorem]{Proposition}
\newtheorem{corollary}[theorem]{Corollary}

\theoremstyle{definition}
\newtheorem{definition}[theorem]{Definition}
\newtheorem{example}[theorem]{Example}
\newtheorem{remark}[theorem]{Remark}

\numberwithin{equation}{section}

\begin{document}
	\title{Phase Transition from Random Features to Fully Trainable Networks}
	\titlemark{Phase Transition from Random Features to Fully Trainable Networks}
	
	
	%
	
	\emsauthor{1}{
		\givenname{Xin}
		\surname{He}
		\mrid{}  
		\zblid{} 
		\orcid{}}{X.~He} 
	\emsauthor{2}{
		\givenname{Huan}
		\surname{Lao}
		\mrid{}
		\zblid{}
		\orcid{}}{H.~Lao}
	\emsauthor{3}{
		\givenname{Xiliang}
		\surname{Lu}
		\mrid{}
		\zblid{}
		\orcid{}}{X.~Lu}
	\emsauthor*{4}{
		\givenname{Shijun}
		\surname{Zhang}
		\mrid{}
		\zblid{}
		\orcid{}}{S.~Zhang}
	\Emsaffil{1}{
		\department{School of Mathematics and Statistics}
		\organisation{Wuhan University}
		\rorid{033vjfk17}
		\address{Bayi Road 299}
		\zip{430072}
		\city{Wuhan}
		\country{China}
		\affemail{\detokenize{xin_he@whu.edu.cn}}}
	\Emsaffil{2}{
		\department{School of Mathematics and Statistics}
		\organisation{Wuhan University}
		\rorid{033vjfk17}
		\address{Bayi Road 299}
		\zip{430072}
		\city{Wuhan}
		\country{China}
		\affemail{laohuan0618@whu.edu.cn}}
	
	\Emsaffil{3}{
		\department{1}{School of Mathematics and Statistics}
		\organisation{1}{Wuhan University}
		\rorid{1}{033vjfk17}
		\address{1}{Bayi Road 299}
		\zip{1}{430072}
		\city{1}{Wuhan}
		\country{1}{China}
		\department{2}{Computational Science Hubei Key Laboratory} 
		\organisation{2}{Wuhan University}%
		\rorid{2}{}
		\address{2}{}%
		\zip{2}{430072}
		\city{2}{Wuhan}
		\country{2}{China} 
		\affemail{2}{xllv.math@whu.edu.cn}}
	\Emsaffil{4}{
		\department{Department of Applied Mathematics}
		\organisation{The Hong Kong Polytechnic University}
		\rorid{0030zas98}
		\address{Hung Hom, Kowloon}
		\zip{}
		\city{Hong Kong SAR}
		\country{China}
		\affemail{shijun.zhang@polyu.edu.hk}}
	
	\classification[41A25, 68T07]{41A46}
	
	\keywords{Phase transition, partially trainable networks, random features, spectral decay, neural network approximation}
	
    \begin{abstract}
Constructing accurate and computationally tractable approximation spaces is a fundamental task in scientific computing. Partially trainable neural networks bridge fixed random features and fully adaptive representations. This paper quantifies a phase transition pattern in approximation bounds and observes analogous behavior in training errors: a rapid but smooth initial improvement followed by diminishing returns, with a sharper transition under greater initialization mismatch. Under polynomial spectral decay with exponent $\alpha>1$ and leverage compatibility, we establish a squared $L_2$ approximation rate $\mathcal{O}((\kappa_P\log(m/\delta)/m)^\alpha)$ for two-layer networks of width $m$ and trainable proportion $P$, where $\kappa_P=(P/\kappa_1^*+(1-P)/\kappa_0^*)^{-1}$. The bound holds uniformly over the unit ball of a reproducing kernel Hilbert space (RKHS) with probability at least $1-\delta$. The constants $\kappa_0^*$ and $\kappa_1^*$ quantify the fixed and trainable groups' sampling mismatches relative to the target RKHS. For spherical biased-ReLU features in dimension $d$, we verify these assumptions with $\alpha=(d+2)/(d-1)$ and $\kappa_0^*=4\sqrt{2}\tau(d+1)/d$ for initialization scale $\tau\ge1$, and $\kappa_1^*=1$ under ridge leverage sampling. For $d=5$ and $\tau=10$, a trainable proportion of approximately $4.1\%$ captures $90\%$ of the reduction in the approximation bound from $P=0$ to $P=1$. A structured three-layer example illustrates how a few trainable first-layer neurons exploit fixed downstream blocks through nonlinear composition. Numerical experiments across diverse settings, including networks with up to 15 layers, exhibit similar phase transitions in training errors.
\end{abstract}

\makeatletter
\g@addto@macro\ems@authlist{\textsuperscript{*}}
\makeatother

\begingroup
\makeatletter
\let\@makefnmark\relax
\makeatother
\setlength{\footnotemargin}{0pt}
\footnotetext{%
    \textsuperscript{*}Corresponding author:
    \href{mailto:shijun.zhang@polyu.edu.hk}
    {shijun.zhang@polyu.edu.hk}.%
}
\endgroup

\maketitle
    
    \thispagestyle{empty}
	\tableofcontents
	\section{Introduction}
The pursuit of accurate, efficient, and stable representations lies
at the heart of scientific computing. A key task is to construct
suitable approximation spaces. Many classical methods, such as
polynomial, Fourier, and wavelet approximation, prescribe a family
of functions and approximate the target through their linear
combinations. Random feature methods (RFMs)
\cite{pao1994learning,rahimi2007random} follow a similar principle,
using fixed sampled features whose ability to capture the target
depends on how well the initialization distribution matches it
\cite{bach2017equivalence,rudi2017generalization,nelsen2024operator}.
Feed-forward neural networks (FNNs) offer a more adaptive approach:
training the hidden parameters allows the feature functions themselves
to adapt to the target, yielding strong expressive power
\cite{pinkus1999approximation,zhong2016overview,shen2022optimal}.
RFMs and fully trainable FNNs thus represent two endpoints, connected
by partially trainable networks in which a proportion of hidden
parameters are trainable and the rest randomly fixed. This paper
investigates the phase transition between these endpoints, addressing
the question:
\begin{center}
    \begin{minipage}{0.94\linewidth}
        \itshape
        How does the performance of partially trainable neural networks
        vary as the trainable proportion increases?
    \end{minipage}
\end{center}

To investigate this question, this paper studies the $P$-trainable networks, where 
the same proportion $P\in[0,1]$ of neurons in each hidden layer are trainable and the rest are randomly fixed. Although arbitrary subsets of network parameters can be made trainable, the present formulation assigns trainability at the neuron level. Our approximation analysis shows that the effect of the trainable proportion depends on how well the fixed and trainable neuron groups match the target function class. The corresponding theoretical bounds and numerical experiments together identify a distinct phase transition pattern: A small trainable group can offset the mismatch between the initialization distribution and the target function class, yielding substantial performance gains at small $P$ followed by diminishing returns, and greater initialization mismatch sharpens the transition.

Building on the approximation results for two-layer $P$-trainable networks with general activation functions, the analysis of spherical biased-ReLU features provides the explicit rates that quantify this dependence on initialization. Figure \ref{fig:intro-spherical-phase-transition} illustrates qualitative agreement between the predicted and observed phase transitions across initialization scales $\tau\ge1$. Within this family, the chosen admissible mismatch constant increases linearly with $\tau$. Increasing $\tau$ generally sharpens phase transitions. More concretely, for $d=5$ and $\tau=10$, a trainable proportion of only about $4.1\%$ captures $90\%$ of the total reduction in the approximation bound from $P=0$ to $P=1$.

A structured three-layer example further demonstrates how a small trainable group uses randomly fixed downstream neurons through nonlinear composition enabled by depth. Extensive numerical experiments across target functions and architectures exhibit similar phase transition patterns, with representative width and depth comparisons in Figure~\ref{fig:representative-result-introduction-phase-transition}.

\begin{figure}[!t]
	\centering
	\subfloat[Threshold for $90\%$ gain.]{%
		\includegraphics[width=0.47\linewidth]
		{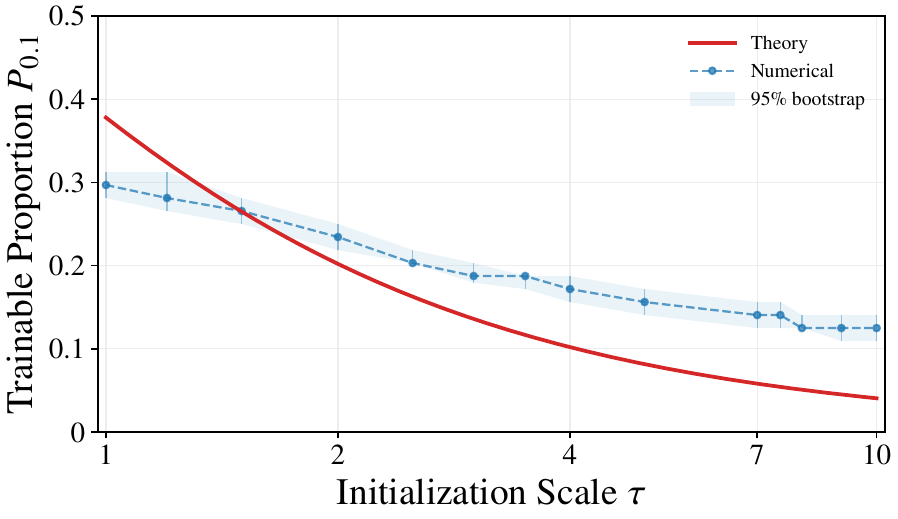}%
		\label{fig:intro-threshold}%
	}
	\hfill
	\subfloat[Normalized gain.]{%
		\includegraphics[width=0.47\linewidth]
		{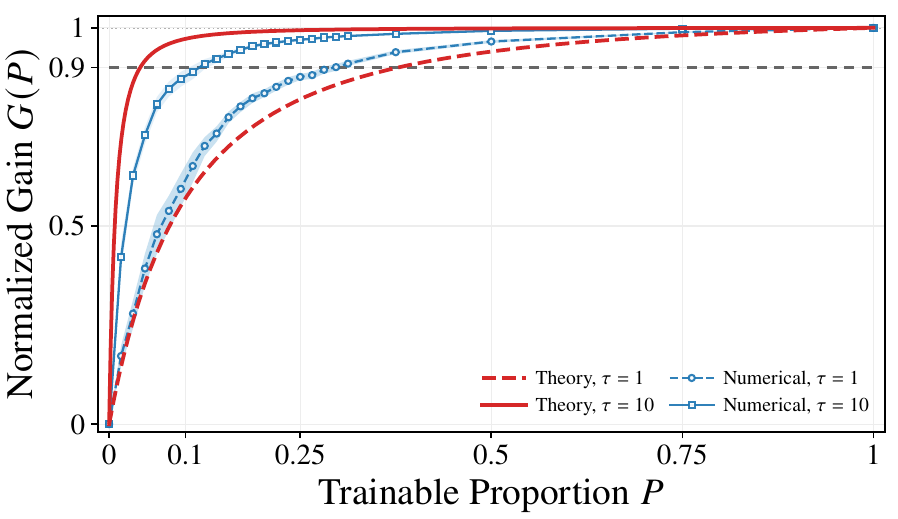}%
		\label{fig:intro-gain}%
	}
	\caption{Phase transitions for two-layer spherical biased-ReLU $P$-trainable networks with $d=5$. Red curves are derived from the approximation bound, while blue curves show training results for a fixed RKHS target. Increasing the fixed initialization scale $\tau\ge1$ raises the mismatch constant from its minimum at $\tau=1$. Both theory and training show qualitatively similar transition shapes and sharper transitions at larger $\tau$. Quantitative differences are expected because training also depends on finite sampling, the optimizer, and the training budget. Threshold $P_{0.1}$ marks $90\%$ normalized gain. Shaded bands indicate $95\%$ bootstrap intervals. See experimental details in Appendix~\ref{subsec:tau-experiments}.}
	\label{fig:intro-spherical-phase-transition}
	\setcounter{subfigure}{0}
	\subfloat[Depth variation from 3 to 7 at width 64.]{%
		\includegraphics[width=0.47\linewidth]
		{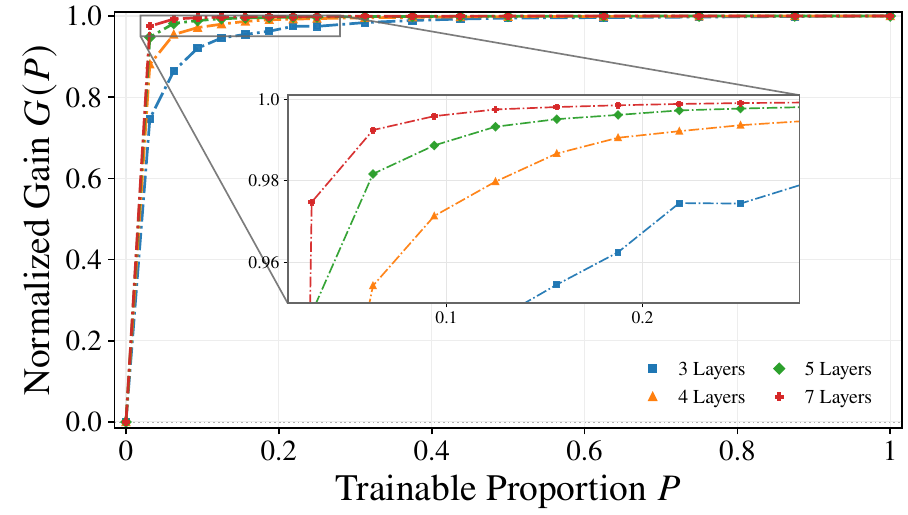}%
	}
	\hfill
	\subfloat[Width variation from 32 to 256 at depth 6.]{%
		\includegraphics[width=0.47\linewidth]
		{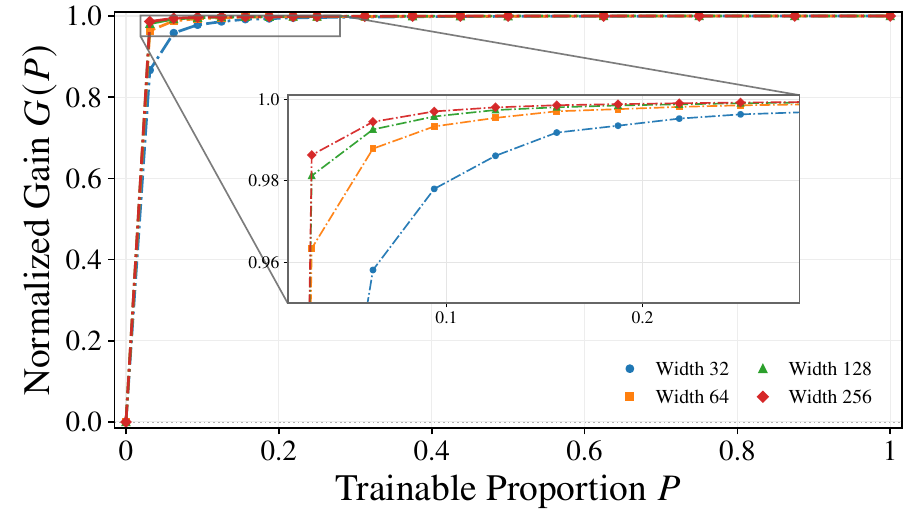}%
	}
	\caption{Representative numerical normalized gains for $P$-trainable ReLU networks on a surface target function. Gains are normalized by the observed loss reduction between $P=0$ and $P=1$. Both depth and width comparisons show rapid gains at small $P$, followed by diminishing gains.}
	\label{fig:representative-result-introduction-phase-transition}
\end{figure}

\subsection{Main Results}
The main contributions of this work are summarized as follows.

\mypara{(\romannumeral1) Small-$P$ Phase Transition for Shallow Networks.} For two-layer $P$-trainable networks, the analysis samples the trainable group from an adjustable distribution to quantify how the trainable proportion and initialization mismatch affect approximation bounds.
\begin{itemize}[labelindent=0pt, leftmargin=*, topsep=0pt, itemsep=0pt]
	\item \textit{General approximation rates.} Theorem \ref{thm:general-approx.-bounds} gives a sufficient width condition for approximating the RKHS unit ball uniformly to any prescribed tolerance with high probability. Under polynomial spectral decay with exponent $\alpha>1$ and leverage compatibility, Theorem \ref{thm:polynomial-rates} then gives the approximation rate for width $m$
	\[ \mathcal{O}\bigg(\Big(\Big(\frac{P}{\kappa_{1}^{*}}+\frac{1-P}{\kappa_{0}^{*}}\Big)\frac{m}{\log(m/\delta)}\Big)^{-\alpha}\bigg),\quad P\in[0,1], \]
	with probability at least $1-\delta$, where $\delta\in(0,1)$. The constants $\kappa_{1}^{*}$ and $\kappa_0^*$ control how well the sampling distributions of the trainable and randomly fixed groups match the target function class, respectively. Both groups contribute to the approximation bound, and their relative contributions depend on the mismatch ratio $\kappa_0^*/\kappa_1^*$.
	\item \textit{Phase transition sharpness.} Corollary \ref{cor:phase-transition} shows that when $\kappa_0^*/\kappa_1^*\gg1 $, a trainable proportion of order $(\kappa_0^*/\kappa_1^*)^{-1}$ suffices to capture any fixed fraction of the reduction between the endpoint bounds. A larger mismatch ratio produces a sharper transition in the bound.
	\item \textit{Spectral and mismatch verification.} For spherical biased-ReLU features on $\mathbb S^{d-1}$, Propositions \ref{prop:spectrum-biased-relu-kernel} and \ref{prop:anisotropic-spherical-mismatch} establish the spectral exponent $\alpha=(d+2)/(d-1)$ and the admissible mismatch constant $\kappa_0^*=4\sqrt2\tau(d+1)/d$. With $\kappa_1^*=1$ from ridge leverage sampling of the trainable group, Theorem \ref{thm:sphericial-polynomial-rates} gives the explicit spherical rate. Increasing the fixed initialization scale \(\tau\ge1\) generally sharpens predicted and observed transitions (Figure~\ref{fig:intro-spherical-phase-transition}).
\end{itemize}
\mypara{(\romannumeral2) Similar Transitions in Deep Networks.} A deep neural network example and numerical experiments show similar small-$P$ transitions and further examine the roles of depth and randomly fixed neurons.
\begin{itemize}[labelindent=0pt, leftmargin=*, topsep=0pt, itemsep=0pt]
	\item \textit{Gains from nonlinear composition.} Example \ref{exm:structed-three-layer-compar} provides explicit approximation upper bounds for a structured three-layer network in which a few trainable first-layer neurons use several randomly fixed downstream neurons. The bounds exhibit a similar small-$P$ phase transition. At selected trainable neuron counts in the first hidden layer, the three-layer construction also gives a smaller error upper bound than its two-layer counterpart, highlighting the gains from nonlinear composition enabled by depth.
	\item \textit{Phase transitions in training performance.} Across three targets and different widths and depths, most tested configurations in Section \ref{sec:numerical-illustration} recover a substantial fraction of the observed loss reduction between the two endpoints at small $P$, followed by diminishing gains. The transition strength varies with the target and architecture, with representative depth and width results in Figure \ref{fig:representative-result-introduction-phase-transition}. Besides, Table \ref{tab:randomly-fixed-part} shows lower training losses with more randomly fixed neurons at a fixed trainable neuron count per hidden layer, supporting the expressive role for the fixed group.
    
\end{itemize}
\subsection{Related Work}
The following discussion reviews approximation and learning theory for RFMs, partially trainable networks, and fully trainable FNNs, as well as studies of pruning and parameter freezing. For each topic, it explains the connections to our work and the differences in model construction, analysis, or research objectives.

\mypara{Approximation Theory for Random Feature Methods.} RFMs approximate the infinite-dimensional feature representation of a kernel using finitely many random features sampled from a prescribed distribution \cite{rahimi2007random, rahimi2008weighted}. Early analyses focused on approximating the kernel itself, particularly with random Fourier feature analysis \cite{sutherland2015error,sriperumbudur2015optimal}. Subsequent operator-based work considered general integral feature maps and studied RFMs via a framework of function approximation and statistical learning. In this framework, the required feature number is governed by the spectrum and effective dimension of the associated integral operator \cite{bach2017equivalence,rudi2017generalization,long2024optimal}. Beyond sampling from the original feature distribution, leverage-based sampling schemes have been developed at both the population and empirical levels to reduce the required number of features \cite{bach2017equivalence,avron2017random,li2019towards,liu2020random}. When the feature map is specialized to ReLU neurons, approximation bounds have also been established for two-layer networks with randomly sampled hidden parameters \cite{hsu2021approximation,gonon2023approximation}. 

RFMs also have limitations in approximation capacity. Under standard Gaussian inputs, approximating even a single ReLU neuron can require exponentially many features or exponentially large output coefficients in the input dimension \cite{yehudai2019power}. Approximation lower bounds derived using ridgelet analysis show that two-layer RFMs with bounded hidden parameters can fail to achieve universal approximation when the sampling support omits a nonzero part of the target’s ridgelet spectrum \cite{li2023powerful}.

Our analysis extends the operator framework in \cite{bach2017equivalence} from a single sampling distribution to two independent groups of fixed sizes sampled from separate distributions, giving unified approximation bounds over $P\in[0,1]$. Beyond the sampling support restrictions studied in \cite{li2023powerful}, our mismatch factor quantifies insufficient sampling mass relative to the target RKHS and its effect on the sufficient network width. The bounds further show how a small, appropriately chosen trainable group can compensate for the initialization mismatch.

\mypara{Learning Theory for Partially Trainable Networks.}
Partially trainable networks have been studied under various constraints on
which parameters are optimized. Semi-random feature models combine randomly
fixed and trainable components within individual units
\cite{kawaguchi2018deep}, while other work establishes universal approximation
for networks with fixed random weights and trainable biases
\cite{williams2025expressivity}. Optimization and function-space analyses
also provide convergence and complexity results for models with a randomly
fixed first hidden layer \cite{chen2022on,chen2026functional}.
Related approximation studies investigate representations with a small
number of target-dependent parameters
\cite{shijun:intrinsic:parameters,2026arXiv260831157Z},
as well as multigrade and layer-wise approximation constructions
\cite{2026arXiv260116884Z,shijun:Geometric:Layer-wise:rates:sinReLU}.

Our analysis considers a different form of partial trainability: hidden
neurons within each layer are divided into trainable and randomly fixed
groups, and we study the resulting finite-width approximation properties.

\mypara{Approximation Theory for Feed-Forward Neural Networks.}
Classical universal approximation theorems establish the density of FNNs
with suitable activations in spaces of continuous functions
\cite{cybenko1989approximation,hornik1989multilayer}.
Quantitative approximation theory relates approximation accuracy to network
size, target regularity, and architectural constraints.
For shallow networks, Barron-type and variation spaces provide important
frameworks for this analysis
\cite{barron1993universal,siegel2023characterization,devore2025weighted,2019arXiv190608039E},
with approximation rates and optimality results available under various
assumptions on the activation and error norm
\cite{siegel2020approximation,siegel2022high,ma2022uniform,siegel2024sharp}.
More general approximation-space frameworks have also been developed for
deep networks \cite{2019arXiv190501208G}.

Quantitative bounds for H\"older, Sobolev, and Besov classes have been
established using ReLU and related rectified-power networks
\cite{yarotsky2017error,lu2021deep,shen2022optimal,siegel2023optimal,suzuki2018adaptivity,shijun:Characterized:by:Numer:Neurons,2019arXiv190207896G,mao2026approximation}.
Other results address analytic and bandlimited functions
\cite{E:2018:exponential:DNN:analytic,bandlimit},
as well as smooth functionals \cite{SONG2023424}.
Further studies investigate how activation choices and constraints on
network size or parameter magnitude affect approximation capabilities
\cite{shijun:arbitrary:error:with:fixed:size,shijun:three:layers,shijun:2023:beyond:ReLU:to:diverse:actfun,jiao2023deepnocurse,2026arXiv260616975L,2026arXiv260925710L}.

Beyond target regularity, approximation theory also examines the role of
input geometry
\cite{shaham2018provable,chen2019efficient,Jian-Feng-Cai-2021}
and architectural structure, including convolutional networks,
dynamical-systems formulations, Transformers, and structured network models
\cite{ZHOU2019,NEURIPS2022_136302ea,li2022deep,cheng2025interpolation,cheng2026unified,zhang2025structured}.
Connections with finite element methods and hierarchical basis
constructions provide additional perspectives
\cite{ce76a17e-6e58-3d02-9e13-a9c53bfaa2fd,HE2022105}.

Our work complements these studies by examining how partial trainability
affects approximation within a given architecture and target function class.

\mypara{Lottery Ticket Hypothesis and Parameter Freezing.} The lottery ticket hypothesis states that a randomly initialized dense network contains a smaller subnetwork that, when trained in isolation, can attain performance comparable to the original network \cite{frankle2019iclr-lottery}. Related studies show that suitably pruned randomly initialized networks may contain subnetworks capable of approximating target networks with little or no weight training \cite{malach2020proving,ramanujan2020shiddennetwork}. Instead, parameter freezing trains only a selected subset of parameters while holding the remainder fixed, often at their random initializations \cite{rosenfeld2019intriguing, wimmer2020freezenet}. Studies of parameter freezing investigate parameter selection rules and freezing ratios, often with an emphasis on practical performance and training efficiency.

Our setting is related to these studies but differs in both construction and objective. A prescribed proportion of hidden neurons is trainable. The remaining hidden parameters stay fixed after initialization, while all neurons are retained in the forward computation. The objective is to study approximation capacity as the trainable proportion varies, rather than to search for subnetworks or design parameter selection rules.

\mypara{Organization.} The rest of the paper is organized as follows. Section \ref{sec:setup} introduces the model, approximation task and analytical framework. Section \ref{sec:main-results} presents the general approximation bound, explicit polynomial rates, and the resulting phase transition phenomenon. Section \ref{sec:specific-activation-functions} specializes the analysis to specific spherical biased-ReLU features. Section \ref{sec:three-layer-illustration} provides the structured three-layer illustration. Section \ref{sec:numerical-illustration} reports the numerical illustrations, and Section \ref{sec:discuss-conclusion} concludes the paper. Supplementary proofs are provided in the appendices.
	\section{Preliminaries and Problem Setting}
\label{sec:setup}

\mypara{Notation.} Let $\mathbb{N}_{+}\coloneq\{1,2,3,\ldots\}$ and $ \mathbb{N}\coloneq\mathbb{N}_{+}\cup\{0\}$. For $n\in\mathbb N_{+}$, write $[n]=\{1,\ldots,n\}$,
with $[0]=\varnothing$. Empty sums are understood to be zero. We denote the set of real numbers by $\mathbb{R}$. Write $A\lesssim B$ and $B\gtrsim A$ when $A\le cB$ for a constant $c>0$. And $A\asymp B$ means $A\lesssim B$ and $B\lesssim A$. Vectors written as tuples are understood as column vectors. For ${x}\in\mathbb{R}^{d}$, $\norm{x}_{p}$ denotes the $\ell_{p}$-norm for $p\in[1,+\infty)$. For a measurable function $f:\mathbb{R}^{d}\to\mathbb{R}$, $\norm{f}_{L_{p}(d\mu)}$ denotes the $L_{p}$-norm with respect to $\mu$. Table \ref{tab:summary-of-notations} in the appendix summarizes the main notations for quick reference.

\subsection{Model Space: $P$-Trainable Neural Networks}
\label{subsec:network-definition}
\mypara{Feed-forward Neural Networks (FNNs).}
Let $W\in\mathbb{N}_{+}$ and $L\ge 2$. We consider a feed-forward neural network
$f_L\in\mathcal{F}(W,L):\mathbb{R}^{d}\to\mathbb{R}$ of width $W$ and
depth $L$, with $L-1$ hidden layers. Set $m_0=d$ and $m_L=1$. For $\ell\in[L-1]$, let $\theta_{\ell,j}\in\mathbb{R}^{m_{\ell-1}+1}$ denote the parameter vector of the $j$-th neuron in the $\ell$-th hidden
layer and $\phi:\mathbb{R}\to\mathbb{R}$ be the activation function. The network is then defined recursively by
\begin{equation}
	\label{eq:Full-FNN}
	\begin{aligned}
		f_0(x)&=({x},1),\\
		f_\ell({x})&=
		\bigl(
		\phi({x};\theta_{\ell,1}),\ldots,
		\phi({x};\theta_{\ell,m_\ell}),1
		\bigr), \quad \ell\in[L-1],\\
		f_L(x)&=\sum\nolimits_{j=1}^{m_{L-1}}
		\beta_j\phi(x;\theta_{L-1,j}),
	\end{aligned}
\end{equation}
where $\beta=(\beta_1,\ldots,\beta_{m_{L-1}})\in\mathbb{R}^{m_{L-1}}$ denotes the output coefficient vector. The width of the network is defined as $W=\max\{m_0,m_1,\ldots,m_L\}$. With a slight abuse of notation, we define
the feature induced by the $j$-th neuron as $\phi(x;\theta_{\ell,j})\coloneqq\phi(\langle\theta_{\ell,j},f_{\ell-1}(x)\rangle)$.

\mypara{$\bm{P}$-Trainable Neural Networks.}
Let $P\in[0,1]$ denote the proportion of trainable neurons in each
hidden layer. For every $\ell\in[L-1]$, define $m_{\ell,\mathrm{free}}(P)=\lfloor Pm_\ell\rfloor$ and $m_{\ell,\mathrm{fix}}(P)=m_\ell-m_{\ell,\mathrm{free}}(P)$. When the dependence on $P$ is clear, write $m_{\ell,\mathrm{free}}$ and $m_{\ell,\mathrm{fix}}$ without their arguments. The hidden-layer widths $\{m_1,\ldots,m_{L-1}\}$ are fixed
when comparing different values of $P$. For indexing, the trainable neurons are listed before the randomly fixed neurons in each hidden layer. Specifically, \[ {\theta}_{\ell,j}\in\mathbb{R}^{m_{\ell-1}+1}, ~1\le j\le m_{\ell,\mathrm{free}},~\text{and}~\theta_{\ell,j}=\bar{\theta}_{\ell,j-m_{\ell,\mathrm{free}}}\sim\rho_{0,\ell},~m_{\ell,\mathrm{free}}+1\le j\le m_{\ell},\]
where ${\theta}_{\ell,j}$ denotes the parameter of a trainable neuron while $\bar{\theta}_{\ell,j-m_{\ell,\mathrm{free}}}$ denotes the parameter sampled from a prescribed distribution $\rho_{0,\ell}$. In the two-layer setting, write $\rho_0=\rho_{0,1}$.

With $\phi(x;\theta_{\ell,j})\coloneqq\phi(\langle\theta_{\ell,j},f^{(P)}_{\ell-1}(x)\rangle)$, the $P$-trainable network is defined by
\begin{equation} 
	\label{eq:p-trainable-neural-network} 
	\begin{aligned} 
		f_0^{(P)}(x)&=(x,1),\\f_\ell^{(P)}(x) &=\Big(\bigl( \phi(x;{\theta}_{\ell,j}) \bigr)_{j=1}^{m_{\ell,\mathrm{free}}},\bigl( \phi(x;\bar{\theta}_{\ell,j}) \bigr)_{j=1}^{m_{\ell,\mathrm{fix}}},1\Big),\quad\ell\in[L-1],\\ 
		f_L^{(P)}(x) &= \sum_{j=1}^{m_{L-1,\mathrm{free}}} \beta_j\phi(x;{\theta}_{L-1,j}) + \sum_{j=1}^{m_{L-1,\mathrm{fix}}} \beta_{m_{L-1,\mathrm{free}}+j}\phi(x;\bar{\theta}_{L-1,j}), 
	\end{aligned}
\end{equation}
where the output coefficient vector
$\beta=(\beta_1,\ldots,\beta_{m_{L-1}})=({\beta}_{\mathrm{free}},{\beta}_{\mathrm{fix}})$ is partitioned according to the trainable and fixed hidden neurons. All entries of $\beta$ remain trainable. For a given realization of the fixed hidden parameters, the resulting class of $P$-trainable neural networks is denoted by $\mathcal{F}_P(W,L)$. Figure \ref{fig:P-network-architecture} illustrates the architecture.

\begin{figure}[t]
	\centering
	\resizebox{0.8\linewidth}{!}{%
		\begin{tikzpicture}[
			>=Latex,
			neuron/.style={circle,draw,minimum size=5.5mm,inner sep=0pt,line width=.6pt},
			free/.style={neuron,draw=blue!65!black,fill=blue!20},
			fixed/.style={neuron,draw=black!55,fill=black!13},
			other/.style={neuron,draw=black!70,fill=white},
			connection/.style={draw=black!28,line width=.35pt}
			]
			\fill[blue!4] (1.55,.4) rectangle (9.65,1.95);
			\fill[black!4] (1.55,-1.95) rectangle (9.65,.4);
			\draw[densely dashed,black!40] (1.55,.4)--(9.65,.4);
			
			\foreach \i in {1,...,3}
			\node[other] (input-\i) at (0,{1.6-.8*\i}) {};
			\foreach \x/\name in {2/hone,4/htwo,9.2/hlast}{
				\foreach \j in {1,...,5}{
					\ifnum\j<3
					\node[free] (\name-\j) at (\x,{2.4-.8*\j}) {};
					\else
					\node[fixed] (\name-\j) at (\x,{2.4-.8*\j}) {};
					\fi
				}
			}
			\node[other] (output) at (11.2,0) {};
			
			\foreach \i in {1,...,3}
			\foreach \j in {1,...,5}
			\draw[connection] (input-\i)--(hone-\j);
			\foreach \i in {1,...,5}
			\foreach \j in {1,...,5}
			\draw[connection] (hone-\i)--(htwo-\j);
			\foreach \i in {1,...,5}
			\draw[connection] (hlast-\i)--(output);
			
			\node[inner sep=2pt,font=\Large] at (6.6,0) {$\cdots$};
			\node[font=\small] at (6.6,1.12)
			{$\lfloor PW\rfloor$ Trainable Neurons};
			\node[font=\small] at (6.7,-1.12)
			{$W-\lfloor PW\rfloor$ Fixed Neurons};
			
			\node[font=\small] at (0,2.45) {Input};
			\node[font=\small] at (2,2.45) {$1$};
			\node[font=\small] at (4,2.45) {$2$};
			\node[font=\small] at (9.2,2.45) {$L-1$};
			\node[font=\small] at (11.2,2.45) {Output $L$};
			
			\draw[<->,line width=.6pt] (12.2,1.9)--(12.2,-1.9)
			node[midway,right=3pt] {Width $W$};
			\draw[<->,line width=.6pt] (0,-2.65)--(11.6,-2.65);
			\node[above=0pt] at (5.6,-2.7) {Depth $L$};
		\end{tikzpicture}
	}
	\caption{A $P$-trainable network with hidden layers of equal width $W$. Each hidden layer contains $\lfloor PW\rfloor$ trainable neurons and $W-\lfloor PW\rfloor$ fixed neurons. Thus, the network has $\mathcal{O}(PW^2L+W)$ trainable parameters out of $\mathcal{O}(W^2L)$ total parameters.}
	\label{fig:P-network-architecture}
\end{figure}
Notably, there are two limiting cases. At $P=0$, all hidden parameters are randomly initialized and held fixed, so only the output coefficients are trained. Thus, $\mathcal{F}_0(W,L)$ corresponds to an RFM class. At $P=1$, all hidden parameters are trainable, so the $P$-trainable network class coincides with the fully trainable FNN class, and hence  $\mathcal{F}_1(W,L)=\mathcal{F}(W,L)$. 

Although the model \eqref{eq:p-trainable-neural-network} is defined for general depth $L$, the main approximation theory concerns two-layer networks in Sections \ref{sec:main-results} and \ref{sec:specific-activation-functions}. Section \ref{sec:three-layer-illustration} provides a structured three-layer illustration, and Section \ref{sec:numerical-illustration} examines deeper networks numerically.

\begin{remark}[Neuron-wise Trainability]
	Partial training can also be defined at the level of individual parameter entries. Here, each hidden neuron is assigned to the trainable or fixed group through its entire \textit{incoming} parameter vector $\theta_{\ell,j}$, including the bias. In deeper networks, the outputs of fixed neurons may still change as earlier trainable layers change. 
\end{remark}

\subsection{Approximation Task: Two Complementary Views}
For the two-layer analysis, set $m=m_1$ and assume that
$Pm$ is an integer, so that $m_{\mathrm{free}}=Pm$
and $m_{\mathrm{fix}}=(1-P)m$. Let $\mathcal{X}\subseteq\mathbb{R}^d$ be the input domain and let $f^*:\mathcal{X}\to\mathbb{R}$ be a target function. The approximation task by a two-layer $P$-trainable network is described from two complementary viewpoints. 

\mypara{Finite-dimensional View.} Given input points $\{x_{1},\ldots,x_{n}\}\subset\mathcal{X}$ and the corresponding target values $\{f^{*}(x_{i})\}_{i=1}^{n}$, the task is to choose the output coefficients and trainable hidden parameters. By \eqref{eq:p-trainable-neural-network}, a two-layer $P$-trainable network $f\in\mathcal{F}_{P}(W,2)$ has the form 
\[ f_2^{(P)}({x})=\sum_{j=1}^{m_{\mathrm{free}}} \beta_j\phi({x};{\theta}_{j}) + \sum_{j=1}^{m_{\mathrm{fix}}} \beta_{m_{\mathrm{free}}+j}\phi(x;\bar{\theta}_{j}). \]

With the randomly sampled hidden parameters held fixed during optimization, the expected best approximation error on these input points is
\begin{equation}
	\label{eq:finite-appro.-task}
	\mathcal{E}_{n,m}(f^{*},\mathcal{F}_P(W,2))=\EE_{\bar\theta_{1:m_{\mathrm{fix}}}}\Big[\inf_{\beta,\theta_{1:m_{\mathrm{free}}}}\frac{1}{n}\sum_{i=1}^{n}\big(f^{*}(x_{i})-f_{2}^{(P)}(x_{i})\big)^{2}\Big].
\end{equation}
When $P=0$, only the output coefficients $\beta$ are optimized, which gives a linear least-squares problem. When $m_{\mathrm{free}}>0$, jointly optimizing the output coefficients and trainable hidden parameters generally yields a nonlinear, non-convex problem.

\mypara{Population View.} Let $\mu$ be a probability measure on the input space $\mathcal{X}$. For any target function $f^{*}\in L_{2}(\mathcal{X},d\mu)$, its squared $L_2(d\mu)$ norm is defined as
\[ \norm{f^{*}}_{L_{2}(d\mu)}^{2}=\int_{\mathcal{X}}\abs{f^{*}(x)}^{2}d\mu(x). \]
Replacing the average in \eqref{eq:finite-appro.-task} with $\norm{\cdot}_{L_{2}(d\mu)}^{2}$ gives the population approximation problem
\begin{equation}
	\label{eq:population-appro.-task}
	\begin{aligned}
		&\mathcal{E}_{m}(f^{*},\mathcal{F}_P(W,2))=\EE_{\bar\theta_{1:m_{\mathrm{fix}}}}\Big[\inf_{\beta,\theta_{1:m_{\mathrm{free}}}}\norm{{f^{*}}-{f_{2}^{(P)}}}_{L_{2}(d\mu)}^{2}\Big].
	\end{aligned}
\end{equation}

The finite-dimensional problem \eqref{eq:finite-appro.-task} evaluates approximation on the prescribed input points, while the
population problem \eqref{eq:population-appro.-task} measures approximation in $L_{2}(d\mu)$. These formulations define the expected best approximation errors over the fixed initialization, while the subsequent results are stated as high-probability bounds over the sampled parameters. 

\subsection{Target Function Space: The Reproducing Kernel Hilbert Space}
Let $\mu$ be the probability measure on the input space $\mathcal X$ and $\rho$ on the parameter space $\Theta$.
\begin{restatable}[Operator Regularity]{assumption}{assumpoperatorRegularity}
	\label{assumption:operator}
	The feature map \(\phi\) is jointly measurable and belongs to \(L_2(\mathcal X\times\Theta,\mu\otimes\rho)\).
\end{restatable}
Under Assumption \ref{assumption:operator}, the feature family $\{\phi(\cdot;\theta): \theta\in\Theta\}$ induces the positive semi-definite kernel $k_{\rho}$ on a $\mu$ full measure subset\footnote{For notational simplicity, write $\mathcal X$ for this subset and leave the excluded set of $\mu$-measure zero implicit.} of $\mathcal{X}$,
\begin{equation} \label{eq:induced-kernel} k_{\rho}(x,x') \coloneq \int_{\Theta} \phi(x;\theta)\phi(x';\theta)\,d\rho(\theta) = \mathbb{E}_{\theta\sim\rho} [ \phi(x;\theta)\phi(x';\theta)]. 
\end{equation}
Let $\mathcal{H}_\rho\subseteq L_2(\mathcal{X},d\mu)$ denote the reproducing kernel Hilbert space (RKHS) induced by $k_{\rho}$. We assume that the target $f^*$ in the main analysis belongs to the RKHS $\mathcal{H}_{\rho}$.

\mypara{Integral Operator Induced by the Feature Map.} 
To compare finite sampled features with the integral representation of targets in $\mathcal H_\rho$, define the feature operator $S_{\rho}:L_2(\Theta, d\rho)\to L_2(\mathcal{X}, d\mu)$ and its adjoint $S_{\rho}^{*}:L_2(\mathcal{X}, d\mu)\to L_2(\Theta, d\rho)$ by 
\begin{equation}
	\nonumber
	(S_{\rho}a)(x)\coloneq
	\int_{\Theta}
	a(\theta)\phi(x; \theta)d\rho(\theta)~ \text{and}~(S_{\rho}^{*}g)(\theta)=\langle \phi(\cdot;\theta),g(\cdot)\rangle_{L_2(d\mu)}.
\end{equation}
Here $a\in L_2(\Theta, d\rho)$ is a coefficient function and $g\in L_2(\mathcal{X}, d\mu)$ is an arbitrary square integrable function. The integral operator associated with $k_{\rho}$ is defined as 
\[
T_{\rho}\coloneq S_{\rho}S_{\rho}^{*}: L_2(\mathcal{X}, d\mu)\to L_2(\mathcal{X}, d\mu).
\]
Equivalently, for any $g\in L_2(\mathcal{X}, d\mu)$,
\begin{equation}
	\label{eq:kernel-integral-operator}
	(T_{\rho}g)(x)
	=\int_{\mathcal{X}}k_{\rho}(x,x')g(x')\,d\mu(x')=\int_{\Theta}
	\phi(x;\theta)\langle\phi(\cdot;\theta),g(\cdot)\rangle_{L_2(d\mu)}d\rho(\theta).
\end{equation}
Therefore, $T_{\rho}=\mathbb{E}_{\theta\sim\rho}
[\phi(\cdot;\theta)\otimes_{L_2(d\mu)}\phi(\cdot;\theta)]$, where $a\otimes_{L_2(d\mu)} b: L_{2}(\mathcal{X}, d\mu)\to L_{2}(\mathcal{X}, d\mu)$ is an operator such that $(a\otimes_{L_2(d\mu)} b)g=\langle b,g\rangle_{L_{2}(d\mu)} a$.

\mypara{Spectral Framework.} Under Assumption \ref{assumption:operator}, $S_{\rho}$ is a Hilbert-Schmidt operator and hence compact. Thus $T_{\rho}=S_{\rho}S_{\rho}^{*}$ is self-adjoint, positive semi-definite, trace-class and compact \cite{simon1979trace}. By the spectral theorem for compact self-adjoint operators \cite{konig1986eigenvalues}, the set of positive eigenvalues $\{\eta_k\}_{k\geq1}$ of $T_{\rho}$ is finite or countably infinite. These eigenvalues can be arranged in non-increasing order and counted with multiplicity, $\eta_1\ge\eta_2\ge\cdots>0$. The corresponding eigenfunctions $\{e_k\}_{k\geq1}$ form an orthonormal basis of $(\ker T_{\rho})^{\perp}$. Therefore, the integral operator $T_{\rho}$ and its positive fractional powers have the following expansions,
\begin{equation}
	\label{eq:spectral-decomposition}
	T_{\rho}=\sum_{k\geq1}\eta_k(e_k\otimes e_k),~\text{and}~T_{\rho}^{s}
	=\sum_{k\geq1}\eta_k^{s}(e_k\otimes e_k),~ s>0.
\end{equation}
In the population analysis, RKHS functions that agree $\mu$-almost everywhere are identified, and $\mathcal H_\rho$ is equipped with the norm defined below. Under this convention, $\mathcal{H}_{\rho}=\operatorname{Range}(T_{\rho}^{1/2})=\operatorname{Range}(S_{\rho})$. Consequently, for every target $f^{*}\in\mathcal{H}_{\rho}$, there exists a unique $q_f^{*}\in(\ker T_{\rho})^{\perp}$ and a unique $a_{f}^{*}\in (\ker S_{\rho})^{\perp}$ such that
\begin{equation}
	\label{eq:target-function-statement}
	f^*=T_{\rho}^{1/2}q_f^{*}=S_{\rho}a_{f}^{*}=\int_{\Theta}a_{f}^{*}(\theta)\phi(\cdot;\theta)d\rho(\theta),
\end{equation}
These equalities hold in $L_{2}(\mathcal{X},d\mu)$, with the norm defined by
\[ \|f^*\|_{\mathcal{H}_{\rho}}
=\inf_{\substack{a\in L_2(d\rho)\\S_\rho a=f^*\ \mathrm{in}\ L_2(d\mu)}}
\|a\|_{L_2(d\rho)}=\|q_f^{*}\|_{L_2(d\mu)}=\norm{a_{f}^{*}}_{L_2(d\rho)}. \]
This representation underlies subsequent analysis of the population approximation error.
\subsection{Distributional Approximation Strategy}
\label{subsec:distributional-approximation-strategy}
Although the trainable parameters can be chosen freely, the direct optimization over these parameters is generally nonlinear and non-convex. To facilitate the approximation analysis, all hidden-layer parameters are constructed through sampling. 
\begin{definition}[Two-group Sampling]
	\label{def:sampling-scheme}
	Let $\rho$ be the reference probability measure defining the target RKHS. Let $\rho_0$ be the prescribed initialization distribution for the fixed neurons and $\rho_1$ an adjustable distribution used to construct the trainable hidden parameters. Assume that both distributions are absolutely continuous with respect to $\rho$, and write
	\[d\rho_i(\theta)=\gamma_i(\theta)d\rho(\theta),i\in\{0,1\}, \]
	where $\gamma_i$ is a nonnegative measurable Radon-Nikodym density satisfying $\int_{\Theta}\gamma_i(\theta)d\rho(\theta)=1$ for each $i\in\{0,1\}$. Given fixed group sizes $m_{\mathrm{free}}$ and $m_{\mathrm{fix}}$, set $m=m_{\mathrm{free}}+m_{\mathrm{fix}}\geq1$ and the trainable proportion $P=m_{\mathrm{free}}/m$. The hidden parameters are sampled according to
	\begin{equation}
		\label{eq:sampling-scheme}
		{\theta}_{j}\stackrel{\mathrm{i.i.d.}}{\sim}\rho_{1}, j\in[m_{\mathrm{free}}],~\text{and}~\bar{\theta}_{j}\stackrel{\mathrm{i.i.d.}}{\sim}\rho_{0}, j\in[m_{\mathrm{fix}}].
	\end{equation}
	 The two groups $\{\theta_{j}\}_{j=1}^{m_{\mathrm{free}}}$ and $\{\bar\theta_{j}\}_{j=1}^{m_{\mathrm{fix}}}$ are mutually independent and have fixed sizes. Their associated mixture measure is $\rho_{P}=P\rho_{1}+(1-P)\rho_{0}$, satisfying
	 \begin{equation}
	 	\label{eq:distributional-constr.}
	 	d\rho_{P}(\theta)=\gamma_{P}(\theta)d\rho(\theta),\text{ with density } \gamma_{P}(\theta)=P\gamma_{1}(\theta)+(1-P)\gamma_{0}(\theta).
	 \end{equation}
\end{definition}

For the approximation analysis, assume that each $\gamma_i,i\in\{0,1\}$ is positive for $\rho$-almost every $\theta$ satisfying $\|\phi(\cdot;\theta)\|_{L_2(d\mu)}>0$. This ensures both sampling distributions retain the nonzero features used to define $T_\rho$, while allowing zero densities where the feature vanishes.

The two-group sampling in Definition \ref{def:sampling-scheme} differs from i.i.d. sampling directly from $\rho_{P}$. The former uses fixed group sizes while the latter would randomize the group counts. Both sampling
distributions remain fixed within each construction. Every realized construction belongs to the $P$-trainable network class $\mathcal{F}_{P}(W,2)$, so its approximation error provides an upper bound on the best approximation error over this network class. Besides, the fixed initialization distribution $\rho_0$ is prescribed, while the choice of the adjustable distribution $\rho_1$
is discussed in the next section. 

By \eqref{eq:population-appro.-task}, optimizing only the output coefficients and taking expectations over both groups thus yields
\begin{equation}
	\label{eq:surrogate-approximation-task}
	\begin{aligned}
		&\mathcal{E}_{m}(f^{*},\mathcal{F}_P(W,2))\\
		&\qquad\le \EE_{\theta_{1:m_{\mathrm{free}}},\bar\theta_{1:m_{\mathrm{fix}}}}\Big[\inf_{\beta}\Big\Vert{f^{*}}  -\sum_{j=1}^{m_{\mathrm{free}}}\beta_j\phi(\cdot;\theta_j)-\sum_{j=1}^{m_{\mathrm{fix}}}\beta_{m_{\mathrm{free}}+j}\phi(\cdot;\bar\theta_j)\Big\Vert_{L_{2}(d\mu)}^{2}\Big].
	\end{aligned}
\end{equation}

\mypara{Ridge Regression.} Conditional on the sampled hidden features, the network output is linear in $\beta$. We analyze \eqref{eq:surrogate-approximation-task} by choosing the output coefficients $\beta$ by population ridge regression. The objective is specified in the next section.

\begin{remark}[Analytical Construction]
	The distributional construction is used only for the approximation analysis and does not require the practical training algorithm to optimize explicitly over probability distributions.
\end{remark}
	\section{General Approximation Results}
\label{sec:main-results}
This section establishes a general approximation bound for two-layer $P$-trainable networks with targets in $\mathcal H_\rho$. Under polynomial spectral decay and leverage compatibility assumptions, this bound yields an explicit rate and quantifies the phase transition as $P$ varies. Note that $T_{\rho}=0$ is trivial for approximation results, since $\mathcal{H}_{\rho}=\{0\}$. Henceforth, assume $T_{\rho}\neq 0$.

\subsection{Approximation Bounds for General Feature Maps}
\label{subsec:approxi-bounds-general-feature-maps}
Following the importance reweighting construction in \cite{bach2017equivalence}, define $\phi_P(x;\theta)=\gamma_P^{-1/2}(\theta)\phi(x;\theta)$ when $\|\phi(\cdot;\theta)\|_{L_2(d\mu)}>0$, where $\gamma_P$ is defined in \eqref{eq:distributional-constr.}. For parameters inducing the zero function in $L_2(d\mu)$, set $\phi_P(\cdot;\theta)=0$. The following lemma shows that this reweighting preserves the kernel and its integral operator.
\begin{lemma}[Invariance of Kernel and Operator]
	Under Assumption \ref{assumption:operator}, let the two groups be sampled according to \eqref{eq:sampling-scheme}, with $P=m_{\mathrm{free}}/m$. The reweighted features $\phi_P$ satisfy the following properties.
	\begin{enumerate}[labelindent=0pt, leftmargin=*,topsep=0pt,partopsep=0pt]
		\item The following empirical kernel is unbiased for $(\mu\otimes\mu)$-almost every
		$(x,x')\in\mathcal{X}\times\mathcal{X}$ \[ \hat{k}_{\rho}^{(P)}(x,x^{\prime})=\frac{1}{m}\Big[\sum_{j=1}^{m_{\mathrm{free}}}\phi_{P}(x;\theta_{j})\phi_{P}(x^{\prime};\theta_{j})+\sum_{j=1}^{m_{\mathrm{fix}}}\phi_{P}(x;\bar\theta_{j})\phi_{P}(x^{\prime};\bar\theta_{j})\Big], \] with $\EE\big[\hat{k}_{\rho}^{(P)}(x,x^{\prime})\big]=k_{\rho}(x,x^{\prime})= \int_{\Theta}\phi_{P}(x;\theta)\phi_{P}(x^{\prime};\theta)d\rho_{P}(\theta). $
		\item The integral operator induced by $\phi_{P}$ is still $T_{\rho}$ and the original eigenpairs
		$\{(\eta_k,e_k)\}_{k\ge1}$ remain unchanged, satisfying $T_{\rho}=\EE_{\theta\sim\rho_{P}}[\phi_{P}(\cdot;\theta)\otimes\phi_{P}(\cdot;\theta)]=\sum_{k\ge1}\eta_k(e_k\otimes e_k).$
	\end{enumerate}
	\label{lemma:kernel-operator-invariance}
\end{lemma}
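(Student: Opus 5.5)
The argument is a change-of-measure computation. The one real subtlety is the set where $\gamma_P$ vanishes.

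Split $\Theta$ into $\Theta_+=\{\theta:\|\phi(\cdot;\theta)\|_{L_2(d\mu)}>0\}$ and its complement $\Theta_0$. On $\Theta_0$ the feature $\phi(\cdot;\theta)$ is the zero element of $L_2(d\mu)$. By construction $\phi_P(\cdot;\theta)=0$ there, so integrals over $\Theta_0$ contribute nothing under either $\rho$ or $\rho_P$. This holds at least after discarding a $\mu$-null set of $x$ for each $\theta$, which we handle by Fubini using the joint measurability from Assumption \ref{assumption:operator}. On $\Theta_+$, the standing positivity assumption gives $\gamma_0,\gamma_1>0$ $\rho$-a.e. Hence $\gamma_P=P\gamma_1+(1-P)\gamma_0>0$ $\rho$-a.e. on $\Theta_+$, and $\phi_P=\gamma_P^{-1/2}\phi$ is well defined there.

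\textbf{Part 1.} By linearity of expectation, and since $\theta_j\sim\rho_1$ and $\bar\theta_j\sim\rho_0$,
\[
\EE\,\hat k_\rho^{(P)}(x,x')=P\int\phi_P(x;\theta)\phi_P(x';\theta)\,d\rho_1+(1-P)\int\phi_P(x;\theta)\phi_P(x';\theta)\,d\rho_0 .
\]
This equals $\int\phi_P\phi_P\,d\rho_P$ because $m_{\mathrm{free}}/m=P$. Only the mixture matters here, and independence between the groups is not needed. On $\Theta_+$ we have $d\rho_P=\gamma_P\,d\rho$ and $\phi_P\phi_P\gamma_P=\phi\phi$, so the integral becomes $\int_{\Theta_+}\phi(x;\theta)\phi(x';\theta)\,d\rho$. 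This equals $k_\rho(x,x')$, since the $\Theta_0$ part vanishes for a.e.\ $(x,x')$.

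Integrability needs a check. By Tonelli, $\iint\int|\phi(x;\theta)\phi(x';\theta)|\,d\rho\,d\mu\,d\mu\le\|\phi\|_{L_2(\mu\otimes\rho)}^2<\infty$. Therefore $\int|\phi(x;\theta)\phi(x';\theta)|\,d\rho<\infty$ for $(\mu\otimes\mu)$-a.e.\ $(x,x')$, and this is why the statement is only "almost every". The same bound shows each expectation is finite, which justifies the linearity step.

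\textbf{Part 2.} For $g\in L_2(d\mu)$, the same density cancellation gives
\[
\EE_{\rho_P}\big[\langle\phi_P(\cdot;\theta),g\rangle\phi_P(\cdot;\theta)\big]=\int_{\Theta_+}\langle\phi(\cdot;\theta),g\rangle\,\phi(\cdot;\theta)\,d\rho ,
\]
which is $T_\rho g$ by \eqref{eq:kernel-integral-operator}. The interchange of the Bochner/pointwise integral with the inner product is justified by Fubini. It uses Cauchy–Schwarz and the bound $\int\|\phi(\cdot;\theta)\|_{L_2(d\mu)}^2\,d\rho<\infty$. Equivalently, one can argue through the kernel: by Part 1, the integral kernel of the reweighted operator is $k_\rho$ a.e. Since the operator is literally $T_\rho$, its spectral decomposition \eqref{eq:spectral-decomposition} and its eigenpairs $\{(\eta_k,e_k)\}$ are unchanged.

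The main obstacle is measure-theoretic bookkeeping rather than any computation. One has to ensure the null sets (in $\theta$ where $\gamma_P=0$, and in $x$ where features vanish) are handled consistently, so that $\phi_P$ is jointly measurable and lies in $L_2(\mu\otimes\rho_P)$. Indeed $\|\phi_P\|_{L_2(\mu\otimes\rho_P)}=\|\phi\mathbf 1_{\Theta_+}\|_{L_2(\mu\otimes\rho)}$, so the reweighted features satisfy Assumption \ref{assumption:operator} with $\rho_P$ in place of $\rho$.
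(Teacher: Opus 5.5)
Your proposal is correct and follows essentially the same route as the paper. Both arguments use the density cancellation $\phi_P\phi_P\gamma_P=\phi\phi$ on the nonzero-feature region, with the zero convention elsewhere. Both then apply linearity of expectation over the two fixed-size groups to reduce $\EE[\hat k_\rho^{(P)}]$ to an integral against $\rho_P=P\rho_1+(1-P)\rho_0$, and use the same change of measure on the rank-one operators to recover $T_\rho$. Your Tonelli integrability check and explicit handling of $\Theta_0$ spell out the measure-theoretic details the paper compresses into ``by Fubini's theorem.''
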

Lemma \ref{lemma:kernel-operator-invariance} ensures that the reweighted features preserve $T_\rho$. Using these sampled features, the output coefficients are chosen by population ridge regression. For any $\lambda>0$,
\begin{equation}
	\label{eq:the-real-ridge-regression-task}
	\hat{\beta}_{\lambda}=\underset{\beta\in\RR^{m}}{\arg\min}~ \Bigg\{\Big\Vert f^{*}-\sum_{j=1}^{m_{\mathrm{free}}}\beta_{j}\phi_{P}(\cdot;\theta_{j})-\sum_{j=1}^{m_{\mathrm{fix}}}\beta_{m_{\mathrm{free}}+j}\phi_{P}(\cdot;\bar\theta_{j})\Big\Vert_{L_{2}(d\mu)}^{2}+\lambda m \norm{\beta}_{2}^{2}\Bigg\}.
\end{equation}

For each realization, reweighting rescales the output coefficients and preserves the span of the sampled
features. To analyze the approximation error of this ridge construction, the size of each reweighted feature is measured relative to the regularized operator $T_\rho+\lambda I$. Specifically, define the ridge leverage score
\begin{equation}
	\label{eq:leverage-score}
	s_{\lambda,P}(\theta)= \langle
		\phi_{P}(\cdot;\theta),(T_\rho+\lambda I)^{-1}\phi_{P}(\cdot;\theta)
		 \rangle_{L_{2}(d\mu)},
\end{equation}
where $\phi_{P}=\gamma_{P}^{-1/2}\phi$, with the convention that $s_{\lambda,P}(\theta)=0$ whenever $\|\phi(\cdot;\theta)\|_{L_2(d\mu)}=0$. The maximal ridge leverage value under the sampling
distribution $\rho_P$ is
\begin{equation}
	\label{eq:definition-of-d-max}
	d_{\max}(\gamma_{P},\lambda)
	=\operatorname*{ess\,sup}_{\theta\sim\rho_{P}}s_{\lambda,P}(\theta).
\end{equation}

The following theorem uses the regularization scale $\lambda=\varepsilon/4$ to give a sufficient width condition for squared approximation error at most $\varepsilon$ with confidence level $1-\delta$.
\begin{theorem}[General Approximation Bounds]
	\label{thm:general-approx.-bounds} Under Assumption \ref{assumption:operator}, let the error tolerance $\varepsilon>0$, $m_{\mathrm{free}}, m_{\mathrm{fix}}\in\NN$ with $m=m_{\mathrm{free}}+m_{\mathrm{fix}}\ge 1$, and the trainable proportion $P=m_{\mathrm{free}}/m \in[0,1]$. Assume that $d_{\max}(\gamma_{P},\varepsilon/4)<\infty$. Let the hidden parameters be sampled according to
	Definition \ref{def:sampling-scheme}. For any $\delta\in(0,1)$, if 
	\begin{equation}
		\label{eq:sample-condition}
		m\ge \frac{40}{9}d_{\max}(\gamma_{P},\varepsilon/4)\log\frac{14d_{\max}(\gamma_{P},\varepsilon/4)}{\delta},
	\end{equation}
	then, with probability at least $1-\delta$, 
	\begin{equation}
		\label{eq:main-result-GAB}
		\sup_{\norm{f^{*}}_{\mathcal H_\rho}\le 1}
		\inf_{\norm{\beta}_{2}^{2}\le\frac{4}{m}}
		\Big\Vert f^{*}-\sum_{j=1}^{m_{\mathrm{free}}}\beta_{j}\phi_{P}(\cdot;\theta_{j})-\sum_{j=1}^{m_{\mathrm{fix}}}\beta_{m_{\mathrm{free}}+j}\phi_{P}(\cdot;\bar\theta_{j})\Big\Vert_{L_{2}(d\mu)}^{2}\le \varepsilon.
	\end{equation}
\end{theorem}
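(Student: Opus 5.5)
Following Bach's operator framework, I would reduce the uniform approximation bound to a single operator inequality that can be checked with a matrix Bernstein bound. Write $\psi_j$ for the $m$ reweighted sampled features: $\psi_j=\phi_P(\cdot;\theta_j)$ for $j\le m_{\mathrm{free}}$, and $\psi_{m_{\mathrm{free}}+j}=\phi_P(\cdot;\bar\theta_j)$ for the fixed group. Set $\hat T=\frac1m\sum_{j=1}^m\psi_j\otimes\psi_j$, and let $\hat S:\RR^m\to L_2(d\mu)$ be $\hat S\beta=\sum_j\beta_j\psi_j$, so that $\hat S\hat S^*=m\hat T$ with $\hat S^*$ taken with respect to the Euclidean inner product.

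\emph{Step 1: deterministic bound.} For $f^*=T_\rho^{1/2}q$ with $\|q\|_{L_2(d\mu)}\le1$, the penalized problem \eqref{eq:the-real-ridge-regression-task} gives
\[
\inf_\beta\Big\{\|f^*-\hat S\beta\|^2+\lambda m\|\beta\|_2^2\Big\}
=\lambda\langle f^*,(\hat T+\lambda I)^{-1}f^*\rangle
=\lambda\big\langle q,T_\rho^{1/2}(\hat T+\lambda I)^{-1}T_\rho^{1/2}q\big\rangle .
\]
This is the standard ridge identity obtained by minimizing the quadratic and using the push-through identity. Suppose we have the operator inequality $\hat T+\lambda I\succeq t(T_\rho+\lambda I)$ with $t=1/2$. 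Then $T_\rho^{1/2}(\hat T+\lambda I)^{-1}T_\rho^{1/2}\preceq 2\,T_\rho^{1/2}(T_\rho+\lambda I)^{-1}T_\rho^{1/2}\preceq 2I$, so the penalized value is at most $2\lambda$. Taking $\lambda=\varepsilon/4$, the minimizer $\hat\beta_\lambda$ satisfies both $\|f^*-\hat S\hat\beta_\lambda\|^2\le 2\lambda=\varepsilon/2\le\varepsilon$ and $\lambda m\|\hat\beta_\lambda\|_2^2\le 2\lambda$, that is, $\|\hat\beta_\lambda\|_2^2\le 2/m\le 4/m$. Because this operator inequality does not depend on $f^*$, the estimate holds uniformly over the unit ball on a single event. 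That is why the supremum appears outside the infimum in \eqref{eq:main-result-GAB}. The slack to $4/m$ leaves room to use a constant $t$ other than $1/2$ if the concentration constants require it.

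\emph{Step 2: concentration.} Define $X_j=(T_\rho+\lambda I)^{-1/2}(\psi_j\otimes\psi_j)(T_\rho+\lambda I)^{-1/2}$. The goal is the high-probability bound
\[
\Big\|\frac1m\sum_j\big(X_j-\EE X_j\big)\Big\|_{\mathrm{op}}\le\frac12 .
\]
The issue is that $\EE X_j$ is not the same for all $j$: it equals $A_i=(T_\rho+\lambda)^{-1/2}T_{\rho_i}^{\gamma_P}(T_\rho+\lambda)^{-1/2}$, where $T_{\rho_i}^{\gamma_P}=\EE_{\rho_i}[\phi_P\otimes\phi_P]$. However, Lemma~\ref{lemma:kernel-operator-invariance} shows that the average $\frac1m\sum_j\EE X_j=PA_1+(1-P)A_0$ equals $(T_\rho+\lambda)^{-1/2}T_\rho(T_\rho+\lambda)^{-1/2}$. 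This works exactly because the group sizes are fixed at $Pm$ and $(1-P)m$. So the sum consists of independent but non-identically distributed terms whose total mean is the desired operator, and the matrix Bernstein inequality for independent, non-identical self-adjoint summands still applies.

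The required parameters are as follows. For the uniform bound, $\|X_j\|=s_{\lambda,P}(\theta)\le d_{\max}$ almost surely under $\rho_1$ or $\rho_0$. This uses that each $\rho_i\ll\rho_P$ whenever the corresponding weight is positive, with density at most $\gamma_i/\gamma_P$; if $P=0$ or $P=1$ one group is empty. For the variance, $\EE X_j^2\preceq d_{\max}\EE X_j$, so $\sum_j\EE X_j^2\preceq m\,d_{\max}\,\frac{T_\rho}{T_\rho+\lambda}$, which has norm at most $m d_{\max}$ and trace at most $m\,d_{\max}\,\tr\frac{T_\rho}{T_\rho+\lambda}\le m d_{\max}^2$. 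I would use the intrinsic-dimension version of Bernstein, as in Tropp's Hilbert-space form used by Bach/Rudi, which applies since $T_\rho$ is trace class. It gives failure probability at most $\approx 14\,d_{\max}\exp\!\big(-\tfrac{m t^2/2}{d_{\max}(1+t/3)}\big)$ at $t=1/2$. Since $\tfrac{t^2/2}{1+t/3}=\tfrac{1/8}{7/6}=\tfrac{3}{28}$, hmm; the constant $40/9$ should come out of tracking the precise version, possibly combining the two-sided tails or using $t$ chosen together with Step 1's slack. I would fit $t$ (for example $t=3/4$, giving $\tfrac{9/32}{5/4}=\tfrac{9}{40}$, which matches $40/9$) and adjust Step 1 accordingly. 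With $t=3/4$ the inequality becomes $\hat T+\lambda\succeq\frac14(T_\rho+\lambda)$, so the penalized value is at most $4\lambda=\varepsilon$ and $\|\beta\|_2^2\le 4/m$. This reproduces exactly the constants in \eqref{eq:main-result-GAB}, confirming the choice $t=3/4$.

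\emph{Main obstacle.} The delicate part is applying Bernstein with the intrinsic dimension to non-identical summands in infinite dimensions, while keeping the constant $14\,d_{\max}/\delta$ in the log. This requires bounding the effective dimension by $d_{\max}$ and handling the case where $\|\EE\sum X_j\|$ could be small, via the standard $\max(\cdot,1)$ correction. A second point to check is measure-theoretic: the essential supremum is taken under $\rho_P$, while samples come from $\rho_0$ and $\rho_1$. Absolute continuity of each $\rho_i$ with respect to $\rho_P$ when $P\in(0,1)$ resolves this, and the endpoint cases $P\in\{0,1\}$ reduce to a single group.
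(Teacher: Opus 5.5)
Your proposal is correct and is essentially the paper's proof. It uses the same ingredients in the same order:
\begin{enumerate}
\item the ridge reduction to $\lambda\langle f^*,(\widehat T_\rho+\lambda I)^{-1}f^*\rangle$;
\item Minsker's Bernstein inequality applied to the two independent, non-identically distributed groups, whose averaged mean is $T_\rho$ because the group sizes are fixed;
\item the variance proxy $\sigma^2=U=d_{\max}/m$, which is exactly what removes your worry about a small $\|\sum_j\EE X_j^2\|$, since the trace ratio is then at most $d(\lambda)\le d_{\max}$;
\item the choice $t=3/4$, giving $\widehat T_\rho+\lambda I\succcurlyeq\frac14(T_\rho+\lambda I)$, hence error $4\lambda=\varepsilon$ and $\|\beta\|_2^2\le 4/m$.
\end{enumerate}
The only detail you left implicit is Minsker's side condition $t^2\ge\sigma^2+Ut/3$, i.e.\ $m\ge\frac{20}{9}d_{\max}$. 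The paper checks that \eqref{eq:sample-condition} implies it: trivially when $d_{\max}\le 9/20$ since $m\ge1$, and via $\log(14d_{\max}/\delta)>1/2$ otherwise.
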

Theorem \ref{thm:general-approx.-bounds} reveals that a single realization of the sampled hidden parameters supports the general approximation bounds uniformly over the unit ball of $\mathcal{H}_{\rho}$, with output coefficients chosen separately for each target function. The constraint $\norm{\beta}_2^2\le 4/m$ controls the output coefficients in the reweighted representation in \eqref{eq:the-real-ridge-regression-task}.

The width condition \eqref{eq:sample-condition} depends on the sampling scheme through $d_{\max}(\gamma_{P},\varepsilon/4)$. To further understand what governs the approximation capabilities of $P$-trainable networks, the next subsection examines how $d_{\max}(\gamma_{P},\varepsilon/4)$ depends on the trainable proportion $P$, the sampling distributions $\rho_{0}$ and $\rho_{1}$, and the regularization scale $\lambda$.

\subsection{Approximation Rates under Polynomial Spectral Decay}
\label{subsec:Poly-Rates}
The quantity $d_{\max}(\gamma_{P},\lambda)$ can be written as the product of a spectral complexity of $T_\rho$ and a sampling mismatch factor. Specifically, the spectral complexity is measured by the effective dimension, defined for $\lambda>0$ as
\begin{equation}
	\label{eq:effective-dimension}
	d(\lambda)=\int_\Theta s_{\lambda,P}(\theta)d\rho_P(\theta)=\operatorname{tr}(T_\rho(T_\rho+\lambda I)^{-1}).
\end{equation}
Since reweighting preserves $T_{\rho}$, the effective dimension depends only on $T_{\rho}$ and $\lambda$, independent of the trainable proportion $P$ and the sampling distributions $\rho_{0}$ and $\rho_{1}$.

For $d(\lambda)>0$, define the mismatch factor of the sampling distribution $\rho_{P}$ by
\begin{equation}
	\label{eq:definition-of-kappa-first}
	\kappa_{\gamma_{P}}(\lambda)=\frac{d_{\max}(\gamma_{P},\lambda)}{d(\lambda)}=\frac{\operatorname*{ess\,sup}_{\theta\sim\rho_{P}}{s_{\lambda,P}(\theta)}}{\EE_{\theta\sim\rho_{P}}[s_{\lambda,P}(\theta)]}\ge 1.
\end{equation}
The mismatch factor $\kappa_{\gamma_{P}}(\lambda)$ compares the maximal score with its sampling average. Since the sufficient width condition \eqref{eq:sample-condition} in Theorem \ref{thm:general-approx.-bounds} is controlled by $d_{\max}(\gamma_P,\lambda)$, a natural sampling benchmark is a density minimizing this maximal ridge leverage for fixed $T_\rho$ and $\lambda$. Such a benchmark is provided by the optimal ridge leverage density with respect to $\rho$,
\begin{equation}
	\label{eq:leverage-score-distribution}
	\gamma_{\lambda}^{*}(\theta)=\frac{\gamma_{P}(\theta)s_{\lambda,P}(\theta)}{d(\lambda)}=\frac{ \langle
		\phi(\cdot;\theta),(T_\rho+\lambda I)^{-1}\phi(\cdot;\theta)
		\rangle_{L_{2}(d\mu)}}{d(\lambda)}.
\end{equation}
By construction, we have $\int_{\Theta}\gamma_{\lambda}^{*}(\theta)d\rho(\theta)=1$, $d_{\max}(\gamma_{\lambda}^{*},\lambda)=d(\lambda)$, and $\kappa_{\gamma_{\lambda}^{*}}(\lambda)=1$. Since $d_{\max}(\gamma,\lambda)\ge d(\lambda)$ for any admissible density $\gamma$, the density $\gamma_{\lambda}^{*}$ gives the smallest sufficient width prescribed by \eqref{eq:sample-condition} for uniform approximation over the unit ball of the target RKHS $\mathcal H_\rho$. It therefore provides an optimal sampling benchmark adapted to the entire target function class. Moreover, the mismatch factor admits the density representation
\[
\kappa_{\gamma_P}(\lambda)
=\operatorname*{ess\,sup}_{\theta\sim\rho_P}\frac{\gamma_\lambda^*(\theta)}{\gamma_P(\theta)}.
\]
Thus, $\kappa_{\gamma_P}(\lambda)$ measures the sampling mismatch through the largest local ratio. Values close to $1$ indicate that the current sampling density closely matches the ridge leverage benchmark. Large values indicate that some parameter regions receive insufficient sampling mass. Figure \ref{fig:sampling-mismatch-22} illustrates these two cases.  
\begin{figure}[!t]
	\centering
	\subfloat[Well-balanced sampling with small $\kappa_{\gamma_P}(\lambda)$.]{%
		\includegraphics[width=0.47\linewidth]{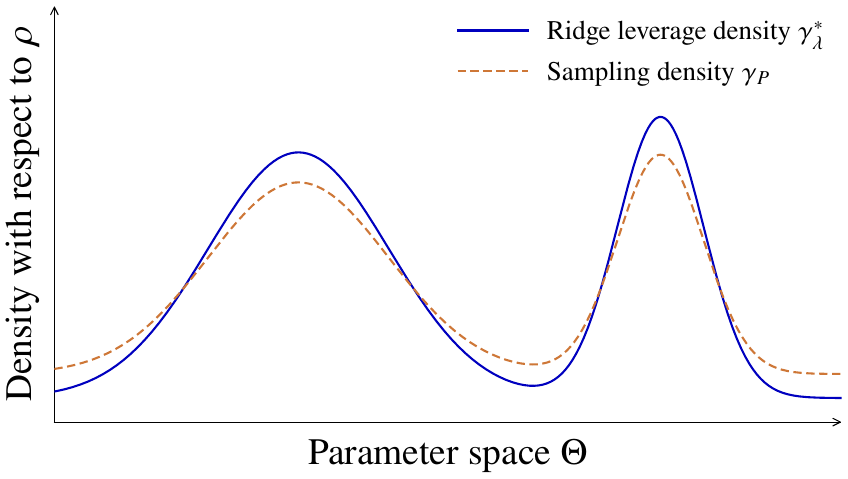}%
	}\hfill
	\subfloat[Local undersampling with large $\kappa_{\gamma_P}(\lambda)$.]{%
		\includegraphics[width=0.47\linewidth]{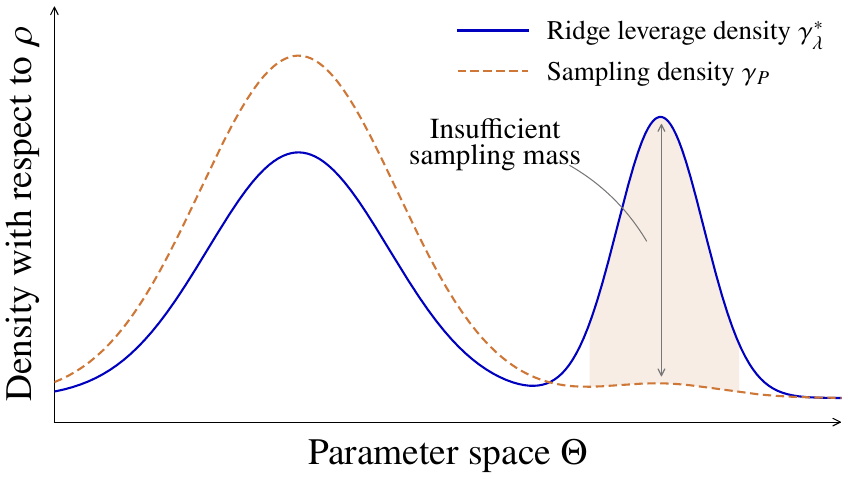}%
	}
	\caption{Schematic illustration of the mismatch factor $\kappa_{\gamma_P}(\lambda)$. The sampling density closely follows the ridge leverage density in (a), but assigns insufficient mass near the right peak in (b).}
	\label{fig:sampling-mismatch-22}
\end{figure}

Since the dependence on the trainable proportion $P$ enters through the mixture density $\gamma_{P}=P\gamma_{1}+(1-P)\gamma_{0}$, the following lemma bounds $d_{\max}(\gamma_P,\lambda)$ in terms of $P$ and the mismatch factors of the two component sampling densities.
\begin{lemma}[Mixture Leverage Bound]
	\label{lemma:maximal-leverage-bound}
	Let the mixture density be  $\gamma_{P}=P\gamma_{1}+(1-P)\gamma_{0}$ with $P\in[0,1]$. For any $\lambda>0$ and $\kappa_{\gamma_{i}}(\lambda)<\infty, i\in\{0,1\}$, we have
	\begin{equation}
		\label{eq:bound-dmax-kappa(lamda)}
		d_{\max}(\gamma_{P},\lambda)\le \Big(\frac{P}{\kappa_{\gamma_{1}}(\lambda)}+\frac{1-P}{\kappa_{\gamma_{0}}(\lambda)} \Big)^{-1}d(\lambda).
	\end{equation} 
\end{lemma}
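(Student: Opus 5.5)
The plan is to work pointwise in $\theta$ with the density representation of the mismatch factor. Set $\ell_\lambda(\theta)=\langle\phi(\cdot;\theta),(T_\rho+\lambda I)^{-1}\phi(\cdot;\theta)\rangle_{L_2(d\mu)}$, the unweighted leverage. This quantity does not depend on any sampling density. For any admissible density $\gamma$ we have $s_{\lambda,\gamma}(\theta)=\ell_\lambda(\theta)/\gamma(\theta)$ wherever $\|\phi(\cdot;\theta)\|_{L_2(d\mu)}>0$. By \eqref{eq:effective-dimension}, $d(\lambda)=\int_\Theta\ell_\lambda\,d\rho$ is the same for every $\gamma$, consistent with Lemma \ref{lemma:kernel-operator-invariance}. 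The definition \eqref{eq:definition-of-kappa-first} then reads $\kappa_{\gamma}(\lambda)=\operatorname*{ess\,sup}_{\theta\sim\rho_\gamma}\ell_\lambda(\theta)/(\gamma(\theta)d(\lambda))$. Hence, for $i\in\{0,1\}$ and $\rho_i$-almost every $\theta$ with nonzero feature,
\[
\gamma_i(\theta)\ \ge\ \frac{\ell_\lambda(\theta)}{\kappa_{\gamma_i}(\lambda)\,d(\lambda)}.
\]

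The first real step is to upgrade this inequality from $\rho_i$-a.e. to $\rho$-a.e. on the set $N^c=\{\theta:\|\phi(\cdot;\theta)\|>0\}$. This is needed because the supremum in $d_{\max}(\gamma_P,\lambda)$ is taken under $\rho_P$, not under each $\rho_i$. I would use the standing positivity assumption that each $\gamma_i>0$ $\rho$-a.e. on $N^c$. On $N^c$ the measures $\rho_i$ and $\rho$ are then mutually absolutely continuous. Consequently a $\rho_i$-null exceptional set intersected with $N^c$ is $\rho$-null, and the displayed inequality holds $\rho$-a.e. on $N^c$ for both $i=0,1$ simultaneously. I expect this null-set bookkeeping to be the only delicate point, together with the convention $s=0$ on $N$, which contributes nothing to the supremum. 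The degenerate case where $\ell_\lambda=0$ a.e. cannot occur, since $d(\lambda)>0$ because $T_\rho\neq0$.

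Next, I would take the convex combination with weights $P$ and $1-P$. This gives, $\rho$-a.e. on $N^c$,
\[
\gamma_P(\theta)=P\gamma_1(\theta)+(1-P)\gamma_0(\theta)\ \ge\ \Big(\frac{P}{\kappa_{\gamma_1}(\lambda)}+\frac{1-P}{\kappa_{\gamma_0}(\lambda)}\Big)\frac{\ell_\lambda(\theta)}{d(\lambda)}.
\]
The bracket is strictly positive, because both $\kappa$'s are finite and $P\in[0,1]$. Dividing through, $s_{\lambda,P}(\theta)=\ell_\lambda(\theta)/\gamma_P(\theta)$ is at most the bracket inverse times $d(\lambda)$. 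This holds $\rho$-a.e. on $N^c$, hence $\rho_P$-a.e. since $\rho_P\ll\rho$, and trivially on $N$.

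Taking the essential supremum under $\rho_P$ yields \eqref{eq:bound-dmax-kappa(lamda)}. The endpoints $P=0,1$ are covered automatically, because the term with zero weight simply drops out.
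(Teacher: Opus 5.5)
Your proposal is correct and follows the paper's proof essentially step for step: the pointwise bound $\ell_\lambda(\theta)\le d_{\max}(\gamma_i,\lambda)\gamma_i(\theta)$ for $i\in\{0,1\}$, the convex combination that lower-bounds $\gamma_P$, division by $\gamma_P$, and the essential supremum under $\rho_P$. You also spell out the upgrade from $\rho_i$-almost everywhere to $\rho$-almost everywhere via the standing positivity assumption on $\gamma_i$, a step the paper simply asserts as following ``by the definition of $d_{\max}$.''
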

To derive explicit approximation rates, the next assumption further controls these two factors over the relevant regularization scales.

\begin{restatable}[Leverage Compatibility]{assumption}{assumQuality}
	\label{assumption:mismatch-used}
	There exist $\lambda_{0}>0$ and constants $1\le\kappa_1^{*}\le\kappa_0^{*}<\infty$ such that, for each $0<\lambda\le\lambda_0$, the sampling densities satisfy $\kappa_{\gamma_{i}}(\lambda)\le\kappa_i^{*}$ for $i\in\{0,1\}$. Here, $\gamma_0$ is prescribed, while $\gamma_1$ may depend on $\lambda$ and is fixed in each sampling construction.
\end{restatable}
Together with the mixture leverage bound in Lemma \ref{lemma:maximal-leverage-bound}, this assumption controls the effect of mixture sampling through the constants $\kappa_1^*$ and $\kappa_0^*$. The remaining step is to determine how the effective dimension $d(\lambda)$ grows as $\lambda$ decreases.

\begin{restatable}[Polynomial Spectral Decay]{assumption}{assumpPoly}
	\label{assumption:eigendecay-used}
	There exist constants $c_-,c_+>0$ and $\alpha>1$ such that
	$c_-k^{-\alpha}\le\eta_k\le c_+k^{-\alpha}$ for every $k\ge1$.
\end{restatable}
This assumption specifies the eigenvalue decay of the integral operator $T_{\rho}$ and implies $d(\lambda)\asymp\lambda^{-1/\alpha}$ for $0<\lambda\le\lambda_0$, as shown in the proof of Theorem \ref{thm:polynomial-rates}. Together with the mixture leverage bound in Lemma \ref{lemma:maximal-leverage-bound}, Assumptions \ref{assumption:mismatch-used} and \ref{assumption:eigendecay-used} are used to derive explicit approximation rates. Further interpretation of assumptions is discussed in Appendix \ref{subsec:assumption-discussions}. 
\begin{theorem}[Polynomial Approximation Rates]
	\label{thm:polynomial-rates}
	Under Assumptions \ref{assumption:operator}, \ref{assumption:mismatch-used} and \ref{assumption:eigendecay-used}, let $m\in\NN_{+}$ and $P\in[0,1]$ satisfy $Pm\in\NN$, and set $W=\max\{d,m,1\}$. For any $\delta\in(0,1)$, under the sampling scheme in Definition \ref{def:sampling-scheme}, the rate holds with probability at least $1-\delta$,
	\begin{equation}
		\label{eq:poly-rates-main-theorem}
		\sup_{\norm{f}_{\mathcal H_\rho}\le 1}\inf_{f_{2}^{(P)}\in\mathcal{F}_P(W,2)}\norm{f-f_{2}^{(P)}}_{L_{2}(d\mu)}^{2}\lesssim \Big( \Big(\frac{P}{\kappa_1^{*}}+\frac{1-P}{\kappa_{0}^{*}} \Big)\frac{9m}{40\log(63m/(20\delta))} \Big)^{-\alpha},
	\end{equation}
	where the implicit constant is independent of $m, P$ and $\delta$.	 
\end{theorem}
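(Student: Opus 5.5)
The plan is to combine Theorem~\ref{thm:general-approx.-bounds}, Lemma~\ref{lemma:maximal-leverage-bound}, and an effective-dimension estimate from Assumption~\ref{assumption:eigendecay-used}. Throughout write $\kappa_P=(P/\kappa_1^*+(1-P)/\kappa_0^*)^{-1}$ and $L=\log(63m/(20\delta))$. Two reductions come first.

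\emph{Reduction to the reweighted features.} Every function $\sum_j\beta_j\phi_P(\cdot;\theta_j)+\sum_j\beta_{m_{\mathrm{free}}+j}\phi_P(\cdot;\bar\theta_j)$ is a realization of $\mathcal{F}_P(W,2)$ with rescaled output weights. Hence, for any single realization, the left side of \eqref{eq:poly-rates-main-theorem} is bounded by the left side of \eqref{eq:main-result-GAB}.

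\emph{Bound on $d_{\max}$.} By Lemma~\ref{lemma:maximal-leverage-bound} and Assumption~\ref{assumption:mismatch-used}, for $0<\lambda\le\lambda_0$ we have $d_{\max}(\gamma_P,\lambda)\le\kappa_P\,d(\lambda)$. The map $\kappa\mapsto(P/\kappa_1+(1-P)/\kappa_0)^{-1}$ is increasing in each argument, so the bounds $\kappa_{\gamma_i}\le\kappa_i^*$ transfer directly.

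Next I show $d(\lambda)\le C\lambda^{-1/\alpha}$ for small $\lambda$. Split $d(\lambda)=\sum_k\eta_k/(\eta_k+\lambda)$ at $K=\lceil(c_+/\lambda)^{1/\alpha}\rceil$. The terms with $k\le K$ are each at most $1$. The tail is at most $\lambda^{-1}\sum_{k>K}c_+k^{-\alpha}\lesssim\lambda^{-1}K^{1-\alpha}/(\alpha-1)\lesssim\lambda^{-1/\alpha}$. Only the upper eigenvalue bound is needed here, and the lower one is irrelevant for an upper rate. For $\lambda\le\lambda_0$ this gives $d_{\max}(\gamma_P,\lambda)\le C\kappa_P\lambda^{-1/\alpha}$ with $C$ depending only on $c_+,\alpha,\lambda_0$.

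I then choose $\varepsilon$ so that \eqref{eq:sample-condition} holds. Set $\varepsilon=4\lambda$ with $\lambda=\big(C\kappa_P L/(9m/40)\big)^{\alpha}\cdot c$ for a suitable constant $c\ge1$, so that $\varepsilon\asymp(9m/(40\kappa_P L))^{-\alpha}$, which is the claimed rate. The width condition requires
\[
m\ge \tfrac{40}{9}D\log(14D/\delta),\qquad D=d_{\max}(\gamma_P,\varepsilon/4)\le C\kappa_P\lambda^{-1/\alpha}.
\]
Since the right side is increasing in $D$, it suffices to check it with $D$ replaced by $\bar D=C\kappa_P\lambda^{-1/\alpha}=c^{-1/\alpha}\cdot\tfrac{9m}{40L}$. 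Then $\tfrac{40}{9}\bar D=c^{-1/\alpha}m/L$. Using $\bar D\le \tfrac{9m}{40}$ (because $L\ge1$, as $63/20>e$, and $c\ge1$), we get $14\bar D/\delta\le 14\cdot\tfrac{9}{40}\cdot m/\delta=\tfrac{63m}{20\delta}$. So $\log(14\bar D/\delta)\le L$, which explains the constant $63/20$, and the condition holds with $c=1$.

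Two edge cases need care, and this is where I expect the main obstacle. First, $\lambda$ may exceed $\lambda_0$, where Assumption~\ref{assumption:mismatch-used} is unavailable. This happens only when $m/(\kappa_P L)$ is bounded above by a constant. In that case the trivial bound (take $\beta=0$, giving error $\le\|f\|_{L_2}^2\le\|T_\rho\|\le\eta_1$ on the unit ball) is $\lesssim$ the right side, because the right side is then bounded below by a positive constant. The implicit constant absorbs this, using $\kappa_P\le\kappa_0^*$ as a fixed constant. Second, the monotonicity substitution $D\to\bar D$ must be made rigorous, which follows since $x\mapsto x\log(14x/\delta)$ is increasing for $x\ge\delta/14$. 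If $D<\delta/14$, the condition is trivially implied, because the right side is then smaller than its value at $\bar D$ or nonpositive.

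Applying Theorem~\ref{thm:general-approx.-bounds} then yields, with probability at least $1-\delta$, the uniform bound $\varepsilon\lesssim\big(\tfrac{9m}{40\kappa_P L}\big)^{-\alpha}$. The implicit constant is $4C^\alpha$, which is independent of $m$, $P$ and $\delta$.
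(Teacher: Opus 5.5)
Your proposal is correct and follows the paper's proof essentially step for step. Lemma~\ref{lemma:maximal-leverage-bound} with Assumption~\ref{assumption:mismatch-used} gives $d_{\max}(\gamma_P,\lambda)\le\kappa_P\,d(\lambda)\lesssim\kappa_P\lambda^{-1/\alpha}$, and $\lambda$ is chosen so that $d_{\max}\le 9m/(40L)$, which forces $\log(14d_{\max}/\delta)\le L$ and hence \eqref{eq:sample-condition}. The case $\lambda>\lambda_0$ is handled by the zero approximant with $\norm{f}_{L_2(d\mu)}^2\le\norm{T_\rho}_{\mathrm{op}}$. The remaining differences are cosmetic: you split the effective-dimension sum at a deterministic index using only the upper eigenvalue bound, so you need no reduction to $\lambda_0\le\eta_1$. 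You also check the width condition via monotonicity of $x\log(14x/\delta)$ rather than bounding the two factors separately as the paper does.
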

The stated rate \eqref{eq:poly-rates-main-theorem} does not require the regularization scale to satisfy $\lambda\le\lambda_0$. When $\lambda>\lambda_0$, the zero approximant used in the proof gives a squared error bounded by a constant times $\lambda$. The same rate therefore follows after adjusting the implicit constant, without an additional restriction on $m$. Accordingly, the adjustable density $\gamma_1$ for the trainable group is selected at the scale used in the proof of Theorem \ref{thm:polynomial-rates}.

By scaling, the result extends to every target $f\in\mathcal H_\rho$,
\[ \inf_{f_{2}^{(P)}\in\mathcal{F}_P(W,2)}\norm{f-f_{2}^{(P)}}_{L_{2}(d\mu)}^{2}\lesssim \Big( \Big(\frac{P}{\kappa_1^{*}}+\frac{1-P}{\kappa_{0}^{*}} \Big)\frac{9m}{40\log(63m/(20\delta))} \Big)^{-\alpha}
\norm{f}_{\mathcal{H}_{\rho}}^{2}. \]
The reference measure $\rho$ therefore affects the bound through its kernel spectrum, the associated mismatch constants, and the RKHS norm of the target function.

For a fixed reference measure $\rho$, Theorem \ref{thm:polynomial-rates} provides an explicit approximation rate in terms of the trainable proportion $P$, the hidden-layer width $m$, and the spectral decay exponent $\alpha$. For fixed $P,\delta$ and mismatch constants, the bound decreases at the rate of  $\mathcal{O}(({m}/{\log m})^{-\alpha})$ as $m\to\infty$. For fixed $m$ and $\delta$, the dependence on $P$ affects the bound through the prefactor $(P/\kappa_{1}^{*}+(1-P)/\kappa_{0}^{*})^{-\alpha}$, which is non-increasing in $P$ since $\kappa_1^*\le \kappa_0^*$. Choosing $\gamma_1=\gamma_\lambda^*$ at each regularization choice gives $\kappa_1^*=1$. More generally, when $\kappa_1^*\ll\kappa_0^*$, a substantial reduction in the bound can occur even for small $P$, as quantified in the subsequent phase transition Corollary \ref{cor:phase-transition}.

\begin{remark}[Probability Statement When $P=1$]
	At $P=1$, the fully trainable network class is deterministic, but the proof still constructs the trainable parameters by sampling
	from $\rho_1$. The failure probability $\delta$ refers to this sampled construction. A successful realization yields an approximation bound over the deterministic network class. The high-probability formulation is retained for consistency across admissible $P\in[0,1]$.
\end{remark}

Substituting $P=m_{\mathrm{free}}/m$ into Theorem \ref{thm:polynomial-rates} gives an equivalent approximation bound in terms of the numbers of trainable and fixed neurons.
\begin{corollary}[Rate by Neuron Counts]
	\label{cor:Rate-of-neuron-number}
	Under the same assumptions and sampling scheme, let $m_{\mathrm{free}}, m_{\mathrm{fix}}\in\NN$ with $m=m_{\mathrm{free}}+m_{\mathrm{fix}}\ge 1$, and set $P=m_{\mathrm{free}}/m$. For $\delta\in(0,1)$, with probability at least $1-\delta$, 
	\begin{equation}
		\nonumber
		\sup_{\norm{f}_{\mathcal H_\rho}\le 1}\inf_{f_{2}^{(P)}\in\mathcal{F}_P(W,2)}\norm{f-f_{2}^{(P)}}_{L_{2}(d\mu)}^{2}\lesssim \Big( \Big(\frac{m_{\mathrm{free}}}{\kappa_1^{*}}+\frac{m_{\mathrm{fix}}}{\kappa_{0}^{*}} \Big)\frac{9}{40\log(63m/(20\delta))} \Big)^{-\alpha}.
	\end{equation}
\end{corollary}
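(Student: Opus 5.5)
This corollary is a direct rewriting of Theorem \ref{thm:polynomial-rates}, so the plan is to invoke that theorem and substitute $P=m_{\mathrm{free}}/m$. I first check that its hypotheses hold. Assumptions \ref{assumption:operator}, \ref{assumption:mismatch-used} and \ref{assumption:eigendecay-used} are assumed. Since $m=m_{\mathrm{free}}+m_{\mathrm{fix}}\ge1$ and $P=m_{\mathrm{free}}/m$, we have $m\in\NN_{+}$, $P\in[0,1]$, and $Pm=m_{\mathrm{free}}\in\NN$. The sampling scheme of Definition \ref{def:sampling-scheme} with these group sizes is exactly the one used in the theorem. Hence, with probability at least $1-\delta$, the bound \eqref{eq:poly-rates-main-theorem} holds, with an implicit constant independent of $m$, $P$ and $\delta$.

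The only computation is the algebraic identity
\[
\Big(\frac{P}{\kappa_1^{*}}+\frac{1-P}{\kappa_0^{*}}\Big)m=\frac{m_{\mathrm{free}}}{\kappa_1^{*}}+\frac{m_{\mathrm{fix}}}{\kappa_0^{*}},
\]
which uses $Pm=m_{\mathrm{free}}$ and $(1-P)m=m_{\mathrm{fix}}$. Inserting it into the right-hand side of \eqref{eq:poly-rates-main-theorem} gives
\[
\Big(\Big(\frac{m_{\mathrm{free}}}{\kappa_1^{*}}+\frac{m_{\mathrm{fix}}}{\kappa_0^{*}}\Big)\frac{9}{40\log(63m/(20\delta))}\Big)^{-\alpha}.
\]
The logarithmic factor still involves $m=m_{\mathrm{free}}+m_{\mathrm{fix}}$, as in the statement. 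The network class $\mathcal F_P(W,2)$ is the same, with $W=\max\{d,m,1\}$.

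I do not expect a real obstacle. The one point to confirm is that the implicit constant does not depend on $P$. Otherwise the reparametrization by the pair $(m_{\mathrm{free}},m_{\mathrm{fix}})$ could absorb a hidden $P$-dependence. Theorem \ref{thm:polynomial-rates} states this independence explicitly, so the constant is uniform over all admissible pairs.
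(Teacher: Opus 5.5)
Your proposal is correct and matches the paper's route. The paper obtains this corollary simply by substituting $P=m_{\mathrm{free}}/m$ into Theorem~\ref{thm:polynomial-rates}, using $Pm=m_{\mathrm{free}}\in\NN$ and $(1-P)m=m_{\mathrm{fix}}$, with the implicit constant carried over unchanged because it is independent of $m$, $P$ and $\delta$.
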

The two neuron groups enter the bound through the weighted counts $m_{\mathrm{free}}/\kappa_1^*$ and $m_{\mathrm{fix}}/\kappa_0^*$. Their contributions are comparable when $m_{\mathrm{free}}\asymp m_{\mathrm{fix}}
(\kappa_1^*/\kappa_0^*)$. Thus, when $\kappa_1^*\ll\kappa_0^*$, a small trainable group can substantially reduce the bound. The reciprocal mismatch ratio $\kappa_1^*/\kappa_0^*$ determines the scale of the relative trainable count, while the hidden-layer width $m$ still affects the overall
approximation accuracy guaranteed by the bound.

\begin{remark}[Influence of Input Dimension]
	The input dimension $d$ affects the approximation bounds through the spectral decay exponent $\alpha$ and the mismatch constants $\kappa_0^*$ and $\kappa_1^*$. Section \ref{sec:specific-activation-functions} makes this dependence explicit for the spherical
	biased-ReLU features.
\end{remark}

\subsection{Small-$P$ Phase Transition}
To quantify the effect of $P$ on the approximation rate of two-layer $P$-trainable networks, fix the hidden-layer width $m$ and the confidence level $1-\delta$, and let $\mathcal{E}(P)$ denote the rate expression on the right-hand side of \eqref{eq:poly-rates-main-theorem}. For $\kappa_1^*<\kappa_0^*$, define the normalized gain by
\begin{equation}
	\label{eq:the-normalized-gain}
	G(P)=\frac{\mathcal{E}(0)-\mathcal{E}(P)}{\mathcal{E}(0)-\mathcal{E}(1)}.
\end{equation}
This quantity $G(P)$ measures the fraction of the total bound reduction between $P=0$ and $P=1$ attained at trainable proportion $P$.
\begin{corollary}[Phase Transition Improvement]
	Under the conditions of Theorem \ref{thm:polynomial-rates}, let $r={\kappa_0^{*}}/{\kappa_1^{*}}>1$. Then the normalized gain satisfies
	\begin{equation}
		\label{eq:normalized-gain}
		G(P)=\frac{1-\bigl(1+(r-1)P\bigr)^{-\alpha}}{1-r^{-\alpha}},~\text{for}~ P\in[0,1].
	\end{equation}
	Moreover, $G(0)=0, G(1)=1, G'(P)>0$, and $G''(P)<0$. Therefore, increasing the trainable proportion $P$ monotonically reduces the approximation bound, with diminishing marginal
	improvement.
	\label{cor:phase-transition}
\end{corollary}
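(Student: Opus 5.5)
The plan is to write the rate expression explicitly and reduce everything to elementary calculus in $P$. With $m$ and $\delta$ fixed, set $C=\bigl(\tfrac{9m}{40\log(63m/(20\delta))}\bigr)^{-\alpha}$, so that $\mathcal{E}(P)=C\,\bigl(P/\kappa_1^*+(1-P)/\kappa_0^*\bigr)^{-\alpha}$. Factoring out $1/\kappa_0^*$ and using $r=\kappa_0^*/\kappa_1^*$, the bracket becomes
\[
\frac{P}{\kappa_1^*}+\frac{1-P}{\kappa_0^*}=\frac{1}{\kappa_0^*}\bigl(rP+1-P\bigr)=\frac{1+(r-1)P}{\kappa_0^*}.
\]
Hence $\mathcal{E}(P)=C(\kappa_0^*)^{\alpha}\bigl(1+(r-1)P\bigr)^{-\alpha}$. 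Write $h(P)=(1+(r-1)P)^{-\alpha}$; then $h(0)=1$ and $h(1)=r^{-\alpha}$. The common prefactor $C(\kappa_0^*)^\alpha>0$ cancels in \eqref{eq:the-normalized-gain}, which gives $G(P)=(1-h(P))/(1-r^{-\alpha})$, i.e.\ \eqref{eq:normalized-gain}. The denominator is positive because $r>1$ and $\alpha>1>0$, so $G$ is well defined. The endpoint values $G(0)=0$ and $G(1)=1$ follow immediately from $h(0)=1$ and $h(1)=r^{-\alpha}$.

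For the derivatives, I will differentiate $h$ on $[0,1]$, where $1+(r-1)P\ge 1>0$, so $h$ is smooth:
\[
h'(P)=-\alpha(r-1)\bigl(1+(r-1)P\bigr)^{-\alpha-1},\qquad h''(P)=\alpha(\alpha+1)(r-1)^2\bigl(1+(r-1)P\bigr)^{-\alpha-2}.
\]
Then $G'=-h'/(1-r^{-\alpha})>0$ because $\alpha>0$ and $r>1$. Likewise $G''=-h''/(1-r^{-\alpha})<0$, since $(r-1)^2>0$ and $\alpha(\alpha+1)>0$.

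Finally, I will interpret these signs. Since $\mathcal{E}(P)=\mathcal{E}(0)-(\mathcal{E}(0)-\mathcal{E}(1))G(P)$ with $\mathcal{E}(0)-\mathcal{E}(1)>0$, strict monotonicity of $G$ means the bound strictly decreases in $P$. Concavity of $G$ means the marginal gain $G'(P)$ decreases, which is the diminishing-returns statement. There is no real obstacle here. The only care needed is to note that the bracket is bounded away from zero on $[0,1]$ and that $r>1$ makes the denominator nonzero. The implicit constant in Theorem \ref{thm:polynomial-rates} plays no role, since $G$ is defined through the explicit rate expression itself.
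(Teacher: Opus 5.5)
Your proposal is correct and takes essentially the same approach as the paper, which gives no separate proof: rewrite the bracket as $(1+(r-1)P)/\kappa_0^*$ so that the common prefactor cancels in $G$, then read off the endpoint values and the signs of $G'$ and $G''$ by direct differentiation. Your remarks that $1+(r-1)P\ge 1$ on $[0,1]$ and that $1-r^{-\alpha}>0$ cover the only points that need care.
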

The normalized gain depends only on the mismatch ratio $r$ and the spectral exponent $\alpha$. This dependence also quantifies how a small proportion of trainable neurons can capture most of the total improvement. For any tolerance level $\varepsilon\in(0,1)$, define
\[
P_{\varepsilon}(r)=\frac{ (\varepsilon+(1-\varepsilon)r^{-\alpha} )^{-1/\alpha}-1}{r-1},
\]
which is the smallest trainable proportion required to achieve the normalized gain of $1-\varepsilon$,
\[ G(P)\ge G(P_{\varepsilon})= 1-\varepsilon~\Longleftrightarrow~P\ge P_{\varepsilon}. \]

For a fixed width $m$, the corresponding minimum trainable count is $\lceil mP_\varepsilon(r)\rceil$. Furthermore, $P_{\varepsilon}\le{(\varepsilon^{-1/\alpha}-1)}/{(r-1)}$. For fixed $\varepsilon$ and $\alpha$, the threshold of $P_{\varepsilon}$ therefore decreases at the rate $\mathcal O(r^{-1})$ as $r$ increases. This quantifies the phase transition phenomenon in the bound: when the trainable distribution has a substantially smaller mismatch constant than the fixed initialization distribution ($\kappa_{1}^{*}\ll\kappa_{0}^{*}$), a small trainable proportion can capture most of the total reduction available between $P=0$ and $P=1$. 
\begin{figure}[!t]
	\centering
	\resizebox{0.6\linewidth}{!}{%
		\begin{tikzpicture}[>=Latex,font=\small]
			\fill[blue!9] (0,3)--(6.2,3)--
			(6.2000,0.0215497)--(6.1750,0.0220533)--(6.1500,0.0225687)--(6.1250,0.0230962)--(6.1000,0.0236360)--
			(6.0750,0.0241885)--(6.0500,0.0247540)--(6.0250,0.0253328)--(6.0000,0.0259251)--(5.9750,0.0265313)--
			(5.9500,0.0271517)--(5.9250,0.0277867)--(5.9000,0.0284366)--(5.8750,0.0291018)--(5.8500,0.0297826)--
			(5.8250,0.0304794)--(5.8000,0.0311926)--(5.7750,0.0319225)--(5.7500,0.0326695)--(5.7250,0.0334342)--
			(5.7000,0.0342168)--(5.6750,0.0350178)--(5.6500,0.0358377)--(5.6250,0.0366768)--(5.6000,0.0375357)--
			(5.5750,0.0384148)--(5.5500,0.0393146)--(5.5250,0.0402356)--(5.5000,0.0411782)--(5.4750,0.0421431)--
			(5.4500,0.0431307)--(5.4250,0.0441416)--(5.4000,0.0451763)--(5.3750,0.0462354)--(5.3500,0.0473195)--
			(5.3250,0.0484292)--(5.3000,0.0495650)--(5.2750,0.0507277)--(5.2500,0.0519178)--(5.2250,0.0531359)--
			(5.2000,0.0543829)--(5.1750,0.0556593)--(5.1500,0.0569659)--(5.1250,0.0583034)--(5.1000,0.0596725)--
			(5.0750,0.0610739)--(5.0500,0.0625086)--(5.0250,0.0639771)--(5.0000,0.0654805)--(4.9750,0.0670194)--
			(4.9500,0.0685948)--(4.9250,0.0702075)--(4.9000,0.0718584)--(4.8750,0.0735484)--(4.8500,0.0752786)--
			(4.8250,0.0770497)--(4.8000,0.0788629)--(4.7750,0.0807191)--(4.7500,0.0826194)--(4.7250,0.0845648)--
			(4.7000,0.0865565)--(4.6750,0.0885954)--(4.6500,0.0906828)--(4.6250,0.0928199)--(4.6000,0.0950078)--
			(4.5750,0.0972477)--(4.5500,0.0995410)--(4.5250,0.1018889)--(4.5000,0.1042926)--(4.4750,0.1067537)--
			(4.4500,0.1092733)--(4.4250,0.1118531)--(4.4000,0.1144943)--(4.3750,0.1171986)--(4.3500,0.1199673)--
			(4.3250,0.1228022)--(4.3000,0.1257047)--(4.2750,0.1286765)--(4.2500,0.1317193)--(4.2250,0.1348348)--
			(4.2000,0.1380247)--(4.1750,0.1412909)--(4.1500,0.1446351)--(4.1250,0.1480594)--(4.1000,0.1515655)--
			(4.0750,0.1551555)--(4.0500,0.1588314)--(4.0250,0.1625952)--(4.0000,0.1664492)--(3.9750,0.1703953)--
			(3.9500,0.1744360)--(3.9250,0.1785734)--(3.9000,0.1828098)--(3.8750,0.1871477)--(3.8500,0.1915894)--
			(3.8250,0.1961375)--(3.8000,0.2007945)--(3.7750,0.2055630)--(3.7500,0.2104456)--(3.7250,0.2154451)--
			(3.7000,0.2205642)--(3.6750,0.2258057)--(3.6500,0.2311727)--(3.6250,0.2366679)--(3.6000,0.2422945)--
			(3.5750,0.2480556)--(3.5500,0.2539542)--(3.5250,0.2599936)--(3.5000,0.2661770)--(3.4750,0.2725079)--
			(3.4500,0.2789896)--(3.4250,0.2856255)--(3.4000,0.2924193)--(3.3750,0.2993744)--(3.3500,0.3064947)--
			(3.3250,0.3137837)--(3.3000,0.3212453)--(3.2750,0.3288834)--(3.2500,0.3367018)--(3.2250,0.3447046)--
			(3.2000,0.3528958)--(3.1750,0.3612794)--(3.1500,0.3698596)--(3.1250,0.3786406)--(3.1000,0.3876267)--
			(3.0750,0.3968222)--(3.0500,0.4062313)--(3.0250,0.4158585)--(3.0000,0.4257083)--(2.9750,0.4357849)--
			(2.9500,0.4460930)--(2.9250,0.4566371)--(2.9000,0.4674215)--(2.8750,0.4784510)--(2.8500,0.4897300)--
			(2.8250,0.5012630)--(2.8000,0.5130545)--(2.7750,0.5251091)--(2.7500,0.5374312)--(2.7250,0.5500253)--
			(2.7000,0.5628957)--(2.6750,0.5760467)--(2.6500,0.5894825)--(2.6250,0.6032074)--(2.6000,0.6172253)--
			(2.5750,0.6315402)--(2.5500,0.6461559)--(2.5250,0.6610761)--(2.5000,0.6763042)--(2.4750,0.6918436)--
			(2.4500,0.7076974)--(2.4250,0.7238684)--(2.4000,0.7403593)--(2.3750,0.7571725)--(2.3500,0.7743102)--
			(2.3250,0.7917741)--(2.3000,0.8095656)--(2.2750,0.8276860)--(2.2500,0.8461360)--(2.2250,0.8649160)--
			(2.2000,0.8840257)--(2.1750,0.9034649)--(2.1500,0.9232324)--(2.1250,0.9433269)--(2.1000,0.9637462)--
			(2.0750,0.9844879)--(2.0500,1.0055488)--(2.0250,1.0269254)--(2.0000,1.0486131)--(1.9750,1.0706072)--
			(1.9500,1.0929021)--(1.9250,1.1154914)--(1.9000,1.1383684)--(1.8750,1.1615254)--(1.8500,1.1849541)--
			(1.8250,1.2086455)--(1.8000,1.2325899)--(1.7750,1.2567769)--(1.7500,1.2811953)--(1.7250,1.3058333)--
			(1.7000,1.3306784)--(1.6750,1.3557175)--(1.6500,1.3809366)--(1.6250,1.4063212)--(1.6000,1.4318562)--
			(1.5750,1.4575260)--(1.5500,1.4833143)--(1.5250,1.5092044)--(1.5000,1.5351789)--(1.4750,1.5612203)--
			(1.4500,1.5873106)--(1.4250,1.6134314)--(1.4000,1.6395640)--(1.3750,1.6656896)--(1.3500,1.6917894)--
			(1.3250,1.7178441)--(1.3000,1.7438347)--(1.2750,1.7697421)--(1.2500,1.7955474)--(1.2250,1.8212318)--
			(1.2000,1.8467767)--(1.1750,1.8721639)--(1.1500,1.8973754)--(1.1250,1.9223937)--(1.1000,1.9472020)--
			(1.0750,1.9717835)--(1.0500,1.9961225)--(1.0250,2.0202036)--(1.0000,2.0440122)--(0.9750,2.0675344)--
			(0.9500,2.0907571)--(0.9250,2.1136677)--(0.9000,2.1362548)--(0.8750,2.1585075)--(0.8500,2.1804158)--
			(0.8250,2.2019708)--(0.8000,2.2231640)--(0.7750,2.2439882)--(0.7500,2.2644368)--(0.7250,2.2845040)--
			(0.7000,2.3041850)--(0.6750,2.3234758)--(0.6500,2.3423731)--(0.6250,2.3608745)--(0.6000,2.3789782)--
			(0.5750,2.3966833)--(0.5500,2.4139895)--(0.5250,2.4308972)--(0.5000,2.4474075)--(0.4750,2.4635219)--
			(0.4500,2.4792429)--(0.4250,2.4945730)--(0.4000,2.5095156)--(0.3750,2.5240745)--(0.3500,2.5382538)--
			(0.3250,2.5520580)--(0.3000,2.5654923)--(0.2750,2.5785617)--(0.2500,2.5912720)--(0.2250,2.6036288)--
			(0.2000,2.6156385)--(0.1750,2.6273071)--(0.1500,2.6386413)--(0.1250,2.6496476)--(0.1000,2.6603327)--
			(0.0750,2.6707036)--(0.0500,2.6807671)--(0.0250,2.6905302)--(0.0000,2.7000000)--cycle;
			
			\draw[->,black!80] (0,0)--(6.45,0) node[right] {$r$};
			\draw[->,black!80] (0,0)--(0,3.25) node[above] {$P$};
			\draw[blue!75!black,line width=1.1pt]
			plot coordinates {
				(0.0000,2.7000000) (0.0250,2.6905302) (0.0500,2.6807671) (0.0750,2.6707036) (0.1000,2.6603327)
				(0.1250,2.6496476) (0.1500,2.6386413) (0.1750,2.6273071) (0.2000,2.6156385) (0.2250,2.6036288)
				(0.2500,2.5912720) (0.2750,2.5785617) (0.3000,2.5654923) (0.3250,2.5520580) (0.3500,2.5382538)
				(0.3750,2.5240745) (0.4000,2.5095156) (0.4250,2.4945730) (0.4500,2.4792429) (0.4750,2.4635219)
				(0.5000,2.4474075) (0.5250,2.4308972) (0.5500,2.4139895) (0.5750,2.3966833) (0.6000,2.3789782)
				(0.6250,2.3608745) (0.6500,2.3423731) (0.6750,2.3234758) (0.7000,2.3041850) (0.7250,2.2845040)
				(0.7500,2.2644368) (0.7750,2.2439882) (0.8000,2.2231640) (0.8250,2.2019708) (0.8500,2.1804158)
				(0.8750,2.1585075) (0.9000,2.1362548) (0.9250,2.1136677) (0.9500,2.0907571) (0.9750,2.0675344)
				(1.0000,2.0440122) (1.0250,2.0202036) (1.0500,1.9961225) (1.0750,1.9717835) (1.1000,1.9472020)
				(1.1250,1.9223937) (1.1500,1.8973754) (1.1750,1.8721639) (1.2000,1.8467767) (1.2250,1.8212318)
				(1.2500,1.7955474) (1.2750,1.7697421) (1.3000,1.7438347) (1.3250,1.7178441) (1.3500,1.6917894)
				(1.3750,1.6656896) (1.4000,1.6395640) (1.4250,1.6134314) (1.4500,1.5873106) (1.4750,1.5612203)
				(1.5000,1.5351789) (1.5250,1.5092044) (1.5500,1.4833143) (1.5750,1.4575260) (1.6000,1.4318562)
				(1.6250,1.4063212) (1.6500,1.3809366) (1.6750,1.3557175) (1.7000,1.3306784) (1.7250,1.3058333)
				(1.7500,1.2811953) (1.7750,1.2567769) (1.8000,1.2325899) (1.8250,1.2086455) (1.8500,1.1849541)
				(1.8750,1.1615254) (1.9000,1.1383684) (1.9250,1.1154914) (1.9500,1.0929021) (1.9750,1.0706072)
				(2.0000,1.0486131) (2.0250,1.0269254) (2.0500,1.0055488) (2.0750,0.9844879) (2.1000,0.9637462)
				(2.1250,0.9433269) (2.1500,0.9232324) (2.1750,0.9034649) (2.2000,0.8840257) (2.2250,0.8649160)
				(2.2500,0.8461360) (2.2750,0.8276860) (2.3000,0.8095656) (2.3250,0.7917741) (2.3500,0.7743102)
				(2.3750,0.7571725) (2.4000,0.7403593) (2.4250,0.7238684) (2.4500,0.7076974) (2.4750,0.6918436)
				(2.5000,0.6763042) (2.5250,0.6610761) (2.5500,0.6461559) (2.5750,0.6315402) (2.6000,0.6172253)
				(2.6250,0.6032074) (2.6500,0.5894825) (2.6750,0.5760467) (2.7000,0.5628957) (2.7250,0.5500253)
				(2.7500,0.5374312) (2.7750,0.5251091) (2.8000,0.5130545) (2.8250,0.5012630) (2.8500,0.4897300)
				(2.8750,0.4784510) (2.9000,0.4674215) (2.9250,0.4566371) (2.9500,0.4460930) (2.9750,0.4357849)
				(3.0000,0.4257083) (3.0250,0.4158585) (3.0500,0.4062313) (3.0750,0.3968222) (3.1000,0.3876267)
				(3.1250,0.3786406) (3.1500,0.3698596) (3.1750,0.3612794) (3.2000,0.3528958) (3.2250,0.3447046)
				(3.2500,0.3367018) (3.2750,0.3288834) (3.3000,0.3212453) (3.3250,0.3137837) (3.3500,0.3064947)
				(3.3750,0.2993744) (3.4000,0.2924193) (3.4250,0.2856255) (3.4500,0.2789896) (3.4750,0.2725079)
				(3.5000,0.2661770) (3.5250,0.2599936) (3.5500,0.2539542) (3.5750,0.2480556) (3.6000,0.2422945)
				(3.6250,0.2366679) (3.6500,0.2311727) (3.6750,0.2258057) (3.7000,0.2205642) (3.7250,0.2154451)
				(3.7500,0.2104456) (3.7750,0.2055630) (3.8000,0.2007945) (3.8250,0.1961375) (3.8500,0.1915894)
				(3.8750,0.1871477) (3.9000,0.1828098) (3.9250,0.1785734) (3.9500,0.1744360) (3.9750,0.1703953)
				(4.0000,0.1664492) (4.0250,0.1625952) (4.0500,0.1588314) (4.0750,0.1551555) (4.1000,0.1515655)
				(4.1250,0.1480594) (4.1500,0.1446351) (4.1750,0.1412909) (4.2000,0.1380247) (4.2250,0.1348348)
				(4.2500,0.1317193) (4.2750,0.1286765) (4.3000,0.1257047) (4.3250,0.1228022) (4.3500,0.1199673)
				(4.3750,0.1171986) (4.4000,0.1144943) (4.4250,0.1118531) (4.4500,0.1092733) (4.4750,0.1067537)
				(4.5000,0.1042926) (4.5250,0.1018889) (4.5500,0.0995410) (4.5750,0.0972477) (4.6000,0.0950078)
				(4.6250,0.0928199) (4.6500,0.0906828) (4.6750,0.0885954) (4.7000,0.0865565) (4.7250,0.0845648)
				(4.7500,0.0826194) (4.7750,0.0807191) (4.8000,0.0788629) (4.8250,0.0770497) (4.8500,0.0752786)
				(4.8750,0.0735484) (4.9000,0.0718584) (4.9250,0.0702075) (4.9500,0.0685948) (4.9750,0.0670194)
				(5.0000,0.0654805)
			};
			\draw[blue!75!black,densely dashed,line width=.9pt]
			plot coordinates {
				(5.0000,0.0654805) (5.0250,0.0639771) (5.0500,0.0625086) (5.0750,0.0610739) (5.1000,0.0596725)
				(5.1250,0.0583034) (5.1500,0.0569659) (5.1750,0.0556593) (5.2000,0.0543829) (5.2250,0.0531359)
				(5.2500,0.0519178) (5.2750,0.0507277) (5.3000,0.0495650) (5.3250,0.0484292) (5.3500,0.0473195)
				(5.3750,0.0462354) (5.4000,0.0451763) (5.4250,0.0441416) (5.4500,0.0431307) (5.4750,0.0421431)
				(5.5000,0.0411782) (5.5250,0.0402356) (5.5500,0.0393146) (5.5750,0.0384148) (5.6000,0.0375357)
				(5.6250,0.0366768) (5.6500,0.0358377) (5.6750,0.0350178) (5.7000,0.0342168) (5.7250,0.0334342)
				(5.7500,0.0326695) (5.7750,0.0319225) (5.8000,0.0311926) (5.8250,0.0304794) (5.8500,0.0297826)
				(5.8750,0.0291018) (5.9000,0.0284366) (5.9250,0.0277867) (5.9500,0.0271517) (5.9750,0.0265313)
				(6.0000,0.0259251) (6.0250,0.0253328) (6.0500,0.0247540) (6.0750,0.0241885) (6.1000,0.0236360)
				(6.1250,0.0230962) (6.1500,0.0225687) (6.1750,0.0220533) (6.2000,0.0215497)
			};
			
			\foreach \x/\lab in {0/1,2.5/10,5/100}{
				\draw (\x,0)--(\x,-.065) node[below=2pt] {$\lab$};
			}
			\draw (0,3)--(-.065,3) node[left=2pt] {$1$};
			\draw (0,2.7)--(-.065,2.7) node[left=2pt] {$1-\varepsilon$};
			\draw[black!45,densely dashed,line width=.65pt]
			(0,0.0654805)--(5,0.0654805);
			\draw[blue!75!black,fill=blue!75!black]
			(5,0.0654805) circle (1pt);
			\draw (-.055,0.0654805)--(0,0.0654805);
			\node[font=\scriptsize,anchor=south east,inner sep=1pt]
			at (-.09,0.0654805)
			{$P_\varepsilon(100)\approx 0.0218$};
			\node[font=\small] at (3.15,2.28)
			{$G(P)\geq 1-\varepsilon$};
			\node[font=\small] at (1.12,.57)
			{$G(P)<1-\varepsilon$};
			\node[anchor=west,blue!75!black] at (2.7,1.10)
			{$P=P_\varepsilon(r)$};
			\node[font=\scriptsize,anchor=west] at (.15,3.16)
			{$\alpha=2,\ \varepsilon=0.1$};
		\end{tikzpicture}
	}
	\caption{Minimum trainable proportion for $90\%$ normalized gain, with $\alpha=2$ and $\varepsilon=0.1$. Shaded region above $P_{\varepsilon}(r)$ satisfies $G(P)\ge 1-\varepsilon$. The endpoint at $r=1$ represents the limit $1-\varepsilon$ as $r\to1_{+}$.}
	\label{fig:tau-phase-transition-pattern}
\end{figure}
Figure \ref{fig:tau-phase-transition-pattern} illustrates this threshold for $\alpha=2$ and $\varepsilon=0.1$. As $r$ increases, a smaller trainable proportion suffices to attain $90\%$ of the normalized gain. At $r=100$, $P_{\varepsilon}\approx0.0218$, giving a proportion threshold of approximately $2.2\%$.
\begin{remark}[Phase Transition in the Upper Bound]
	The normalized gain describes the reduction in the theoretical upper bound. Observed errors also depend on the target function, input sampling, and optimization, and may therefore deviate from this curve. Nevertheless, the subsequent numerical experiments exhibit a similar qualitative phase transition.
\end{remark}
	\section{Analysis of Spherical Biased-ReLU Features}
\label{sec:specific-activation-functions}
Since the preceding results in Section \ref{sec:main-results} apply to general feature maps under assumptions on sampling compatibility and spectral decay, this section verifies these assumptions for spherical biased-ReLU features.

\subsection{The Target RKHS and Kernel Spectrum}
\mypara{Target Function Class.} Let $d\ge 2$ and consider the spherical input and parameter spaces
\[
\begin{aligned}
	\mathcal{X}&=\mathbb{S}^{d-1}=\{x\in\RR^{d},\norm{x}_{2}=1\}\subset\RR^{d},&&\mu=\operatorname{Unif}(\mathbb S^{d-1}),\\
	\Theta&=\mathbb S^d\subset\RR^{d+1},&&\rho=\operatorname{Unif}(\mathbb S^d), 
\end{aligned} \]
where $\operatorname{Unif}(\cdot)$ denotes the normalized rotation-invariant surface measure. Throughout this section, $\rho$ serves as the reference measure defining the kernel $k_\rho$, the integral operator $T_\rho$, and the associated target RKHS $\mathcal H_\rho$.

For $\theta=(\omega,b)\in\Theta$, define the  biased ReLU feature by
\[ \phi(x;\theta)=\sigma_{R}(\omega^{\top}x+b)=(\theta^{\top}\tilde{x})_{+},~\text{where}~\sigma_{R}(t)=\max\{t,0\},~\text{and}~\tilde{x}=(x,1). \]
Since $\norm{\theta}_{2}^{2}=1$ and $\norm{\tilde{x}}_{2}=\sqrt{2}$, we have $0\le\phi(x;\theta)\le\sqrt{2}$. These bounded features satisfy the integrability Assumption \ref{assumption:operator}. Thus, the associated operator $T_\rho$ taking the form of \eqref{eq:kernel-integral-operator} is self-adjoint, positive semi-definite, trace-class, and compact on $L_2(\mathbb S^{d-1},d\mu)$.

\mypara{Scaled Spherical Initialization.} To examine the initialization mismatch while keeping the target RKHS fixed, we consider the following family of sampling distributions for hidden neurons. Let $\omega_{0}\sim\mathcal{N}(0,I_d)$ and
$b_{0}\sim\mathcal{N}(0,1)$ be independent. For $\tau \geq 1$, define
\begin{equation}
	\nonumber
	\theta=\frac{(\omega_{0},\tau b_{0})}
	{\sqrt{\norm{\omega_{0}}_2^2+\tau^2b_{0}^2}}=(\omega,b)\in \Theta=\mathbb S^d,
\end{equation}
and denote its distribution by $\rho_0^{(\tau)}$. Its density  with respect to $\rho$ is
\begin{equation}
	\nonumber
	\gamma_0^{(\tau)}(\theta)=\frac{d\rho_{0}^{(\tau)}}{d\rho}(\theta)=\frac{1}{\tau} \Big(1-b^2+\frac{b^2}{\tau^2} \Big)^{-(d+1)/2}.
\end{equation}
In particular, $\rho_0^{(1)}=\rho$. The density satisfies $\gamma_0^{(\tau)}(\theta)\ge {1}/{\tau}$ for all $\theta\in\Theta$, since $\tau\geq1$ and $\abs{b}\le 1$. The randomly fixed neurons are sampled from $\rho_0^{(\tau)}=\gamma_0^{(\tau)}\rho$. For each regularization scale $\lambda>0$, the trainable parameters are constructed using the optimal ridge leverage distribution introduced in \eqref{eq:leverage-score-distribution}, that is, $\rho_{1}=\gamma_1\rho$ with $ \gamma_1=\gamma_{\lambda}^{*}$ and $\kappa_{{1}}^{*}=1$. The corresponding mixture density is therefore
\[
\gamma_P(\theta)=P\gamma_1(\theta)+(1-P)\gamma_0^{(\tau)}(\theta).
\]

With the reweighted features defined in Subsection
\ref{subsec:approxi-bounds-general-feature-maps}, Lemma \ref{lemma:kernel-operator-invariance} ensures that this sampling construction preserves $k_\rho$ and $T_\rho$. Varying $\tau$ only changes the initialization distribution while leaving the target space $\mathcal H_\rho$ unchanged.

\begin{figure}[H]
	\centering
	\resizebox{\linewidth}{!}{%
		\begin{tikzpicture}[>=Latex,font=\small,text=black]
			\def\R{1.42}
			\def\tilt{12}
			\def\N{90}
			\def\poleY{\R}
			
			\draw[black!65,densely dashed,line width=.45pt]
			(-3.8,\poleY)--(2.55,\poleY);
			\draw[black!65,densely dashed,line width=.45pt]
			(-3.8,-\poleY)--(2.55,-\poleY);
			\node[anchor=east] at (-3.8,\poleY) {$b=1$};
			\node[anchor=east] at (-3.8,-\poleY) {$b=-1$};
			
			\foreach \cx/\scalefactor in {-2.55/1,2.55/3.2}{
				\begin{scope}[shift={(\cx,0)}]
					\shade[inner color=white,outer color=blue!13] (0,0) circle (\R);
					\draw[blue!42!black,densely dashed,line width=.35pt]
					(-\R,0) arc[start angle=180,end angle=0,
					x radius=\R,y radius={\R*sin(\tilt)}];
					\draw[blue!42!black,line width=.45pt]
					(-\R,0) arc[start angle=180,end angle=360,
					x radius=\R,y radius={\R*sin(\tilt)}];
					\draw[blue!46!black,line width=.55pt] (0,0) circle (\R);
					
					\foreach \i in {0,...,89}{
						\pgfmathsetmacro{\z}{1-2*(\i+.5)/\N}
						\pgfmathsetmacro{\rxy}{sqrt(1-\z*\z)}
						\pgfmathsetmacro{\phi}{\i*137.507764}
						\pgfmathsetmacro{\xx}{\rxy*cos(\phi)}
						\pgfmathsetmacro{\yy}{\rxy*sin(\phi)}
						\pgfmathsetmacro{\den}{sqrt(1-\z*\z+\scalefactor*\scalefactor*\z*\z)}
						\pgfmathsetmacro{\xp}{\xx/\den}
						\pgfmathsetmacro{\yp}{\yy/\den}
						\pgfmathsetmacro{\zp}{\scalefactor*\z/\den}
						\pgfmathsetmacro{\depth}{\yp*cos(\tilt)+\zp*sin(\tilt)}
						\ifdim\depth pt>0pt
						\fill[color={rgb,255:red,60;green,84;blue,136}]
						({\R*\xp},{\R*(\zp*cos(\tilt)-\yp*sin(\tilt))})
						circle (.95pt);
						\fi
					}
					\node[inner sep=0pt] at (0,\poleY) {$\star$};
					\node[inner sep=0pt] at (0,-\poleY) {$\star$};
					\ifdim\scalefactor pt<2pt
					\node[anchor=base] at (0,-1.91) {$\rho=\rho_0^{(1)},~\tau=1$};
					\else
					\node[anchor=base] at (0,-1.91) {$\rho_0^{(\tau)},~\tau>1$};
					\fi
				\end{scope}
			}
			\draw[->,thick] (-.72,0)--(.72,0)
			node[midway,above=2pt] {$\tau\uparrow$};
			\draw[->,thick] (4.38,0)--(5.82,0)
			node[midway,above=2pt] {Mismatch $\uparrow$};
			
			\begin{scope}[shift={(6.23,-1.42)}]
				\draw[->,black!80] (0,0)--(2.84,0) node[right,text=black] {$P$};
				\draw[->,black!80] (0,0)--(0,2.84)
				node[above,text=black] {$G(P)$};
				\draw[blue!45!black,densely dashed,line width=.9pt,
				domain=0:1,samples=60,smooth,variable=\p]
				plot ({2.65*\p},{2.65*2*\p/(1+\p)});
				\draw[blue!75!black,line width=1.1pt,
				domain=0:1,samples=60,smooth,variable=\p]
				plot ({2.65*\p},{2.65*12*\p/(1+11*\p)});
				\draw[->,black,line width=.55pt] (1.2,1.83)--(.8,2.2);
				\node[anchor=west,font=\scriptsize] at (.12,2.65)
				{Larger mismatch};
				\node[anchor=west,font=\scriptsize] at (.7,1)
				{Smaller mismatch};
				\node[anchor=base,font=\small] at (1.42,-.49)
				{Sharper phase transition};
			\end{scope}
		\end{tikzpicture}
	}
	\caption{Scaled spherical initialization. Increasing $\tau$ shifts fixed-neuron parameters toward the poles $b=\pm1$. The right panel schematically shows how greater mismatch can sharpen the gain at small $P$.}
	\label{fig:tau-illustration-drawing}
\end{figure}
Figure \ref{fig:tau-illustration-drawing} illustrates the role of $\tau$. At $\tau=1$, fixed neurons follow the uniform reference distribution $\rho$. Increasing $\tau$ concentrates their initialization near $b=\pm1$. The right panel schematically illustrates how greater initialization mismatch with the target RKHS can sharpen the normalized gain as $P$ increases. Thus the scaled spherical family provides a concrete setting for analyzing the effect of initialization mismatch, and the framework also accommodates other admissible initialization distributions.

\mypara{Eigen-decomposition of $T_{\rho}$.} The spectrum of $T_\rho$ is characterized using spherical harmonics. Following \cite{atkinson2012spherical,bach2017breaking}, let $\YY_k$ be the space of spherical harmonics of degree $k$ on $\mathbb S^{d-1}$, and let $\{Y_{k,j}\}_{j=1}^{N(d,k)}$ be a real orthonormal basis of $\YY_{k}$ in $L_2(\mathbb S^{d-1}, d\mu)$, where
\begin{equation}
	\label{eq:degree-N-Formula}
	N(d,0)=1,~N(d,k)=\frac{2k+d-2}{k}\binom{k+d-3}{d-2},~d\ge2,k\ge 1.
\end{equation}
Every $f\in L_2(\mathbb S^{d-1},d\mu) $ admits the following expansion, with convergence in $L_2(d\mu)$,
\begin{equation}
	\nonumber
    f(x)=\sum_{k=0}^{+\infty}\sum_{j=1}^{N(d,k)}\langle f, Y_{k,j} \rangle_{L_{2}(d\mu)}Y_{k,j}(x)=\sum_{k=0}^{+\infty} N(d,k) \int_{\mathcal{X}}f(y)P_{k}(x^{\top}y)d\mu(y).
\end{equation}
Here $P_{k}$ is the normalized Legendre polynomial of degree $k$ in dimension  $d$, with $P_{k}(1)=1$.

By the rotation invariance of $\rho$ and results in \cite[Section 3.1]{bach2017breaking}, the induced kernel $k_{\rho}(x,x^{\prime})$ now has the explicit form
\[ k_{\rho}(x,x^{\prime})=\EE_{\theta\sim\rho}[\phi(x;\theta)\phi(x^{\prime};\theta)]=h \Big(\frac{1+x^{\top}x^{\prime}}{2} \Big),\]
where $h(t)= \frac{1}{\pi(d+1)}(\sqrt{1-t^{2}}+(\pi-\arccos t )t)$. Thus, $k_\rho$ is a continuous rotation-invariant kernel on $\mathbb S^{d-1}$, allowing its spectrum to be analyzed through the spherical harmonic decomposition.

\begin{proposition}[Spectrum and Eigenvalue Decay of ReLU Kernel]
	\label{prop:spectrum-biased-relu-kernel}
	Under the spherical setting described above, the following properties hold:
	\begin{enumerate}[labelindent=0pt, leftmargin=*,topsep=0pt,partopsep=0pt]
		\item Every spherical harmonic $Y_{k,j}\in\YY_{k}$ is an eigenfunction of $T_{\rho}$, with $T_{\rho}Y_{k,j}=\xi_{k}Y_{k,j}, j=1,\ldots,N(d,k)$, where the eigenvalue associated with degree $k$ satisfies
		\begin{equation}
			\label{eq:relu-degree-eigenvalue}
			\xi_{k}=\frac{\Gamma(d/2)}{\sqrt{\pi}\Gamma((d-1)/2)}\int_{-1}^{1}h \Big(\frac{1+t}{2} \Big)P_{k}(t)(1-t^{2})^{(d-3)/2}dt>0.
		\end{equation}
		Here $\Gamma$ is the Gamma function and $h(t)= \frac{1}{\pi(d+1)}(\sqrt{1-t^{2}}+(\pi-\arccos t )t)$.
		\item Consequently, the operator and kernel admit the decompositions:
		\[ T_{\rho}=\sum_{k=0}^{+\infty}\xi_{k}\sum_{j=1}^{N(d,k)}Y_{k,j}\otimes Y_{k,j},~\text{and}~k_{\rho}(x,x^{\prime})=\sum_{k=0}^{+\infty}\xi_{k}\sum_{j=1}^{N(d,k)} Y_{k,j}(x)Y_{k,j}(x^{\prime}). \]
		\item The degree-wise eigenvalues satisfy $\xi_{k}\asymp k^{-(d+2)}$ as $k\to\infty$. Moreover, the eigenvalues $\{\eta_k\}_{k\ge1}$ of $T_{\rho}$ arranged in non-increasing order and counted with multiplicity satisfy $\eta_k\asymp k^{-\frac{d+2}{d-1}}$.
	\end{enumerate}
\end{proposition}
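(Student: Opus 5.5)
Parts 1 and 2 follow from the Funk--Hecke formula. Part 3 needs two further inputs: strict positivity of every degree-wise coefficient, and a singularity analysis of the zonal profile at $t=1$. The final step converts degree-wise decay into decay of the eigenvalues counted with multiplicity.

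\emph{Parts 1 and 2.} Since $k_\rho(x,x')=g(x^\top x')$ with $g(t)=h((1+t)/2)$ continuous on $[-1,1]$, the Funk--Hecke theorem gives, for every $Y\in\YY_k$,
\[
\int_{\mathbb S^{d-1}} g(x^\top y)Y(y)\,d\mu(y)=\xi_k Y(x),
\]
with $\xi_k$ exactly the normalized Gegenbauer-weighted integral \eqref{eq:relu-degree-eigenvalue}. Here $\Gamma(d/2)/(\sqrt\pi\,\Gamma((d-1)/2))$ is the density of $x^\top y$ under $\mu$. Completeness of $\bigoplus_k\YY_k$ in $L_2(d\mu)$ then gives the operator expansion of $T_\rho$. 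The kernel series follows from Mercer's theorem, or from the addition formula $\sum_j Y_{k,j}(x)Y_{k,j}(x')=N(d,k)P_k(x^\top x')$.

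\emph{Positivity of every $\xi_k$.} Write $\pi-\arccos s=\pi/2+\arcsin s$, so
\[
\pi(d+1)h(s)=\tfrac{\pi}{2}s+\bigl(\sqrt{1-s^2}+s\arcsin s\bigr).
\]
The bracket has derivative $\arcsin s$, whose Maclaurin coefficients are positive in all odd degrees. Hence $h(s)=\sum_n a_n s^n$ with $a_n\ge0$, $a_1>0$, and $a_{2n}>0$ for all $n$. Since $h(1)<\infty$, the series converges absolutely on $[-1,1]$.

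Substituting $s=(1+t)/2$ gives $g(t)=\sum_n a_n2^{-n}(1+t)^n$. Each $(1+t)^n$ is a polynomial in $t$ with positive coefficients. Each monomial $t^j$ has nonnegative Legendre coefficients $\int t^jP_k(t)(1-t^2)^{(d-3)/2}dt$, strictly positive for $k\le j$ with $k\equiv j\pmod 2$. Choosing any $n\ge k$ with $a_n>0$, the $t^k$ term of $(1+t)^n$ contributes a strictly positive amount to $\xi_k$, and all other contributions are nonnegative. The uniformly convergent interchange of sum and integral is justified because the $P_k$ are bounded by $1$. Hence $\xi_k>0$ for all $k$.

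\emph{Decay $\xi_k\asymp k^{-(d+2)}$.} This is the main obstacle. The plan is to isolate the singularity of $g$ at $t=1$; the endpoint $t=-1$ corresponds to $s=0$, where $h$ is analytic. Near $s=1$,
\[
\arcsin s=\tfrac{\pi}{2}-\sqrt{2(1-s)}\,(1+O(1-s)),
\]
so $h(s)$ equals a smooth function plus $c(1-s)^{3/2}(1+O(1-s))$ with $c\neq0$. In the variable $t$ this is $c'(1-t)^{3/2}$ plus smoother terms.

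Next, use Rodrigues' formula for $P_k$ and integrate by parts, as in \cite[Appendix D]{bach2017breaking}. A zonal profile whose only singularity is $(1-t)^{\nu}$ with $\nu\notin\NN$ has Funk--Hecke coefficients $\sim C_\nu k^{-(d-1)-2\nu}$ with $C_\nu\ne0$; smooth parts contribute $O(k^{-M})$ for every $M$. With $\nu=3/2$ this yields $\xi_k\sim Ck^{-(d+2)}$ for large $k$. I would either invoke the asymptotic in \cite{bach2017breaking} directly or redo it via the explicit Gegenbauer integral of $(1-t)^{\nu}$, which is a ratio of Gamma functions. Combined with positivity for the finitely many small $k$, this gives the two-sided bound for all $k\ge1$.

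\emph{Ordered eigenvalues.} Set $M(K)=\sum_{k\le K}N(d,k)\asymp K^{d-1}$, which follows from \eqref{eq:degree-N-Formula} since $N(d,k)\asymp k^{d-2}$. Use $c_1k^{-(d+2)}\le\xi_k\le c_2k^{-(d+2)}$.
\begin{itemize}
\item Lower bound: for $n\le M(K)$, at least $n$ eigenvalues are $\ge c_1K^{-(d+2)}$, so $\eta_n\ge c_1K^{-(d+2)}$.
\item Upper bound: at most $M(K)$ eigenvalues exceed $c_2(K+1)^{-(d+2)}$, so $\eta_n\le c_2(K+1)^{-(d+2)}$ when $n>M(K)$.
\end{itemize}
Choosing $K\asymp n^{1/(d-1)}$ in each case gives $\eta_n\asymp n^{-(d+2)/(d-1)}$.
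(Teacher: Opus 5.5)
Your proof is correct. Parts 1--2 match the paper: Funk--Hecke, completeness, Mercer. You take the closed form of $h$ from the setup, whereas the paper re-derives it. Positivity and the decay rate, however, follow a genuinely different route.

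\emph{Positivity.} The paper argues softly. If $\xi_k=0$, then $\langle(\theta^\top\tilde\cdot)_+,Y_{k,j}\rangle_{L_2(d\mu)}$ vanishes for $\rho$-a.e.\ $\theta$, hence, by continuity and full support, for every $\theta\in\mathbb S^d$. Density of biased-ReLU networks \cite{leshno1993multilayer} then forces $Y_{k,j}=0$. That argument never uses the closed form of $h$. Your Schoenberg-type argument combines nonnegative Maclaurin coefficients of $h$ (with $a_{2n}>0$) with nonnegative Gegenbauer coefficients of monomials, which follow from Rodrigues' formula. It is self-contained and avoids the approximation theorem, at the price of depending on the explicit kernel.

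\emph{Decay.} The paper never analyzes the kernel profile. It writes $\xi_k=\int\chi_k(b)^2p(b)\,db$ and derives a Jacobi representation of the ReLU harmonic coefficients $\chi_k(b)$. It then applies Darboux's formula on compact subintervals plus a global bound, and removes the oscillatory part by Riemann--Lebesgue (Lemma~\ref{lemma:relu-harmonic-coefficient-asymptotics}, Corollary~\ref{cor:sharp-degree-spectrum-spheical}). Your endpoint-singularity argument is shorter. Carried out with the Gamma-ratio formula for $(1-t)^{3/2}$, it reproduces the paper's sharp constant $2^{d-7/2}d\,\Gamma(d/2)^2/(\pi^2(d+1))$. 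What the heavier route buys is pointwise control of $\chi_k(b)^2$ relative to $\xi_k$. That is exactly what the mismatch bounds in Proposition~\ref{prop:uniform-spherical-mismatch} require, so the machinery does double duty; your route gives the spectrum only.

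\emph{Details for the write-up.} First, the asymptotic in \cite{bach2017breaking} concerns coefficients of the activation $(t)_+$ on $\mathbb S^d$, whose singularity is interior. It cannot be invoked directly for this restricted kernel, so the explicit Gegenbauer integral is the right tool. Second, the correction $c(1-s)^{3/2}O(1-s)$ is not smooth, so ``smooth parts contribute $O(k^{-M})$'' does not cover it. Use the exact splitting $\pi(d+1)h(\cos\varphi)=\pi\cos\varphi+(\sin\varphi-\varphi\cos\varphi)=\pi s+(1-s)^{3/2}B(s)$, with $B$ analytic on $[0,1]$. Then expand $B$ at $s=1$ to finite order, apply the Gamma formula to each $(1-t)^{3/2+m}$ term, and bound the remainder by smoothness.

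\emph{Ordered eigenvalues.} Your counting argument is more careful than the paper's one-line identification $\eta_n=\xi_K$, since it does not need $\xi_k$ to be monotone in $k$. Use $(k+1)^{-(d+2)}$ so that $k=0$ is covered.
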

Proposition \ref{prop:spectrum-biased-relu-kernel} verifies the polynomial spectral decay Assumption \ref{assumption:eigendecay-used} with $\alpha=\frac{d+2}{d-1}$. It also characterizes the considered target RKHS as
\begin{equation}
	\label{eq:space-defition-of-RKHS-spericial}
	\mathcal{H}_{\rho}=\Bigg\{ f=\sum_{k=0}^{\infty}\sum_{j=1}^{N(d,k)}c_{k,j}Y_{k,j}: \norm{f}_{\mathcal{H}_{\rho}}^{2}=\sum_{k=0}^{\infty}\sum_{j=1}^{N(d,k)}\frac{c_{k,j}^{2}}{\xi_{k}}<\infty\Bigg\}.
\end{equation}

\subsection{Initialization Mismatch and Approximation Rates}
With the spectral decay established, the remaining step is to verify Assumption \ref{assumption:mismatch-used} by bounding the initialization mismatch $\kappa_{\gamma_{0}^{(\tau)}}(\lambda)$. This provides the compatibility constant needed to specialize the approximation rates.

\mypara{Quantification of the Initialization Mismatch.} For the scaled spherical initialization $\rho_0^{(\tau)}=\gamma_0^{(\tau)}\rho$, the mismatch factor defined in \eqref{eq:definition-of-kappa-first} is
\begin{equation}
    \nonumber
	\kappa_{\gamma_0^{(\tau)}}(\lambda)= \frac{d_{\max}(\gamma_{0}^{(\tau)},\lambda)}{d(\lambda)}=\frac{1}{d(\lambda)}\operatorname*{ess\,sup}_{\theta\sim\rho_{0}^{(\tau)}}\frac{  \langle
		\phi(\cdot;\theta),(T_\rho+\lambda I)^{-1}\phi(\cdot;\theta)
		 \rangle_{L_{2}(d\mu)}}{\gamma_{0}^{(\tau)}(\theta)}.
\end{equation}
The following proposition bounds this factor explicitly in terms of $d$ and $\tau$.
\begin{proposition}[Mismatch of Scaled Spherical Initialization]
	\label{prop:anisotropic-spherical-mismatch}
	Under the spherical biased-ReLU setting, fix $\tau\ge 1$ and let the randomly fixed neurons follow $\rho_0^{(\tau)}=\gamma_0^{(\tau)}\rho$. Then, for every $\varepsilon>0$, there exists
	$\lambda_{\varepsilon,\tau}>0$ such that
	\begin{equation}
		\label{eq:anisotropic-spherical-mismatch-upper-bound}
		\nonumber
		\frac{\sqrt{2}\tau(d+1)}{d}-\varepsilon\le \kappa_{\gamma_0^{(\tau)}}(\lambda)\le\frac{2\sqrt{2}\tau(d+1)}{d}+\varepsilon,~\text{for}~
		\lambda\in(0,\lambda_{\varepsilon,\tau}].
	\end{equation}
\end{proposition}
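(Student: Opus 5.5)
The plan is to reduce the mismatch factor to a one-dimensional problem in the bias $b$, and then to do an asymptotic analysis of the leverage function as $\lambda\to0$.

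\textbf{Reduction to a function of $b$.} Write $\ell_\lambda(\theta)=\langle\phi(\cdot;\theta),(T_\rho+\lambda I)^{-1}\phi(\cdot;\theta)\rangle_{L_2(d\mu)}$.

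1. Every rotation $R$ of $\mathbb S^{d-1}$ satisfies $\phi(Rx;(R\omega,b))=\phi(x;(\omega,b))$. By Proposition \ref{prop:spectrum-biased-relu-kernel}, $T_\rho$ commutes with rotations. Hence $\ell_\lambda(\omega,b)=L_\lambda(b)$ depends only on $b$.
2. Since $\gamma_0^{(\tau)}$ also depends only on $b$,
\[
\kappa_{\gamma_0^{(\tau)}}(\lambda)=\frac{\sup_{|b|\le1}\tau\,(1-b^2+b^2/\tau^2)^{(d+1)/2}L_\lambda(b)}{\int_{-1}^1 L_\lambda(b)\,\pi_d(b)\,db},
\]
where $\pi_d(b)\propto(1-b^2)^{(d-2)/2}$ is the marginal of $b$ under $\rho=\operatorname{Unif}(\mathbb S^d)$.
3. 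Write $\omega=\sqrt{1-b^2}\,\hat\omega$. Then $\phi(x;\theta)=g_b(\hat\omega^\top x)$ with $g_b(t)=(\sqrt{1-b^2}\,t+b)_+$.
4. The Funk--Hecke formula gives Gegenbauer coefficients $a_k(b)$ of $g_b$, and
\[
L_\lambda(b)=\sum_{k\ge0}N(d,k)\,\frac{a_k(b)^2}{\xi_k+\lambda}.
\]

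\textbf{High-frequency asymptotics of $a_k(b)$.} The function $g_b$ is piecewise linear in $t\in[-1,1]$. When $|b|<1/\sqrt2$ its only kink is interior, at $t_b=-b/\sqrt{1-b^2}$, with slope jump $\sqrt{1-b^2}$. When $|b|\ge 1/\sqrt2$, $g_b$ is affine on $[-1,1]$, so $a_k(b)=0$ for $k\ge2$.

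I will integrate by parts twice in the Funk--Hecke integral, using the Rodrigues formula, to isolate the kink contribution. Combined with the Darboux/Szeg\H{o} asymptotics of Gegenbauer polynomials away from $\pm1$, this should give
\[
N(d,k)\,a_k(b)^2=c_d\,k^{-3}\,A(b)\,\bigl(\cos^2(k\vartheta_b+\psi_k)+o(1)\bigr),
\qquad
A(b)\propto(1-b^2)\,(1-t_b^2)^{(d-3)/2}\,(1-t_b^2)^{-1/2}\cdots,
\]
where $t_b=\cos\vartheta_b$. The exponents will be fixed by the computation. The key structural features are:
\begin{itemize}
\item the factor $k^{-3}$ matches $N(d,k)\xi_k\asymp k^{d-2}k^{-(d+2)}$ up to constants, so that $\sum_k N(d,k)a_k^2/(\xi_k+\lambda)$ diverges like $d(\lambda)$ as $\lambda\to0$;
\item there is an oscillating factor whose supremum is $1$ and whose Ces\`aro average is $1/2$.
\end{itemize}

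Because $d(\lambda)\to\infty$, any finite set of low frequencies contributes $o(d(\lambda))$. Summing against the weights $1/(\xi_k+\lambda)$, the oscillation averages out, so $L_\lambda(b)/d(\lambda)\to A(b)/\overline A$. The upper bound, however, must be uniform in $b$, including near $|b|=1/\sqrt2$, where the kink reaches the pole. There I will bound the oscillating factor by its envelope $1$ instead of its mean $1/2$. This is exactly where the factor-$2$ gap between the two sides of the proposition comes from: the lower bound takes $b$ at the maximizer and uses the averaged limit, while the upper bound uses the envelope.

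\textbf{Optimization in $b$.} It remains to compute
\[
\sup_{|b|<1/\sqrt2}\ \tau\,(1-b^2+b^2/\tau^2)^{(d+1)/2}\,\frac{A(b)}{\int A\,\pi_d}.
\]
The normalizing integral is a Beta-type integral after the substitution $t=t_b$. For $\tau\ge1$, the maximum should be attained at $b=0$, since the weight $1-b^2+b^2/\tau^2$ decreases in $b^2$ and I expect $A$ to be maximized at $b=0$ as well. Evaluating this ratio should produce $\sqrt2\,\tau(d+1)/d$, with the factor $\sqrt2$ coming from $\|\tilde x\|_2=\sqrt2$. The upper bound is then twice this value, and the $\pm\varepsilon$ absorbs the $o(1)$ terms once $\lambda\le\lambda_{\varepsilon,\tau}$.

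\textbf{Main obstacle.} The hardest step is the uniform (in $b$) envelope bound for $N(d,k)a_k(b)^2$ near the turning region $|t_b|\to1$. There the Darboux asymptotics break down, and I would use Szeg\H{o}'s uniform inequality $(1-t^2)^{(d-2)/4}|P_k(t)|\lesssim k^{-(d-2)/2}$ to control it. A second concern is making the low-frequency remainder $o(d(\lambda))$ uniformly in $b$.
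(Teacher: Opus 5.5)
Your plan is essentially the paper's proof. It shares the Funk--Hecke reduction to a function of $b$, the Rodrigues formula with two integrations by parts, Darboux asymptotics on compact $b$-intervals plus a uniform weighted Jacobi bound for the envelope, a termwise envelope comparison with $\xi_k$ for the upper bound, and the parity average at $b=0$ for the lower bound. The paper handles $\tau$ through the sandwich $1/\tau\le\gamma_0^{(\tau)}$ with equality at $b=0$, which is exactly what your weighted supremum reduces to.

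When you carry it out, correct three points:
\begin{itemize}
\item $N(d,k)a_k(b)^2$ decays like $k^{-4}$, not $k^{-3}$.
\item The envelope is $A(b)=(1-b^2)(1-t_b^2)^{d/2}=(1-2b^2)^{d/2}(1-b^2)^{(2-d)/2}$. It vanishes at $|b|=1/\sqrt2$ and is maximal at $b=0$, so the turning region is harmless.
\item The factor $2$ is lost by bounding $\cos^2\le1$ near $b=0$, not near $|b|=1/\sqrt2$.
\end{itemize}
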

To absorb the $\varepsilon$ term in the upper bound, we choose $\kappa_0^*={4\sqrt{2}\tau(d+1)}/{d}$ as an admissible compatibility constant for subsequent analysis, valid for all sufficiently small $\lambda>0$. These estimates show that the initialization mismatch is of linear order in $\tau$. Choosing the optimal ridge leverage distribution for the trainable neurons gives $\kappa_1^*=1$. Now the leverage compatibility Assumption \ref{assumption:mismatch-used} is verified.

Substituting these constants and the spectral decay exponent $\alpha=(d+2)/(d-1)$ into the general polynomial rate yields the following result.
\begin{theorem}[Rates of Spherical Biased-ReLU Features]
	\label{thm:sphericial-polynomial-rates} Under the spherical biased-ReLU setting, fix $\tau\ge 1$. Let $m_{\mathrm{free}}, m_{\mathrm{fix}}\in\NN$ with $m=m_{\mathrm{free}}+m_{\mathrm{fix}}\ge 1$, and set $P=m_{\mathrm{free}}/m$ and $W=\max\{d,m,1\}$. For any $\delta\in(0,1)$, there exists $\lambda>0$ such that, when the two groups are sampled according to Definition \ref{def:sampling-scheme}, with fixed-group distribution $\rho_0^{(\tau)}$ and trainable-group distribution $\rho_1=\gamma_\lambda^*\rho$, the following bound holds with probability at least $1-\delta$:
	\begin{equation}
		\nonumber
		\sup_{\norm{f}_{\mathcal H_\rho}\le 1}\inf_{f_{2}^{(P)}\in\mathcal{F}_P(W,2)}\norm{f-f_{2}^{(P)}}_{L_{2}(d\mu)}^{2}\lesssim \Big( \Big({P}+\frac{(1-P)d}{4\sqrt{2}\tau(d+1)} \Big)\frac{9m}{40\log(63m/(20\delta))} \Big)^{-\frac{d+2}{d-1}}.
	\end{equation}
\end{theorem}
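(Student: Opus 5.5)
This follows by specializing Theorem \ref{thm:polynomial-rates}. We only need to check its three assumptions in the spherical setting with the stated constants; the claimed rate then comes out by substitution.

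\textbf{Step 1: Assumption \ref{assumption:operator}.} It holds because $0\le\phi\le\sqrt2$ and $\mu,\rho$ are probability measures.

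\textbf{Step 2: Assumption \ref{assumption:eigendecay-used}.} Proposition \ref{prop:spectrum-biased-relu-kernel}(3) gives $\eta_k\asymp k^{-(d+2)/(d-1)}$. Hence there are constants $c_\pm$ with $\alpha=(d+2)/(d-1)>1$. Since all $\eta_k>0$, the asymptotic equivalence upgrades to two-sided bounds for every $k\ge1$ after adjusting constants on the finitely many initial indices.

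\textbf{Step 3: Assumption \ref{assumption:mismatch-used}.} This is the main bookkeeping step.

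For the fixed group, apply Proposition \ref{prop:anisotropic-spherical-mismatch} with the choice $\varepsilon=2\sqrt2\tau(d+1)/d$. This yields $\lambda_0:=\lambda_{\varepsilon,\tau}>0$ such that
\[
\kappa_{\gamma_0^{(\tau)}}(\lambda)\le 4\sqrt2\tau(d+1)/d=\kappa_0^*
\quad\text{for all } 0<\lambda\le\lambda_0 .
\]

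For the trainable group, take $\gamma_1=\gamma_\lambda^*$ at each $\lambda$. By the identity following \eqref{eq:leverage-score-distribution}, $\kappa_{\gamma_\lambda^*}(\lambda)=1=\kappa_1^*$. The ordering $1\le\kappa_1^*\le\kappa_0^*$ holds because $\tau\ge1$ and $(d+1)/d>1$.

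Two technical points need checking:
\begin{itemize}
\item $\gamma_0^{(\tau)}\ge1/\tau>0$ everywhere, so it is positive wherever the feature is nonzero.
\item $\gamma_\lambda^*>0$ exactly where $\|\phi(\cdot;\theta)\|_{L_2(d\mu)}>0$, because $(T_\rho+\lambda I)^{-1}$ is strictly positive.
\end{itemize}
Together these give the positivity convention required after Definition \ref{def:sampling-scheme}.

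\textbf{Step 4: Conclusion.} Invoke Theorem \ref{thm:polynomial-rates}. In its proof, a specific regularization scale $\lambda$ (depending on $m,P,\delta$) is chosen, and $\rho_1$ is fixed as $\gamma^*_\lambda\rho$ at that scale; this $\lambda$ is the one whose existence the statement asserts. The remark after Theorem \ref{thm:polynomial-rates} handles the case $\lambda>\lambda_0$ through the zero approximant, at the cost of an adjusted constant.

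Substituting $\kappa_1^*=1$ and $\kappa_0^*=4\sqrt2\tau(d+1)/d$ gives
\[
P/\kappa_1^*+(1-P)/\kappa_0^*=P+\frac{(1-P)d}{4\sqrt2\tau(d+1)} .
\]
With $\alpha=(d+2)/(d-1)$, this is exactly the displayed bound. Theorem \ref{thm:polynomial-rates} assumes $Pm\in\NN$, which holds here since $m_{\mathrm{free}}\in\NN$; Corollary \ref{cor:Rate-of-neuron-number} covers the same form.

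\textbf{Main obstacle.} The only delicate point is that the implicit constant must not depend on $m$, $P$ or $\delta$. The constant produced by Theorem \ref{thm:polynomial-rates} depends only on $c_\pm$, $\alpha$ and $\lambda_0$. Here those depend only on $d$ and $\tau$, which are fixed, so the constant is admissible.
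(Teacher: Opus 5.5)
Your proposal is correct and follows the paper's proof. Both verify the spectral decay with $\alpha=(d+2)/(d-1)$ via Proposition \ref{prop:spectrum-biased-relu-kernel}, take $\kappa_1^*=1$ from the ridge leverage density, obtain $\kappa_0^*=4\sqrt2\tau(d+1)/d$ from Proposition \ref{prop:anisotropic-spherical-mismatch} with $\varepsilon=2\sqrt2\tau(d+1)/d$, and substitute into Theorem \ref{thm:polynomial-rates}. The only difference is cosmetic: the paper writes out the case split explicitly (whether the chosen $\lambda_{P,m}$ lies below $\bar\lambda_{d,\tau}$, using the zero approximant otherwise), whereas you delegate it to Theorem \ref{thm:polynomial-rates} and its remark, and you add the routine positivity and finite-index checks.
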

The corresponding admissible mismatch ratio in Corollary \ref{cor:phase-transition} is currently $r=\kappa_{0}^{*}/\kappa_{{1}}^{*}=4\sqrt2\tau(d+1)/d$. Increasing $\tau$ raises this ratio and produces a sharper predicted transition at small $P$. The rate thus quantifies how the fixed initialization and trainable proportion $P$ affect the approximation accuracy.

Figure \ref{fig:spherical-tau-gain} illustrates the predicted normalized gain in a representative high-dimensional setting with $d=20$, for which $\alpha=22/19$. In particular, the proportion required to attain $90\%$ of the gain decreases from approximately $0.563$ at $\tau=1$ to $0.100$ at $\tau=10$.
\begin{figure}[H]
	\centering
	\resizebox{\linewidth}{!}{%
		\begin{tikzpicture}[font=\small,text=black]
			\def\inputDim{20}
			\pgfmathsetmacro{\spectralExponent}{(\inputDim+2)/(\inputDim-1)}
			\begin{groupplot}[
				group style={group size=4 by 1,horizontal sep=0.7cm,ylabels at=edge left},
				width=3.5cm,height=3.5cm,scale only axis,
				xmin=0,xmax=1.04,ymin=0,ymax=1.10,
				axis lines=left,axis line style={black,line width=.5pt},
				xtick={0,0.5,1},ytick={0,0.5,0.9,1},
				tick align=outside,tick style={black,line width=.4pt},
				tick label style={font=\scriptsize,text=black},
				scaled ticks=false,
				xlabel={$P$},xlabel style={font=\small,text=black},
				ylabel={$G(P)$},ylabel style={font=\small,text=black},
				title style={font=\small,text=black,yshift=-6pt},
				clip=true,clip mode=individual,
				]
				\pgfplotsinvokeforeach{1,2,5,10}{
					\nextgroupplot[title={$\tau=#1$}]
					\pgfkeys{/pgf/fpu=true,/pgf/fpu/output format=fixed}
					\pgfmathsetmacro{\contrast}{4*sqrt(2)*#1*(\inputDim+1)/\inputDim}
					\pgfmathsetmacro{\threshold}{
						((0.1+0.9*\contrast^(-\spectralExponent))^(-1/\spectralExponent)-1)
						/(\contrast-1)}
					\pgfkeys{/pgf/fpu=false}
					\addplot[black!40,densely dashed,line width=.5pt]
					coordinates {(0,1) (1,1) (1,0)};
					\addplot[black!40,densely dashed,line width=.5pt]
					coordinates {(0,0.9) (\threshold,0.9) (\threshold,0)};
					\addplot[blue!75!black,line width=1pt,domain=0:1,
					samples=351,variable=\t]
					({\t^2},
					{(1-(1+(\contrast-1)*\t^2)^(-\spectralExponent))
						/(1-\contrast^(-\spectralExponent))});
					\addplot[only marks,mark=*,mark size=1.3pt,color={rgb,255:red,214;green,39;blue,40}]
					coordinates {(\threshold,0.9) (1,1)};
					\node[anchor=center,font=\small,text=black]
					at (rel axis cs:0.5,0.45)
					{$P_{0.1}\approx\pgfmathprintnumber[fixed,fixed zerofill,precision=3]{\threshold}$};
				}
			\end{groupplot}
		\end{tikzpicture}
	}
	\caption{Normalized gain predicted by the spherical biased-ReLU bound in dimension $d=20$ ($\alpha=22/19$) for increasing $\tau$. Gray dashed guides mark $90\%$ gain and the endpoint $G(1)=1$.}
	\label{fig:spherical-tau-gain}
\end{figure}

Besides, Figure \ref{fig:intro-spherical-phase-transition} in the introduction provides a numerical illustration for a fixed target in the spherical RKHS unit ball with $d=5$, using a two-layer ReLU network of width $64$. The observed $90\%$ gain threshold decreases overall as $\tau$ increases, and the gain curve for $\tau=10$ rises more rapidly at small $P$ than that for $\tau=1$. These observations are qualitatively consistent with the dependence on initialization predicted by the bound. The target construction and training settings for this $d=5$ experiment are provided in Appendix \ref{subsec:tau-experiments}.

\mypara{Mechanism and Scope.} Varying the fixed-neuron initialization while keeping the target RKHS unchanged isolates the effect of sampling mismatch. The spherical biased-ReLU setting precisely demonstrates how a small trainable group can offset the fixed group's mismatch and substantially reduce the approximation bound, consistent with the general theory in Section \ref{sec:main-results}. The framework can also be applied to other input domains, feature maps, and target spaces, when the corresponding operator properties, spectral decay, and sampling compatibility are verified. Such extensions may require different proof techniques. Nevertheless, the experiments in Section \ref{sec:numerical-illustration} show similar phase transitions across diverse settings.
	\section{A Structured Three-Layer Illustration}
\label{sec:three-layer-illustration}
Although the $P$-trainable architecture is well-defined at arbitrary depth, shared upstream parameters couple downstream features through nonlinear composition in deeper networks. This complicates a direct deep-layer extension of the probabilistic kernel argument. The following structured three-layer example provides a complementary illustration of the phase transition pattern in approximation bounds and the advantage of nonlinear composition.

\begin{example}[Structured Three-Layer Illustration]
	\label{exm:structed-three-layer-compar}
	Let $\mathcal{X}=[-1,1]^{3}$, $\mu=\operatorname{Unif}([-1,1]^3)$ and consider the target function
	\[
	f^*(x)=(x_2^2-1/3)+\frac12(x_3^2-1/3).
	\]
	Let $\phi(x;\theta)=\operatorname{ReLU}(x;\theta)=(\theta^{\top}\tilde{x})_+$, where $\tilde{x}=(x,1)$. Consider the three-layer network
	\begin{equation}
        \nonumber
		f_{3}(x)
		=\beta_0+
		\sum_{j=1}^{10}\sum_{\ell=1}^{20}\beta_{j,\ell}
		\omega_{j,\ell}\big(\phi(x;\theta_{2j-1})
		+\phi(x;\theta_{2j})-b_{j,\ell}\big)_{+}.
	\end{equation}
	Here $\beta_0$ is a freely trainable output bias. The 20 first-layer neurons form 10 disjoint pairs,
	each feeding a separate block of 20 second-layer neurons. Each second-layer neuron receives inputs only from its assigned pair, with the same weight on both incoming edges.
	
	\mypara{Parameter Configuration.} This structured illustration varies the trainable count only in
	the first hidden layer, rather than using a common trainable proportion across hidden layers. For $m_{\mathrm{free}}\in\{0,\ldots,20\}$, the first $m_{\mathrm{free}}$ first-layer parameters are trainable in $\mathbb R^4$. The remaining parameters
	$\theta_i=(\theta_{i,1},\theta_{i,2},
	\theta_{i,3},\theta_{i,4})$
	are sampled independently from $\rho$, with
	\begin{equation}
        \nonumber
		\begin{aligned}
			\mathbb P\bigl((\theta_{i,1},\theta_{i,2},\theta_{i,3})=\mathbf e_1\bigr)
			&=\mathbb P\bigl((\theta_{i,1},\theta_{i,2},\theta_{i,3})=-\mathbf e_1\bigr)
			=0.4999,\\
			\mathbb P\bigl((\theta_{i,1},\theta_{i,2},\theta_{i,3})=\mathbf e_k\bigr)
			&=\mathbb P\bigl((\theta_{i,1},\theta_{i,2},\theta_{i,3})=-\mathbf e_k\bigr)
			=0.00005,~k=2,3,
		\end{aligned}
	\end{equation}
	where $\mathbf e_k$ is the $k$-th standard basis vector. Each bias $\theta_{i,4}\sim\operatorname{Unif}[-1,1]$
	is independent of the weight vector. The second-layer parameters are independent draws of
	$\omega_{j,\ell}\sim\operatorname{Unif}[1,2]$ and
	$b_{j,\ell}\sim\operatorname{Unif}[0,1]$, independently of the first-layer initialization. All sampled parameters remain fixed, while the trainable first-layer parameters and output parameters are optimized.
	
	\mypara{Two-layer Comparison.} To additionally examine the approximation effect of nonlinear composition alongside the phase transition, consider the two-layer network
	\[ f_2(x)=\beta_0^{(2)}+\sum_{i=1}^{220}\beta_i^{(2)}\phi(x;\theta_i^{(2)}), \]
	with the same total hidden-neuron and trainable-neuron counts. Its first $m_{\mathrm{free}}$ hidden parameters are trainable. The remaining hidden parameters are also sampled independently from $\rho$ and fixed. All output parameters are optimized. Figure \ref{fig:three-layer-comparison} compares these two architectures with four trainable hidden neurons.
		\begin{figure}[t]
		\centering
		\resizebox{\linewidth}{!}{%
			\begin{tikzpicture}[
				>=Latex,font=\small,text=black,
				neuron/.style={circle,minimum size=3mm,inner sep=0pt,line width=.5pt},
				free/.style={neuron,draw=blue!65!black,fill=blue!20},
				fixed/.style={neuron,draw=black!55,fill=black!13},
				other/.style={neuron,draw=black!70,fill=white},
				module/.style={rectangle,rounded corners=2pt,draw=black!55,
					minimum width=1.75cm,minimum height=.86cm,
					inner sep=0pt,line width=.5pt},
				free module/.style={module,draw=blue!65!black},
				connection/.style={->,draw=black!50,line width=.55pt},
				output connection/.style={connection,draw=blue!65!black},
				panel/.style={draw=black!55,densely dashed,
					rounded corners=3pt,line width=.55pt}
				]
				
				\begin{scope}
					\draw[panel] (-.55,-3.02) rectangle (7.5,2.64);
					\node at (0,2.22) {Input};
					\node at (2.15,2.22) {Hidden Layer 1};
					\node at (4.6,2.22) {Hidden Layer 2};
					\node at (6.8,2.22) {Output};
					
					\node[module,minimum width=.65cm,minimum height=1.45cm]
					(a-input) at (0,-.2) {};
					\foreach \dy in {-.42,0,.42}
					\node[other] at (0,{-.2+\dy}) {};
					\node[free] (a-output) at (6.8,-.2) {};
					
					\foreach \j/\cy/\kind in {
						1/1.4/free,2/.4/free,3/-.6/fixed,10/-1.95/fixed}{
						\ifnum\j<3
						\node[free module] (a-pair-\j) at (2.15,\cy) {};
						\else
						\node[module] (a-pair-\j) at (2.15,\cy) {};
						\fi
						\foreach \dy in {-.22,.22}
						\node[\kind] at (1.6,{\cy+\dy}) {};
						\node[align=center] at (2.4,\cy) {Pair\\$\j$};
						
						\node[module] (a-block-\j) at (4.6,\cy) {};
						\foreach \dy in {-.27,.27}
						\node[fixed] at (4.05,{\cy+\dy}) {};
						\foreach \dy in {-.065,0,.065}
						\fill[black] (4.05,{\cy+\dy}) circle (.4pt);
						\node[align=center] at (4.85,\cy) {Block\\$\j$};
					}
					
					\foreach \cx in {2.15,4.6}
					\foreach \cy in {-1.16,-1.27,-1.38}
					\fill[black] (\cx,\cy) circle (.55pt);
					
					\begin{scope}[on background layer]
						\foreach \j in {1,2,3,10}{
							\draw[connection] (a-input.east)--(a-pair-\j.west);
							\draw[connection] (a-pair-\j.east)--(a-block-\j.west);
							\draw[output connection] (a-block-\j.east)--(a-output);
						}
					\end{scope}
					
					\node at (2.15,-2.75) {20 Neurons};
					\node at (4.6,-2.75) {200 Neurons};
					\node[font=\normalsize] at (3.475,-3.43)
					{(a) Three-layer Network $f_{3}$};
				\end{scope}
				
				\draw[->,line width=.9pt] (7.9,-.2)--(11.2,-.2);
				\node[align=center] at (9.55,.2)
				{Move 200 Fixed Neurons};
				\node[align=center] at (9.55,-.7) {To Hidden Layer 1 \\ (\textcolor{red}{Same Neuron Budget})};
				
				\begin{scope}[shift={(12.15,0)}]
					\draw[panel] (-.55,-3.02) rectangle (5.4,2.64);
					\node at (0,2.22) {Input};
					\node at (2.3,2.22) {Hidden Layer 1};
					\node at (4.7,2.22) {Output};
					
					\node[module,minimum width=.65cm,minimum height=1.45cm]
					(b-input) at (0,-.2) {};
					\foreach \dy in {-.42,0,.42}
					\node[other] at (0,{-.2+\dy}) {};
					\node[free] (b-output) at (4.7,-.2) {};
					
					\node[free module,minimum width=1.8cm,minimum height=1.5cm]
					(b-free) at (2.3,1.1) {};
					\foreach \dy in {-.54,-.18,.18,.54}
					\node[free] at (1.75,{1.1+\dy}) {};
					\node[align=center] at (2.55,1.1) {Free\\4};
					
					\node[module,minimum width=1.8cm,minimum height=2.25cm]
					(b-fixed) at (2.3,-1.25) {};
					\foreach \dy in {-.88,-.5,.5,.88}
					\node[fixed] at (1.75,{-1.25+\dy}) {};
					\foreach \dy in {-.12,0,.12}
					\fill[black] (1.75,{-1.25+\dy}) circle (.5pt);
					\node[align=center] at (2.55,-1.25) {Fixed\\216};
					
					\begin{scope}[on background layer]
						\foreach \part in {free,fixed}{
							\draw[connection] (b-input.east)--(b-\part.west);
							\draw[output connection] (b-\part.east)--(b-output);
						}
					\end{scope}
					
					\node at (2.3,-2.75) {220 Neurons};
					\node[font=\normalsize] at (2.425,-3.43)
					{(b) Two-layer Network $f_{2}$};
				\end{scope}
				
				\node[free] at (5.5,-4.04) {};
				\node[anchor=west] at (5.78,-4.04) {Trainable Neuron};
				\node[fixed] at (10.05,-4.04) {};
				\node[anchor=west] at (10.33,-4.04) {Randomly Fixed Neuron};
				
			\end{tikzpicture}%
		}
		\caption{Architectural comparison with 220 hidden neurons and four trainable neurons. In (a), 20 first-layer neurons form 10 pairs, each feeding a block of 20 fixed neurons with tied incoming weights. In (b), the second-layer budget is reallocated to the single hidden layer. Only representative fixed neurons are shown.}
		\label{fig:three-layer-comparison}
	\end{figure}
	
	\mypara{Error Analysis and Normalized Gain.} The approximation error is defined as 
	\[
	\mathcal E(f^{*},f_{d})=\mathbb E\inf_{\theta_{\mathrm{free}},\beta}\|f^*-f_d\|_{L_2(d\mu)}^2,~d\in\{2,3\},
	\]
	where $\beta$ denotes all output parameters, including bias $\beta_0$,
	and the expectation is over the fixed random parameters. Let $\mathcal E^{(d)}(m_{\mathrm{free}})$ denote its explicit upper bound derived in Proposition \ref{prop:structured-depth-upper-bounds}. For each architecture, we normalize the reduction in this upper
	bound relative to the endpoint $m_{\mathrm{free}}=20$:
	\[
	G^{(d)}(m_{\mathrm{free}})
	=\frac{\mathcal E^{(d)}(0)-\mathcal E^{(d)}(m_{\mathrm{free}})}
	{\mathcal E^{(d)}(0)-\mathcal E^{(d)}(20)},~d\in\{2,3\}.
	\]
	Table \ref{tab:three-layer-comparison} reports the upper bounds and normalized gains for two or four trainable neurons.

    \begin{table}[!t]
	\centering
	\caption{Approximation error upper bounds and normalized gains with two or four trainable hidden neurons. Both baseline bounds equal $1/9$. Each gain is normalized relative to its architecture's bound at $m_{\mathrm{free}}=20$.}
	\label{tab:three-layer-comparison}
    \small
	\setlength{\tabcolsep}{2pt}
	\renewcommand{\arraystretch}{1.2}
	\begin{tabular*}{\linewidth}{
			@{\extracolsep{\fill}}ccccc@{}
		}
		\toprule
		\textbf{Trainable Neurons}
		& \textbf{Architecture}
		& \textbf{Error Bound}
		& \textbf{Gain}
		& \textbf{Bound Ratio} \\
		\midrule
		
		\multirow{2}{*}{$2$ in the first layer}
		& Three-layer $f_3$
		& $2.22\times10^{-2}$
		& $0.7998$
		& \multirow{2}{*}{
			$\displaystyle
			\frac{\mathcal E^{(3)}(2)}{\mathcal E^{(2)}(2)}
			\approx0.9539$
		} \\
		
		& Two-layer $f_2$
		& $2.33\times10^{-2}$
		& $0.7902$
		& \\
		
		\midrule
		
		\multirow{2}{*}{$4$ in the first layer}
		& Three-layer $f_3$
		& $2.82\times10^{-5}$
		& $0.9997$
		& \multirow{2}{*}{
			$\displaystyle
			\frac{\mathcal E^{(3)}(4)}{\mathcal E^{(2)}(4)}
			\approx0.0206$
		} \\
		
		& Two-layer $f_2$
		& $1.37\times10^{-3}$
		& $0.9877$
		& \\
		
		\bottomrule
	\end{tabular*}
\end{table}
\end{example}
\mypara{Phase Transition.} Table \ref{tab:three-layer-comparison} shows that a small trainable group captures most of the normalized reduction in the approximation bounds. With only two trainable hidden neurons, both constructions attain approximately \(80\%\) of the reduction measured relative to their respective bounds at \(m_{\mathrm{free}}=20\). With four trainable neurons, the normalized gains reach \(0.9997\) and \(0.9877\), respectively. These results illustrate a rapid reduction in the bounds at small trainable counts, consistent with the phase transition pattern in the two-layer theory. Moreover, with four trainable neurons, the three-layer error upper bound is only \(2.06\%\) of its two-layer counterpart under matched neuron budgets. 

\mypara{Role of Depth.} The difference in the absolute upper bounds arises from nonlinear composition and feature reuse across layers. Both constructions approximate the quadratic components of the target function by piecewise linear interpolation, but use their trainable neurons differently. In the three-layer construction, four trainable first-layer neurons can generate \(|x_2|\) and \(|x_3|\). Each feeds a fixed block of 20 second-layer neurons, providing 20 basis functions for approximating its square. Thus, four trainable neurons adapt 40 downstream basis functions through the fixed second layer. In contrast, the two-layer construction uses its trainable neurons directly to implement interpolation basis functions, while leaving the fixed features unused. Therefore, this example illustrates how a small trainable upstream group can use fixed downstream random features through nonlinear composition to obtain a smaller explicit approximation bound.

The construction and error estimates of this example follow from a direct interpolation argument, detailed in Appendix \ref{subsec:proof-of-Example}. Beyond this structured example, a general theory characterizing phase transitions and quantifying the approximation advantages of nonlinear composition in multilayer \(P\)-trainable networks remains to be developed. It is currently unclear whether the probabilistic kernel approach can be extended to address these questions or whether different methods are needed.

	\section{Numerical Evidence of Phase Transitions}
\label{sec:numerical-illustration}
The preceding sections derive approximation bounds for two-layer $P$-trainable networks and analyze a structured three-layer construction. This section complements these results by examining the attained training losses across different target functions, widths, and depths. The numerical results show that the small-$P$ phase transition pattern also appears in the training performance under a fixed optimization budget.

\subsection{Experimental Setting}
\label{subsubsec:phase-transition-setting}
\mypara{Target Functions.} Three target functions cover different oscillation scales and input dimensions, as illustrated in Figure~\ref{fig:target-functions}.

The first target combines a relatively low-frequency oscillatory term with a Gaussian term and is sampled at 12,000 uniformly spaced points on $[-4,4]$:
\[ f_1^*(x)=x\sin x+e^{-0.1x^2}. \]

The second target contains multiple frequency components with higher-frequency oscillations and is sampled at 12,000 uniformly spaced points on $[-2\pi,2\pi]$:
\[ f_2^*(x)=0.1\sin(21x)+0.2\cos(17x+0.4)+0.35\sin(13x- 0.7)+0.5\cos(9x)+\sin(5x). \]

The third target extends the experiments to two dimensions. It combines a separable trigonometric term with a radial Gaussian term and is sampled on a $250\times250$ uniform grid over $[-2,2]^2$:
\[ f_3^*(x,y)=\sin(\pi x)\cos(\pi y)+e^{-(x^2+y^2)}. \]
All main experiments use deterministic uniform grids without shuffling. For each target, the training inputs, target values, and sample order are identical across configurations.
\begin{figure}[!t]
	\centering
	\subfloat[Target $f_1^*$.]{%
		\includegraphics[width=0.32\linewidth]{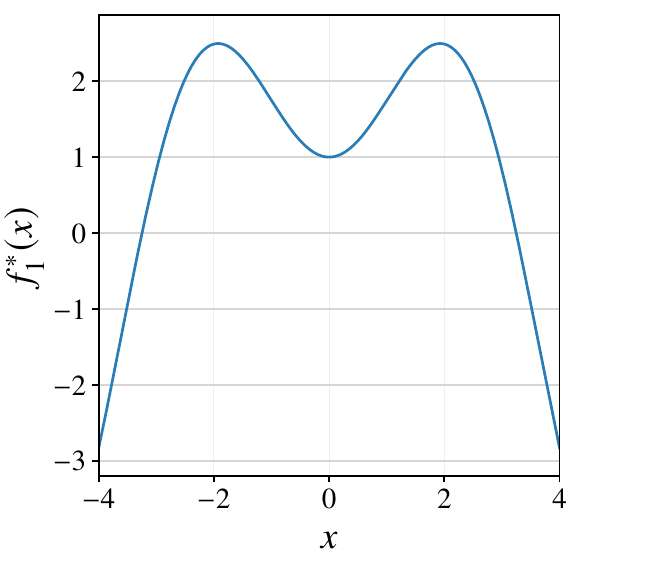}%
		\label{fig:target-f1}%
	}\hfill
	\subfloat[Target $f_2^*$.]{%
		\includegraphics[width=0.32\linewidth]{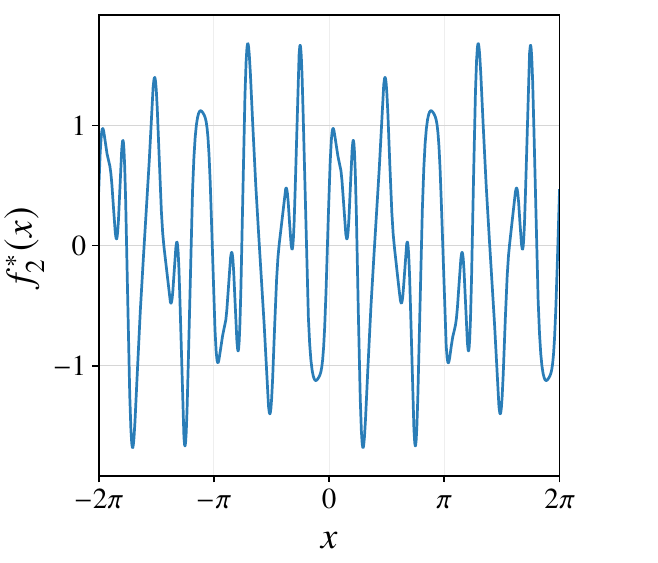}%
		\label{fig:target-f2}%
	}\hfill
	\subfloat[Target $f_3^*$.]{%
		\includegraphics[width=0.32\linewidth]{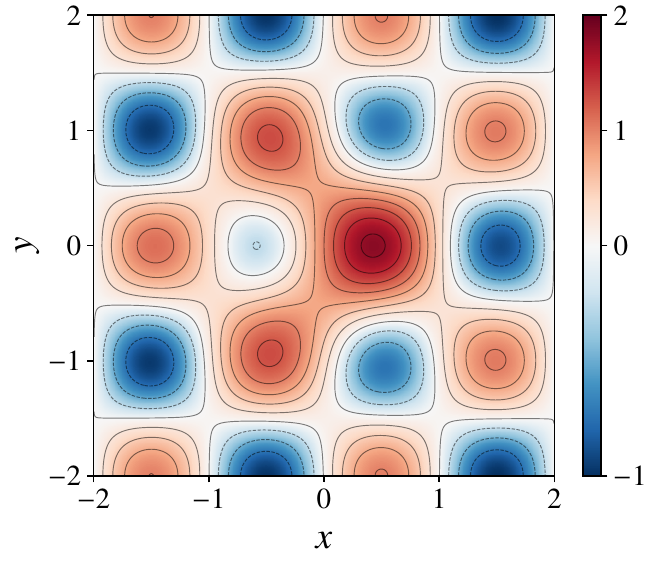}%
		\label{fig:target-f3}%
	}
	\caption{Target functions in the numerical experiments. Targets
		$f_1^*$ and $f_2^*$ are shown as curves, while $f_3^*$ is shown as a heatmap, with color indicating the function value.}
	\label{fig:target-functions}
\end{figure}

\mypara{Architecture Configurations.} 
The experiments follow the neuron-wise trainability convention in Section \ref{sec:setup}, considering $P$-trainable networks with equal  hidden-layer width $W\in\{32,64,128,256\}$. The depth $L$ includes the output layer, so each network has $L-1$ hidden layers. In each hidden layer, $PW$ neurons are trainable  through their incoming weights and biases, while the remaining hidden parameters stay fixed after initialization. All neurons remain active in the forward computation, and the output layer, including its bias, is fully trainable. The main experiments use ReLU activation.

\mypara{Experimental Set-up.} The following settings are used to compare trainable proportions under shared initialization and a fixed training budget.
\begin{itemize}[labelindent=0pt, leftmargin=*, topsep=0pt, itemsep=0pt]
	\item \textbf{Trainable Proportions.} The trainable proportion $P$ is evaluated on the following grid
	\[ P\in \mathcal{G}_{\mathrm{exp}}
	=\frac{1}{32}\Big\{0,~1,~2,~\cdots,7,~8,~10,~12,~14,~16,~20,~24,~28,~32\Big\}, \]
	with finer spacing at small $P$ to examine the phase transition. Every tested width is a multiple of $32$, so $PW$ is an integer at each grid point, without rounding.
	\item \textbf{Depth and Width Variation.} The depth experiments use $W=64$ and $L\in\{2, 3, 4, 5, 7, 9,\allowbreak 11, 13, 15\}$. The width experiments use $L=6$ and $W\in\{32,64,128,256\}$. 
	\item \textbf{Shared Initialization.} Configurations with the same network architecture and repetition index share the same initial values of all parameters across $P$. The repetition index starts at $0$. The initialization seeds are
	\[ \operatorname{Seed_{init}}(W) = 42 + (\operatorname{Repeat~Index}) + 100W, \]
	for width variation, and 
	\[ \operatorname{Seed_{init}}(L) = 42 + 100(\operatorname{Repeat~Index}) + (L-1), \]
	for depth variation. Both formulas are independent of $P$. A dedicated \texttt{torch.\allowbreak Generator} generates initial weights and biases in a fixed order. For all activation functions, weights use Xavier-uniform initialization \cite{glorot2010understanding} with gain $5/3$, and biases are sampled uniformly from the same interval as the corresponding layer's weights.
	\item \textbf{Repetitions and Training Budget.} Each configuration is trained for 12,000 steps and repeated 21 times with different initialization seeds. Table \ref{tab:experimental-configuration} summarizes the settings.
\begin{table}[!t]
	\centering
	\caption{Experimental configuration with ReLU activation.}
	\label{tab:experimental-configuration}
    \small
	\setlength{\tabcolsep}{2pt}
	\renewcommand{\arraystretch}{1.2}
	\begin{tabularx}{\linewidth}{@{}
			>{\raggedright\arraybackslash}p{0.21\linewidth}
			>{\raggedright\arraybackslash}X
			@{\hspace{1.2em}}
			>{\raggedright\arraybackslash}p{0.21\linewidth}
			>{\raggedright\arraybackslash}X
			@{}}
		\toprule
		\textbf{Item} & \textbf{Value} & \textbf{Item} & \textbf{Value} \\
		\midrule
	Batch size        & Full batch         & Optimizer   & Adam \\
	Training steps    & 12,000             & Base LR     & $10^{-3}$ \\
	Gradient clipping & $10.0$             & LR schedule & \texttt{ReduceLR\allowbreak{}OnPlateau} \\
	Early stopping    & Disabled           & LR factor   & $0.5$ \\
	Precision         & \texttt{float32}   & LR patience & $400$ \\
	Hardware          & Single NVIDIA A100 & LR threshold & $10^{-6}$ \\
	Repeats           & $21$               & Minimum LR  & $10^{-6}$ \\
		\bottomrule
	\end{tabularx}
\end{table}
\end{itemize}
\mypara{Evaluation Metrics.} Let $\mathcal{L}_{t,r}(P)$ denote the training mean-square error at step $t$ in repetition $r$. For each run, the best training loss within the budget $T$ is $\mathcal{E}_{r}(P)=\min_{0\le t\le T} \mathcal{L}_{t,r}(P)$. The losses are aggregated by averaging their logarithms across $R=21$ repetitions and transforming back to the original scale. The corresponding geometric mean loss and normalized training gain are defined as
\[
\mathcal E(P)
=\exp\Big(\frac1R\sum_{r=1}^{R}\log\mathcal E_r(P)\Big)=\Big(\prod_{r=1}^{R}\mathcal E_r(P)\Big)^{1/R},~\text{and}~G(P)=\frac{\mathcal{E}(0)-\mathcal{E}(P)}{\mathcal{E}(0)-\mathcal{E}(1)}.
\]
Thus $G(P)$ is computed from the aggregated losses and measures the fraction of the observed reduction between
$P=0$ and $P=1$ attained by partial training. 

\subsection{Experimental Results}
\label{subsubsec:phase-transition-results}
\begin{figure}[t]
	\centering
	\subfloat[Depth variation loss.]{
		\includegraphics[width=0.47\linewidth]
		{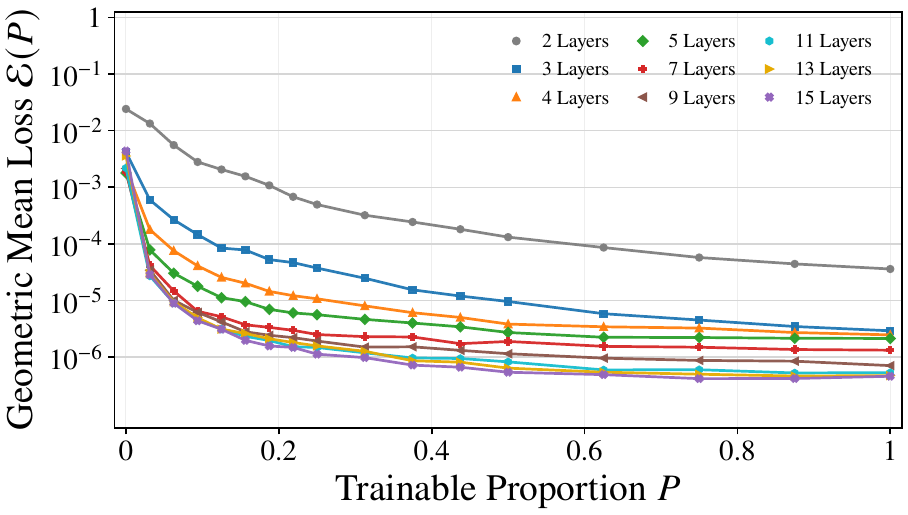}
		\label{fig:F1-depth-loss}
	}\hfill
	\subfloat[Depth variation gain.]{
		\includegraphics[width=0.47\linewidth]
		{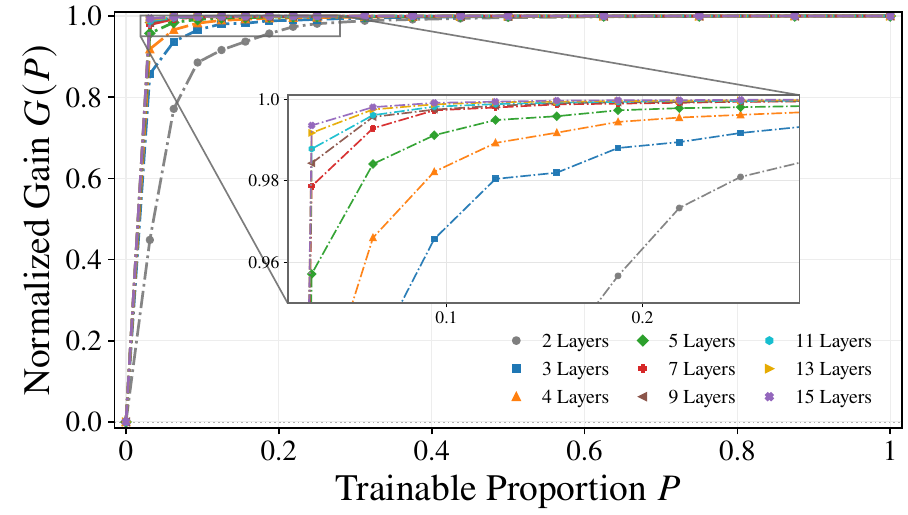}
		\label{fig:F1-depth-gain}
	}
	
	\subfloat[Width variation loss.]{
		\includegraphics[width=0.47\linewidth]
		{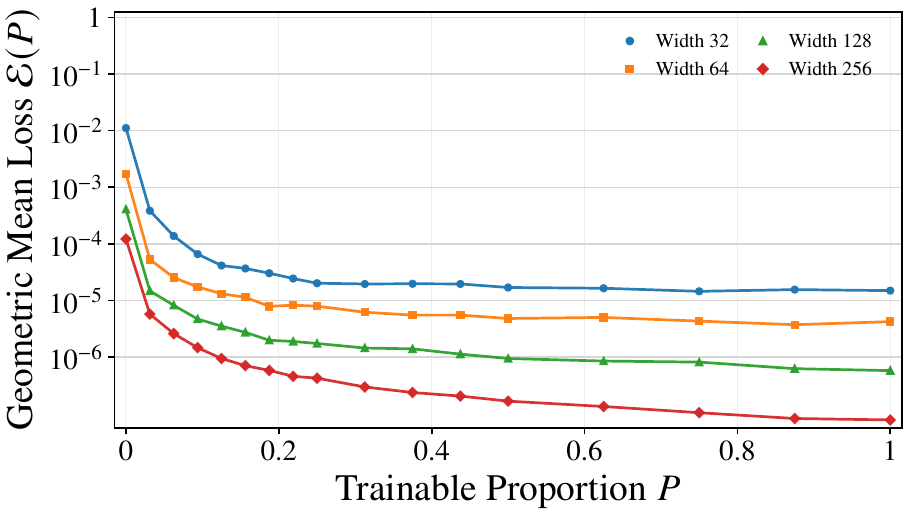}
		\label{fig:F1-width-loss}
	}\hfill
	\subfloat[Width variation gain.]{
		\includegraphics[width=0.47\linewidth]
		{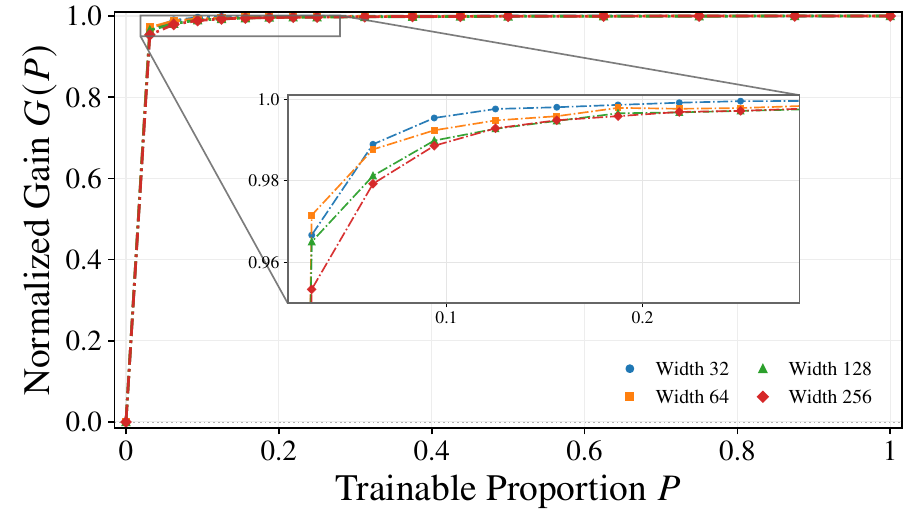}
		\label{fig:F1-width-gain}
	}
	\caption{Training results for $f_1^*$ with ReLU activation. The top and bottom rows show depth and width variation, respectively. The left column reports the geometric mean of the best training loss across 21 repetitions, and the right column shows the normalized gain. }
	\label{fig:numerical-target-1}
\end{figure}

\begin{figure}[p]
	\centering
	\subfloat[Depth variation loss.]{%
		\includegraphics[width=0.47\linewidth]
		{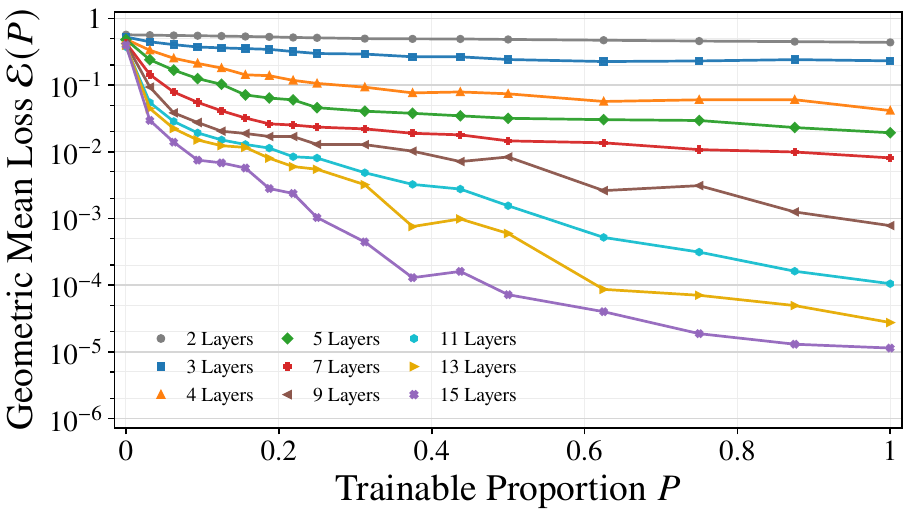}%
		\label{fig:F2-depth-loss}%
	}\hfill
	\subfloat[Depth variation gain.]{%
		\includegraphics[width=0.47\linewidth]
		{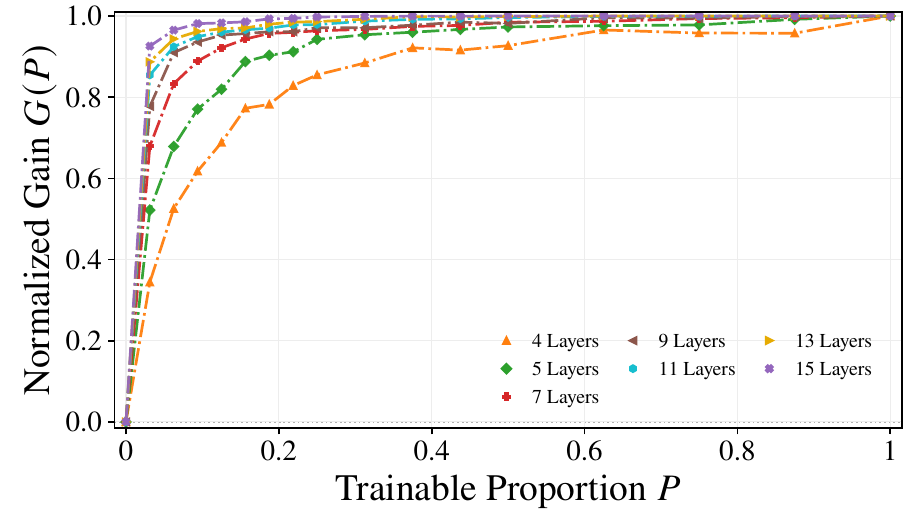}%
		\label{fig:F2-depth-gain}%
	}
	
	\subfloat[Width variation loss.]{%
		\includegraphics[width=0.47\linewidth]
		{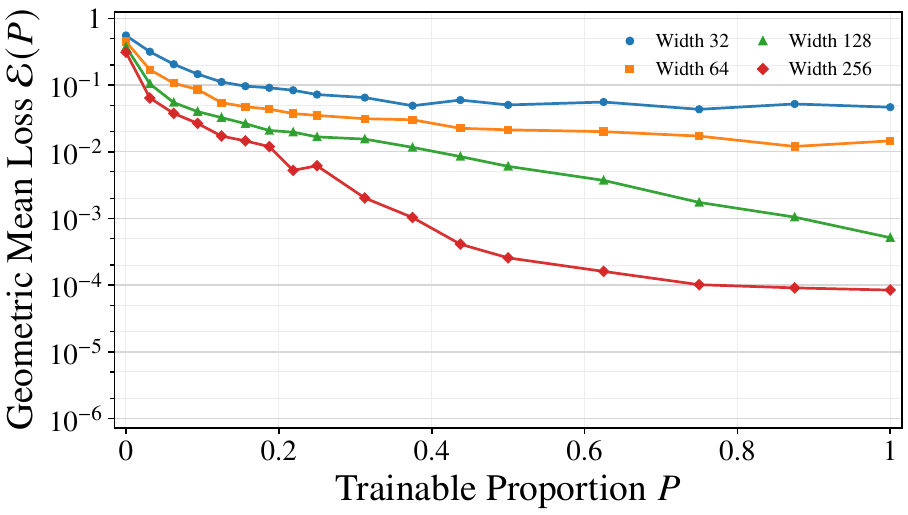}%
		\label{fig:F2-width-loss}%
	}\hfill
	\subfloat[Width variation gain.]{%
		\includegraphics[width=0.47\linewidth]
		{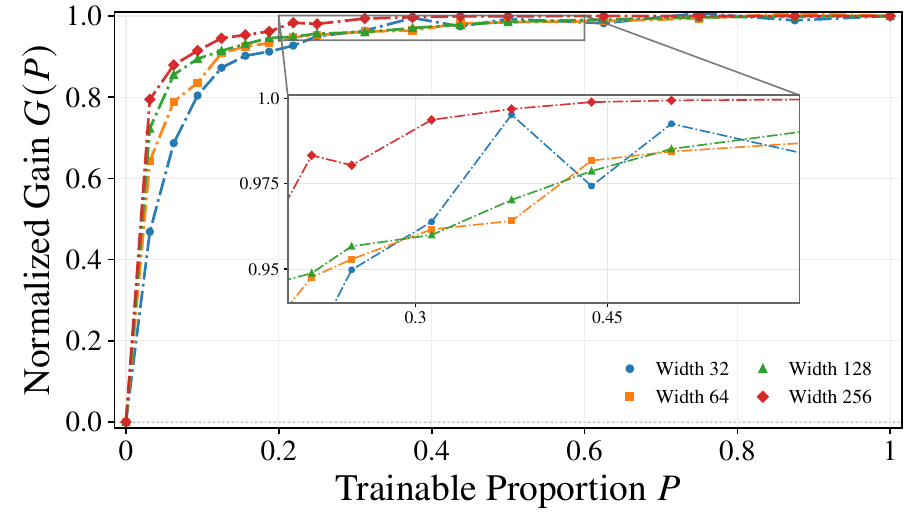}%
		\label{fig:F2-width-gain}%
	}
	\caption{Training loss and normalized gain for $f_2^*$.
		The layout and axis scales follow Figure \ref{fig:numerical-target-1}.}
	\label{fig:numerical-target-2}
	\setcounter{subfigure}{0}
	\subfloat[Depth variation loss.]{%
		\includegraphics[width=0.47\linewidth]
		{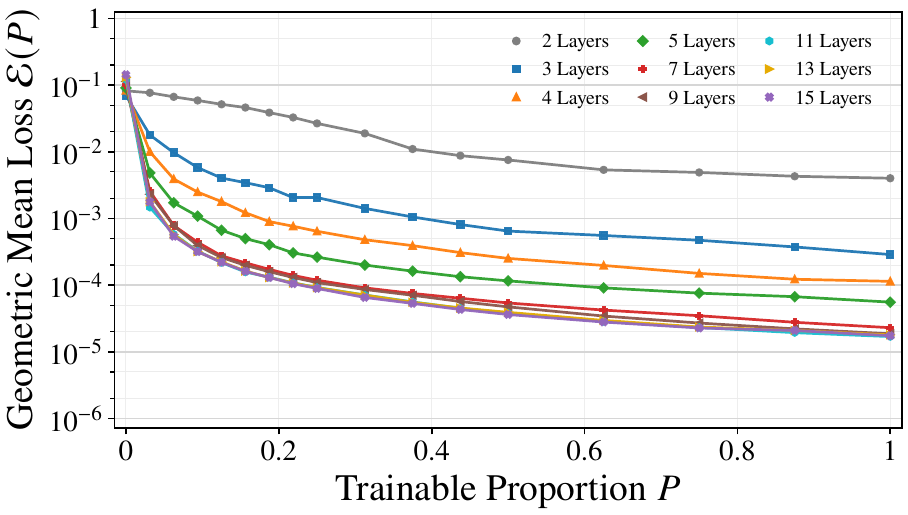}%
		\label{fig:F3-depth-loss}%
	}\hfill
	\subfloat[Depth variation gain.]{%
		\includegraphics[width=0.47\linewidth]
		{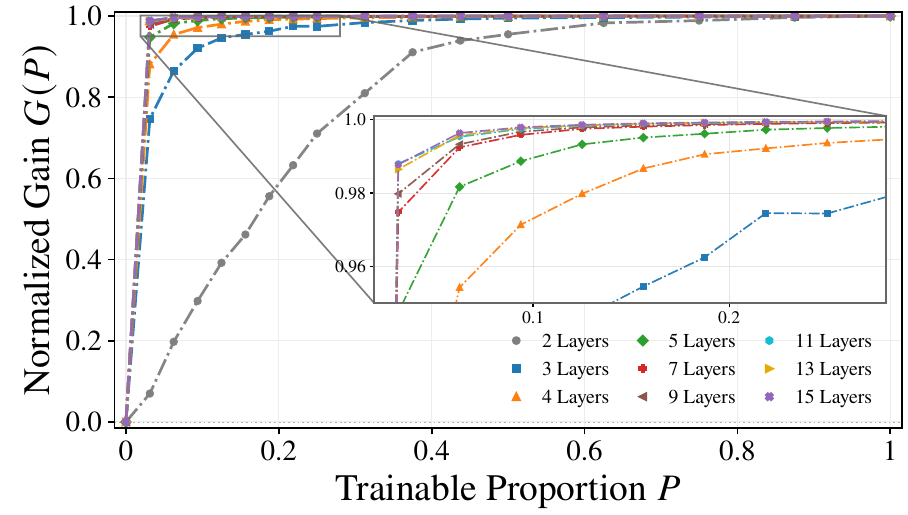}%
		\label{fig:F3-depth-gain}%
	}
	
	\subfloat[Width variation loss.]{%
		\includegraphics[width=0.47\linewidth]
		{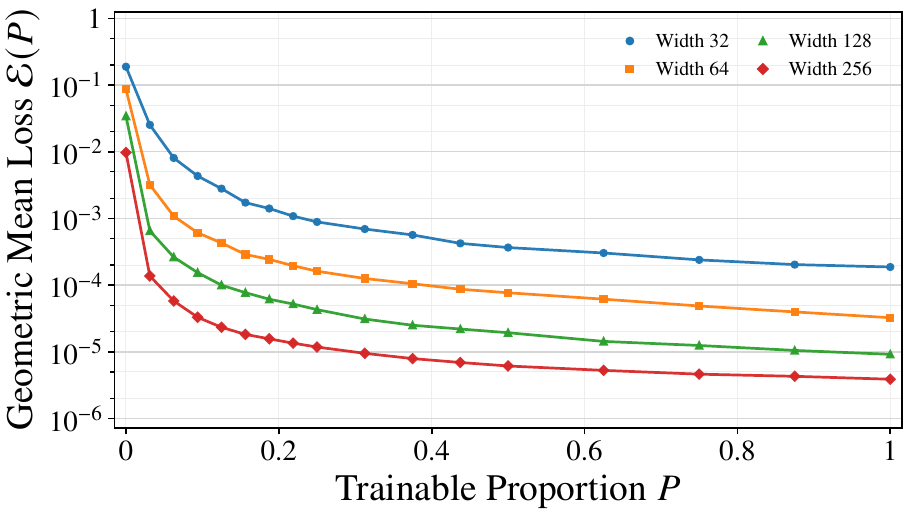}%
		\label{fig:F3-width-loss}%
	}\hfill
	\subfloat[Width variation gain.]{%
		\includegraphics[width=0.47\linewidth]
		{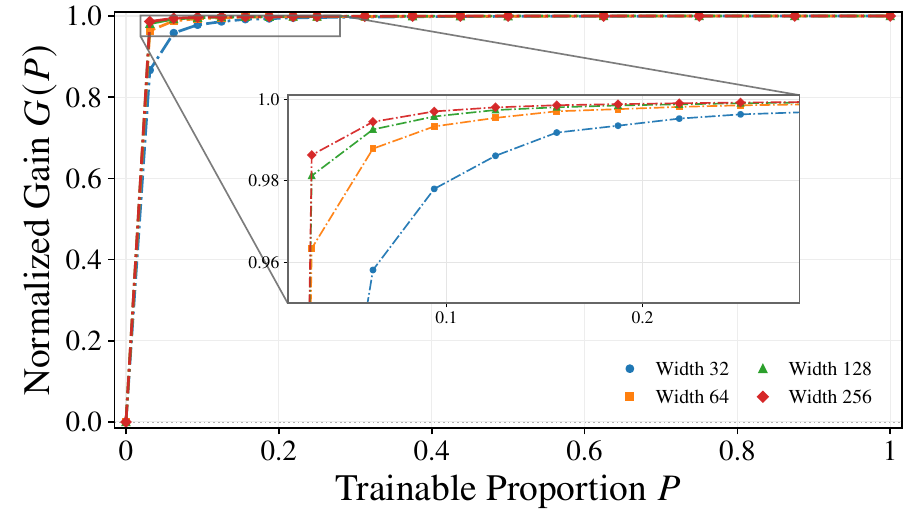}%
		\label{fig:F3-width-gain}%
	}
	\caption{Training loss and normalized gain for $f_3^*$.
		The layout and axis scales follow Figure \ref{fig:numerical-target-1}.}
	\label{fig:numerical-target-3}
\end{figure}

Figures \ref{fig:numerical-target-1}, \ref{fig:numerical-target-2}, and \ref{fig:numerical-target-3} report the results for $f_1^*$, $f_2^*$, and $f_3^*$, respectively. For each target, the top and bottom rows show depth and width variation, while the left and right columns show $\mathcal E(P)$ and $G(P)$. The horizontal axes and the gain axes use linear scales, while the loss axes use logarithmic scales. Insets enlarge the regions where the gain is close to $1$.

The two metrics provide complementary information. The absolute loss $\mathcal E(P)$ directly measures the attained errors, allowing comparisons across trainable proportions and architectures. The normalized gain $G(P)$ expresses the loss reduction on a common relative scale across configurations with different endpoint losses. A gain close to $1$ need not imply a small absolute error. The two metrics should therefore be interpreted together.

\mypara{Relation to Approximation Theory.} Before interpreting the numerical results, we note that the attained training losses depend on optimization, the finite training grids, and the training budget. They provide empirical indicators of approximation performance on these grids, so the comparison
with theory concerns qualitative trends as $P$ varies. 

\mypara{Results and Discussion.} The figures lead to the following observations.
\begin{itemize}[labelindent=0pt, leftmargin=*, topsep=0pt, itemsep=0pt]
	\item \textbf{Observed Phase Transition.} Most configurations exhibit substantial loss reductions at small $P$, followed by diminishing gains as the normalized gain curves approach a plateau. This pattern is particularly clear for $f_1^*$ and for the deeper networks tested on $f_2^*$ and $f_3^*$. These observations are qualitatively consistent with the small-$P$ phase transition in the theoretical approximation bounds.
	
	\item \textbf{Benefits of Partial Training.} Partial training can achieve substantial loss reductions with far fewer trainable hidden parameters. For example, in the width experiments for $f_1^*$ in Figure \ref{fig:numerical-target-1}\subref{fig:F1-width-gain}, all tested widths recover most of their endpoint improvement at $P=1/32$. Thus, obtaining most of the observed improvement does not require training all hidden parameters. This provides a useful trade-off between the number of trainable hidden parameters and  the attained loss reduction.
	\item \textbf{Task-dependent Deviation.} For the high-frequency target $f_2^*$, the two- and three-layer networks retain losses above $10^{-1}$ throughout the tested range of $P$ in Figure \ref{fig:numerical-target-2}\subref{fig:F2-depth-loss}, indicating poor fitting even at full training. Under the current settings,
	the three-layer network's loss fluctuates with $P$. Since neither architecture fits the target well under the current settings, their normalized gain curves are omitted from Figure~\ref{fig:numerical-target-2}\subref{fig:F2-depth-gain}.
\end{itemize}

\subsection{Results with Other Activation Functions}
The width and depth experiments are also conducted with GELU and $\tanh$ to examine whether similar phase transition patterns occur. Table \ref{tab:other-activations} summarizes representative training losses and normalized gains at $L=5$ and $W=64$. Both activations recover over $95\%$ of the endpoint loss reduction at $P=1/8$ for all three targets, consistent with the small-$P$ phase transition behavior observed with ReLU. The complete plots are reported in Appendix \ref{sec:addition-experiments}.
\begin{table}[!t]
	\centering
	\caption{Training losses and normalized gains for GELU and $\tanh$
		with depth $L=5$ and width $W=64$.
		Losses are geometric means of the best training losses across 21 repetitions.}
	\label{tab:other-activations}
    \small
	\setlength{\tabcolsep}{2pt}
	\renewcommand{\arraystretch}{1.2}
	\begin{tabular*}{\linewidth}{
			@{\extracolsep{\fill}}ccccccc@{}
		}
		\toprule
		\textbf{Activation} & \textbf{Target}
		& $\bm{\mathcal E(0)}$
		& $\bm{\mathcal E(1/32)}$
		& $\bm{\mathcal E(1)}$
		& $\bm{G(1/32)}$
		& $\bm{G(1/8)}$ \\
		\midrule
		
		\multirow{3}{*}{GELU} & $f_1^*$
		& $9.70\times10^{-5}$
		& $2.13\times10^{-6}$
		& $3.33\times10^{-7}$
		& $0.9814$ & $0.9927$ \\
		
		& $f_2^*$
		& $6.80\times10^{-1}$
		& $1.78\times10^{-1}$
		& $7.69\times10^{-4}$
		& $0.7390$ & $0.9723$ \\
		
		& $f_3^*$
		& $1.72\times10^{-1}$
		& $1.59\times10^{-3}$
		& $4.24\times10^{-6}$
		& $0.9908$ & $0.9996$ \\
		\midrule
		
		\multirow{3}{*}{$\tanh$} & $f_1^*$
		& $3.63\times10^{-3}$
		& $1.96\times10^{-5}$
		& $4.69\times10^{-7}$
		& $0.9947$ & $0.9997$ \\
		
		& $f_2^*$
		& $6.18\times10^{-1}$
		& $3.17\times10^{-1}$
		& $3.30\times10^{-4}$
		& $0.4867$ & $0.9552$ \\
		
		& $f_3^*$
		& $6.35\times10^{-2}$
		& $4.18\times10^{-3}$
		& $1.11\times10^{-5}$
		& $0.9343$ & $0.9973$ \\
		
		\bottomrule
	\end{tabular*}
	
	\caption{Training losses for $f_1^*$ with ReLU activation at depth $6$, keeping 32 trainable neurons per hidden layer while varying the number of fixed neurons. Losses are geometric means of the best training losses across 21 repetitions. Percentage reductions are relative to $W=32$.}
	\label{tab:randomly-fixed-part}
    \small
	\setlength{\tabcolsep}{2pt}
	\renewcommand{\arraystretch}{1.2}
	\begin{tabular*}{\linewidth}{
			@{\extracolsep{\fill}}cccccc@{}
		}
		\toprule
		\textbf{Width} & $\bm{P}$
		& \textbf{Trainable Per Layer}
		& \textbf{Fixed Per Layer}
		& $\bm{\mathcal E(P)}$
		& \textbf{Loss reduction (\%)} \\
		\midrule
		32 & $1$ & 32 & 0
		& $1.50\times10^{-5}$ & $0.00$ \\
		64 & $1/2$ & 32 & 32
		& $4.82\times10^{-6}$ & $67.76$ \\
		128 & $1/4$ & 32 & 96
		& $1.75\times10^{-6}$ & $88.33$ \\
		256 & $1/8$ & 32 & 224
		& $\bm{9.49\times10^{-7}}$ & $\bm{93.66}$ \\
		\bottomrule
	\end{tabular*}
\end{table}
\subsection{Expressive Role of Randomly Fixed Neurons}
The width experiments include configurations with the same number of trainable hidden neurons but  different numbers of randomly fixed neurons. Table \ref{tab:randomly-fixed-part} compares such configurations for $f_1^*$ at $L=6$, with $32$ trainable neurons per hidden layer.

\mypara{Effect of Fixed Neuron Count.} As the number of randomly fixed neurons increases, the observed loss decreases progressively, with a reduction of approximately $94\%$ from $W=32$ to $W=256$. The number of trainable neurons per hidden layer remains fixed, although the total number of trainable parameters varies with width. These results support the expressive role of the randomly fixed neurons.
	\section{Discussion and Conclusions}
\label{sec:discuss-conclusion}
This work studied the approximation capacity of partially trainable networks. For two-layer networks, the finite-width bounds quantify how sampling mismatch affects approximation as the trainable proportion varies. Under polynomial spectral decay and leverage compatibility, the rates show that a small trainable group can substantially reduce the bound when its mismatch is much smaller than that of the fixed group.
The spherical biased-ReLU analysis establishes the spectral decay
and two-sided bounds on the initialization mismatch, while a structured three-layer example illustrates a similar phase transition and shows how nonlinear composition allows trainable neurons to use randomly fixed downstream features to obtain a smaller explicit approximation bound. Numerical experiments show similar phase transition patterns across targets and architectures and further support the expressive role of the randomly fixed neurons.

The approximation bounds rely on a distributional construction of the trainable parameters. The leverage-score distribution provides an effective choice for deriving the bounds, but does not characterize the parameters produced by gradient-based training. Connecting this construction to the evolution of trainable parameters during optimization is an important direction for understanding when the predicted gains can be attained in training.

Besides, a general phase transition theory for deeper $P$-trainable networks remains to be developed. Changes in upstream trainable parameters also change the inputs to later neurons, including those with fixed parameters. This dependence between layers complicates a direct extension of current two-layer kernel techniques. Further analysis is needed to quantify how such interactions affect approximation as $P$ varies. Deterministic approximation techniques may help characterize more directly how a small trainable group uses the representation provided by the randomly fixed group.
	\appendix
	\section*{Overview of the Appendices}
\begin{itemize}[labelindent=0pt, leftmargin=*, topsep=0pt, itemsep=0pt]
	\item {Appendix \ref{sec:appendix-additional-t-notes}} summarizes the notation and collects supplementary technical remarks.
	\item {Appendix \ref{sec:appendix-proof-general-results}} proves the general approximation bounds and polynomial rates.
	\item {Appendix \ref{subsec:proo-of-specific}} proves the spectral and mismatch bounds for spherical ReLU features.
	\item {Appendix \ref{subsec:proof-of-Example}} derives error bounds for the structured two- and three-layer examples.
	\item {Appendix \ref{sec:addition-experiments}} presents additional experiments that complement the main results.
\end{itemize}

\begin{table*}[!t]
	\centering
	\caption{Notation used throughout the paper.}
	\renewcommand{\arraystretch}{1.2}
	{\small \begin{tabular}{c l}
			\toprule
			\makebox[1cm][c]{\textbf{Symbols}}&\makebox[8.5cm][l]{\textbf{Descriptions}}\\
			\midrule
			$\mathcal{X}~\&~\Theta$&The input domain and the parameter space.\\
			$\phi(x;\theta)$&Feature map and its reweighted form $\phi_{P}(x;\theta)$. \\
			\midrule
			$\mathcal{F}(W,L)$& The neural network class with $W$ width and depth $L$ in \eqref{eq:Full-FNN}.\\
			$\mathcal{F}_{P}(W,L)$&The partially trainable counterpart of $\mathcal{F}(W,L)$ in \eqref{eq:p-trainable-neural-network}.\\
			$k_{\rho}(x,x^{\prime})~\&~T_{\rho}$&Kernel induced by $\phi(x;\theta)$ \eqref{eq:induced-kernel} and the corresponding integral operator \eqref{eq:kernel-integral-operator}. \\
			$\mathcal{H}_{\rho}$& Reproducing Kernel Hilbert Space (RKHS) induced by $\rho$ in \eqref{eq:target-function-statement}. \\
			\midrule
			$\rho$& Reference measure defining the target RKHS $\mathcal H_\rho$.\\
			$\rho_0$ & Prescribed initialization distribution for fixed neurons in Definition \ref{def:sampling-scheme}.\\
			$\rho_{1}$ & Adjustable sampling distribution for trainable neurons in Definition \ref{def:sampling-scheme}.\\
			$\rho_{P}~\&~\gamma_{P}$& Mixture measure and its density relative to $\rho$ in \eqref{eq:distributional-constr.}\\
			$s_{\lambda,P}(\theta)$ & The ridge leverage score defined in \eqref{eq:leverage-score}.\\
			$d_{\max}(\gamma,\lambda)$ &The maximal ridge leverage defined in \eqref{eq:definition-of-d-max}.\\
			$\kappa_{\gamma_{P}}(\lambda),\kappa_{0}^{*},\kappa_{1}^{*}$& Mismatch factor of $\rho_{P}$ \eqref{eq:definition-of-kappa-first}; Admissible constants $\kappa_{0}^{*}$ for $\rho_{0}$, and $\kappa_{1}^{*}$ for $\rho_{1}$.\\
			$P~\&~G(P)$& Trainable proportion $P$ and normalized gain $G(P)$ in \eqref{eq:the-normalized-gain}. \\
			$Y_{k,j}~\&~\YY_{k}$& The real orthonormal basis $\{Y_{k,j}\}$ of degree $k$ spherical harmonics space $\YY_{k}$.\\
			$\eta$& Eigenvalues ordered by magnitude (with multiplicity) $\eta$ in \eqref{eq:spectral-decomposition}.\\
			$\xi$&Eigenvalues $\xi$ indexed by spherical harmonic degree in \eqref{eq:relu-degree-eigenvalue}.\\ 
			\midrule
			$\lesssim~\&~\gtrsim$& $A\lesssim B$ and $B\gtrsim A$ denote $A\le cB$ for some positive constant $c>0$. \\
			$\asymp$& $A\asymp B$ means $c_{1}A\le B\le c_{2}A$ for some positive constants $c_{1},c_{2}>0$.\\
			$\mathcal{O}(\cdot)~\&~\tilde{\mathcal{O}}(\cdot)$& $\mathcal{O}(\cdot)$ is used to hide constant factors, while $\tilde{\mathcal{O}}(\cdot)$ ignores logarithmic factors.\\
			$\norm{\cdot}_{L_{2}(d\mu)}$&$L_2$ norms under input measure and $\norm{\cdot}_{L_{2}(d\rho)}$ under parameter measure.\\
			$a\otimes_{L_2(d\mu)}b$& $(a\otimes_{L_2(d\mu)} b)g=\langle b,g\rangle_{L_{2}(d\mu)} a$ for any $g\in L_{2}(d\mu)$.\\
			$A\preccurlyeq B, B\succcurlyeq A$&Positive semidefinite order, meaning that $B-A$ is positive semidefinite.\\
			\bottomrule
	\end{tabular}}
	\label{tab:summary-of-notations}
\end{table*}

\section{Additional Technical Notes}
\label{sec:appendix-additional-t-notes}
\subsection{The Summary of Notations}
\label{sec:notation-summary}
Table \ref{tab:summary-of-notations} summarizes the main notations used in the paper.

\subsection{Clarifications of the Assumptions}
\label{subsec:assumption-discussions}
For convenience, we restate all assumptions used in the main text before discussing their roles and necessity.

\assumpoperatorRegularity*
\mypara{Clarification.} Assumption \ref{assumption:operator} provides the operator regularity needed for the general analysis. Joint measurability and square
integrability of $\phi$ make $S_\rho$ a Hilbert--Schmidt operator. Consequently, $T_\rho=S_\rho S_\rho^*$ is self-adjoint, positive
semi-definite, trace-class, and compact. These properties justify the spectral decomposition and ensure that $d(\lambda)$ is finite for every
$\lambda>0$. The assumption is general and directly verifiable, for example, when the feature map is bounded on probability spaces.

\assumQuality*
\mypara{Clarification.} Assumption \ref{assumption:mismatch-used} is a uniform compatibility condition between each candidate sampling distribution density $\gamma_{{i}}$ and the optimal ridge leverage density $\gamma_{\lambda}^{*}$. Indeed, 
\[ \kappa_{\gamma_i}(\lambda) = \frac{d_{\max}(\gamma_{{i}},\lambda)}{d(\lambda)}= \operatorname*{ess\,sup}_{\theta\in\Theta} \frac{\gamma_\lambda^*(\theta)}{\gamma_i(\theta)}. \]
Hence, the assumption requires $\gamma_i(\theta) \geq\gamma_\lambda^*(\theta)/\kappa_i^*$ for $\rho$-almost every $\theta$ at each $\lambda\in(0,\lambda_{0}]$. Its role is to bound the
sampling cost $d_{\max}(\gamma,\lambda)$ by a constant multiple of the effective dimension $d(\lambda)$ in the rate analysis. The
ordering $\kappa_1^*\leq\kappa_0^*$ further expresses the advantage of the adjustable distribution at the level of these bounds.

This condition is closely related to the compatibility condition of \cite[Assumption 2]{rudi2017generalization}, which, in our notation, controls the maximal ridge leverage score by $d_{\max}(\gamma,\lambda)\le C_{\gamma}\lambda^{-C^{*}}$ for some constants $C_{\gamma}>0$, and $C^{*}\in[0,1]$. Our assumption likewise controls $d_{\max}(\gamma_i,\lambda)$, but requires a stronger constant-factor bound
$d_{\max}(\gamma_i,\lambda)\leq\kappa_i^*d(\lambda)$. Under Assumption \ref{assumption:eigendecay-used}, we have
$d(\lambda)\asymp\lambda^{-1/\alpha}$. Hence our condition implies a bound of the former form with $C^*=1/\alpha$ for sufficiently small $\lambda$.

\assumpPoly*

\mypara{Clarification.} Assumption \ref{assumption:eigendecay-used} specifies the spectral scale needed to turn the general approximation bound into a polynomial
rate. Its two-sided bounds imply $d(\lambda)\asymp\lambda^{-1/\alpha}$ (Derivation in \eqref{eq:bound-of-d(lamda)-lambda}). The upper spectral bound controls the approximation rate, while the lower bound ensures that $\alpha$ describes the actual polynomial scale rather than only an upper envelope. The trace-class property of  $T_{\rho}$ alone does not establish this two-sided decay, but it can be verified for the particular kernel, as in the spherical ReLU example in Proposition \ref{prop:spectrum-biased-relu-kernel}.

\section{Proofs of Main Results in Section \ref{sec:main-results} -- General Bounds}
\label{sec:appendix-proof-general-results}
\subsection*{Outline}
\begin{itemize}[labelindent=0pt, leftmargin=*, topsep=0pt, itemsep=0pt]
	\item Subsection \ref{subsec:auxiliary-t-pre} proves an operator Bernstein inequality with a variance proxy and uses it to establish the regularized operator concentration for the two independently sampled feature groups.
	
	\item Subsection \ref{subsec:proof-main-genral-results} first verifies kernel and operator invariance, then combines the concentration result with the ridge construction to prove the general approximation theorem. It subsequently proves the mixture leverage bound and combines this bound with sampling compatibility and polynomial spectral decay to derive polynomial approximation rates.
\end{itemize}
\subsection{Auxiliary Theoretical Preliminaries}
\label{subsec:auxiliary-t-pre}
To prove the general approximation result in Theorem \ref{thm:general-approx.-bounds}, we utilize the following operator Bernstein inequality and the partial order bound on the deviation $\widehat{T}_{\rho}-T_{\rho}$ between empirical and integral operators.
\begin{lemma}[Operator Bernstein Inequality With Variance Proxy]
	\label{lemma:modified-bernstein-inequality}
	Let $X_1,\ldots,X_m$ be independent, centered, self-adjoint Hilbert--Schmidt random operators on a separable Hilbert space. Suppose that $\norm{X_j}_{\operatorname{op}}\le U$ almost surely for $j\in[m]$ and some positive $U$. Then for $\sigma^{2}>0$ with $\sigma^{2}\ge\norm{\sum_{j=1}^{m}\EE[(X_{j})^{2}]}_{\operatorname{op}}$ and any $t>0$ satisfying $t^{2}\ge \sigma^{2}+{Ut}/{3}$, we have
	\[
	\mathbb{P}\Big(
	\Big\Vert{\sum_{j=1}^m X_j}\Big\Vert_{\operatorname{op}}>t
	\Big)
	\le
	14\frac{\tr (\sum_{j=1}^{m}\EE[(X_{j})^{2}] )}{\sigma^2}
	\exp \Big(
	-\frac{t^2/2}{\sigma^2+Ut/3}
	 \Big).
	\]
\end{lemma}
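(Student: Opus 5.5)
The plan is to rerun the intrinsic-dimension matrix Bernstein argument of Tropp and Minsker. Throughout, the operator norm of the variance operator is replaced by the larger proxy $\sigma^2$. Write $S=\sum_{j=1}^m X_j$ and $V=\sum_{j=1}^m\EE[X_j^2]$, a positive trace-class operator; it is trace class because each $X_j$ is Hilbert--Schmidt. Since $\norm{S}_{\operatorname{op}}=\max\{\lambda_{\max}(S),\lambda_{\max}(-S)\}$, a union bound reduces the claim to a one-sided tail bound. That bound is applied once to $\{X_j\}$ and once to $\{-X_j\}$, which share the same $U$, $V$ and $\sigma^2$.

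\emph{One-sided bound.} Use the function $\psi(x)=e^{x}-x-1$, which is nonnegative, convex and increasing on $[0,\infty)$, so that no ambient dimension enters. For $\theta\in(0,3/U)$, the spectral mapping argument gives
\[\mathbb P(\lambda_{\max}(S)>t)\le \frac{\EE\tr\psi(\theta S)}{\psi(\theta t)}.\]
Since $\tr(\theta S)$ has mean zero, $\EE\tr\psi(\theta S)=\EE\tr(e^{\theta S}-I)$. Lieb's concavity theorem, together with the Bernstein moment bound $\EE e^{\theta X_j}\preccurlyeq \exp(g(\theta)\EE X_j^2)$ with $g(\theta)=\frac{\theta^2/2}{1-\theta U/3}$, gives
\[\EE\tr(e^{\theta S}-I)\le\tr\bigl(e^{g(\theta)V}-I\bigr).\]
In infinite dimensions I would justify these two steps by compressing to finite-dimensional subspaces spanned by eigenvectors of $V$ and the ranges of the $X_j$, then passing to the limit using trace-class convergence.

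\emph{Where the proxy enters.} The function $x\mapsto (e^{ax}-1)/x$ is increasing for $a>0$. Applying it eigenvalue by eigenvalue,
\[\tr\bigl(e^{gV}-I\bigr)=\sum_k v_k\frac{e^{gv_k}-1}{v_k}\le \tr(V)\frac{e^{g\sigma^2}-1}{\sigma^2}\le\frac{\tr V}{\sigma^2}e^{g\sigma^2},\]
because every eigenvalue satisfies $v_k\le\norm{V}_{\operatorname{op}}\le\sigma^2$.

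\emph{Choosing $\theta$.} Take $\theta=t/(\sigma^2+Ut/3)$. Then $\theta U/3<1$, and $\theta t-g(\theta)\sigma^2=\frac{t^2/2}{\sigma^2+Ut/3}$, the standard Bernstein computation. The hypothesis $t^2\ge\sigma^2+Ut/3$ is exactly the condition $\theta t\ge1$. Hence
\[\frac{e^{\theta t}}{\psi(\theta t)}=\bigl(1-(1+\theta t)e^{-\theta t}\bigr)^{-1}\le(1-2/e)^{-1}<4,\]
since $(1+x)e^{-x}$ is decreasing on $x\ge1$. Each one-sided tail is therefore at most $4\frac{\tr V}{\sigma^2}\exp\bigl(-\frac{t^2/2}{\sigma^2+Ut/3}\bigr)$. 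Adding the two sides gives the constant $8\le14$.

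\emph{Main obstacle.} I expect the main work to lie in the rigorous infinite-dimensional extension: applying Lieb's theorem and the trace-exponential bounds to Hilbert--Schmidt operators on a separable Hilbert space. The finite-rank approximation and the limit passage need care, using trace-norm continuity of $e^{gV}-I$ and Fatou's lemma for the tail probabilities. The remaining steps are the elementary computations above.
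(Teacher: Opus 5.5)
Your proposal is correct and follows essentially the same argument as the paper. The paper cites Minsker's intermediate estimate, whose proof is your chain: Markov with $\psi(x)=e^x-x-1$, the Lieb/Tropp trace-exponential bound with $g(\theta)=\frac{\theta^2/2}{1-\theta U/3}$, and monotonicity of $(e^{ax}-1)/x$ to reach $\frac{\tr V}{\sigma^2}e^{g\sigma^2}$. The paper then bounds the leftover factor by $1+6/(\theta t)^2\le 7$ and takes a union bound to get $14$, whereas your sharper estimate $e^{x}/\psi(x)\le(1-2/e)^{-1}<4$ for $x\ge1$ gives $8\le 14$.

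One fix for the infinite-dimensional step: compress with fixed finite-rank projections $P_d$, not the ranges of the $X_j$, which are random and possibly infinite-dimensional. Since $\EE[(P_dX_jP_d)^2]\preccurlyeq P_d\,\EE[X_j^2]\,P_d$, both $\sigma^2$ and $\tr V$ remain valid proxies for the compressed problem. Then let $d\to\infty$ using $\norm{P_dSP_d}_{\operatorname{op}}\to\norm{S}_{\operatorname{op}}$ and Fatou, as in Minsker's Section~3.2.
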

\begin{proof}[\textbf{Proof of Lemma \ref{lemma:modified-bernstein-inequality}}]
	The result follows from the intermediate estimate (3.9) in the proof of Theorem 3.1 of \cite{minsker2017some}, which gives
	\[ \mathbb{P}\Big(
	\eta_{\max}\Big({\sum_{j=1}^m X_j}\Big)>t
	\Big)
	\le
	\frac{\tr (\sum_{j=1}^{m}\EE[(X_{j})^{2}])}{\sigma^2}
	\exp\Big(
	-\frac{t^2/2}{\sigma^2+Ut/3}
	\Big)\Big(1+\frac{6(\sigma^{2}+Ut/3)^{2}}{t^{4}}\Big). \]
	The condition $t^{2}\ge \sigma^{2}+{Ut}/{3}$ implies the last factor is at most $7$. Apply the same estimate to $-\sum_{j}X_{j}$ and use
	a union bound to obtain the factor $14$. Further with the operator extension of Minsker's Bernstein inequality \cite[Section 3.2]{minsker2017some}, we complete the proof.
\end{proof}
\begin{proposition}[Regularized Operator Concentration]
	\label{prop:partial-order-deviation-empirical-continu} Let $\lambda>0$, $t\in(0,1)$ and assume that $0\le d_{max}(\gamma_{P},\lambda)<\infty$. Suppose that $\gamma_P(\theta)>0$ for $\rho$-almost every
	$\theta$ with $\norm{\phi(\cdot;\theta)}_{L_{2}(d\mu)}>0$. Let $P={m_{\mathrm{free}}}/{m}, 1-P={m_{\mathrm{fix}}}/{m}, m=m_{\mathrm{free}}+m_{\mathrm{fix}}\ge 1$. Let the hidden parameters be sampled according to
	Definition \ref{def:sampling-scheme}. Define  
	\[\widehat T_{\rho}=\frac{1}{m}\Big(\sum_{j=1}^{m_{\mathrm{free}}}\phi_P(\cdot;\theta_j)\otimes\phi_P(\cdot;\theta_j)+\sum_{j=1}^{m_{\mathrm{fix}}}\phi_P(\cdot;\bar\theta_j)\otimes\phi_P(\cdot;\bar\theta_j)\Big).
	\]
	Then for any $\delta\in(0,1)$, if
	\[ m\ge\max \Bigg\{d_{\max}(\gamma_{P},\lambda) \Big(\frac{1}{t^2}+\frac{1}{3t} \Big), 2d_{\max}(\gamma_{P},\lambda)
	 \Big(\frac{1}{t^2}+\frac{1}{3t} \Big)
	\log\frac{14d(\lambda)}{\delta}
	 \Bigg\}, \] 
	with probability at least $1-\delta$, we have 
	\[ ({T}_{\rho}+\lambda I)^{-1/2}(\widehat{T}_{\rho}-{T}_{\rho})({T}_{\rho}+\lambda I)^{-1/2}\succcurlyeq -tI,~\text{equivalently,}~\widehat T_\rho+\lambda I
	\succcurlyeq
	(1-t)(T_\rho+\lambda I). \]
\end{proposition}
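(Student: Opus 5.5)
We will apply Lemma \ref{lemma:modified-bernstein-inequality} to the whitened, centered summands. Write $A=(T_\rho+\lambda I)^{-1/2}$. For each trainable index $j\in[m_{\mathrm{free}}]$ set $Z_j=\frac1m A\,\phi_P(\cdot;\theta_j)\otimes\phi_P(\cdot;\theta_j)\,A$, and for fixed indices set $\bar Z_j$ analogously with $\bar\theta_j$. The centered operators are $X_j=Z_j-\EE Z_j$ and $\bar X_j=\bar Z_j-\EE\bar Z_j$. They are independent, self-adjoint and rank at most two plus a trace-class mean, hence Hilbert--Schmidt. Lemma \ref{lemma:kernel-operator-invariance} gives
\[
\sum_j\EE Z_j+\sum_j\EE\bar Z_j=\tfrac1m\bigl(m_{\mathrm{free}}A\,T_{\rho_1}A+m_{\mathrm{fix}}A\,T_{\rho_0}A\bigr)=A\,T_\rho A,
\]
where $T_{\rho_i}=\EE_{\theta\sim\rho_i}[\phi_P\otimes\phi_P]$ and $P\,T_{\rho_1}+(1-P)\,T_{\rho_0}=T_\rho$. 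Thus $A(\widehat T_\rho-T_\rho)A=\sum X_j+\sum\bar X_j$. The two groups are not identically distributed, but this does no harm: only the sum of second moments enters, and the mixture identity recombines everything into a $\rho_P$-average. This is the key point of the argument.

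\emph{Norm bound.} $\norm{Z_j}_{\mathrm{op}}=\frac1m s_{\lambda,P}(\theta_j)\le d_{\max}/m$ almost surely. Here the support condition is used: $\theta_j\sim\rho_1$ lies $\rho_P$-a.s.\ in the set where the ess sup applies, since $\rho_1\ll\rho_P$ when $P>0$. Moreover $\norm{\EE Z_j}_{\mathrm{op}}\le d_{\max}/m$, and since both $Z_j$ and $\EE Z_j$ are PSD, $\norm{X_j}_{\mathrm{op}}\le\max\{\norm{Z_j},\norm{\EE Z_j}\}\le d_{\max}/m=:U$.

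\emph{Variance.} $\EE X_j^2\preccurlyeq\EE Z_j^2\preccurlyeq \frac{d_{\max}}{m}\EE Z_j$, because $Z_j^2=\norm{Z_j}Z_j$ for a rank-one PSD operator. Summing over both groups,
\[
\sum\EE X^2\preccurlyeq \frac{d_{\max}}{m}A\,T_\rho A=\frac{d_{\max}}{m}T_\rho(T_\rho+\lambda)^{-1}.
\]
This has operator norm at most $d_{\max}/m=:\sigma^2$ and trace at most $d_{\max}d(\lambda)/m$. For the trace factor we need a variant of the lemma. Monotonicity of the trace under $\preccurlyeq$ gives $\tr(\sum\EE X^2)/\sigma^2\le d(\lambda)$, but $\sigma^2$ must dominate the actual variance norm, and it does. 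Then $\tr/\sigma^2\le d(\lambda)$.

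\emph{Conclusion.} Take $t\in(0,1)$. The condition $t^2\ge\sigma^2+Ut/3$ reads $m\ge d_{\max}(1/t^2+1/(3t))$, which is the first term in the width requirement. The tail bound becomes $14d(\lambda)\exp\bigl(-\frac{mt^2/2}{d_{\max}(1+t/3)}\bigr)$, and this is at most $\delta$ once $m\ge 2d_{\max}(1/t^2+1/(3t))\log(14d(\lambda)/\delta)$, since $t^2/(1+t/3)=1/(1/t^2+1/(3t))$. On the complementary event $\norm{A(\widehat T_\rho-T_\rho)A}_{\mathrm{op}}\le t$, which gives the one-sided inequality $\succcurlyeq -tI$. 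Conjugating by $A^{-1}$ yields $\widehat T_\rho+\lambda I\succcurlyeq(1-t)(T_\rho+\lambda I)$.

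\textbf{Main obstacle.} The delicate step is the trace factor. Lemma \ref{lemma:modified-bernstein-inequality} uses the exact $\sum\EE X_j^2$ in the trace, and this is bounded by monotonicity as above. We must also check the degenerate cases: $m_{\mathrm{free}}=0$ or $m_{\mathrm{fix}}=0$, and $d(\lambda)$ possibly less than $1$. If $d(\lambda)$ is small the bound stays valid because the lemma's inequality holds as stated. Finally, the support of $\rho_i$ must be matched with the $\rho_P$-essential supremum, and this is handled by the positivity assumption on $\gamma_P$.
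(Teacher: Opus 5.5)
Your proposal is correct and follows the paper's proof essentially step for step. It uses the same whitened, group-wise centered summands and the same bound $0\preccurlyeq A_P(\theta)\preccurlyeq d_{\max}(\gamma_P,\lambda)I$. It has the same rank-one variance estimate, recombined via $P\,\EE_{\rho_1}+(1-P)\,\EE_{\rho_0}=\EE_{\rho_P}$ into $\frac{d_{\max}}{m}T_\rho(T_\rho+\lambda I)^{-1}$, and applies Lemma~\ref{lemma:modified-bernstein-inequality} with $\sigma^2=U=d_{\max}(\gamma_P,\lambda)/m$. Two small remarks. First, the paper disposes of the case $d_{\max}(\gamma_P,\lambda)=0$ separately (there $\widehat T_\rho=T_\rho=0$ almost surely), which you omit; it is needed because the lemma requires $U,\sigma^2>0$. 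Second, no ``variant'' of the lemma is needed for the trace factor, since it is already stated with the proxy $\sigma^2$ in the denominator.
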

\begin{proof}[\textbf{Proof of Proposition \ref{prop:partial-order-deviation-empirical-continu}}]
	We note that if $d_{\max}(\gamma_P,\lambda)=0$, then $\phi_P(\cdot;\theta)=0$ for $\rho_P$-almost every $\theta$, and $T_\rho=\widehat T_\rho=0$ almost surely. The conclusion holds for every $m\ge1$. Thus, below assume $d_{\max}(\gamma_P,\lambda)>0$. We define the rank-one operator as
	\[ A_{P}(\theta)=({T}_{\rho}+\lambda I)^{-1/2} [\phi_{P}(\cdot;\theta)\otimes_{L_2(d\mu)}\phi_{P}(\cdot;\theta) ]({T}_{\rho}+\lambda I)^{-1/2}. \]
	And the two related self-adjoint operators $X_{j}^{\mathrm{free}}$ and $X_{j}^{\mathrm{fix}}$ are defined as
	\begin{equation}
		\nonumber
		\begin{aligned}
			&X_{j}^{\mathrm{free}}=\frac{1}{m} \bigl(A_{P}(\theta_{j})-\EE_{\theta\sim {\rho_{1}}}[A_{P}(\theta)] \bigl),~j\in[m_{\mathrm{free}}],\\
			&X_{j}^{\mathrm{fix}}=\frac{1}{m} \bigl(A_{P}(\bar\theta_{j})-\EE_{\bar\theta\sim {\rho_{0}}}[A_{P}(\bar\theta)] \bigl),~j\in[m_{\mathrm{fix}}].
		\end{aligned}
	\end{equation}
	By construction and the fixed group sizes, we have
	\[ \begin{aligned}
		&\sum_{j=1}^{m_{\mathrm{free}}}X_{j}^{\mathrm{free}}+\sum_{j=1}^{m_{\mathrm{fix}}}X_{j}^{\mathrm{fix}}\\
		&=\frac{1}{m}\Big(\sum_{j=1}^{m_{\mathrm{free}}}A_{P}(\theta_{j})+\sum_{j=1}^{m_{\mathrm{fix}}}A_{P}(\bar\theta_{j})\Big)-P\EE_{\theta\sim{\rho_{1}}}[A_{P}(\theta)]-(1-P)\EE_{\theta\sim{\rho_{0}}}[A_{P}(\theta)]\\
		&=({T}_{\rho}+\lambda I)^{-1/2}\widehat{T}_{\rho}({T}_{\rho}+\lambda I)^{-1/2}-\EE_{\theta\sim\rho_{P}}[A_{P}(\theta)]=({T}_{\rho}+\lambda I)^{-1/2}(\widehat{T}_{\rho}-{T}_{\rho})({T}_{\rho}+\lambda I)^{-1/2}.
	\end{aligned} \]
	Therefore, we bound the operator norm of  $\sum_{j=1}^{m_{\mathrm{free}}}X_{j}^{\mathrm{free}}+\sum_{j=1}^{m_{\mathrm{fix}}}X_{j}^{\mathrm{fix}}$ with high probability to get the final result. 
	
	First, recall the following notation in \eqref{eq:effective-dimension} and \eqref{eq:definition-of-d-max},
	\begin{equation}
        \nonumber
		\begin{aligned}
			d(\lambda)&=\operatorname{tr}(T_{\rho}(T_{\rho}+\lambda I)^{-1})\\
			&=\int_{\Theta}\operatorname{tr}((T_{\rho}+\lambda I)^{-1/2}[\phi_{P}(\cdot;\theta)\otimes\phi_{P}(\cdot;\theta)](T_{\rho}+\lambda I)^{-1/2})d\rho_{P}(\theta)\\
			&=\int_{\Theta}\frac{\langle \phi(\cdot;\theta),(T_{\rho}+\lambda I)^{-1}\phi(\cdot;\theta)\rangle}{\gamma_{P}(\theta)}\gamma_{P}(\theta)d\rho(\theta)\le d_{\max}(\gamma_{P},\lambda).
		\end{aligned}
	\end{equation}
	For $\rho_{P}$-almost every $\theta$, we have
	\begin{equation}
		\label{eq:bound-of-AP-theta}
		\begin{aligned}
			\norm{A_{P}(\theta)}_{\operatorname{op}}&=\norm{\frac{1}{\gamma_{P}(\theta)}({T}_{\rho}+\lambda I)^{-1/2}[\phi(\cdot;\theta)\otimes_{L_2(d\mu)}\phi(\cdot;\theta)]({T}_{\rho}+\lambda I)^{-1/2}}_{\operatorname{op}}\\
			&=\frac{1}{\gamma_{P}(\theta)}\norm{[({T}_{\rho}+\lambda I)^{-1/2}\phi(\cdot;\theta)]\otimes_{L_2(d\mu)}[({T}_{\rho}+\lambda I)^{-1/2}\phi(\cdot;\theta)]}_{\operatorname{op}}\\
			&=\frac{\norm{({T}_{\rho}+\lambda I)^{-1/2}\phi(\cdot;\theta)}^{2}_{L_{2}(d\mu)}}{\gamma_{P}(\theta)}=\frac{\langle \phi(\cdot;\theta),(T_{\rho}+\lambda I)^{-1}\phi(\cdot;\theta)\rangle}{\gamma_{P}(\theta)}\\
			&\le d_{\max}(\gamma_{P},\lambda),
		\end{aligned}
	\end{equation}
	where $\norm{\cdot}_{\operatorname{op}}$ denotes the operator norm,
which here equals the largest singular value. We use the identity
$\norm{u\otimes_{L_2(d\mu)}u}_{\operatorname{op}}
=\norm{u}_{L_2(d\mu)}^{2}$ for every $u\in L_2(d\mu)$. By the construction ($\phi_{P}\otimes\phi_{P}$ is positive) and \eqref{eq:bound-of-AP-theta}, we have 
	\begin{equation}
		\label{eq:bound-of-AP-theta-2222}
		0 \preccurlyeq A_{P}(\theta) \preccurlyeq d_{\max}(\gamma_{P},\lambda) I.
	\end{equation}
	
	Second, by construction, we have $\EE_{\theta\sim \rho_{1}}[X_{j}^{\mathrm{free}}]=0$ and $\EE_{\bar\theta\sim\rho_{0}}[X_{j}^{\mathrm{fix}}]=0$. Thus these operators are mutually independent, centered and identically distributed within each group, although they are not identically distributed across the two groups.
	
	And for $X_{j}^{\mathrm{free}}$ and $X_{j}^{\mathrm{fix}}$, by \eqref{eq:bound-of-AP-theta-2222}, we have 
	\[ \begin{aligned}
		-\frac{1}{m}d_{\max}(\gamma_{P},\lambda) I \preccurlyeq X_{j}^{\mathrm{free}},X_{j}^{\mathrm{fix}} \preccurlyeq \frac{1}{m}d_{\max}(\gamma_{P},\lambda) I,
	\end{aligned} \]
	which means $\norm{X_{j}^{\mathrm{free}}}_{\operatorname{op}}\le \frac{d_{\max}(\gamma_{P},\lambda)}{m},j\in[m_{\mathrm{free}}]$ and $\norm{X_{j}^{\mathrm{fix}}}_{\operatorname{op}}\le \frac{d_{\max}(\gamma_{P},\lambda)}{m},j\in[m_{\mathrm{fix}}]$.
	
	Third, we need to estimate the operators' variance $\EE[(X_{j}^{\mathrm{free}})^{2}]$ and $\EE[(X_{j}^{\mathrm{fix}})^{2}]$. For $\EE[(X_{j}^{\mathrm{free}})^{2}]$, we have
	\begin{equation}
		\nonumber
		\begin{aligned}
			\EE_{\rho_{1}}[(X_{j}^{\mathrm{free}})^{2}]&=\frac{1}{m^{2}} (\EE_{\rho_{1}}[A_{P}(\theta)^{2}]-\EE_{\rho_{1}}[A_{P}(\theta)]^{2} )\\
			&\preccurlyeq \frac{1}{m^{2}}\EE_{\rho_{1}}[A_{P}(\theta)^{2}]\preccurlyeq  \frac{d_{\max}(\gamma_{P},\lambda)}{m^{2}}\EE_{\rho_{1}}[A_{P}(\theta)].
		\end{aligned}
	\end{equation}
	Similarly, the relation $\EE_{\rho_{0}}[(X_{j}^{\mathrm{fix}})^{2}]\preccurlyeq \frac{d_{\max}(\gamma_{P},\lambda)}{m^{2}}\EE_{\rho_{0}}[A_{P}(\theta)]$ holds for $X_{j}^{\mathrm{fix}}$. Thus,
	\begin{equation}
		\nonumber
		\begin{aligned}
			&\sum_{j=1}^{m_{\mathrm{free}}}\EE_{\rho_{1}}[(X_{j}^{\mathrm{free}})^{2}]+\sum_{j=1}^{m_{\mathrm{fix}}}\EE_{\rho_{0}}[(X_{j}^{\mathrm{fix}})^{2}]\preccurlyeq \frac{d_{\max}(\gamma_{P},\lambda)}{m} \Big(\frac{m_{\mathrm{free}}}{m}\EE_{\rho_{1}}[A_{P}(\theta)]+\frac{m_{\mathrm{fix}}}{m}\EE_{\rho_{0}}[A_{P}(\theta)] \Big)\\
			&= \frac{d_{\max}(\gamma_{P},\lambda)}{m}\EE_{{\rho_{P}}}[A_{P}(\theta)]=\frac{d_{\max}(\gamma_{P},\lambda)}{m}{T}_{\rho}({T}_{\rho}+\lambda I)^{-1}.
		\end{aligned}
	\end{equation}
	For notational convenience, denote $\sum_{j=1}^{m_{\mathrm{free}}}\EE_{\rho_{1}}[(X_{j}^{\mathrm{free}})^{2}]+\sum_{j=1}^{m_{\mathrm{fix}}}\EE_{\rho_{0}}[(X_{j}^{\mathrm{fix}})^{2}]$ as $\sum_{j=1}^{m}\EE[(X_{j})^{2}]$. Therefore, we have 
	\begin{equation}
		\nonumber
		\begin{aligned}
			\Big\Vert{\sum_{j=1}^{m}\EE[(X_{j})^{2}]}\Big\Vert_{\operatorname{op}}&\le \frac{d_{\max}(\gamma_{P},\lambda)}{m},~\text{and}~\\
			\operatorname{tr}\Big(\sum_{j=1}^{m}\EE[(X_{j})^{2}]\Big)&\le \frac{d_{\max}(\gamma_{P},\lambda)}{m} \operatorname{tr} ({T}_{\rho}({T}_{\rho}+\lambda I)^{-1} )=\frac{d(\lambda)d_{\max}(\gamma_{P},\lambda)}{m}.
		\end{aligned}
	\end{equation}
	Finally, we apply the operator Bernstein inequality. By Lemma \ref{lemma:modified-bernstein-inequality}, for $t$ satisfying \eqref{eq:mid-m-ref} below, we have
	\begin{equation}
		\label{eq:the-bound-mid}
		\PP\Bigg(\Big\Vert{\sum_{j=1}^{m_{\mathrm{free}}}X_{j}^{\mathrm{free}}+\sum_{j=1}^{m_{\mathrm{fix}}}X_{j}^{\mathrm{fix}}}\Big\Vert_{\operatorname{op}}>t\Bigg)\le 14d(\lambda)\exp\Big(-\frac{t^{2}/2}{(d_{\max}(\gamma_{P},\lambda)/m)(1+t/3)}\Big).
	\end{equation}
	Lemma \ref{lemma:modified-bernstein-inequality} requires that, with $\sigma^{2}=U=d_{\max}(\gamma_{P},\lambda)/m$,
	\begin{equation}
		\label{eq:mid-m-ref}
		t^{2}\ge \sigma^{2}+\frac{Ut}{3} \Longrightarrow m\ge d_{\max}(\gamma_{P},\lambda) \Big(\frac{1}{t^2}+\frac{1}{3t} \Big).
	\end{equation}
	At the same time, for any $\delta\in(0,1)$, with $d(\lambda)\le d_{\max}(\gamma_{P},\lambda)$, to make the upper bound in \eqref{eq:the-bound-mid} at most $\delta$,  it suffices that
	\begin{equation}
		\label{eq:mid-m-ref-2}
		m\ge 2d_{\max}(\gamma_{P},\lambda)
		 \Big(\frac{1}{t^2}+\frac{1}{3t} \Big)
		\log\frac{14d(\lambda)}{\delta}.
	\end{equation}
	Thus when \eqref{eq:mid-m-ref} and \eqref{eq:mid-m-ref-2} are valid, we have with probability at least $1-\delta$,
	\[ \Big\Vert{\sum_{j=1}^{m_{\mathrm{free}}}X_{j}^{\mathrm{free}}+\sum_{j=1}^{m_{\mathrm{fix}}}X_{j}^{\mathrm{fix}}}\Big\Vert_{\operatorname{op}}\le t,~\text{and}~({T}_{\rho}+\lambda I)^{-1/2}(\widehat{T}_{\rho}-{T}_{\rho})({T}_{\rho}+\lambda I)^{-1/2}\succcurlyeq -tI. \qedhere\]
\end{proof}

\subsection{Proofs of the Main Results in Section \ref{sec:main-results}}
\label{subsec:proof-main-genral-results}
\begin{proof}[\textbf{Proof of Lemma \ref{lemma:kernel-operator-invariance} (\nameref*{lemma:kernel-operator-invariance})}]~
	
\textbf{For item 1}, the changes of measure below use $\gamma_P>0$ for $\rho$-almost every parameter in the nonzero-feature region, with the reweighted feature set to zero elsewhere. By Fubini’s theorem, this zero-feature convention preserves the kernel for $(\mu\otimes\mu)$-almost every input pair. All kernel identities below are understood in this sense. Since $d\rho_P=\gamma_Pd\rho$ in \eqref{eq:distributional-constr.} and $\phi_P=\gamma_P^{-1/2}\phi$ on the nonzero-feature region, we have
	\[ \begin{aligned}
		k_{\rho}(x,x^{\prime})&=\int_{\Theta}\phi(x;\theta)\gamma_{P}^{-1}(\theta)\phi(x^{\prime};\theta)\gamma_{P}(\theta)d\rho(\theta)=\int_{\Theta}\phi_{P}(x;\theta)\phi_{P}(x^{\prime};\theta)d\rho_{P}(\theta).
	\end{aligned} \]
	For $0<P<1$, linearity of expectation and the fixed group sizes yield
	\begin{align*}
		\EE[\hat k_\rho^{(P)}(x,x')]
		&=\frac{m_{\mathrm{free}}}{m}\EE_{\theta\sim\rho_{1}}[\phi_P(x;\theta)\phi_P(x';\theta)]+\frac{m_{\mathrm{fix}}}{m}\EE_{\theta\sim\rho_{0}}[\phi_P(x;\theta)\phi_P(x';\theta)]
		\\
		&=\int_\Theta\phi_P(x;\theta)\phi_P(x';\theta)\,
		d\bigl(P\rho_{1}+(1-P)\rho_{0}\bigr)(\theta)=k_\rho(x,x').
	\end{align*}
	For $P=0$ or $P=1$, the same identity follows directly from the single nonempty group.
	
	\textbf{For item 2}, Assumption \ref{assumption:operator} gives
	\[
	\int_\Theta
	\|\phi_P(\cdot;\theta)\|_{L_2(d\mu)}^2d\rho_P(\theta)=\int_\Theta\|\phi(\cdot;\theta)\|_{L_2(d\mu)}^2d\rho(\theta)<\infty.
	\]
	Thus the integral of the rank-one operators is well defined. By the change of measure,
	\begin{align*}
		\int_\Theta
		\phi_P(\cdot;\theta)\otimes\phi_P(\cdot;\theta)
		\,d\rho_P(\theta)
		&=\int_\Theta\gamma_P(\theta)^{-1}\bigl(\phi(\cdot;\theta)\otimes\phi(\cdot;\theta)\bigr)\gamma_P(\theta)\,d\rho(\theta)=T_\rho.
	\end{align*}
	Thus the original spectral decomposition $T_\rho=\sum_{k\ge1}\eta_k(e_k\otimes e_k)$	and its eigenpairs remain unchanged.
\end{proof}

\begin{proof}[\textbf{Proof of Theorem \ref{thm:general-approx.-bounds} (\nameref*{thm:general-approx.-bounds})}]
	The following proof builds on the general strategy of \cite[Proposition~1]{bach2017equivalence}, but requires a nontrivial modification. The original proof considers i.i.d. features drawn from a single distribution, while our sampling scheme consists of two independent groups drawn separately from $\rho_{1}$ and $\rho_{0}$. We therefore extend the proof to the two-group sampling setting and use the variance-proxy form of Bernstein's inequality.
	
	\mypara{Proof Overview.} The ridge solution reduces control of both the approximation error and the coefficient norm to a common quadratic form involving the regularized empirical operator in \eqref{eq:finial-target-mid-ref}. The variance-proxy Bernstein inequality provides the operator comparison needed to bound this quadratic form by the target's squared RKHS norm, yielding \eqref{eq:mid-estimate-111}. Since the concentration event does not depend on the target function, the resulting bounds hold uniformly over the RKHS unit ball. Finally, choosing $\lambda=\varepsilon/4$ in \eqref{eq:finial-result-thm1} gives the prescribed accuracy.
	
	The proof details are demonstrated below.
	
	\mypara{Ridge Construction.} First, by Definition \ref{def:sampling-scheme}, we recall necessary notations:
	\[ d\rho_{1}(\theta)=\gamma_{1}(\theta)d\rho(\theta),~d\rho_{0}(\theta)=\gamma_{0}(\theta)d\rho(\theta).
	\] 
	Assuming $Pm\in\NN$, we have $m_{\mathrm{free}}=Pm$, $m_{\mathrm{fix}}=(1-P)m$, $\gamma_{P}(\theta)=P{\gamma_{1}}(\theta)+(1-P)\gamma_{0}(\theta)$ and $d\rho_{P}(\theta)=\gamma_{P}(\theta)d\rho(\theta)$. By \eqref{eq:target-function-statement}, there exists $a_{f}^{*}\in L_{2}(d\rho)$ such that
	\[ f=\int_{\Theta} a_{f}^{*}(\theta)\phi(\cdot;\theta)d\rho=\int_{\Theta}\gamma_{P}^{-1/2}a_{f}^{*}(\theta)\phi_{P}(\cdot;\theta)d\rho_{P}(\theta), \]
	with $\norm{a_{f}^{*}}_{L_{2}(d\rho)}=\norm{\gamma_{P}^{-1/2}a_{f}^{*}}_{L_{2}(d\rho_{P})}=\norm{f}_{\mathcal{H}_{\rho}}$. Therefore, it suffices to consider $\norm{f}_{\mathcal{H}_{\rho}}\le 1$ and set $a_f=\gamma_P^{-1/2}a_f^*$ when $\gamma_{P}>0$ and $a_f=0$ when $\gamma_{P}=0$. With the regularization parameter $\lambda>0$, now consider the following minimization problem
	\begin{equation}
		\label{eq:target-error-term-in-proof}
		\underset{\beta}{\min}\Bigg\Vert{f-\sum_{j=1}^{m_{\mathrm{free}}}\beta_{j}\phi_{P}(\cdot;\theta_{j})-\sum_{j=1}^{m_{\mathrm{fix}}}\beta_{m_{\mathrm{free}}+j}\phi_{P}(\cdot;\bar\theta_{j})}\Bigg\Vert_{L_{2}(d\mu)}^{2}+\lambda m\norm{\beta}_{2}^{2},
	\end{equation}
	where $\phi_{P}(\cdot;\theta)=\gamma_{P}(\theta)^{-1/2}\phi(\cdot;\theta)$, ${\theta}_{j}\stackrel{\mathrm{i.i.d.}}{\sim}\rho_{1}, j\in[m_{\mathrm{free}}]$, and $\bar{\theta}_{j}\stackrel{\mathrm{i.i.d.}}{\sim}\rho_{0},j\in[m_{\mathrm{fix}}]$.
	
	We consider the operators $\Phi_{\mathrm{free}}:\RR^{m_{\mathrm{free}}}\to L_{2}(\mathcal{X},d\mu)$ and $\Phi_{\mathrm{fix}}:\RR^{m_{\mathrm{fix}}}\to L_{2}(\mathcal{X},d\mu)$ such that
	\[ \Phi_{\mathrm{free}}\beta_{\mathrm{free}}=\sum_{j=1}^{m_{\mathrm{free}}}\beta_{j}\phi_{P}(\cdot;\theta_{j}),~\Phi_{\mathrm{fix}}\beta_{\mathrm{fix}}=\sum_{j=1}^{m_{\mathrm{fix}}}\beta_{m_{\mathrm{free}}+j}\phi_{P}(\cdot;\bar\theta_{j}). \]
	Further define $\Phi_{P}:\RR^{m_{\mathrm{free}}}\times \RR^{m_{\mathrm{fix}}}\to L_{2}(\mathcal{X},d\mu)$, with $\Phi_{P}=[\Phi_{\mathrm{free}} ~\Phi_{\mathrm{fix}}]$ and  $\Phi_{P}\beta=\Phi_{\mathrm{free}}\beta_{\mathrm{free}}+\Phi_{\mathrm{fix}}\beta_{\mathrm{fix}}$. Therefore, we solve the following regularized least-squares problem
	\[ \underset{\beta}{\min}\norm{f-\Phi_{P}\beta}_{L_{2}(d\mu)}^{2}+\lambda m(\norm{\beta_{\mathrm{free}}}_{2}^{2}+\norm{\beta_{\mathrm{fix}}}_{2}^{2}), \]
	whose solution is 
	\begin{equation}
		\label{eq:the-solution-alpha-beta-proof}
		(\beta_{\mathrm{free}},\beta_{\mathrm{fix}})=(\Phi_{P}^{*}\Phi_{P}+\lambda mI)^{-1}\Phi_{P}^{*}f=\frac{1}{m}\Phi_{P}^{*}(\frac{1}{m}\Phi_{P}\Phi_{P}^{*}+\lambda I)^{-1}f\in\RR^{m}.
	\end{equation}
	
	We thus denote the empirical integral operator $\widehat{T}_{\rho}: L_{2}(\mathcal{X},d\mu)\to L_{2}(\mathcal{X},d\mu)$ as
	\[ \widehat{T}_{\rho}=\frac{1}{m}\Phi_{P}\Phi_{P}^{*}=\frac{1}{m} \Big[\sum_{j=1}^{m_{\mathrm{free}}}\phi_{P}(\cdot;\theta_{j})\otimes\phi_{P}(\cdot;\theta_{j})+\sum_{j=1}^{m_{\mathrm{fix}}}\phi_{P}(\cdot;\bar\theta_{j})\otimes\phi_{P}(\cdot;\bar\theta_{j}) \Big]. \]
	By construction, for $\widehat{T}_{\rho}$, we have $\EE [\widehat{T}_{\rho} ]=T_{\rho}$ from
	\[ \begin{aligned}
		\EE [\widehat{T}_{\rho} ]&=\frac{m_{\mathrm{free}}}{m}\int_{\Theta}\frac{\gamma_{1}(\theta)}{\gamma_{P}(\theta)}\phi(\cdot;\theta)\otimes\phi(\cdot;\theta)d\rho(\theta)+\frac{m_{\mathrm{fix}}}{m}\int_{\Theta}\frac{\gamma_{0}(\theta)}{\gamma_{P}(\theta)}\phi(\cdot;\theta)\otimes\phi(\cdot;\theta)d\rho(\theta)\\
		&=\int_{\Theta}\frac{P\gamma_{1}(\theta)+(1-P)\gamma_{0}(\theta)}{\gamma_{P}(\theta)}\phi(\cdot;\theta)\otimes\phi(\cdot;\theta)d\rho(\theta)\\
		&=\int_{\Theta}\phi(\cdot;\theta)\otimes\phi(\cdot;\theta)d\rho(\theta)=T_{\rho},
	\end{aligned} \]
	where we utilize the $m_{\mathrm{free}}/m=P$, $m_{\mathrm{fix}}/m=1-P$ and definition of $\gamma_{P}(\theta)$.
	
	\mypara{Error and Coefficient Bounds.} Now we consider $\norm{f-\Phi_{P}\beta}_{L_{2}(d\mu)}^{2}$, 
	\begin{equation}
		\label{eq:error-part-1-general-rates}
		\begin{aligned}
			\norm{f-\Phi_{P}\beta}_{L_{2}(d\mu)}^{2}&=\norm{f-\widehat{T}_{\rho}(\widehat{T}_{\rho}+\lambda I)^{-1}f}_{L_{2}(d\mu)}^{2}=\norm{\lambda(\widehat{T}_{\rho}+\lambda I)^{-1}f}_{L_{2}(d\mu)}^{2}\\
			&=\lambda^{2} \langle f, (\widehat{T}_{\rho}+\lambda I)^{-2}f \rangle_{L_{2}(d\mu)}\le\lambda \langle f,(\widehat{T}_{\rho}+\lambda I)^{-1}f  \rangle_{L_{2}(d\mu)},
		\end{aligned}
	\end{equation}
	where the classical partial order between self-adjoint operators indicates $(\widehat{T}_{\rho}+\lambda I)^{-2}\preccurlyeq \lambda^{-1}(\widehat{T}_{\rho}+\lambda I)^{-1}$. 
	
	Then consider $\lambda m(\norm{\beta_{\mathrm{free}}}_{2}^{2}+\norm{\beta_{\mathrm{fix}}}_{2}^{2})$, with \eqref{eq:the-solution-alpha-beta-proof}, we have 
	\begin{equation}
		\label{eq:error-part-2-general-rates}
		\begin{aligned}
			&\lambda m(\norm{\beta_{\mathrm{free}}}_{2}^{2}+\norm{\beta_{\mathrm{fix}}}_{2}^{2})\\
			&= \lambda m \langle  \frac{1}{m}\Phi_{P}^{*}(\frac{1}{m}\Phi_{P}\Phi_{P}^{*}+\lambda I)^{-1}f, \frac{1}{m}\Phi_{P}^{*}(\frac{1}{m}\Phi_{P}\Phi_{P}^{*}+\lambda I)^{-1}f \rangle_{\RR^{m}}\\
			&=\lambda \langle  (\widehat{T}_{\rho}+\lambda I)^{-1}f, \widehat{T}_{\rho}(\widehat{T}_{\rho}+\lambda I)^{-1}f  \rangle_{L_{2}(d\mu)}\le \lambda \langle  f, (\widehat{T}_{\rho}+\lambda I)^{-1}f  \rangle_{L_{2}(d\mu)},
		\end{aligned}
	\end{equation}
	where we use $(\widehat{T}_{\rho}+\lambda I)^{-2}\widehat{T}_{\rho}\preccurlyeq (\widehat{T}_{\rho}+\lambda I)^{-1}$.
	
	With \eqref{eq:target-error-term-in-proof}, \eqref{eq:error-part-1-general-rates} and \eqref{eq:error-part-2-general-rates}, both quantities are bounded by
	\begin{equation}
		\label{eq:finial-target-mid-ref}
		\begin{aligned}
			\max \{\norm{f-\Phi_{P}\beta}_{L_{2}(d\mu)}^{2}, \lambda m(\norm{\beta_{\mathrm{free}}}_{2}^{2}+\norm{\beta_{\mathrm{fix}}}_{2}^{2}) \}&\le \lambda \langle  f, (\widehat{T}_{\rho}+\lambda I)^{-1}f  \rangle_{L_{2}(d\mu)}
		\end{aligned}
	\end{equation}
	\mypara{Uniform Control over the RKHS Unit Ball.} To bound $\langle  f, (\widehat{T}_{\rho}+\lambda I)^{-1}f \rangle_{L_{2}(d\mu)}$, use the operator factorization in \cite{alaoui2015fast},
	\[ 
	\begin{aligned}
		& \langle  f, (\widehat{T}_{\rho}+\lambda I)^{-1}f  \rangle_{L_{2}(d\mu)}= \langle  f, ({T}_{\rho}+\lambda I+\widehat{T}_{\rho}-{T}_{\rho})^{-1}f  \rangle_{L_{2}(d\mu)}\\
		&= \langle  f, ({T}_{\rho}+\lambda I+{({T}_{\rho}+\lambda I)^{1/2}({T}_{\rho}+\lambda I)^{-1/2}}(\widehat{T}_{\rho}-{T}_{\rho}){({T}_{\rho}+\lambda I)^{-1/2}({T}_{\rho}+\lambda I)^{1/2}})^{-1}f  \rangle_{L_{2}(d\mu)}\\
		&= \langle ({T}_{\rho}+\lambda I)^{-1/2}f, (I+{({T}_{\rho}+\lambda I)^{-1/2}(\widehat{T}_{\rho}-{T}_{\rho})({T}_{\rho}+\lambda I)^{-1/2}})^{-1}({T}_{\rho}+\lambda I)^{-1/2}f  \rangle_{L_{2}(d\mu)}.
	\end{aligned}
	\]
	Based on Proposition \ref{prop:partial-order-deviation-empirical-continu}, for any $\delta\in(0,1)$, we can prove that the deviation part satisfies, for any $t\in(0,1)$, with probability at least $1-\delta$,
	\[ {({T}_{\rho}+\lambda I)^{-1/2}(\widehat{T}_{\rho}-{T}_{\rho})({T}_{\rho}+\lambda I)^{-1/2}}\succcurlyeq -tI. \]
	Then we consider $t=\frac{3}{4}$. Note that the choice $t=3/4$ is made only for notational convenience. Any fixed $t\in(0,1)$ leads to the same
	approximation rate, only with different universal constants. We require that
	\begin{equation}
		\label{eq:m-required-main-thm-proof}
		m\ge\max \Bigg\{\frac{20}{9}d_{\max}(\gamma_{P},\lambda), \frac{40}{9}d_{\max}(\gamma_{P},\lambda)\log\frac{14d_{\max}(\gamma_{P},\lambda)}{\delta} \Bigg\}
	\end{equation}
	Since $m\in\NN_{+}$, we can turn \eqref{eq:m-required-main-thm-proof} into
	\begin{equation}
		\label{eq:m-required-main-thm-proof-final}
		m\ge\frac{40}{9}d_{\max}(\gamma_{P},\lambda)\log\frac{14d_{\max}(\gamma_{P},\lambda)}{\delta},
	\end{equation}
	which is because when $d_{\max}\le \frac{9}{20}$, $m\ge \frac{20}{9}d_{\max}$ naturally holds while when $d_{\max}> \frac{9}{20}$, $\log\frac{14d_{\max}}{\delta}>\log(14\cdot\frac{9}{20})>\frac{1}{2}$. Thus when \eqref{eq:m-required-main-thm-proof-final} is valid, with probability at least $1-\delta$,
	\[ ({T}_{\rho}+\lambda I)^{-1/2}(\widehat{T}_{\rho}-{T}_{\rho})({T}_{\rho}+\lambda I)^{-1/2}\succcurlyeq -\frac{3}{4}I. \]
	Therefore,
	\begin{equation}
		\label{eq:mid-estimate-111}
		\begin{aligned}
			\langle f, (\widehat{T}_{\rho}+\lambda I)^{-1}f  \rangle_{L_{2}(d\mu)}&\le  \langle ({T}_{\rho}+\lambda I)^{-1/2}f, (1-t)^{-1}({T}_{\rho}+\lambda I)^{-1/2}f  \rangle_{L_{2}(d\mu)}\\
			&=(1-t)^{-1} \langle f, ({T}_{\rho}+\lambda I)^{-1}f  \rangle_{L_{2}(d\mu)}\\
			&= 4\langle f, ({T}_{\rho}+\lambda I)^{-1}f \rangle_{L_{2}(d\mu)}.
		\end{aligned}
	\end{equation}
	We next bound $\langle f, ({T}_{\rho}+\lambda I)^{-1}f \rangle_{L_{2}(d\mu)}$. By Cauchy-Schwarz inequality, we have for any $b\in L_{2}(\mathcal{X}, d\mu)$,
	\[ \begin{aligned}
		&\langle b,(f\otimes_{L_2(d\mu)}f)b \rangle_{L_{2}(d\mu)}= \Big(\int_{\mathcal{X}}b(x)f(x)d\mu(x) \Big)^{2}\\
        &= \Big(\int_{\mathcal{X}}\int_{\Theta}b(x)a_{f}(\theta)\phi_{P}(x;\theta)d\mu(x)d\rho_{P}(\theta) \Big)^{2}\\
		&\le \int_{\Theta}a_{f}(\theta)^{2}d\rho_{P}(\theta)\cdot\int_{\Theta} \Big(\int_{\mathcal{X}}b(x)\phi_{P}(x;\theta)d\mu(x) \Big)^{2}d\rho_{P}(\theta)\\
		&\le \norm{a_{f}}_{L_{2}(d\rho_{P})}^{2}\cdot \int_{\Theta} \langle b,(\phi_{P}\otimes_{L_2(d\mu)}\phi_{P})b \rangle_{L_{2}(d\mu)}d\rho_{P}(\theta)\le  \langle b,T_{\rho}b \rangle_{L_{2}(d\mu)},
	\end{aligned} \]
	where $T_{\rho}=\int_{\Theta}\phi_{P}(\cdot,\theta)\otimes_{L_2(d\mu)}\phi_{P}(\cdot,\theta)d\rho_{P}(\theta)$. This means $f\otimes_{L_2(d\mu)} f \preccurlyeq T_{\rho}$. Using this inequality with  $(T_{\rho}+\lambda I)^{-1/2}$ gives
	\[ [(T_{\rho}+\lambda I)^{-1/2}f]\otimes [(T_{\rho}+\lambda I)^{-1/2}f]\preccurlyeq [(T_{\rho}+\lambda I)^{-1/2}]T_{\rho}[(T_{\rho}+\lambda I)^{-1/2}]\preccurlyeq I. \]
	This yields $\langle f, ({T}_{\rho}+\lambda I)^{-1}f \rangle_{L_{2}(d\mu)}\le 1$. Consequently, $\langle  f, (\widehat{T}_{\rho}+\lambda I)^{-1}f\rangle_{L_{2}(d\mu)} \le 4$. Further with \eqref{eq:finial-target-mid-ref} and \eqref{eq:mid-estimate-111},  we have with probability at least $1-\delta$
	\begin{equation}
		\label{eq:finial-result-thm1}
		\begin{aligned}
			\norm{f-\Phi_{P}\beta}_{L_{2}(d\mu)}^{2}&\le \lambda \langle  f, (\widehat{T}_{\rho}+\lambda I)^{-1}f  \rangle_{L_{2}(d\mu)}\le \lambda\cdot\frac{1}{1-t}= 4\lambda.
		\end{aligned}
	\end{equation}
	The coefficients also satisfy
	\[ \lambda m(\norm{\beta_{\mathrm{free}}}_{2}^{2}+\norm{\beta_{\mathrm{fix}}}_{2}^{2})\le 4\lambda\Longrightarrow \norm{\beta_{\mathrm{free}}}_{2}^{2}+\norm{\beta_{\mathrm{fix}}}_{2}^{2}=\norm{\beta}_{2}^{2}\le\frac{4}{m}, \]
	when \eqref{eq:m-required-main-thm-proof-final} is satisfied. Let $\lambda=\varepsilon/4$ for any $\varepsilon>0$, and we prove the desired result.
\end{proof}

\begin{proof}[\textbf{Proof of Lemma \ref{lemma:maximal-leverage-bound} (\nameref*{lemma:maximal-leverage-bound})}]
	By the definition of $d_{\max}$, the following inequalities hold $\rho$-almost everywhere.
	\[
	 \langle\phi(\cdot;\theta),(T_\rho+\lambda I)^{-1}\phi(\cdot;\theta) \rangle_{L_2(d\mu)}\le d_{\max}(\gamma_i,\lambda)\gamma_i(\theta),~\text{for}~ i\in\{0,1\}.
	\]
	With $\gamma_{P}=P\gamma_{1}+(1-P)\gamma_{0}$, it follows that
	\[
	\gamma_P(\theta)
	\ge
	 \Big(
	\frac{P}{d_{\max}(\gamma_1,\lambda)}
	+
	\frac{1-P}{d_{\max}(\gamma_0,\lambda)}
	 \Big)
	 \langle
	\phi(\cdot;\theta),(T_\rho+\lambda I)^{-1}\phi(\cdot;\theta)
	 \rangle_{L_2(d\mu)}.
	\]
	Dividing by $\gamma_P(\theta)$ and taking the essential supremum yields
	\[ d_{\max}(\gamma_P,\lambda)\le
	 \Big(
	\frac{P}{d_{\max}(\gamma_1,\lambda)}
	+
	\frac{1-P}{d_{\max}(\gamma_0,\lambda)}
	 \Big)^{-1}= \Big(\frac{P}{\kappa_{\gamma_{1}}(\lambda)}+\frac{1-P}{\kappa_{\gamma_{0}}(\lambda)} \Big)^{-1}d(\lambda), \]
	where we use $\kappa_{\gamma_{i}}(\lambda)={d_{\max}(\gamma_{i},\lambda)}/{d(\lambda)}$.
\end{proof}

\begin{proof}[\textbf{Proof of Theorem \ref{thm:polynomial-rates} (\nameref*{thm:polynomial-rates})}] 
Replacing $\lambda_0$ by $\min\{\lambda_0,\eta_1\}$ preserves Assumption \ref{assumption:mismatch-used}. Thus, assume without loss of generality that $\lambda_0\le\eta_1$. 

For $0<\lambda\le\lambda_0$, since the $\{\eta_j\}$ are in the descending order, let
$J_\lambda=\max\{j\ge1:\eta_j\ge\lambda\}$.	Under Assumption \ref{assumption:eigendecay-used}, we have $\eta_{J_{\lambda}}\asymp \lambda\asymp J_{\lambda}^{-\alpha}$, and hence
	\begin{equation}
		\label{eq:bound-of-d(lamda)-lambda}
		\begin{aligned}
			d(\lambda)&=\sum_{j\ge1}\frac{\eta_j}{\eta_j+\lambda}=\sum_{1\le j\le J_{\lambda}}\frac{\eta_j}{\eta_j+\lambda}+\sum_{j>J_{\lambda}}\frac{\eta_j}{\eta_j+\lambda}\lesssim 1\cdot J_{\lambda}+\frac{1}{\lambda}\sum_{j> J_{\lambda}}\eta_j\\
			&\lesssim J_\lambda+ \frac{1}{\lambda}\sum_{j> J_{\lambda}}j^{-\alpha}\lesssim J_\lambda+\frac{1}{\lambda}J_{\lambda}^{1-\alpha}\asymp\lambda^{-1/\alpha}.
		\end{aligned} 
	\end{equation}
	The matching lower bound follows from $d(\lambda)\ge J_{\lambda}/2 \asymp\lambda^{-1/\alpha}$. Thus $d(\lambda)\asymp\lambda^{-1/\alpha}$. For each such $\lambda$, Assumption \ref{assumption:mismatch-used}
	guarantees a choice of $\gamma_1$ for which
	Lemma \ref{lemma:maximal-leverage-bound} gives
	\[ d_{\max}(\gamma_{P},\lambda)\le \Big(\frac{P}{\kappa_{\gamma_{1}}(\lambda)}+\frac{1-P}{\kappa_{\gamma_{0}}(\lambda)} \Big)^{-1}d(\lambda)\le \Big(\frac{P}{\kappa_{1}^{*}}+\frac{1-P}{\kappa_{0}^{*}} \Big)^{-1}d(\lambda).   \] 
	Combining this with
	\eqref{eq:bound-of-d(lamda)-lambda} yields
	\[ d_{\max}(\gamma_P,\lambda)\lesssim \Big(\frac{P}{\kappa_1^*}+\frac{1-P}{\kappa_0^*} \Big)^{-1}\lambda^{-1/\alpha}, \]
	where the implicit constant is independent of $P$. For the given network width $m$, a sufficient condition for \eqref{eq:sample-condition} is
	\begin{equation}
        \nonumber
		d_{\max}(\gamma_{P},\lambda)\le\frac{9m}{40\log(63m/(20\delta))}.
	\end{equation}
	Indeed, $\log(63m/(20\delta))>1$, so this condition implies $d_{\max}(\gamma_{P},\lambda)\le 9m/40$ and hence $\log(14d_{\max}(\gamma_{P},\lambda)/\delta)\le\log(63m/(20\delta))$. The width condition \eqref{eq:sample-condition} holds. It therefore suffices to choose $\lambda$ satisfying
	\begin{equation}
		\label{eq:relation-lambda-lower-bound-choice}
		\lambda\gtrsim
		 \Big(
		 \Big(\frac{P}{\kappa_1^*}
		+\frac{1-P}{\kappa_0^*} \Big)
		\frac{9m}{40\log(63m/(20\delta))}
		 \Big)^{-\alpha},
	\end{equation}
	where the implicit constant is chosen sufficiently large. Choose $\lambda$ to attain this lower bound,
	with the implicit constant fixed. If $\lambda\le \lambda_{0}$, then \eqref{eq:main-result-GAB} with $\varepsilon=4\lambda$ gives,
	with probability at least $1-\delta$,
	\[
	\sup_{\norm{f}_{\mathcal H_\rho}\le1}\inf_{f_2^{(P)}\in\mathcal F_P(W,2)}\norm{f-f_2^{(P)}}_{L_2(d\mu)}^2
	\le4\lambda.
	\]
	
	If $\lambda>\lambda_0$, the zero approximant instead gives
	\[
	\inf_{f_2^{(P)}\in\mathcal F_P(W,2)}
	\norm{f-f_2^{(P)}}_{L_2(d\mu)}^2
	\le\norm{T_\rho}_{\mathrm{op}}\le\frac{\norm{T_\rho}_{\mathrm{op}}}{\lambda_0}\lambda,
	\]
	where we use 
	\[
	\norm{f}_{L_2(d\mu)}^2=\norm{T_{\rho}^{1/2}q_{f}}_{L_2(d\mu)}^2=\langle q_{f},T_{\rho}q_{f}\rangle_{L_2(d\mu)}\le \norm{T_\rho}_{\mathrm{op}}\norm{q_{f}}_{L_2(d\mu)}^2\le\norm{T_\rho}_{\mathrm{op}}.
	\]
	Combining the two cases with
	\eqref{eq:relation-lambda-lower-bound-choice} yields
	\[
	\sup_{\norm{f}_{\mathcal H_\rho}\le1}\inf_{f_2^{(P)}\in\mathcal F_P(W,2)}
	\norm{f-f_2^{(P)}}_{L_2(d\mu)}^2\lesssim \Big( \Big(\frac{P}{\kappa_1^*}+\frac{1-P}{\kappa_0^*} \Big)\frac{9m}{40\log(63m/(20\delta))} \Big)^{-\alpha}.
	\]
	The implicit constant is independent of $m$, $P$ and $\delta$.
\end{proof}

\section{Proofs of Main Results in Section \ref{sec:specific-activation-functions} -- Specific Spherical Setting}
\label{subsec:proo-of-specific}
\subsection*{Outline}
\begin{itemize}[labelindent=0pt, leftmargin=*, topsep=0pt, itemsep=0pt]
	\item Subsection \ref{subsec:auxilary-spherical} establishes the Funk–Hecke formula and harmonic representations of the ridge leverage score and ReLU harmonic coefficients. The Jacobi representation then yields sharp estimates for the degree-wise eigenvalues.
	\item  Subsection \ref{subsec:proofs-spherical} uses the above estimates to prove the kernel spectrum and eigenvalue decay. It then bounds the initialization mismatch, first for the uniform spherical distribution and then for the scaled spherical family. Combining the spectral decay and mismatch bounds gives final explicit spherical approximation rates.
\end{itemize}
\subsection{Auxiliary Theoretical Preliminaries}
\label{subsec:auxilary-spherical}
\begin{lemma}[The Funk--Hecke Formula]
	\label{lemma:normalized-funk-hecke}
	Let $\mu=\operatorname{Unif}(\mathbb S^{d-1})$ be the normalized
	surface measure on $\mathbb S^{d-1}$, where $d\geq2$. Let
	$\psi:[-1,1]\to\RR$ be measurable and satisfy
	\[
	\int_{-1}^{1}
	\abs{\psi(t)}(1-t^2)^{(d-3)/2}\,dt<+\infty.
	\]
	For every spherical harmonic $Y_{k,j}\in\YY_k$ and every
	$x\in\mathbb S^{d-1}$,
	\begin{equation}
		\nonumber
		\label{eq:normalized-funk-hecke}
		\int_{\mathbb S^{d-1}}
		\psi(x^\top y)Y_{k,j}(y)\,d\mu(y)
		=
		\widehat{\psi}_kY_{k,j}(x),
	\end{equation}
	where
	\begin{equation}
		\nonumber
		\label{eq:normalized-funk-hecke-coefficient}
		\widehat{\psi}_k
		=
		\frac{\Gamma(d/2)}
		{\sqrt{\pi}\Gamma((d-1)/2)}
		\int_{-1}^{1}
		\psi(t)P_k(t)(1-t^2)^{(d-3)/2}\,dt,
	\end{equation}
	and $P_k$ is the degree-$k$ Legendre polynomial in dimension
	$d$, normalized by $P_k(1)=1$.
\end{lemma}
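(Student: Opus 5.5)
The plan is to follow the classical route for the Funk--Hecke formula on $\mathbb S^{d-1}$. The argument rests on two facts. The first is the addition theorem: $\sum_{j=1}^{N(d,k)}Y_{k,j}(x)Y_{k,j}(y)=N(d,k)P_k(x^\top y)$. The second is the orthogonality of the normalized Legendre polynomials $P_k$ with respect to the weight $(1-t^2)^{(d-3)/2}$ on $[-1,1]$. We also use the formula for integrating a zonal function against the normalized measure $\mu$:
\[
\int_{\mathbb S^{d-1}}g(x^\top y)\,d\mu(y)=c_d\int_{-1}^1 g(t)(1-t^2)^{(d-3)/2}\,dt,
\qquad c_d=\frac{\Gamma(d/2)}{\sqrt\pi\,\Gamma((d-1)/2)}.
\]
This holds for every integrable $g$. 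The constant $c_d$ is obtained by taking $g\equiv1$ and evaluating a Beta integral. The integrability hypothesis on $\psi$ makes the left-hand side of the lemma absolutely convergent, since $Y_{k,j}$ is bounded on the compact sphere.

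\textbf{Step 1: the operator $K_\psi$ is rotation equivariant.} Fix $x$ and define $(K_\psi f)(x)=\int\psi(x^\top y)f(y)\,d\mu(y)$. Since $\mu$ is rotation invariant, $K_\psi$ commutes with every orthogonal transformation $Q$, that is, $K_\psi(f\circ Q)=(K_\psi f)\circ Q$. The space $\YY_k$ is an irreducible $O(d)$-invariant subspace, and the spaces $\YY_k$ are mutually orthogonal. Hence $K_\psi$ maps $\YY_k$ into itself and acts on it as a scalar $\widehat\psi_k$.

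The main technical point is to avoid a full Schur-lemma argument. A concrete alternative works for all bounded $\psi$. The image of $K_\psi$ restricted to $\YY_k$ lies in $\YY_k$, because $K_\psi$ preserves each isotypic component. To identify the scalar without invoking irreducibility, I would test against the zonal harmonic. Take $Z_x(y)=N(d,k)P_k(x^\top y)\in\YY_k$ and compute $K_\psi Z_x$ at $x$. By the zonal integration formula,
\[
(K_\psi Z_x)(x)=N(d,k)\,c_d\int_{-1}^1\psi(t)P_k(t)(1-t^2)^{(d-3)/2}\,dt .
\]

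\textbf{Step 2: pass from the zonal harmonic to a general $Y_{k,j}$.} The reproducing property gives $Y_{k,j}(y)=\int Z_y(z)Y_{k,j}(z)\,d\mu(z)$. Insert this into $K_\psi Y_{k,j}$ and apply Fubini, which is justified by the integrability assumption. This writes $K_\psi Y_{k,j}(x)$ as $\int \big[\int\psi(x^\top y)P_k(y^\top z)N(d,k)\,d\mu(y)\big] Y_{k,j}(z)\,d\mu(z)$. The inner integral is a function of $x^\top z$ alone, by rotation invariance about the axis $x$. It therefore suffices to show that this inner kernel equals $\widehat\psi_k N(d,k)P_k(x^\top z)$. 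Expand it in its Legendre series in $x^\top z$; only the degree-$k$ component survives when it is paired with $Y_{k,j}$, because the $\YY_\ell$ are orthogonal. The degree-$k$ coefficient is then read off from the Step 1 computation, using $P_k(1)=1$.

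\textbf{Step 3: extend from bounded to integrable $\psi$.} Finally I would remove any boundedness assumption. Approximate $\psi$ by truncations $\psi_n=\psi\mathbf 1_{\{|\psi|\le n\}}$ and use dominated convergence on both sides. The required domination comes from the weighted integrability hypothesis together with $|P_k|\le1$ and the boundedness of $Y_{k,j}$.
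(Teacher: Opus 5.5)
Your proposal is correct, but it takes a different route from the paper. The paper does not reprove the identity. It quotes the classical Funk--Hecke theorem (Atkinson--Han, Thm.~2.22), which is stated for the unnormalized surface measure with constant $|\mathbb S^{d-2}|$. It then divides by $|\mathbb S^{d-1}|$, using $|\mathbb S^{d-2}|/|\mathbb S^{d-1}|=\Gamma(d/2)/(\sqrt\pi\,\Gamma((d-1)/2))$. You instead derive the formula from four ingredients: the addition theorem, the reproducing property of $N(d,k)P_k(y^\top z)$ on $\YY_k$, the zonal integration formula, and completeness of the $P_\ell$ in $L_2([-1,1],(1-t^2)^{(d-3)/2}dt)$. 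The paper's approach buys brevity, since only the normalization constant has to be checked. Yours buys self-containment and shows exactly where the weighted integrability hypothesis enters, namely in absolute convergence and in the Fubini step.

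If you write it up, you can shorten it, because Step~2 plus the direct zonal evaluation of $(K_\psi Z_x)(x)$ already finishes the proof.
\begin{itemize}
\item Step~2 gives $K_\psi Y=a_kY$ for every $Y\in\YY_k$, where $a_k$ is the degree-$k$ Legendre coefficient of the inner kernel $h$.
\item Applying this to $Y=Z_x\in\YY_k$ gives $(K_\psi Z_x)(x)=a_kN(d,k)$.
\item Comparing with the direct evaluation $N(d,k)\,c_d\int_{-1}^1\psi(t)P_k(t)(1-t^2)^{(d-3)/2}dt$ yields $a_k=\widehat\psi_k$.
\end{itemize}

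Two parts can then be dropped.
\begin{itemize}
\item The representation-theoretic claims in Step~1 are not needed. This is just as well, because the inference ``irreducible and mutually orthogonal, hence $K_\psi\YY_k\subseteq\YY_k$'' would also require the $\YY_k$ to be pairwise non-isomorphic.
\item Step~3 is unnecessary. Nothing in Step~2 uses boundedness of $\psi$, since $|h|\le N(d,k)\,c_d\int_{-1}^1|\psi(t)|(1-t^2)^{(d-3)/2}dt$ already holds.
\end{itemize}

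Two small precisions are also worth making.
\begin{itemize}
\item Step~2 does not show that $h(x^\top z)=\widehat\psi_kN(d,k)P_k(x^\top z)$, even though you announce that as the goal. What it shows is the identity after pairing with $Y_{k,j}$, which is all you need. The pointwise statement is true, but it only follows once the lemma is proved.
\item For $d=2$, the claim that $h$ depends only on $x^\top z$ needs the reflection fixing $x$. That is, you need invariance of $\mu$ under all of $O(d)$, because the rotation stabilizer of $x$ in $SO(2)$ is trivial.
\end{itemize}
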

\begin{proof}[\textbf{Proof of Lemma \ref{lemma:normalized-funk-hecke}}]
	This is the Funk--Hecke formula
	\cite[Theorem~2.22]{atkinson2012spherical}, rewritten for the normalized surface measure
	$d\mu=dS_{d-1}/\abs{\mathbb S^{d-1}}$, using $\frac{\abs{\mathbb S^{d-2}}}{\abs{\mathbb S^{d-1}}}=\frac{\Gamma(d/2)}{\sqrt{\pi}\Gamma((d-1)/2)}$.
\end{proof}

\begin{lemma}[Harmonic Representation of the Ridge Leverage Score]
	\label{lemma:harmonic-representation-leverage-score}
	Let $\theta=(\omega,b)=(\sqrt{1-b^2}u,b)\in\mathbb S^d$, where $u\sim\operatorname{Unif}(\mathbb S^{d-1})$ is independent
	of $b$, and $b$ has density
	\begin{equation}
		\label{eq:definition-pb-density}
		p(b)= \frac{\Gamma((d+1)/2)}{\sqrt{\pi}\Gamma(d/2)}(1-b^2)^{(d-2)/2},~\text{with}~b\in(-1,1).
	\end{equation}
	Define
	\[
	\chi_k(b)=\frac{\Gamma(d/2)}{\sqrt{\pi}\Gamma((d-1)/2)}\int_{-1}^{1}\bigl(\sqrt{1-b^2}\,t+b\bigr)_+P_k(t)(1-t^2)^{(d-3)/2}dt.
	\]
	For $\lambda>0$, write $s_{\lambda}(\theta)=\langle \phi(\cdot;\theta),(T_{\rho}+\lambda I)^{-1}\phi(\cdot;\theta)\rangle_{L_2(d\mu)}$ for  the unweighted score. Then, for every $Y_{k,j}\in\YY_k$, we have $\langle\phi(\cdot;\theta),Y_{k,j}\rangle_{L_2(d\mu)}=\chi_k(b)Y_{k,j}(u)$. Moreover,
	\begin{equation}
		\label{eq:harmonic-repre-of-essential-terms}
		\xi_k=\int_{-1}^{1}\chi_k(b)^2p(b)db,~s_\lambda(\theta)=\sum_{k=0}^{\infty}\frac{N(d,k)\chi_k(b)^2}{\xi_k+\lambda},~\text{and}~d(\lambda)=\sum_{k=0}^{\infty}\frac{N(d,k)\xi_k}{\xi_k+\lambda}.
	\end{equation}
\end{lemma}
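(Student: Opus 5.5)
The plan is to compute the inner product $\langle\phi(\cdot;\theta),Y_{k,j}\rangle_{L_2(d\mu)}$ directly via the Funk--Hecke formula (Lemma \ref{lemma:normalized-funk-hecke}), and then read off the three identities in \eqref{eq:harmonic-repre-of-essential-terms} from the spectral decomposition of Proposition \ref{prop:spectrum-biased-relu-kernel}.

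\textbf{Inner product.} Write $\theta=(\sqrt{1-b^2}u,b)$ with $u\in\mathbb S^{d-1}$. Then $\phi(y;\theta)=(\sqrt{1-b^2}\,u^\top y+b)_+=\psi_b(u^\top y)$, where $\psi_b(t)=(\sqrt{1-b^2}\,t+b)_+$. This $\psi_b$ is bounded on $[-1,1]$, so the integrability hypothesis of Lemma \ref{lemma:normalized-funk-hecke} holds (for $d\ge2$ the weight $(1-t^2)^{(d-3)/2}$ is integrable). Applying the lemma with $x=u$ gives $\int\psi_b(u^\top y)Y_{k,j}(y)\,d\mu(y)=\widehat{(\psi_b)}_k\,Y_{k,j}(u)$, and $\widehat{(\psi_b)}_k$ is exactly $\chi_k(b)$.

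\textbf{Score and effective dimension.} By Proposition \ref{prop:spectrum-biased-relu-kernel}, the $Y_{k,j}$ form an orthonormal eigenbasis of $T_\rho$ with eigenvalues $\xi_k>0$; they span all of $L_2(\mathbb S^{d-1})$ by completeness of spherical harmonics. Hence
\[
(T_\rho+\lambda I)^{-1}=\sum_{k}(\xi_k+\lambda)^{-1}\sum_j Y_{k,j}\otimes Y_{k,j},
\]
and therefore
\[
s_\lambda(\theta)=\sum_k\frac{\chi_k(b)^2}{\xi_k+\lambda}\sum_{j=1}^{N(d,k)}Y_{k,j}(u)^2 .
\]
The addition theorem gives $\sum_j Y_{k,j}(u)^2=N(d,k)P_k(1)=N(d,k)$ for the normalized measure, which is consistent with the expansion formula stated before the proposition. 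This yields the formula for $s_\lambda$. The formula for $d(\lambda)=\tr(T_\rho(T_\rho+\lambda I)^{-1})$ follows by summing $\xi_k/(\xi_k+\lambda)$ over the eigenbasis, counting $N(d,k)$ multiplicity per degree.

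\textbf{Identity $\xi_k=\int\chi_k^2p$.} Fix any $Y_{k,j}$. Then
\[
\xi_k=\langle T_\rho Y_{k,j},Y_{k,j}\rangle=\EE_{\theta\sim\rho}\langle\phi(\cdot;\theta),Y_{k,j}\rangle^2=\EE\bigl[\chi_k(b)^2Y_{k,j}(u)^2\bigr].
\]
This requires the decomposition of $\rho=\operatorname{Unif}(\mathbb S^d)$ into independent $u\sim\operatorname{Unif}(\mathbb S^{d-1})$ and $b$ with density $p$. That decomposition is the standard slice formula $d\sigma_d=(1-b^2)^{(d-2)/2}\,db\,d\sigma_{d-1}$, and normalizing gives the constant $\Gamma((d+1)/2)/(\sqrt\pi\,\Gamma(d/2))$. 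By independence and $\EE_u Y_{k,j}(u)^2=1$, the expectation equals $\int\chi_k^2p\,db$. The only delicate points are checking this normalizing constant and the measurability needed for Fubini, which is immediate since the integrands are bounded. I expect the main obstacle to be the bookkeeping of normalized measures in the addition theorem and in the sphere-slicing density, rather than any conceptual issue.
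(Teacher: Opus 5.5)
Your proposal is correct and follows essentially the same route as the paper: Funk--Hecke with $x=u$ for the inner product, the addition formula $\sum_j Y_{k,j}(u)^2=N(d,k)$ for $s_\lambda$, and the independent slicing $\theta=(\sqrt{1-b^2}u,b)$ for $\xi_k=\int\chi_k^2p\,db$. The differences are minor. You import the eigenbasis from items 1--2 of Proposition~\ref{prop:spectrum-biased-relu-kernel}, which is non-circular because only item 3 relies on this lemma through Corollary~\ref{cor:sharp-degree-spectrum-spheical}, and you take the trace directly for $d(\lambda)$. The paper instead computes all entries $\langle Y_{\ell,i},T_\rho Y_{k,j}\rangle=\int\chi_\ell\chi_k p\,db\cdot\delta_{(\ell,i),(k,j)}$ to re-derive the diagonalization inside the lemma, and then obtains $d(\lambda)=\EE_{\theta\sim\rho}[s_\lambda(\theta)]$.
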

\begin{proof}[\textbf{Proof of Lemma \ref{lemma:harmonic-representation-leverage-score}}]
	Rewrite every $\theta=(\omega,b)\in\mathbb S^{d}$, except on a zero measure set (when $b=\pm 1$), as 
	\[ \theta=(\omega,b)=(\norm{\omega}_{2}\frac{\omega}{\norm{\omega}_{2}},b)\coloneq(\sqrt{1-b^{2}}u,b),  \]
	where $u=\omega/\norm{\omega}_{2}\in\mathbb S^{d-1}$ is independent of $b$ and $u\sim\operatorname{Unif}(\mathbb S^{d-1})$. By the spherical integral, $b$ has density
	\begin{equation}
		\nonumber
		p(b)= \frac{\Gamma((d+1)/2)}{\sqrt{\pi}\Gamma(d/2)}(1-b^2)^{(d-2)/2},~\text{with}~b\in(-1,1).
	\end{equation} 
	And we can write $\phi(x;\theta)=\sigma_{R}(\omega^{\top}x+b)=\sigma_{R}(\sqrt{1-b^{2}}u^{\top}x+b)$. Applying the Funk-Hecke formula of Lemma \ref{lemma:normalized-funk-hecke} to the feature map $\phi$, we have for any $Y_{k,j}\in \YY_{k}$, 
	\begin{equation}
		\label{eq:F-Hformula-phi-ref1}
		\langle\phi(\cdot;\theta),Y_{k,j}(\cdot)\rangle_{L_2(d\mu)}=\int_{\mathcal{X}}\sigma_{R}(\sqrt{1-b^{2}}u^{\top}x+b)Y_{k,j}(x)d\mu(x)=\chi_{k}(b)Y_{k,j}(u),
	\end{equation}
	with 
	\[ \chi_{k}(b)= \frac{\Gamma(d/2)}{\sqrt{\pi}\Gamma((d-1)/2)}\int_{-1}^{1}\sigma_{R}(\sqrt{1-b^2}\,t+b)P_k(t)(1-t^2)^{(d-3)/2}dt. \]
	
	The independence of $u$ and $b$ and orthonormality give
	\[ 
	\begin{aligned}
		\langle Y_{\ell,i}, T_{\rho}Y_{k,j}\rangle&=\int_\Theta
		\langle Y_{\ell,i},\phi(\cdot;\theta)\rangle
		\langle\phi(\cdot;\theta),Y_{k,j}\rangle d\rho(\theta)\\
		&=\int_{-1}^{1}\chi_{\ell}(b)\chi_{k}(b)p(b)db\int_{\mathbb S^{d-1}}Y_{\ell,i}(u)Y_{k,j}(u)d\mu(u).
	\end{aligned}
	 \]
	The second integral is zero unless $(\ell,i)=(k,j)$, when it equals one.
	Completeness therefore gives $T_\rho Y_{k,j}=\xi_kY_{k,j}$ with
	$\xi_k=\int_{-1}^{1}\chi_k(b)^2p(b)\,db$.
	
	Further, by the addition formula $\sum_{j=1}^{N(d,k)}Y_{k,j}(u)^{2}=N(d,k)$,  the corresponding spectral decomposition of $(T_{\rho}+\lambda I)^{-1}$ similar to item 2 of Proposition \ref{prop:spectrum-biased-relu-kernel}, and \eqref{eq:F-Hformula-phi-ref1}, we obtain
	\begin{equation}
		\label{eq:score-proof-the-aim-to-bound}
		\begin{aligned}
			s_{\lambda}(\theta)
			&=\langle\phi(\cdot;\theta), (T_{\rho}+\lambda I)^{-1}\phi(\cdot;\theta)\rangle_{L_2(d\mu)}\\
			&=\langle\phi(\cdot;\theta),\sum_{k=0}^{\infty}\sum_{j=1}^{N(d,k)}\frac{1}{\xi_{k}+\lambda}Y_{k,j}\otimes Y_{k,j} \phi(\cdot;\theta) \rangle_{L_2(d\mu)}\\
			&=\sum_{k=0}^{\infty}\sum_{j=1}^{N(d,k)}\frac{\abs{\langle\phi(\cdot;\theta),Y_{k,j}(\cdot)\rangle_{L_2(d\mu)}}^{2}}{\xi_{k}+\lambda}\\
			&=\sum_{k=0}^{\infty}\sum_{j=1}^{N(d,k)}\frac{(\chi_{k}(b)Y_{k,j}(u))^{2}}{\xi_{k}+\lambda}=\sum_{k=0}^{\infty}\frac{N(d,k)\chi_{k}(b)^{2}}{\xi_k+\lambda}.
		\end{aligned}
	\end{equation}
	With $d(\lambda)=\EE_{\theta\sim\rho}[s_{\lambda}(\theta)]$, we also obtain $d(\lambda)=\sum_{k=0}^{\infty}\frac{N(d,k)\xi_{k}}{\xi_{k}+\lambda}$.
\end{proof}

\begin{lemma}[Jacobi Representation of the ReLU Harmonic Coefficients]
	\label{lemma:relu-harmonic-coefficient-asymptotics}
	Let $c_d=\frac{d-3}{2}$, $G_d(b)=(1-2b^2)_+^{d/2}(1-b^2)^{(2-d)/2}$ for $\abs{b}<1$, set $G_{d}(\pm 1)=0$ and let $C_{1,d}=\frac{2^{(d-2)/2}\Gamma(d/2)}{\pi}$. Then the following statements hold for $k\geq2$.
	\begin{enumerate}[labelindent=0pt, leftmargin=*,topsep=0pt,partopsep=0pt]
		\item If $\abs{b}\geq1/\sqrt{2}$, then $\chi_k(b)=0$. While if $\abs{b}<1/\sqrt{2}$,
		\begin{equation}
			\nonumber\chi_k(b)=\frac{\Gamma(d/2)\sqrt{1-b^2}}{4\sqrt{\pi}\Gamma((d-1)/2)k(k-1)J_k^{(c_d,c_d)}(1)} \Big(\frac{1-2b^2}{1-b^2} \Big)^{c_d+2}J_{k-2}^{(c_d+2,c_d+2)} \Big(-\frac{b}{\sqrt{1-b^2}} \Big).
		\end{equation}
		\item There exists a constant $C_d>0$ such that
		\begin{equation}
            \nonumber
			\chi_k(b)^2
			\leq
			C_dG_d(b)k^{-(d+2)},
			\qquad b\in[-1,1].
		\end{equation}
		\item For every compact interval
		$I\subset(-1/\sqrt{2},1/\sqrt{2})$,
		\begin{equation}
			\label{eq:compact-asymptotic-chi}
			\chi_k(b)=C_{1,d}\sqrt{G_d(b)}\,k^{-(d+2)/2} [\cos\Psi_k(b)+\mathcal O(k^{-1}) ]
		\end{equation}
		uniformly for $b\in I$, where
		\[
		\Psi_k(b)= \Big(k+\frac{d-2}{2} \Big)\arccos \Big(-\frac{b}{\sqrt{1-b^2}} \Big)-\frac{\pi(d+2)}{4}.
		\]
	\end{enumerate}
\end{lemma}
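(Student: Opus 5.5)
The plan is to treat $\chi_k(b)$ as the Funk--Hecke coefficient of the ridge profile $\psi_b(t)=(at+b)_+$ with $a=\sqrt{1-b^2}$. The profile vanishes for $t\le t_0:=-b/a$. I will then reduce the integral to a single boundary term at $t_0$ by integrating by parts twice against Jacobi polynomials. Write $c=c_d$ and $P_k=J_k^{(c,c)}/J_k^{(c,c)}(1)$, with weight $(1-t^2)^{c}$ where $c=(d-3)/2$.

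\emph{Item 1.} Note that $|t_0|\ge1$ iff $b^2\ge 1-b^2$, i.e.\ iff $|b|\ge 1/\sqrt2$. If $t_0\ge1$, then $\psi_b\equiv0$ on $[-1,1]$. If $t_0\le-1$, then $\psi_b$ is affine on $[-1,1]$, hence orthogonal to $P_k$ for $k\ge2$. In both cases $\chi_k(b)=0$.

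For $|b|<1/\sqrt2$ I will use the Rodrigues-type identity
\[
\frac{d}{dt}\bigl[(1-t^2)^{\alpha+1}J_{n-1}^{(\alpha+1,\alpha+1)}(t)\bigr]=-2n(1-t^2)^{\alpha}J_n^{(\alpha,\alpha)}(t).
\]
Apply it once with $(\alpha,n)=(c,k)$ and integrate $\int_{t_0}^1 (at+b)(\cdots)\,dt$ by parts. The boundary term at $t=1$ vanishes because of the factor $(1-t^2)^{c+1}$. The boundary term at $t_0$ vanishes because $at_0+b=0$. This leaves $\frac{a}{2k}\int_{t_0}^1(1-t^2)^{c+1}J_{k-1}^{(c+1,c+1)}\,dt$. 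Applying the identity again with $(\alpha,n)=(c+1,k-1)$ evaluates this integral as $\frac{1}{2(k-1)}(1-t_0^2)^{c+2}J_{k-2}^{(c+2,c+2)}(t_0)$. Since $1-t_0^2=(1-2b^2)/(1-b^2)$, and after including the Funk--Hecke constant and the factor $1/J_k^{(c,c)}(1)$, this gives exactly the stated formula with the factor $1/(4k(k-1))$.

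\emph{Item 2.} Use $J_k^{(c,c)}(1)=\binom{k+c}{k}\asymp k^{c}$ together with a uniform weighted bound on all of $[-1,1]$ for the symmetric Jacobi polynomials with $\alpha=c+2=(d+1)/2\ge 3/2$. The bound I need is the Erdélyi--Magnus--Nevai / Szeg\H{o}-type estimate
\[
(1-x^2)^{\alpha/2+1/4}\,\bigl|J_n^{(\alpha,\alpha)}(x)\bigr|\lesssim n^{-1/2}.
\]
Plug in $x=t_0$ and $n=k-2$. This gives
\[
|\chi_k(b)|\lesssim a\,(1-t_0^2)^{(c+2)/2-1/4}\,k^{-2-c-1/2}=a\,(1-t_0^2)^{d/4}\,k^{-(d+2)/2}.
\]
Squaring, $a^2(1-t_0^2)^{d/2}=(1-2b^2)^{d/2}(1-b^2)^{1-d/2}=G_d(b)$, which is the claim. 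The case $|b|\ge1/\sqrt2$ is trivial by item 1, since there $\chi_k(b)=0$.

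\emph{Item 3.} Write $t_0=\cos\vartheta$ with $\vartheta=\arccos(-b/\sqrt{1-b^2})$. On a compact $I\subset(-1/\sqrt2,1/\sqrt2)$, $\vartheta$ stays in a compact subset of $(0,\pi)$. There I apply Darboux's asymptotic formula
\[
J_n^{(\alpha,\alpha)}(\cos\vartheta)=(\pi n)^{-1/2}(\sin\tfrac\vartheta2\cos\tfrac\vartheta2)^{-\alpha-1/2}\bigl[\cos\bigl((n+\alpha+\tfrac12)\vartheta-(\alpha+\tfrac12)\tfrac\pi2\bigr)+\mathcal O(n^{-1})\bigr],
\]
which holds uniformly on such a set. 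With $n=k-2$ and $\alpha=c+2$ the phase becomes $(k+\frac{d-2}{2})\vartheta-\frac{\pi(d+2)}4=\Psi_k(b)$. The amplitude becomes $(\sin\vartheta/2)^{-\alpha-1/2}$, which combines with the prefactor as in item 2 into $\sqrt{G_d(b)}$. It remains to collect the constants. These are the Funk--Hecke constant, $\Gamma(c+1)$ from $J_k^{(c,c)}(1)\sim k^c/\Gamma(c+1)$, the factor $2^{\alpha+1/2}$, and $\pi^{-1/2}$. Together they should produce $C_{1,d}=2^{(d-2)/2}\Gamma(d/2)/\pi$. Replacing $k-2$ and $k(k-1)$ by powers of $k$ only costs relative $\mathcal O(k^{-1})$ errors.

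I expect the main obstacle to be item 2: I need a weighted Jacobi bound that is genuinely uniform up to the endpoints $x=\pm1$, i.e.\ as $|b|\to1/\sqrt2$. I would cite the Erdélyi--Magnus--Nevai inequality for that. Failing that, I would split into the range $1-t_0^2\ge k^{-2}$, where the Szeg\H{o} bound holds, and the range $1-t_0^2<k^{-2}$. In the latter I would use $|J_n^{(\alpha,\alpha)}|\le J_n^{(\alpha,\alpha)}(1)\asymp n^{\alpha}$, and then $(1-t_0^2)^{\alpha}n^{\alpha}$ is dominated by $(1-t_0^2)^{\alpha/2-1/4}n^{-1/2}$. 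The constant bookkeeping in item 3 is routine but error-prone.
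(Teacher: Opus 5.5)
Your proposal is correct and follows essentially the same route as the paper: two integrations by parts via the Rodrigues/derivative identity reduce $\chi_k(b)$ to a boundary term at $t_0=-b/\sqrt{1-b^2}$. Item~2 then uses a weighted Jacobi bound that is uniform up to the endpoints; the paper cites the Mastroianni--Milovanovi\'c estimate, which is equivalent in effect to the Erd\'elyi--Magnus--Nevai bound you invoke. Item~3 applies Szeg\H{o}'s Darboux formula with the same constant bookkeeping. The only detail to tidy is $k=2$, where $n=k-2=0$ makes $n^{-1/2}$ meaningless. Read the bound as $(n+1)^{-1/2}$ there, or note that $J_0^{(c_d+2,c_d+2)}$ is constant, as the paper does.
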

\begin{proof}[\textbf{Proof of Lemma \ref{lemma:relu-harmonic-coefficient-asymptotics}}] We prove the statements item by item.
	\mypara{For item 1:} Let $J_k^{(\alpha,\beta)}$ denote the standard Jacobi polynomial. Since the Legendre polynomial $P_k$ is normalized by $P_k(1)=1$ and, by \cite[Section 2.6.1]{atkinson2012spherical}, $P_{k}$ is proportional to the Jacobi polynomial $J_{k}^{((d-3)/2,(d-3)/2)}(t)$, $P_{k}$ can be written as 
	\[ P_{k}(t)=\frac{J_{k}^{((d-3)/2,(d-3)/2)}(t)}{J_{k}^{((d-3)/2,(d-3)/2)}(1)}. \]
	
	If $b\le -\frac{1}{\sqrt{2}}$, the feature $\sigma_{R}(\sqrt{1-b^{2}}u^\top x+b)$ vanishes. And if $b\ge \frac{1}{\sqrt{2}}$,
	\[ \sigma_{R}(\sqrt{1-b^{2}}u^\top x+b)= b+\sqrt{1-b^{2}}\,u^\top x
	\in\YY_0\oplus\YY_1. \]
	By the orthogonality of spherical harmonics of different degrees and \eqref{eq:F-Hformula-phi-ref1}, $\chi_k(b)=0$ for every $k\ge2$. Thus when $k\ge 2$ and $\abs{b}\ge\frac{1}{\sqrt{2}}$, the $\chi_{k}(b)=0$.
	
	For $\abs{b}<\frac{1}{\sqrt{2}}$, by the Rodrigues formula for Jacobi Polynomials, let {$c_{d}=(d-3)/2$},
	\[ (1-t^{2})^{c_{d}}J_{k}^{(c_{d},c_{d})}(t)=\frac{(-1)^{k}}{2^{k}k!}\frac{d^{k}}{dt^{k}}[(1-t^{2})^{c_{d}+k}] \]
	after two integrations by parts, we have for $k\ge 2$, 
	\begin{equation}
		\label{eq:bound-of-chi-former}
		\begin{aligned}
			&\chi_{k}(b)\\
			&=C_\Gamma\cdot\frac{\sqrt{1-b^{2}}}{J_{k}^{(c_{d},c_{d})}(1)}\cdot\frac{(-1)^{k}}{2^{k}k!}\int_{-\frac{b}{\sqrt{1-b^{2}}}}^{1}\Big(t+\frac{b}{\sqrt{1-b^{2}}}\Big)\frac{d^{k}}{dt^{k}}[(1-t^{2})^{c_{d}+k}]dt\\
			&=\frac{C_\Gamma\sqrt{1-b^{2}}}{4k(k-1)J_{k}^{(c_{d},c_{d})}(1)}\cdot\frac{(-1)^{k-2}}{2^{k-2}(k-2)!}\cdot\frac{d^{k-2}}{dt^{k-2}}[(1-t^{2})^{c_{d}+k}] \Bigg|_{t=-\frac{b}{\sqrt{1-b^{2}}}}\\
			&=\frac{C_\Gamma\sqrt{1-b^{2}}}{4k(k-1)J_{k}^{(c_{d},c_{d})}(1)} \Big(\frac{1-2b^{2}}{1-b^{2}} \Big)^{c_{d}+2}J_{k-2}^{(c_{d}+2,c_{d}+2)}\Big(-\frac{b}{\sqrt{1-b^{2}}}\Big),
		\end{aligned}
	\end{equation}
    where $C_\Gamma=\frac{\Gamma(d/2)}{\sqrt{\pi}\Gamma((d-1)/2)}$.
	\mypara{For item 2:} Based on \cite[Eq.(2.3.41)]{mastroianni2008interpolation}, we have for $t\in (-1,1)$,
	\[ \Bigg\vert{\frac{J_{k}^{(c_{d},c_{d})}(t)}{\norm{J_{k}^{(c_{d},c_{d})}}}}\Bigg\vert\le C_{d} \Big(\sqrt{1-t}+\frac{1}{k} \Big)^{-c_{d}-1/2} \Big(\sqrt{1+t}+\frac{1}{k} \Big)^{-c_{d}-1/2}\le C_{d}(1-t^{2})^{-c_{d}/2-1/4}. \]
	Since $\norm{J_{k}^{(c_{d},c_{d})}}=\mathcal{O}(k^{-1/2})$ with the norm taken in $L^2([-1,1],(1-t^2)^{c_d}dt)$, we have 
	\[ \abs{J_{k}^{(c_{d},c_{d})}(t)}\le C_{d}k^{-1/2}(1-t^{2})^{-c_{d}/2-1/4}~\text{for}~ t\in(-1,1). \]
	Therefore, we use 
	\begin{equation}
		\label{eq:bound-of-jacobi-abs}
		|J_{k-2}^{(c_d+2,c_d+2)}(t)|
		\le C_d(k-2)^{-1/2}
		(1-t^2)^{-(c_d+2)/2-1/4},~k\ge3.
	\end{equation}
	
	 Moreover, for $J_{k}^{(c_{d},c_{d})}(1)$, by the definition, we derive 
	\begin{equation}
		\label{eq:estimate-jacobi(1)}
		J_{k}^{(c_{d},c_{d})}(1)=\frac{\Gamma(c_{d}+k+1)}{\Gamma(k+1)\Gamma(c_{d}+1)}\sim\frac{1}{\Gamma(c_{d}+1)}k^{c_{d}}\asymp k^{(d-3)/2}.
	\end{equation}
	Combining \eqref{eq:bound-of-chi-former}, \eqref{eq:bound-of-jacobi-abs} and \eqref{eq:estimate-jacobi(1)}, we obtain, for $k\ge 3$,
	\[ 
	\begin{aligned}
		&\abs{\chi_k(b)}\\
		&\le \frac{\Gamma(d/2)\sqrt{1-b^{2}}}{\sqrt{\pi}\Gamma((d-1)/2)4k(k-1)} \Big(\frac{1-2b^{2}}{1-b^{2}} \Big)^{c_{d}+2}\cdot C_{d} k^{-c_{d}} (k-2)^{-1/2} \Big(\frac{1-2b^{2}}{1-b^{2}} \Big)^{-(c_{d}+2)/2-1/4}\\
		&\le C_{d}(1-b^{2})^{(2-d)/4}(1-2b^{2})^{d/4}k^{-(d+2)/2}.
	\end{aligned}
	\]
	When $k=2$, $J_{k-2}^{(c_{d}+2,c_{d}+2)}$ is constant and can be included by enlarging the constant $C_d$.
	
	\textbf{Consequently}, for $\abs{b}< \frac{1}{\sqrt{2}}$, $\abs{\chi_k(b)}^{2}\le C_{d}(1-b^{2})^{(2-d)/2}(1-2b^{2})^{d/2}k^{-(d+2)}$. And for $\abs{b}\ge \frac{1}{\sqrt{2}}$, $\abs{\chi_k(b)}=0$. We can write (Recall that the final aim is to bound $\kappa_{\gamma_{0}}(\lambda)$ via the score $s_{\lambda}(\theta)$ in \eqref{eq:score-proof-the-aim-to-bound})
	\begin{equation}
		\nonumber
		\abs{\chi_k(b)}^{2}\le C_dG_d(b)k^{-(d+2)}~\text{for}~b\in[-1,1].
	\end{equation}
	This global bound will be used to control endpoint regions where the Darboux expansion below is not uniform.
	\mypara{For item 3:} For $\abs{b}<\frac{1}{\sqrt{2}}$, let $z_{b}=\arccos(-\frac{b}{\sqrt{1-b^{2}}})$. Applying the classical Darboux formula for Jacobi polynomials \cite[Theorem 8.21.8]{szeg1939orthogonal} to $J_{k-2}^{(c_d+2,c_d+2)}(\cos z_b)$ with $c_{d}=(d-3)/2$, we have 
	\begin{equation}
		\nonumber
		\begin{aligned}
			&J_{k-2}^{((d+1)/2,(d+1)/2)}(\cos z_b)\\
			&=\frac{1}{{\pi}^{1/2} (k-2)^{1/2}}  \Big(\sin\frac{z_{b}}{2}\cos\frac{z_{b}}{2} \Big)^{-(d+2)/2}\Big[\cos \Big(\frac{d-2+2k}{2}z_{b}-\frac{\pi(d+2)}{4} \Big)+\mathcal{O}(k^{-1}) \Big]\\
			&=\frac{2^{(d+2)/2}}{{\pi}^{1/2} (k-2)^{1/2}}  \Big(\frac{1-2b^2}{1-b^2} \Big)^{-(d+2)/4} \Big[\cos \Big(\frac{d-2+2k}{2}z_{b}-\frac{\pi(d+2)}{4} \Big)+\mathcal{O}(k^{-1}) \Big],
		\end{aligned}
	\end{equation}
	where we have used \[ \sin z_b = \sqrt{\frac{1-2b^2}{1-b^2}},~ \sin\frac{z_b}{2}\cos\frac{z_b}{2} = \frac{1}{2}\sin z_{b}=\frac{1}{2}\sqrt{\frac{1-2b^2}{1-b^2}}. \] The remainder $\mathcal{O}(k^{-1})$ is uniform for $b$ in every compact subset of $(-1/\sqrt{2},1/\sqrt{2})$. Further combining above estimate of $J_{k-2}^{((d+1)/2,(d+1)/2)}(\cos z_b)$, \eqref{eq:bound-of-chi-former} and \[ J_k^{(c_d,c_d)}(1) = \frac{\Gamma(k+c_d+1)} {\Gamma(k+1)\Gamma(c_d+1)} = \frac{k^{c_d}}{\Gamma(c_d+1)}  (1+\mathcal{O}(k^{-1}) ), \] we obtain \begin{equation} 
        \nonumber
		\begin{aligned} 
			\chi_k(b)&=\frac{\Gamma(d/2)\sqrt{1-b^{2}}}{\sqrt{\pi}\Gamma((d-1)/2)4k(k-1)J_{k}^{(c_{d},c_{d})}(1)} \Big(\frac{1-2b^{2}}{1-b^{2}} \Big)^{c_{d}+2}J_{k-2}^{(c_{d}+2,c_{d}+2)}\Big(-\frac{b}{\sqrt{1-b^{2}}}\Big)\\
			&=\frac{\Gamma(d/2)\sqrt{1-b^{2}}\Gamma(c_{d}+1)(1+\mathcal{O}(k^{-1}))}{\sqrt{\pi}\Gamma((d-1)/2)4k(k-1)k^{c_{d}}} \Big(\frac{1-2b^{2}}{1-b^{2}} \Big)^{c_{d}+2}J_{k-2}^{(c_{d}+2,c_{d}+2)}\Big(-\frac{b}{\sqrt{1-b^{2}}}\Big)\\
			&=C_{1,d}\sqrt{G_{d}(b)}k^{-(d+2)/2} \Big[\cos \Big(\frac{d-2+2k}{2}z_{b}-\frac{\pi(d+2)}{4} \Big)+\mathcal{O}(k^{-1}) \Big],
		\end{aligned} 
	\end{equation} 
	where $C_{1,d}=\frac{2^{(d-2)/2}\Gamma(d/2)}{\pi}$ and $G_d(b)=(1-2b^2)_{+}^{d/2}(1-b^2)^{(2-d)/2}$. The asymptotic relation in above formula is again uniform for $b$ in compact subsets of $(-1/\sqrt{2},1/\sqrt{2})$. Now we finish the proof.
\end{proof}

\begin{corollary}[Sharp Spectral Estimation of $\xi$]
	\label{cor:sharp-degree-spectrum-spheical} Under the spherical biased-ReLU setting,
	\begin{equation}
        \nonumber
		\xi_k=\frac{C_{1,d}^2d}
		{2\sqrt{2}(d+1)}k^{-(d+2)}(1+o(1)),
	\end{equation}
\end{corollary}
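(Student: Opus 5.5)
The plan is to insert the asymptotics of Lemma~\ref{lemma:relu-harmonic-coefficient-asymptotics} into the representation $\xi_k=\int_{-1}^{1}\chi_k(b)^2p(b)\,db$ from Lemma~\ref{lemma:harmonic-representation-leverage-score}. The goal is to show that
\[
k^{d+2}\xi_k\;\longrightarrow\;\frac{C_{1,d}^2}{2}\int_{-1}^{1}G_d(b)p(b)\,db .
\]
We then evaluate the right-hand side in closed form.

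\textbf{Splitting the integral.} Fix $\eta>0$ and a compact interval $I_\eta=[-1/\sqrt2+\eta,\,1/\sqrt2-\eta]$. Split $\int_{-1}^1$ into the integral over $I_\eta$ and the integral over its complement.
\begin{itemize}
\item On the complement, item~1 gives $\chi_k=0$ for $\abs{b}\ge 1/\sqrt2$. On the remaining strips, item~2 gives $k^{d+2}\chi_k^2\le C_dG_d(b)$.
\item The function $G_dp\propto(1-2b^2)_+^{d/2}$ is bounded and integrable. Hence the strip contribution to $k^{d+2}\xi_k$ is $\le C\eta$, uniformly in $k$.
\item On $I_\eta$, item~3 gives, uniformly in $b$,
\[
k^{d+2}\chi_k(b)^2=C_{1,d}^2G_d(b)\bigl[\cos^2\Psi_k(b)+\mathcal O(k^{-1})\bigr].
\]
\end{itemize}
So it suffices to show that $\int_{I_\eta}G_dp\cos^2\Psi_k\,db\to\tfrac12\int_{I_\eta}G_dp\,db$. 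Letting $\eta\to0$ at the end then gives the limit, i.e.\ the $1+o(1)$ form.

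\textbf{The oscillatory step (main obstacle).} Write $\cos^2\Psi_k=\tfrac12+\tfrac12\cos 2\Psi_k$. Here
\[
2\Psi_k=(2k+d-2)\,z_b-\tfrac{\pi(d+2)}{2},\qquad z_b=\arccos\!\bigl(-b/\sqrt{1-b^2}\bigr).
\]
The map $b\mapsto z_b$ is smooth and strictly monotone on $I_\eta$, with $dz_b/db$ bounded away from $0$ there. Change variables to $z$. The integral becomes $\int g(z)\cos\bigl((2k+d-2)z-c\bigr)\,dz$ with $g$ continuous (indeed smooth) on a compact interval. By the Riemann--Lebesgue lemma, or one integration by parts, this tends to $0$. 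This is the only genuinely delicate point. One needs uniformity of the Darboux remainder on compacts, which is provided by item~3, and the domination near $\pm1/\sqrt2$, which is provided by item~2.

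\textbf{Computing the constant.} By the definitions of $G_d$ and $p$,
\[
G_d(b)p(b)=\frac{\Gamma((d+1)/2)}{\sqrt\pi\,\Gamma(d/2)}\,(1-2b^2)_+^{d/2}.
\]
Substituting $b=s/\sqrt2$ and using $\int_{-1}^1(1-s^2)^{d/2}\,ds=\sqrt\pi\,\Gamma(d/2+1)/\Gamma((d+3)/2)$ gives
\[
\int_{-1}^1G_dp\,db=\frac{1}{\sqrt2}\cdot\frac{\Gamma((d+1)/2)\,\Gamma(d/2+1)}{\Gamma(d/2)\,\Gamma((d+3)/2)}=\frac{d}{\sqrt2\,(d+1)} .
\]
Multiplying by $C_{1,d}^2/2$ yields
\[
\xi_k=\frac{C_{1,d}^2\,d}{2\sqrt2\,(d+1)}\,k^{-(d+2)}\bigl(1+o(1)\bigr),
\]
as claimed.
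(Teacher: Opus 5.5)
Your proposal is correct and follows essentially the paper's route: the representation $\xi_k=\int\chi_k^2p\,db$, the squared Darboux asymptotics split via $\cos^2\Psi_k=\tfrac12(1+\cos 2\Psi_k)$, domination by item~2, the substitution $z=z_b$ followed by Riemann--Lebesgue for the oscillatory part, and the same beta integral giving $d/(\sqrt2(d+1))$. The only difference is technical: you truncate to a compact $I_\eta$ and bound the strips by $C\eta$, whereas the paper applies dominated convergence and Riemann--Lebesgue directly on the whole interval, using $\widetilde G_d\in L_1(0,\pi)$. Your truncation arguably handles more cleanly the fact that the Darboux remainder is uniform only on compacts.
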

\begin{proof}[\textbf{Proof of Corollary \ref{cor:sharp-degree-spectrum-spheical}}]
	By Lemma \ref{lemma:harmonic-representation-leverage-score} and the fact that $\chi_k(b)=0$ for $|b|\ge 1/\sqrt2$, we have
	\[
	k^{d+2}\xi_k=\int_{-1/\sqrt2}^{1/\sqrt2}k^{d+2}\chi_k(b)^2p(b)\,db.
	\]
	To estimate this integral, let
	$z_b=\arccos(-b/\sqrt{1-b^2})$. Squaring \eqref{eq:compact-asymptotic-chi} gives
	\begin{equation}
		\label{eq:chi-square-darboux}
		k^{d+2}\chi_k(b)^2 =
		C_{1,d}^2G_d(b)
		 \Big[
		\cos^2 \Big(
		 (k+\frac{d-2}{2} )z_b
		-\frac{\pi(d+2)}4
		 \Big)
		+\mathcal O(k^{-1})
		 \Big],
	\end{equation}
	uniformly for $b$ in compact subsets of
	$(-1/\sqrt2,1/\sqrt2)$. For each fixed $|b|<1/\sqrt2$, the remainder $\mathcal{O}(k^{-1})$ in
	\eqref{eq:chi-square-darboux} tends to zero as $k\to\infty$. Using $\cos^2 t=(1+\cos(2t))/2$, we rewrite
	\eqref{eq:chi-square-darboux} as
	\[
	\begin{aligned}
		k^{d+2}\chi_k(b)^2&=\frac{C_{1,d}^2}{2}G_d(b)+
		\frac{C_{1,d}^2}{2}G_d(b)
		\cos \Big(
		(2k+d-2)z_b-\frac{\pi(d+2)}2
		 \Big)\\
		&+C_{1,d}^2G_d(b)\mathcal O(k^{-1}).
	\end{aligned}
	\]
	By the global bound in item~2 of
	Lemma~\ref{lemma:relu-harmonic-coefficient-asymptotics}
	and dominated convergence, the last term contributes
	$o(1)$ after integration. Hence,
	\[
	\begin{aligned}
		k^{d+2}\xi_k
		&=\frac{C_{1,d}^2}{2}
		\int_{-1/\sqrt2}^{1/\sqrt2}G_d(b)p(b)\,db\\
		&+
		\frac{C_{1,d}^2}{2}
		\int_{-1/\sqrt2}^{1/\sqrt2}
		G_d(b)p(b)
		\cos \Big(
		(2k+d-2)z_b-\frac{\pi(d+2)}2
		 \Big)db
		+o(1).
	\end{aligned}
	\]
	
	The first integral can be evaluated directly.
	Using the density in \eqref{eq:definition-pb-density},
	a beta-integral calculation gives
	\[
		\int_{-1/\sqrt2}^{1/\sqrt2}G_d(b)p(b)db=
		\frac{\Gamma((d+1)/2)}
		{\sqrt{\pi}\Gamma(d/2)}
		\int_{-1/\sqrt2}^{1/\sqrt2}
		(1-2b^2)^{d/2}db=\frac{d}{\sqrt2(d+1)}.
	\]
	
	For the second integral, the change of variables
	$z=z_b$, with $b=-\frac{\cos z}{\sqrt{1+\cos^2z}}$, gives
	\[
	\begin{aligned}
		&\int_{-1/\sqrt2}^{1/\sqrt2}
		G_d(b)p(b)
		\cos \Big(
		(2k+d-2)z_b-\frac{\pi(d+2)}2
		 \Big)\,db\\
		&=
		\int_0^\pi
		\widetilde G_d(z)
		\cos \Big(
		(2k+d-2)z-\frac{\pi(d+2)}2
		 \Big)dz,
	\end{aligned}
	\]
	where
	\[
	\widetilde G_d(z)
	=
	\frac{\Gamma((d+1)/2)}
	{\sqrt{\pi}\Gamma(d/2)}
	\frac{\sin^{d+1}z}
	{(1+\cos^2z)^{(d+3)/2}}
	\in L_1(0,\pi).
	\]
	The Riemann--Lebesgue lemma therefore implies that
	this integral tends to zero as $k\to\infty$.
	
	Combining these estimates yields
	\[
	\xi_k
	=
	\frac{C_{1,d}^2d}{2\sqrt2(d+1)}
	k^{-(d+2)}(1+o(1)),
	\]
	which proves the result.
	In particular, $\xi_k\asymp k^{-(d+2)}$ for large $k$.
\end{proof}

\subsection{Proofs of Main Results in Section \ref{sec:specific-activation-functions}}
\label{subsec:proofs-spherical}
\begin{proof}[\textbf{Proof of Proposition \ref{prop:spectrum-biased-relu-kernel} (\nameref*{prop:spectrum-biased-relu-kernel})}] ~
	
	\mypara{Proof Overview.} Rotation invariance and the Funk–Hecke formula identify the spherical harmonics as eigenfunctions of $T_\rho$, while the density of biased-ReLU networks establishes the strict positivity of the eigenvalues (\textbf{item 1}). Completeness of the spherical harmonics then gives the operator decomposition, and Mercer's theorem yields the corresponding kernel expansion (\textbf{item 2}). Finally, the degree-wise eigenvalue decay follows from Corollary~\ref{cor:sharp-degree-spectrum-spheical}. Combining this decay with the spherical harmonic multiplicities gives the decay rate of the eigenvalues ordered with multiplicity (\textbf{item 3}).
	
	\mypara{Proof Details.}	We prove the proposition item by item. 
	\mypara{For item 1:} The rotation invariance of $k_\rho$ implies that it depends on $x$ and $x'$ only through their inner product. Write $h:[-1,1]\to\RR$ such that
	\[
	k_\rho(x,x^{\prime})=h \Big(\frac{1+x^\top x^{\prime}}{2} \Big).
	\]
	For any $Y_{k,j}\in \YY_{k}$, the Funk-Hecke formula in Lemma \ref{lemma:normalized-funk-hecke} gives 
	\[ T_{\rho}Y_{k,j}(x)=\int_{\mathbb{S}^{d-1}}
	h \Big(\frac{1+x^{\top}y}{2} \Big)Y_{k,j}(y)\,d\mu(y)=\xi_{k}Y_{k,j}(x), \]
	where 
	\[ \xi_{k}=\frac{\Gamma(d/2)}{\sqrt{\pi}\Gamma((d-1)/2)}\int_{-1}^{1}h \Big(\frac{1+t}{2} \Big)P_{k}(t)(1-t^{2})^{(d-3)/2}dt. \] 
	Hence every element of $\{Y_{k,j}\}_{j=1}^{N(d,k)}$ is an eigenfunction of $T_{\rho}$ with eigenvalue $\xi_k$. 
	
	We next establish the strict positivity of $
	\xi_{k}$. Since $T_{\rho}Y_{k,j}=\xi_{k}Y_{k,j}, \norm{Y_{k,j}}_{L_{2}(d\mu)}=1$, and $T_{\rho}=S_{\rho}S_{\rho}^{*}$, we can also write
	\[ \xi_{k}=\langle Y_{k,j}, T_{\rho}Y_{k,j} \rangle_{L_{2}(d\mu)}=\langle S_{\rho}^{*}Y_{k,j}, S_{\rho}^{*}Y_{k,j} \rangle_{L_{2}(d\rho)}=\int_{\Theta}\abs{\langle (\theta^{\top}\tilde{\cdot})_{+},Y_{k,j}\rangle_{L_{2}(d\mu)}}^{2}d\rho(\theta)\ge 0. \]
	Suppose $\xi_{k}=0$, then we have 
	\begin{equation}
		\label{eq:contradiction-eq-ref}
		\langle (\theta^{\top}\tilde{\cdot})_{+},Y_{k,j}\rangle_{L_{2}(d\mu)}=0,~\text{for}~\rho\text{-almost every}~\theta\in\Theta.
	\end{equation}
	The map $\theta\mapsto\langle(\theta^\top\tilde{\cdot})_+,Y_{k,j}\rangle_{L_2(d\mu)}$ is continuous on $\mathbb S^d$. Since $\rho$ has full support on $\mathbb S^d$, this identity holds for every $\theta\in\Theta$.
	
	By the universal approximation theorem for non-polynomial activation function \cite{leshno1993multilayer}, with the homogeneity of ReLU, the two-layer network class $\mathcal{F}=\bigcup_{W\ge d}\mathcal{F}(W,2)$ with parameters restricted to $\mathbb S^{d}$ is dense in $C(\mathbb S^{d-1})$. Consequently, $\mathcal{F}$ is dense in $L_2(\mathbb S^{d-1},d\mu)$ since $C(\mathbb S^{d-1})$ is dense in $L_{2}(\mathbb S^{d-1},d\mu)$. Further with \eqref{eq:contradiction-eq-ref}, this means $Y_{k,j}$ is orthogonal to $\mathcal F$ which is dense in $L_2(\mathbb S^{d-1}, d\mu)$. And hence $Y_{k,j}=0$ in $L_2(\mathbb S^{d-1},d\mu)$, contradicting $\norm{Y_{k,j}}_{L_2(d\mu)}=1$. Therefore, $\xi_{k}>0$ for every $k\ge 0$.
	
	It remains to determine the explicit form of $h$ from the definition of $k_\rho$. Denote $u=\frac{\tilde{x}}{\sqrt{2}}$, $v=\frac{\tilde{x}^{\prime}}{\sqrt{2}}$ and $u^{\top}v=t=\cos\varphi_{1}$ where $\varphi_{1}\in[0,\pi]$. Since the $\rho$ is rotation-invariant, we can let
	\[ u=\mathbf e_{1},~\text{and}~ v=\cos\varphi_{1} \mathbf e_{1} +\sin\varphi_{1} \mathbf e_{2}.\]
	Projecting any $\theta\in\mathbb S^{d}$ into the $\operatorname{span}\{u,v\}$, we write $(\theta_{1},\theta_{2})=r(\cos\varphi_{2},\sin\varphi_{2})$ with $\varphi_{2}\in[0,2\pi]$. Thus
	\[
	\begin{aligned}
		&k_{\rho}(x,x^{\prime})\\
		&=\EE_{\theta\sim\rho}[\phi(x;\theta)\phi(x^{\prime};\theta)]=2\EE_{\theta\sim\rho}[(\theta^{\top}u)_{+}(\theta^{\top}v)_{+}]\\
		&=2\EE_{\theta\sim\rho}[r^{2}(\cos\varphi_{2})_{+}(\cos(\varphi_{2}-\varphi_{1}))_{+}]=\frac{2}{\pi(d+1)}\int_{\varphi_{1}-\frac{\pi}{2}}^{\frac{\pi}{2}}\cos\varphi_{2}\cos(\varphi_{2}-\varphi_{1})d\varphi_{2}\\
		&=\frac{1}{\pi(d+1)}[\sin\varphi_{1}+(\pi-\varphi_{1})\cos\varphi_{1}]
		=\frac{1}{\pi(d+1)}[\sqrt{1-t^{2}}+(\pi-\arccos t)t],
	\end{aligned}  \]
	where we use $\EE[r^{2}]=\frac{2}{d+1}$ and $t=\cos\varphi_{1}$. Further with $t=\frac{1+x^{\top}x^{\prime}}{2}$, we obtain the stated $h$.
	
	\mypara{For item 2:} Since $\{Y_{k,j}\}_{k,j}$ is a complete orthonormal basis in $L_2(\mathbb{S}^{d-1},d\mu)$ and the spherical harmonic spaces form an orthogonal decomposition \cite[Theorem 2.38]{atkinson2012spherical}, we have $L_2(\mathbb{S}^{d-1},d\mu) =\bigoplus_{k=0}^{\infty}\YY_{k}$, where the direct sum is in the $L_{2}$ space sense. For any $f\in L_2(\mathbb{S}^{d-1},d\mu)$,  the following series converge in $L_{2}(d\mu)$,
	\[ f(x)=\sum_{k=0}^{+\infty}\sum_{j=1}^{N(d,k)}\langle f, Y_{k,j} \rangle_{L_{2}(d\mu)}Y_{k,j}(x)~\text{and}~T_{\rho}f(x)=\sum_{k=0}^{+\infty}\xi_{k}\sum_{j=1}^{N(d,k)}\langle f, Y_{k,j} \rangle_{L_{2}(d\mu)}Y_{k,j}(x). \]
	This indicates that $T_{\rho}=\sum_{k=0}^{+\infty}\xi_{k}\sum_{j=1}^{N(d,k)}Y_{k,j}\otimes Y_{k,j}$. Moreover, since $k_{\rho}$ is continuous and positive semi-definite on the compact space
	$\mathbb{S}^{d-1}\times\mathbb{S}^{d-1}$, by Mercer's theorem, we also have the expansion of $k_{\rho}$.
	
	\mypara{For item 3:} The estimate $\xi_k\asymp k^{-(d+2)}$ follows directly from Corollary~\ref{cor:sharp-degree-spectrum-spheical}.
	It remains to account for multiplicities and determine the decay of the ordered eigenvalues. Now we rearrange the eigenvalues in non-increasing order with multiplicity. The dimension formula gives
	\[
	\begin{aligned}
		&N(d,k)\\
		&=\frac{2k+d-2}{k}\binom{k+d-3}{d-2}=\frac{2k+d-2}{k}\frac{(k+d-3)!}{(d-2)!(k-1)!}\sim \frac{2}{(d-2)!}k^{d-2}~\text{as}~k\to+\infty.
	\end{aligned}  \] 
	Thus, $N(d,k)\asymp k^{d-2}$. Consequently, the number of eigenvalues, counted with multiplicity, contributed by degrees up to $K$ satisfies $\sum_{k=0}^{K}N(d,k)\asymp K^{d-1}$.  
	
	Combining $\xi_k\asymp k^{-(d+2)}$ with
	$\sum_{k=0}^{K}N(d,k)\asymp K^{d-1}$ shows that
	an eigenvalue with ordered index $n$ corresponds to
	a degree $K\asymp n^{1/(d-1)}$. Hence,
	\[
	\eta_n=\xi_K
	\asymp K^{-(d+2)}
	\asymp n^{-\frac{d+2}{d-1}},~\text{as }n\to\infty. 
	\]
	This completes the proof.
\end{proof}

Before proving Proposition \ref{prop:anisotropic-spherical-mismatch}, we first prove the special case of uniform spherical distribution with $\tau=1$. 
\begin{proposition}[Mismatch of Uniform Spherical Initialization]
	\label{prop:uniform-spherical-mismatch}
	Under the spherical biased ReLU setting, let $\rho_0=\operatorname{Unif}(\mathbb S^d)$. For every $\varepsilon>0$, there exists
	$\lambda_\varepsilon>0$ such that the mismatch associated
	with the initialization distribution $\rho_0$ satisfies
	\begin{equation}
        \nonumber
		\frac{\sqrt{2}(d+1)}{d}-\varepsilon\le \kappa_{\gamma_0}(\lambda)\le\frac{2\sqrt{2}(d+1)}{d}+\varepsilon,~\text{for}~\lambda\in(0,\lambda_\varepsilon].
	\end{equation}
\end{proposition}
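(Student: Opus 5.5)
The plan is to write everything in harmonic coordinates and compare $s_\lambda$ with $d(\lambda)$ degree by degree. Since $\rho_0=\rho$, we have $\gamma_0\equiv1$. By Lemma~\ref{lemma:harmonic-representation-leverage-score},
\[
\kappa_{\gamma_0}(\lambda)=\frac{\operatorname*{ess\,sup}_{b}\sum_{k\ge0}N(d,k)\chi_k(b)^2/(\xi_k+\lambda)}{\sum_{k\ge0}N(d,k)\xi_k/(\xi_k+\lambda)}.
\]
Two limits drive the argument. First, $d(\lambda)\to\infty$ as $\lambda\to0$, since $d(\lambda)\asymp\lambda^{-1/\alpha}$ by Proposition~\ref{prop:spectrum-biased-relu-kernel}. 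Consequently, for any fixed cutoff $K$, the degrees $k\le K$ contribute $O_K(1)$ to numerator and denominator, using $\chi_k^2\le 2$, $\xi_k>0$ and $\xi_k+\lambda\ge\xi_k$. Their contribution to the ratio therefore vanishes as $\lambda\to0$. Second, for large $k$, Lemma~\ref{lemma:relu-harmonic-coefficient-asymptotics}(3) and Corollary~\ref{cor:sharp-degree-spectrum-spheical} give
\[
\frac{\chi_k(b)^2}{\xi_k}=\frac{2\sqrt2(d+1)}{d}\,G_d(b)\bigl[\cos^2\Psi_k(b)+\mathcal O(k^{-1})\bigr](1+o(1)).
\]
Since $G_d(b)=(1-2b^2)^{d/2}(1-b^2)^{-(d-2)/2}\le1$, with equality only at $b=0$, this identity is the source of both constants.

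\textbf{Upper bound.} Fix $\varepsilon>0$ and choose $\eta>0$ so that $C_dG_d(b)\le\varepsilon\, c_{d}^{\xi}$ on $1/\sqrt2-\eta\le|b|<1/\sqrt2$. Here $c_d^\xi$ is the constant in $\xi_k\sim c_d^\xi k^{-(d+2)}$, and this choice is possible because $G_d\to0$ at $\pm1/\sqrt2$. On that endpoint strip, item 2 of Lemma~\ref{lemma:relu-harmonic-coefficient-asymptotics} gives $\chi_k^2\le 2\varepsilon\,\xi_k$ for $k\ge K$. On the compact set $|b|\le1/\sqrt2-\eta$, the Darboux expansion applies uniformly, so we pick $K$ with $\chi_k(b)^2\le(1+\varepsilon)\frac{2\sqrt2(d+1)}{d}\xi_k$ for all $k\ge K$, using $\cos^2\le1$ and $G_d\le1$. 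Summing gives
\[
s_\lambda(b)\le O_K(1)+(1+\varepsilon)\frac{2\sqrt2(d+1)}{d}\,d(\lambda)
\]
uniformly in $b$. Dividing by $d(\lambda)\to\infty$ yields the upper bound for all small $\lambda$.

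\textbf{Lower bound.} An essential supremum dominates any average, so I average over a small interval $I=[-\eta',\eta']$ instead of evaluating at $b=0$. We choose $\eta'$ so that $G_d\ge1-\varepsilon$ on $I$. Then
\[
\operatorname*{ess\,sup}_b s_\lambda\ge\frac1{|I|}\int_I s_\lambda(b)\,db\ge\sum_{k\ge K}\frac{N(d,k)}{\xi_k+\lambda}\cdot\frac{1}{|I|}\int_I\chi_k(b)^2\,db.
\]
By the uniform Darboux expansion on $I$,
\[
\frac1{|I|}\int_I k^{d+2}\chi_k^2\,db\ge C_{1,d}^2(1-\varepsilon)\frac{1}{|I|}\int_I\cos^2\Psi_k(b)\,db-\mathcal O(k^{-1}).
\]
Writing $\cos^2=\tfrac12(1+\cos 2\Psi_k)$, the oscillatory part is $\tfrac12|I|^{-1}\int_I\cos\bigl((2k+d-2)z_b-\tfrac{\pi(d+2)}{2}\bigr)db$. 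This tends to $0$ by the Riemann--Lebesgue lemma after the substitution $z=z_b$, which is smooth with nonvanishing derivative on $I$, exactly as in Corollary~\ref{cor:sharp-degree-spectrum-spheical}. Hence, for $k\ge K$, the averaged $\chi_k^2$ is at least $(1-2\varepsilon)\frac{C_{1,d}^2}{2}k^{-(d+2)}\ge(1-3\varepsilon)\frac{\sqrt2(d+1)}{d}\xi_k$. Summing and discarding the $O_K(1)$ low-degree part of $d(\lambda)$ gives the lower bound $\frac{\sqrt2(d+1)}{d}-\varepsilon$ for small $\lambda$.

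\textbf{Main obstacle.} The hard step is the upper bound near the endpoints $|b|\to1/\sqrt2$, where the Darboux asymptotics are not uniform. Item 2's global bound has a non-sharp constant $C_d$. It only works because it is multiplied by $G_d(b)$, which we can make arbitrarily small there. Care is needed that the choices of $\eta$ and then $K$ are made in this order and that neither depends on $\lambda$.
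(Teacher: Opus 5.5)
Your proposal is correct. The upper bound is essentially the paper's argument. You split $[-1,1]$ into a compact interior interval and an endpoint strip. Inside, you use the uniform Darboux expansion with $\cos^2\le 1$ and $G_d\le 1$. On the strip, you use the global bound of item 2 multiplied by a small $G_d$. The finitely many low degrees are absorbed through $d(\lambda)\to\infty$. Your explicit bound $N(d,k)\chi_k(b)^2\le\|\phi(\cdot;\theta)\|_{L_2(d\mu)}^2\le 2$ replaces the paper's appeal to continuity of $\chi_k$.

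The lower bound takes a genuinely different route. The paper evaluates the score at the single point $b=0$. Passing from the essential supremum to this point value requires continuity of $s_\lambda$ and full support of $\rho_0$. There $\Psi_k(0)=(k-2)\pi/2$, so $\chi_k(0)^2/\xi_k$ tends to $\frac{2\sqrt2(d+1)}{d}$ along even degrees and to $0$ along odd ones. The paper then needs a separate pairing argument, using $N(d,2k+1)/N(d,2k)\to1$ and $\xi_{2k}/\xi_{2k+1}\to1$ uniformly in $\lambda$, to show that even degrees carry asymptotically half of $d(\lambda)$.

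You instead average over $[-\eta',\eta']$ with respect to Lebesgue measure. This is legitimate because $s_\lambda$ depends only on $b$ and the law of $b$ has a positive density on $(-1,1)$. The Riemann--Lebesgue step from Corollary~\ref{cor:sharp-degree-spectrum-spheical} then replaces $\cos^2\Psi_k$ by $1/2$ degree by degree. So you need neither the parity bookkeeping nor continuity of $s_\lambda$.

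What the paper's route buys is the exact pointwise limit $s_\lambda((\omega,0))/d(\lambda)\to\sqrt2(d+1)/d$. This holds at the point where $\gamma_0^{(\tau)}$ attains its minimum $1/\tau$, and it is reused verbatim in the proof of Proposition~\ref{prop:anisotropic-spherical-mismatch}. Your averaged bound adapts to that case too, at the cost of one more $\varepsilon$. You divide by $\max_{|b|\le\eta'}\gamma_0^{(\tau)}(b)$, which tends to $1/\tau$ as $\eta'\to0$ by continuity of $\gamma_0^{(\tau)}$.
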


\begin{proof}[\textbf{Proof of Proposition \ref{prop:uniform-spherical-mismatch}}]
	For the upper bound, by the definition of $\kappa_{\gamma_0}(\lambda)$ and \eqref{eq:harmonic-repre-of-essential-terms},
	\[
	\kappa_{\gamma_0}(\lambda) = \operatorname*{ess\,sup}_{\theta\sim\rho_0} \frac{s_\lambda(\theta)}{d(\lambda)} = \operatorname*{ess\,sup}_{\theta\sim\rho_0} \Bigg( \sum_{k=0}^{\infty} \frac{N(d,k)\chi_k(b)^2}{\xi_k+\lambda} \Bigg) \Bigg/ \Bigg( \sum_{k=0}^{\infty} \frac{N(d,k)\xi_k}{\xi_k+\lambda} \Bigg).
	\]
	Fix $\varepsilon>0$. To bound this ratio, we compare $\chi_k(b)^2$ with $\xi_k$ uniformly over $b\in[-1,1]$ for all sufficiently large $k$.
	
	We first consider a compact interval inside $(-1/\sqrt2,1/\sqrt2)$. For $\delta\in(0,1/\sqrt2)$, let $I_\delta=[-1/\sqrt2+\delta, 1/\sqrt2-\delta]$. Squaring \eqref{eq:compact-asymptotic-chi}, using $\cos^2\Psi_k(b)\le1$, and comparing with Corollary \ref{cor:sharp-degree-spectrum-spheical}, we obtain
	\[
	\limsup_{k\to\infty} \sup_{b\in I_\delta} \frac{\chi_k(b)^2}{G_d(b)\xi_k} \le \frac{C_{1,d}^2}{C_{1,d}^2d/[2\sqrt2(d+1)]} = \frac{2\sqrt2(d+1)}d.
	\]
	Therefore, there exists $K\ge2$ such that for any $k\ge K, b\in I_\delta$,
	\[
	\chi_k(b)^2 \le  \Big(\frac{2\sqrt2(d+1)}d+\frac{\varepsilon}{2} \Big) G_d(b)\xi_k.
	\]
	
	We next extend this estimate to the whole interval $[-1,1]$. The global bound in item~2 of Lemma~\ref{lemma:relu-harmonic-coefficient-asymptotics}, together with $\xi_k\asymp k^{-(d+2)}$ and $\xi_k>0$, gives
	\[
	\chi_k(b)^2 \le C_d G_d(b)\xi_k, ~k\ge2,~ b\in[-1,1].
	\]
	Since $G_d(b)$ tends to zero as $|b|\to1/\sqrt2$ and vanishes for $|b|\ge1/\sqrt2$, choose $\delta$ sufficiently small that
	\[
	C_d G_d(b) \le \frac{\varepsilon}{2},~ b\in[-1,1]\setminus I_\delta.
	\]
	For this $\delta$, take the corresponding $K$ in the interior estimate. Since $\chi_k(b)^2\le(\varepsilon/2)\xi_k$ outside $I_\delta$ and $G_d(b)\le1$ inside it, combining the two regions gives
	\[
	\chi_k(b)^2 \le  \Big(\frac{2\sqrt2(d+1)}d+\frac{\varepsilon}{2} \Big)\xi_k, ~k\ge K, ~b\in[-1,1].
	\]
	
	For this fixed $K$, continuity of $\chi_k$ on $[-1,1]$ gives
	\[
	C_K := \sum_{k=0}^{K-1} \frac{N(d,k)}{\xi_k} \sup_{t\in[-1,1]}\chi_k(t)^2 < \infty.
	\]
	Splitting the sum at $K$ and using the uniform bound above, we obtain
	\[
	\begin{aligned} \frac{s_\lambda(\theta)}{d(\lambda)} &= \frac{1}{d(\lambda)}  \Big[ \sum_{k=0}^{K-1} \frac{N(d,k)\chi_k(b)^2}{\xi_k+\lambda} + \sum_{k=K}^{\infty} \frac{N(d,k)\chi_k(b)^2}{\xi_k+\lambda}  \Big] \\ &\le \frac{C_K}{d(\lambda)} +  \Big(\frac{2\sqrt2(d+1)}d+\frac{\varepsilon}{2} \Big) \frac{1}{d(\lambda)} \sum_{k=K}^{\infty} \frac{N(d,k)\xi_k}{\xi_k+\lambda} \le \frac{C_K}{d(\lambda)} + \frac{2\sqrt2(d+1)}d + \frac{\varepsilon}{2}. \end{aligned}
	\]
	Taking the essential supremum yields
	\[
	\kappa_{\gamma_0}(\lambda) \le \frac{2\sqrt2(d+1)}d + \frac{\varepsilon}{2} + \frac{C_K}{d(\lambda)}.
	\]
	
	Since $\xi_k>0$ for every $k$, monotone convergence gives $\lim_{\lambda\to0_+} d(\lambda) = \sum_{k=0}^{\infty} N(d,k) = \infty$. Hence there exists $\lambda_\varepsilon>0$ such that $C_K/d(\lambda)\le\varepsilon/2$ whenever $0<\lambda\le\lambda_\varepsilon$. Consequently,
	\[
	\kappa_{\gamma_0}(\lambda) \le \frac{2\sqrt2(d+1)}d+\varepsilon, 0<\lambda\le\lambda_\varepsilon.
	\]
	For the lower bound, setting $b=0$ in Lemma \ref{lemma:relu-harmonic-coefficient-asymptotics} gives $\Psi_{k}(0)=\frac{(k-2)\pi}{2}$. Combining this with the expansion in \eqref{eq:compact-asymptotic-chi} and Corollary \ref{cor:sharp-degree-spectrum-spheical}, we have
	\begin{equation}
		\label{eq:kappa-reffff-1}
		\frac{\chi_k(0)^2}{\xi_k}=
		\frac{2\sqrt2(d+1)}{d}
		\cos^2\Big(\frac{k\pi}{2}\Big)+o(1),~k\to\infty.
	\end{equation}
	We note that the cosine squared equals one for even $k$ and zero for odd $k$. To compare the contributions of even and odd degrees, the dimension formula \eqref{eq:degree-N-Formula} gives
	\[ \frac{N(d,2k+1)}{N(d,2k)}=\frac{4k+d}{4k+d-2}\frac{2k+d-2}{2k+1}\to 1,~k\to\infty, \]
	and Corollary \ref{cor:sharp-degree-spectrum-spheical} also gives
	\[ 
	\Big|
	\frac{\xi_{2k}+\lambda}{\xi_{2k+1}+\lambda}-1
	\Big|
	=\Big|\frac{\xi_{2k}-\xi_{2k+1}}{\xi_{2k+1}+\lambda}\Big|
	\le\Big|\frac{\xi_{2k}}{\xi_{2k+1}}-1\Big|\to0,~k\to\infty, \]
	uniformly over $\lambda>0$. To get the final lower bound, we need to estimate $\frac{1}{d(\lambda)} \sum_{k=1}^{\infty} \frac{N(d,2k)\xi_{2k}}{\xi_{2k}+\lambda}.$ For fixed $\varepsilon>0$, by the preceding estimates, there exists $K\ge2$, independent of $\lambda$, such that for all $k\ge K$ and $\lambda>0$,
	\[ \Bigg| \frac{N(d,2k)\xi_{2k}}{\xi_{2k}+\lambda} - \frac{N(d,2k+1)\xi_{2k+1}}{\xi_{2k+1}+\lambda} \Bigg| \le \varepsilon \frac{N(d,2k)\xi_{2k}}{\xi_{2k}+\lambda}. \]
	Since $d(\lambda)=\sum_{k=0}^{\infty}\frac{N(d,k)\xi_k}{\xi_k+\lambda}$, we have
	\[ \begin{aligned} &\Big| \frac{1}{d(\lambda)}\Big(2\sum_{k=1}^{\infty} \frac{N(d,2k)\xi_{2k}}{\xi_{2k}+\lambda} -d(\lambda)\Big) \Big| \\ &\le \frac{1}{d(\lambda)} \sum_{k=0}^{2K-1} \frac{N(d,k)\xi_k}{\xi_k+\lambda} + \frac{1}{d(\lambda)} \sum_{k=K}^{\infty} \Big| \frac{N(d,2k)\xi_{2k}}{\xi_{2k}+\lambda} - \frac{N(d,2k+1)\xi_{2k+1}}{\xi_{2k+1}+\lambda} \Big| \\ &\le \frac{\sum_{k=0}^{2K-1}N(d,k)}{d(\lambda)} + \frac{\varepsilon}{d(\lambda)} \sum_{k=K}^{\infty} \frac{N(d,2k)\xi_{2k}}{\xi_{2k}+\lambda}\le \frac{\sum_{k=0}^{2K-1}N(d,k)}{d(\lambda)} +\varepsilon. \end{aligned} \]
	Since $d(\lambda)\to\infty$ as $\lambda\to0_+$ and $\varepsilon>0$ is arbitrary, this proves
	\begin{equation}
		\label{eq:kappa-reffff-2}
		\lim_{\lambda\to0_+} \frac{1}{d(\lambda)} \sum_{k=1}^{\infty} \frac{N(d,2k)\xi_{2k}}{\xi_{2k}+\lambda} =\frac12.
	\end{equation}
	Since $s_\lambda$ is continuous on $\mathbb S^d$ and $\rho_0$ has full support, the essential supremum is at least $s_\lambda((\omega,0))$ for $\theta=(\omega,0)$. Consequently, by \eqref{eq:harmonic-repre-of-essential-terms}, \eqref{eq:kappa-reffff-1} and \eqref{eq:kappa-reffff-2}
	\begin{equation}
		\label{eq:b-0-limit-kappa}
		\begin{aligned} \liminf_{\lambda\to0_+}\kappa_{\gamma_0}(\lambda)&\ge\lim_{\lambda\to0_+}\frac{s_\lambda((\omega,0))}{d(\lambda)}\\ &= \frac{2\sqrt2(d+1)}d \lim_{\lambda\to0_+}\frac{1}{d(\lambda)}\sum_{k=0}^{\infty}\frac{N(d,k)\xi_k}{\xi_k+\lambda}\cos^2\Big(\frac{k\pi}{2}\Big) \\ &= \frac{2\sqrt2(d+1)}d \lim_{\lambda\to0_+}\frac{1}{d(\lambda)}\sum_{k=1}^{\infty}\frac{N(d,2k)\xi_{2k}}{\xi_{2k}+\lambda}= \frac{\sqrt2(d+1)}d. \end{aligned}
	\end{equation}
	After decreasing $\lambda_\varepsilon$ if necessary, the upper and lower bounds together give
	\[
	\frac{\sqrt2(d+1)}d-\varepsilon\le\kappa_{\gamma_0}(\lambda)
	\le\frac{2\sqrt2(d+1)}d+\varepsilon,~0<\lambda\le\lambda_\varepsilon. \qedhere
	\]
\end{proof}

Proposition \ref{prop:uniform-spherical-mismatch} now gives the corresponding result for scaled spherical initialization.
\begin{proof}[\textbf{Proof of Proposition \ref{prop:anisotropic-spherical-mismatch} (\nameref*{prop:anisotropic-spherical-mismatch})}]
	The leverage score associated with
	$\rho_0^{(\tau)}$ satisfies $s_{\lambda,0}^{(\tau)}(\theta)=\frac{s_\lambda(\theta)}{\gamma_0^{(\tau)}(\theta)}$. Moreover, the corresponding effective dimension is still $d(\lambda)$, since
	\[
	\int_{\mathbb S^d} s_{\lambda,0}^{(\tau)}(\theta)d\rho_0^{(\tau)}(\theta)=\int_{\mathbb S^d}\frac{s_\lambda(\theta)}{\gamma_0^{(\tau)}(\theta)}\gamma_0^{(\tau)}(\theta)d\rho(\theta)=d(\lambda).
	\]
	Since $\gamma_0^{(\tau)}(\theta)\ge1/\tau$ and $\gamma_0^{(\tau)}((\omega,0))=1/\tau$ for $\omega\in\mathbb S^{d-1}$, we obtain
	\[ \begin{aligned} \tau\frac{s_\lambda((\omega,0))}{d(\lambda)} &\le\kappa_{\gamma_0^{(\tau)}}(\lambda)=\operatorname*{ess\,sup}_{\theta\in\mathbb S^d}\frac{s_\lambda(\theta)}{\gamma_0^{(\tau)}(\theta)d(\lambda)}\le\tau\operatorname*{ess\,sup}_{\theta\in\mathbb S^d}\frac{s_\lambda(\theta)}{d(\lambda)}=\tau\kappa_{\gamma_0^{(1)}}(\lambda). \end{aligned} \]
	Applying Proposition~\ref{prop:uniform-spherical-mismatch} with $\varepsilon/\tau$ and using the limit at $b=0$ established in \eqref{eq:b-0-limit-kappa} gives
	\[ \frac{\sqrt2\tau(d+1)}d-\varepsilon\le\kappa_{\gamma_0^{(\tau)}}(\lambda)\le\frac{2\sqrt2\tau(d+1)}d+\varepsilon \]
	for all sufficiently small $\lambda>0$.
\end{proof}
\begin{proof}[\textbf{Proof of Theorem \ref{thm:sphericial-polynomial-rates} (\nameref*{thm:sphericial-polynomial-rates})}]
	By Proposition \ref{prop:spectrum-biased-relu-kernel}, the
	ordered eigenvalues of $T_\rho$ satisfy $\eta_k\asymp k^{-\frac{d+2}{d-1}}$. Hence, the polynomial eigenvalue decay assumption in
	Theorem \ref{thm:polynomial-rates} holds with exponent $\alpha=\frac{d+2}{d-1}$.
	
	For the trainable neurons, take the normalized leverage density $\gamma_1(\theta)=\frac{s_\lambda(\theta)}{d(\lambda)}$ which indicates the $\kappa_{\gamma_{1}}(\lambda)=\kappa_{1}^{*}=1$. For the randomly fixed neurons, apply
	Proposition~\ref{prop:anisotropic-spherical-mismatch} with
	$\varepsilon=\frac{2\sqrt{2}\tau(d+1)}{d}$, and there exists $\bar\lambda_{d,\tau}>0$ such that
	\[
	\kappa_{\gamma_0^{(\tau)}}(\lambda)\le\frac{4\sqrt{2}\tau(d+1)}{d},~\text{for}~ 0<\lambda\leq\bar\lambda_{d,\tau}.
	\]
	
	Set $W=\max\{d,m,1\}$. Following \eqref{eq:relation-lambda-lower-bound-choice} in the proof of Theorem \ref{thm:polynomial-rates}, choose
	\begin{equation}
        \nonumber
		\lambda_{P,m}=A_d \Big( \Big({P}+\frac{(1-P)d}{4\sqrt{2}\tau(d+1)} \Big)\frac{9m}{40\log(63m/(20\delta))} \Big)^{-\frac{d+2}{d-1}},
	\end{equation}
	where $A_d>0$ depends only on $d$ and is chosen to satisfy \eqref{eq:relation-lambda-lower-bound-choice}. If $\lambda_{P,m}\leq\bar\lambda_{d,\tau}$, Theorem \ref{thm:polynomial-rates} directly yields,
	with probability at least $1-\delta$,
	\[ \sup_{\norm{f}_{\mathcal H_\rho}\le 1}\inf_{f_{2}^{(P)}\in\mathcal{F}_P(W,2)}\norm{f-f_{2}^{(P)}}_{L_{2}(d\mu)}^{2}\lesssim \Big( \Big({P}+\frac{(1-P)d}{4\sqrt{2}\tau(d+1)} \Big)\frac{9m}{40\log(63m/(20\delta))} \Big)^{-\frac{d+2}{d-1}}. \]
	If $\lambda_{P,m}\ge\bar\lambda_{d,\tau}$, we use the zero function $f_{2}^{(P)}=0$ in $\mathcal F_P(W,2)$. Since
	$\norm{f}_{\mathcal H_\rho}=\norm{q_{f}}_{L_2(d\mu)}\leq1$ implies
	\[
	\norm{f}_{L_2(d\mu)}^2=\norm{T_{\rho}^{1/2}q_{f}}_{L_2(d\mu)}^2=\langle q_{f},T_{\rho}q_{f}\rangle_{L_2(d\mu)}\le \norm{T_\rho}_{\mathrm{op}}\norm{q_{f}}_{L_2(d\mu)}^2\le\norm{T_\rho}_{\mathrm{op}},
	\]
	then we have
	\[ \sup_{\norm{f}_{\mathcal H_\rho}\leq1}\inf_{f_2^{(P)}\in\mathcal F_P(W,2)}\norm{f-f_2^{(P)}}_{L_2(d\mu)}^2\le\norm{T_\rho}_{\mathrm{op}}\le\frac{\norm{T_\rho}_{\mathrm{op}}}{\bar{\lambda}_{d,\tau}}\lambda_{P,m}. \]
	Thus, the same rate holds in both cases, with a constant depending only on the fixed spherical setting and $\tau$.
	
	Besides, since ${P}+\frac{(1-P)d}{4\sqrt{2}\tau(d+1)}\ge \frac{d}{4\sqrt{2}\tau(d+1)}$, we have 
	\[ \lambda_{P,m}\le A_d \Big(\frac{d}{4\sqrt{2}\tau(d+1)}\frac{9m}{40\log(63m/(20\delta))} \Big)^{-\frac{d+2}{d-1}}. \]
	Therefore, $\lambda_{P,m}\leq\bar\lambda_{d,\tau}$ whenever $\frac{9m}{40\log(63m/(20\delta))}\ge \frac{4\sqrt{2}\tau(d+1)}{d}(A_d/\bar{\lambda}_{d,\tau})^{\frac{d-1}{d+2}}$, which means that, for fixed $d,\tau$ and $\delta$, sufficiently large $m$ leads to the same bound result.
\end{proof}

\section{Error Bounds for the Structured Three-Layer Example \ref{exm:structed-three-layer-compar}}
\label{subsec:proof-of-Example}
Let $r=m_{\mathrm{free}}\in\{0,\ldots,20\}$ be the number of trainable first-layer neurons in the three-layer network and the number of trainable hidden neurons in the two-layer network. The two network architectures have the expression
\[ \begin{aligned}
	f_{3}(x)&=\beta_0+
	\sum_{j=1}^{10}\sum_{\ell=1}^{20}\beta_{j,\ell}\omega_{j,\ell}
	\big(\phi(x;\theta_{2j-1})
	+\phi(x;\theta_{2j})-b_{j,\ell}\big)_{+},\\
	f_2(x)&=\beta_0^{(2)}+\sum_{i=1}^{220}\beta_i^{(2)}\phi(x;\theta_i^{(2)}).
\end{aligned} \]

\begin{proposition}[Approximation Error Upper Bounds]
	\label{prop:structured-depth-upper-bounds}
	For the construction in Example \ref{exm:structed-three-layer-compar},
	the approximation errors satisfy
	\[
	\mathbb E\inf_{\theta_{\mathrm{free}},\beta}
	\|f^*-f_d\|_{L_2(d\mu)}^2\leq\mathcal E^{(d)}(r),
	~ d\in\{2,3\},~ r\in\{0,2,4,20\},
	\]
	where the explicit upper bounds are
	\begin{equation}
		\nonumber
		\label{eq:structured-three-layer-upper-bounds}
		\begin{aligned}
			\mathcal E^{(3)}(0)&=\frac19,~\mathcal E^{(3)}(2)=\frac1{45}+\frac{144}{\prod_{j=21}^{25}j},~\mathcal E^{(3)}(4)=\frac{180}{\prod_{j=21}^{25}j},~
			\mathcal E^{(3)}(20)=
			\frac{580}{\prod_{j=101}^{105}j},
		\end{aligned}
	\end{equation}
	and
	\begin{equation}
		\nonumber
		\label{eq:structured-two-layer-upper-bounds}
		\mathcal E^{(2)}(0)=\frac19,~\mathcal E^{(2)}(2)=\frac{17}{729},~\mathcal E^{(2)}(4)=\frac1{729},~
		\mathcal E^{(2)}(20)=\frac1{9\cdot11^4}.
	\end{equation}
	Here $\mathcal E^{(d)}(r)$ denotes the stated upper bound.
\end{proposition}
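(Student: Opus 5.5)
For every pair $(d,r)$ I would bound the infimum by one explicit choice of trainable parameters and output coefficients, and then take the expectation over the fixed parameters. Two elementary facts do most of the work. First, since $x_2,x_3$ are independent and uniform, $\operatorname{Var}(x_i^2)=\tfrac15-\tfrac19=\tfrac4{45}$. This gives $\|f^*\|_{L_2(d\mu)}^2=\tfrac4{45}+\tfrac14\cdot\tfrac4{45}=\tfrac19$, and it also shows that the error splits additively across the $x_2$ and $x_3$ components, with weights $1$ and $\tfrac14$, whenever each component is approximated separately by an even function of that coordinate. Second, the piecewise linear interpolant of $t^2$ on an interval of length $h$ has squared $L_2$ error $\int_0^h (s(h-s))^2\,ds=h^5/30$.

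The case $r=0$ is uniform in $d$. I would take $\beta_0=0$ and all other coefficients equal to zero. Since $\mathbb E f^*=0$, the error is exactly $\|f^*\|^2=\tfrac19$, which gives $\mathcal E^{(2)}(0)=\mathcal E^{(3)}(0)=\tfrac19$.

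For the three-layer network with $r\ge2$, I would set $\theta_1=(0,1,0,0)$ and $\theta_2=(0,-1,0,0)$. Pair $1$ then outputs $|x_2|$, and block $1$ supplies the hinges $(|x_2|-b_{1,\ell})_+$, $\ell=1,\dots,20$. The factor $\omega_{j,\ell}$ is absorbed into $\beta_{j,\ell}$. The variable $t=|x_2|$ is uniform on $[0,1]$, and the $20$ i.i.d.\ uniform knots $b_{1,\ell}$ split $[0,1]$ into $21$ spacings with a symmetric Dirichlet law. On each interior spacing I would use the piecewise linear interpolant of $t^2$, which is realizable with the bias and hinges; on the leftmost spacing $[0,b_{(1)}]$ I would use the constant fit. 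Using $\mathbb E[h_i^5]=5!\,/\prod_{j=21}^{25}j$ for each spacing and summing $h_i^5/30$ over spacings, together with the cruder endpoint term, should give the constant $144/\prod_{j=21}^{25}j$. For $r=2$ the $x_3$ part is left unfit, contributing $\tfrac14\cdot\tfrac4{45}=\tfrac1{45}$. For $r=4$ I would use a second pair with $\pm\mathbf e_3$ and weight $\tfrac14$, adding $36/\prod j$, for a total of $180/\prod j$. For $r=20$ I would give five pairs to each coordinate, so each coordinate sees $100$ knots, and repeat the Dirichlet computation with $n=100$, obtaining $580/\prod_{j=101}^{105}j$.

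For the two-layer network, the $216$ fixed neurons depend almost surely only on $x_1$ (up to probability $10^{-4}$ events, which I would simply not use), so I would set their coefficients to zero. I would place the $r$ trainable ReLUs at deterministic symmetric knots in $x_2$ and $x_3$. One option is $\pm x_i$ paired to form $|x_i|$, plus interior hinges on a uniform grid. I would then compute the interpolation error exactly with $h^5/30$ per cell, plus the best-constant or best-linear residual where a cell has no adjustable slope. With $10$ neurons per coordinate at $r=20$, the uniform grid should produce $1/(9\cdot 11^4)$; the values $17/729$ and $1/729$ at $r=2,4$ should come from the analogous small-grid choices.

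I expect the main obstacle to be bookkeeping rather than any concept. Matching the exact constants $144$, $580$ and $17/729$ requires the right treatment of the boundary spacings, namely the constant fit on $[0,b_{(1)}]$ and the interaction with the bias. It also requires the exact Dirichlet moments and, in the two-layer case, choosing the deterministic knot locations so that the stated rational numbers come out. I would first try to reproduce the stated bounds with the choices above and adjust the endpoint handling if the constants do not match.
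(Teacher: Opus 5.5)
Your baseline bound and your three-layer bounds are correct and follow the paper's proof. By continuity, the value on $[0,b_{(1)}]$ is forced to be $b_{(1)}^2$, which contributes $\int_0^h(h^2-u^2)^2\,du=\tfrac{8}{15}h^5$. The other 20 spacings, including $[b_{(20)},1]$ where the total slope is free, contribute $h^5/30$ each. This gives $(\tfrac{8}{15}+\tfrac{20}{30})\,5!=144$, $\tfrac54\cdot144=180$, and $\tfrac54(\tfrac{8}{15}+\tfrac{100}{30})\,5!=580$.

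One correction to your first paragraph: the additive split across coordinates does not come from evenness. It comes from choosing the output bias so that the total residual has mean zero. Independence of $x_2$ and $x_3$ then makes the error equal to $\operatorname{Var}(e_2)+\operatorname{Var}(e_3)$, and in the three-layer case each variance may be bounded by the uncentered $h^5$ sums, as the paper does.

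The genuine gap is the two-layer case. You leave it at ``should come from the analogous small-grid choices'', and the one concrete option you give cannot reach the stated constants.

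In a two-layer network each hinge $(\pm x_i-c)_+$ costs one neuron, and forming $|x_i|=(x_i)_++(-x_i)_+$ spends two neurons on the single knot $0$. So $2q$ neurons per coordinate give at most $2q$ cells. The best linear $L_2$ fit of $u^2$ on a cell of width $h$ leaves error $h^5/180$, so $n$ cells cost at least $\tfrac{4}{45n^4}$ per coordinate under the normalized measure. Your option therefore gives at best $\tfrac{1}{9(2q)^4}$. Concretely this is $\tfrac1{144}$ at $r=4$, $\tfrac1{180}+\tfrac1{45}=\tfrac1{36}$ at $r=2$, and $\tfrac{1}{9\cdot10^4}$ at $r=20$, all larger than claimed.

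The missing idea is to place no knot at the origin. Use $(x_i-\tfrac{2k-1}{2q+1})_+$ and $(-x_i-\tfrac{2k-1}{2q+1})_+$ for $k=1,\dots,q$. The nodes $\pm\tfrac{2k-1}{2q+1}$ together with $\pm1$ then cut $[-1,1]$ into $2q+1$ equal cells. The even interpolant is the constant $\tfrac{1}{(2q+1)^2}$ on the middle cell, so no linear term and no extra neuron is needed there.

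You also need exact centering, not the raw $h^5/30$ sum. The uncentered error is $\tfrac{8}{15(2q+1)^4}$ and the residual mean is $\tfrac{2}{3(2q+1)^2}$, so the variance is $\tfrac{4}{45(2q+1)^4}$. With weights $1$ and $\tfrac14$ this gives $\tfrac{1}{9(2q+1)^4}$:
\begin{itemize}
\item $q=1$ gives $\tfrac1{729}$;
\item $q=5$ gives $\tfrac{1}{9\cdot11^4}$;
\item a single pair on $x_2$ gives $\tfrac{4}{3645}+\tfrac1{45}=\tfrac{17}{729}$.
\end{itemize}
Without centering, even the right grid only gives $\tfrac54\cdot\tfrac{8}{1215}=\tfrac{2}{243}$ at $r=4$.

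A minor point: the fixed neurons do not depend only on $x_1$ almost surely. With 216 of them there is roughly a $4\%$ chance that one uses $\pm\mathbf e_2$ or $\pm\mathbf e_3$. This is harmless, since you set their coefficients to zero.
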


\begin{proof}[\textbf{Proof of Proposition~\ref{prop:structured-depth-upper-bounds}}] \textbf{Overview.} The zero output gives a common baseline upper bound $\mathcal{E}^{(d)}(0)$. \textbf{For the three-layer network}, pairs of trainable first-layer neurons produce absolute coordinate values, allowing the fixed second-layer blocks to implement piecewise-linear interpolation with random nodes. The interpolation residuals and moments of the random spacings then yield the expected error bounds $\mathcal{E}^{(3)}(r)$. \textbf{For the two-layer network}, the trainable neurons directly implement symmetric piecewise-linear interpolation with equally spaced nodes. Centering the interpolation residuals and integrating their squares then yields the two-layer error bounds for the different numbers of trainable neurons $\mathcal{E}^{(2)}(r)$.
	
	\mypara{Proof Details.} Since the coordinates of $x$ are independent and $\int_{-1}^1\frac{t^2-1/3}{2}\,{dt}=0$ with  $\int_{-1}^1\frac{(t^2-1/3)^2}{2}\,{dt}=\frac4{45}$, the target satisfies $\|f^*\|_{L_2(d\mu)}^2
	=\frac4{45}(1+\frac14 )=\frac19$. Choosing all output coefficients and the output bias to be zero therefore gives the baseline upper bound $\mathcal{E}^{(d)}(0)$ for both architectures.
	
	\mypara{The Three-Layer Upper Bound.}
	Since $\omega_{j,\ell}>0$, the approximation residual is
	\[
	\begin{aligned}
		&f_3(x)-f^*(x)\\
		&=\beta_0+
		\sum_{j=1}^{10}\sum_{\ell=1}^{20}
		\beta_{j,\ell}\omega_{j,\ell}
		\bigl[
		\phi(x;\theta_{2j-1})+
		\phi(x;\theta_{2j})-b_{j,\ell}
		\bigr]_+-(x_2^2-1/3)-\frac12(x_3^2-1/3).
	\end{aligned}
	\]
	
	\textbf{(\romannumeral1) Construction with Four Trainable Neurons.} With four trainable first-layer neurons, choose them as 
	\[
	\theta_1=(\mathbf e_2,0),~
	\theta_2=(-\mathbf e_2,0),~
	\theta_3=(\mathbf e_3,0),~
	\theta_4=(-\mathbf e_3,0).
	\]
	Their pairwise sums are $|x_2|$ and $|x_3|$.
	Setting the output coefficients of all other blocks to zero
	therefore gives
	\[
	\begin{aligned}
		f_3(x)-f^*(x)&=\beta_0+
		\sum_{\ell=1}^{20}\beta_{1,\ell}\omega_{1,\ell}
		(|x_2|-b_{1,\ell})_++\sum_{\ell=1}^{20}\beta_{2,\ell}\omega_{2,\ell}
		(|x_3|-b_{2,\ell})_+\\
        &\quad -(x_2^2-1/3)-\frac12(x_3^2-1/3).
	\end{aligned}
	\]
	Although $x_k$ ranges over $[-1,1]$, the folded input $|x_k|$
	is uniform on $[0,1]$. The interpolation below is therefore
	performed on $[0,1]$ with $u=|x_k|$, for which $u^2=x_k^2$.
	
	For each selected block $j=1,2$, reorder and relabel its
	biases and the corresponding weights so that, almost surely,
	\[
	0<b_{j,1}<\cdots<b_{j,20}<1,~b_{j,0}=0,~ b_{j,21}=1.
	\]
	These biases serve as interpolation nodes.
	For the first block, approximate $u^2$ by a function that
	is constant with value $b_{1,1}^2$ on $[0,b_{1,1}]$ and
	linearly interpolates $u^2$ on each
	$[b_{1,\ell},b_{1,\ell+1}]$.
	The linear interpolant has slope
	$b_{1,\ell}+b_{1,\ell+1}$ on each such interval.
	Those coefficients $\beta_{1,\ell}\omega_{1,\ell}$ decide the successive slope differences,
	with zero slope on the initial interval.
	Consequently, choose
	\[
	\beta_{1,\ell}=
	\begin{cases}
		\frac{b_{1,1}+b_{1,2}}{\omega_{1,1}},
		&\ell=1,\\[5pt]
		\frac{b_{1,\ell+1}-b_{1,\ell-1}}{\omega_{1,\ell}},
		&2\leq\ell\leq20.
	\end{cases}
	\]
	Together with the constant term $b_{1,1}^2$, these coefficients
	give the prescribed interpolation, with residual
	\begin{equation}
		\label{eq:residual-blocks}
        \begin{aligned}
            &b_{1,1}^2+
		\sum_{\ell=1}^{20}\beta_{1,\ell}\omega_{1,\ell}
		(u-b_{1,\ell})_+-u^2\\
        &\qquad=
		\begin{cases}
			b_{1,1}^2-u^2,
			&0\leq u\leq b_{1,1},\\
			(u-b_{1,\ell})(b_{1,\ell+1}-u),
			&1\leq\ell\leq20,~b_{1,\ell}\leq u\leq b_{1,\ell+1}.
		\end{cases}
        \end{aligned}
	\end{equation}
	Evaluating this expression at $u=|x_2|$ gives the residual
	for the first target coordinate on $[-1,1]$.
	For the second block, apply the same construction to
	$\frac12u^2$ and evaluate it at $u=|x_3|$. This gives
	\[
	\beta_{2,\ell}=
	\begin{cases}
		\frac{b_{2,1}+b_{2,2}}{2\omega_{2,1}},
		&\ell=1,\\[5pt]
		\frac{b_{2,\ell+1}-b_{2,\ell-1}}{2\omega_{2,\ell}},
		&2\leq\ell\leq20.
	\end{cases}
	\]
	Its interpolation residual has the same form with its own
	biases and a factor of $1/2$.
	
	\textbf{(\romannumeral2) Centering and Error Estimation.} Since the target has zero spatial mean,
	$\int_{\mathcal X}f^*\,d\mu=0$, center the constructed approximation so that $\int_{\mathcal X}f_{3}d\mu=0$ by choosing
	\[
	\begin{aligned}
		\beta_0
		&=b_{1,1}^2+\frac12b_{2,1}^2-\Big[b_{1,1}^2+\frac12b_{2,1}^2+\sum_{j=1}^{2}\sum_{\ell=1}^{20}
		\beta_{j,\ell}\omega_{j,\ell}
		\frac12\int_{-1}^{1}(|u|-b_{j,\ell})_+\,du\Big]\\
		&=-\frac12\sum_{j=1}^{2}\sum_{\ell=1}^{20}
		\beta_{j,\ell}\omega_{j,\ell}(1-b_{j,\ell})^2,
	\end{aligned}
	\]
	where the first $b_{1,1}^{2} + b_{2,1}^{2}/2$ supplies the constant terms of the two interpolants, and the later bracketed expression gives the combined spatial mean. With all coefficients determined, the resulting residual is 
	\[
	\begin{aligned}
		f_3(x)-f^*(x)
		&=
		\sum_{\ell=1}^{20}\beta_{1,\ell}\omega_{1,\ell}
		\Big[
		(|x_2|-b_{1,\ell})_+
		-\frac{(1-b_{1,\ell})^2}{2}
		\Big]-(x_2^2-1/3)\\
		&\quad+
		\sum_{\ell=1}^{20}\beta_{2,\ell}\omega_{2,\ell}
		\Big[
		(|x_3|-b_{2,\ell})_+
		-\frac{(1-b_{2,\ell})^2}{2}
		\Big]-\frac12(x_3^2-1/3).
	\end{aligned}
	\]
	Each coordinate error has zero spatial integral, so the cross term vanishes when the squared norm is expanded. Taking the first block as an example, since $x_2$ is uniformly distributed on $[-1,1]$, we have 
	\[ 
	\begin{aligned}
		&\EE \int_{-1}^{1}
		\Big(		\sum_{\ell=1}^{20}\beta_{1,\ell}\omega_{1,\ell}
		\Big[
		(|x_2|-b_{1,\ell})_+
		-\frac{(1-b_{1,\ell})^2}{2}
		\Big]
		-(x_2^2-1/3)
		\Big)^2\frac{dx_2}{2}\\
		&=\EE \int_{0}^{1}\Big(b_{1,1}^{2}+\sum_{\ell=1}^{20}\beta_{1,\ell}\omega_{1,\ell}(u-b_{1,\ell})_+-u^{2}\Big)^2du \\ &\qquad\qquad\qquad-\EE \Big(\int_{0}^{1}\Big[b_{1,1}^{2}+\sum_{\ell=1}^{20}\beta_{1,\ell}\omega_{1,\ell}(v-b_{1,\ell})_+-v^{2}\Big]dv\Big)^2,
	\end{aligned}
	\]
	where we use $(1-b_{1,\ell})^{2}/2=\int_{0}^{1}(v-b_{1,\ell})_{+}dv$, $\int_{0}^{1}v^{2}dv=1/3$ and replace $x_{2}$ with $u$. Use the inequality for any square-integrable function $g$ on $[0,1]$,
	\[
	\int_0^1\Big[g(u)-\int_0^1g(v)\,dv\Big]^2du
	=
	\int_0^1g(u)^2du
	-\Big[\int_0^1g(u)\,du\Big]^2
	\leq\int_0^1g(u)^2du,
	\] 
	we get 
	\[ 
	\begin{aligned}
		&\EE \int_{-1}^{1}
		\Big(		\sum_{\ell=1}^{20}\beta_{1,\ell}\omega_{1,\ell}
		\Big[
		(|x_2|-b_{1,\ell})_+
		-\frac{(1-b_{1,\ell})^2}{2}
		\Big]
		-(x_2^2-1/3)
		\Big)^2\frac{dx_2}{2}\\
		&\le \EE \int_{0}^{1}
		\Big(	b_{1,1}^{2}+\sum_{\ell=1}^{20}\beta_{1,\ell}\omega_{1,\ell}
		(u-b_{1,\ell})_+-u^{2}	
		\Big)^2du
	\end{aligned}
	\]
	Therefore, further using the residual results in \eqref{eq:residual-blocks}, direct integration yields
	\[
	\begin{aligned}
		\mathbb E\|f_3-f^*\|_{L_2(d\mu)}^2
		&\leq
		\mathbb E\int_0^1
		\Big(
		b_{1,1}^2+
		\sum_{\ell=1}^{20}\beta_{1,\ell}\omega_{1,\ell}
		(u-b_{1,\ell})_+-u^2
		\Big)^2du\\
		&\qquad\qquad+\mathbb E\int_0^1
		\Big(
		\frac12b_{2,1}^2+
		\sum_{\ell=1}^{20}\beta_{2,\ell}\omega_{2,\ell}
		(u-b_{2,\ell})_+-\frac12u^2
		\Big)^2du\\
		&=\mathbb E\Big(
		\frac8{15}(b_{1,1}-b_{1,0})^5+
		\frac1{30}\sum_{\ell=1}^{20}
		(b_{1,\ell+1}-b_{1,\ell})^5
		\Big)\\
		&\qquad\qquad+\frac14\mathbb E\Big(
		\frac8{15}(b_{2,1}-b_{2,0})^5+
		\frac1{30}\sum_{\ell=1}^{20}
		(b_{2,\ell+1}-b_{2,\ell})^5
		\Big),
	\end{aligned}
	\]
	where $b_{j,0}=0, j\in\{1,2\}$.
	
	By the uniform-spacing identity
	\cite{pyke1965spacings},
	each spacing $b_{j,\ell+1}-b_{j,\ell},\ell\in\{0,1,\cdots,20\}$ has the density $p(x)=20(1-x)^{19}$. Therefore, $\mathbb E(b_{j,\ell+1}-b_{j,\ell})^5
	=20\int_0^1x^5(1-x)^{19}\,dx
	=\frac{120}{\prod_{\ell=21}^{25}\ell}$. Consequently,
	\[
	\begin{aligned}
		\mathbb E\|f_3-f^*\|_{L_2(d\mu)}^2
		&\leq
		 \Big(1+\frac14 \Big)
		 \Big(\frac8{15}+\frac{20}{30} \Big)
		\frac{120}{\prod_{\ell=21}^{25}\ell}=\frac{180}{\prod_{\ell=21}^{25}\ell}
		=\mathcal E^{(3)}(4).
	\end{aligned}
	\]
	Taking the infimum over the trainable hidden parameters and
	output coefficients can only decrease the error, thereby proving the upper bound with four trainable neurons.
	
	\textbf{(\romannumeral3) Two and Twenty Trainable Neurons.} With two trainable first-layer neurons, choose $\theta_1=(\mathbf e_2,0),\theta_2=(-\mathbf e_2,0)$. Their sum is $|x_2|$, so the first second-layer block provides 20 random hinge features for approximating $x_2^2-1/3$. Set the output coefficients of all other blocks to zero. Applying the single-block interpolation estimation derived above and leaving the $x_3$-component
	unapproximated gives
	\[
	\begin{aligned}
		\mathbb E\inf_{\theta_{\mathrm{free}},\beta}
		\|f_3-f^*\|_{L_2(d\mu)}^2
		&\leq
		\frac1{45}
		+ \Big(\frac8{15}+\frac{20}{30} \Big)
		\frac{120}{\prod_{j=21}^{25}j}=
		\frac1{45}
		+\frac{144}{\prod_{j=21}^{25}j}
		=\mathcal E^{(3)}(2).
	\end{aligned}
	\]
	
	With 20 trainable first-layer neurons, assign 5 pairs to
	$(x_2)_++(-x_2)_+$ and the other 5 pairs to
	$(x_3)_++(-x_3)_+$. Each coordinate then feeds 5 fixed second-layer blocks, providing 100 independent uniform
	biases. Pooling and ordering these biases gives the same
	interpolation construction with 100 nodes per coordinate.
	Repeating the calculation above yields
	\[
	\begin{aligned}
		\mathbb E\inf_{\theta_{\mathrm{free}},\beta}
		\|f_3-f^*\|_{L_2(d\mu)}^2
		&\leq
		 \Big(1+\frac14 \Big)
		 \Big(\frac8{15}+\frac{100}{30} \Big)
		\frac{120}{\prod_{\ell=101}^{105}\ell}=\frac{580}{\prod_{\ell=101}^{105}\ell}
		=\mathcal E^{(3)}(20).
	\end{aligned}
	\]
	All second-layer parameters remain fixed in this construction.
	
	\mypara{The Two-Layer Upper Bound.}
	We use the following symmetric interpolation construction for
	approximating $u^2$. Let $q\in\{1,2,\ldots,5\}$, set
	$m_{\mathrm{free}}=4q$, and consider the equally spaced nodes $u_i=-1+\frac{2i}{2q+1},~0\leq i\leq2q+1$. The piecewise-linear interpolant of $u^2$ at these nodes is
	\[
	I_q(u)
	=
	\frac1{(2q+1)^2}
	+\frac4{2q+1}\sum_{r=1}^{q}
	\Big[
	\Big(u-\frac{2r-1}{2q+1}\Big)_+
	+
	\Big(-u-\frac{2r-1}{2q+1}\Big)_+
	\Big].
	\]
	Thus, $I_q$ is represented by $q$ symmetric pairs of
	ReLU features, or $2q$ neurons in total. On every interval $[u_i,u_{i+1}]$, we have $I_q(u)-u^2=(u-u_i)(u_{i+1}-u)$. The spatial mean of the interpolation residual is
	\[
	\int_{-1}^{1}\bigl[I_q(u)-u^2\bigr]\frac{du}{2}=
	\frac{2q+1}{2}\int_0^{2/(2q+1)} u\Big(\frac{2}{2q+1}-u\Big)\,du=\frac{2}{3(2q+1)^{2}}.
	\]
	Therefore, $I_q(u)-\frac13-\frac{2}{3(2q+1)^{2}}$ has zero spatial mean and approximates $u^2-1/3$.
	Moreover,
	\[
	\int_{-1}^{1}\Big[I_q(u)-\frac13-\frac{2}{3(2q+1)^{2}}-(u^2-1/3)\Big]^2\frac{du}{2}
	=\frac{4}{45(2q+1)^4}.
	\]
	
	To implement these two coordinate-wise interpolants, assign
	$q$ symmetric pairs of trainable neurons to the $x_2$-coordinate
	and another $q$ symmetric pairs to the $x_3$-coordinate.
	For $1\leq r\leq q$, choose the corresponding hidden
	parameters as
	\[
	\begin{aligned}
		\theta_{2r-1}^{(2)}
		&=
		\Big(\mathbf e_2,-\frac{2r-1}{2q+1}\Big),
		&
		\theta_{2r}^{(2)}
		&=
		\Big(-\mathbf e_2,-\frac{2r-1}{2q+1}\Big),\\
		\theta_{2q+2r-1}^{(2)}
		&=
		\Big(\mathbf e_3,-\frac{2r-1}{2q+1}\Big),
		&
		\theta_{2q+2r}^{(2)}
		&=
		\Big(-\mathbf e_3,-\frac{2r-1}{2q+1}\Big).
	\end{aligned}
	\]
	Choose their output coefficients as $	\beta_{2r-1}^{(2)}=\beta_{2r}^{(2)}=\frac4{2q+1},~
	\beta_{2q+2r-1}^{(2)}=\beta_{2q+2r}^{(2)}=\frac2{2q+1}$, where the second pair of coefficients includes the factor
	$1/2$ multiplying the $x_3$-component of the target.
	Set the output coefficients of all fixed neurons to zero. Finally, choose the output bias as
	\[
	\beta_0^{(2)}= \Big(1+\frac12 \Big) \Big[\frac1{(2q+1)^2}-\frac13-\frac{2}{3(2q+1)^{2}} \Big]=\frac{1-(2q+1)^2}{2(2q+1)^2}.
	\]
	For these parameters, the network output is
	\[
	f_2(x)
	=
	I_q(x_{2})-\frac13-\frac{2}{3(2q+1)^{2}}
	+\frac12
	 \Big[
	I_q(x_{3})-\frac13-\frac{2}{3(2q+1)^{2}}
	 \Big].
	\]
	
	The two coordinate errors have zero spatial mean and depend
	on independent coordinates, so their cross term vanishes.
	Consequently,
	\[
	\|f_2-f^*\|_{L_2(d\mu)}^2= \Big(1+\frac14 \Big)
	\frac{4}{45(2q+1)^4}=\frac1{9(2q+1)^4}.
	\]
	The infimum over the trainable hidden parameters and output
	coefficients is no larger than the error of this explicit
	choice. Since the construction is available for every
	realization of the fixed random neurons, taking expectation
	over those parameters gives
	\[
	\EE\inf_{\theta_{\mathrm{free}},\beta}\norm{f^{*}-f_{2}}_{L_{2}(d\mu)}^{2}\le \mathcal E^{(2)}(m_{\mathrm{free}})=\frac1{9(m_{\mathrm{free}}/2+1)^4}.
	\]
	In particular, $ \mathcal E^{(2)}(4)=\frac1{9\cdot3^4}=\frac1{729},~\mathcal E^{(2)}(20)=\frac1{9\cdot11^4}$.
	
	For the case of two trainable hidden neurons, use one symmetric pair for the $x_2$-coordinate. Taking $q=1$ in the interpolation construction gives the centered approximation error $\frac{4}{45\cdot3^4}=\frac4{3645}$ for $x_2^2-1/3$. Leaving the $x_3$-component
	unapproximated therefore yields
	\[
	\begin{aligned}
		\mathbb E\inf_{\theta_{\mathrm{free}},\beta}
		\|f_2-f^*\|_{L_2(d\mu)}^2
		&\leq
		\frac4{3645}+\frac1{45}=\frac{17}{729}
		=\mathcal E^{(2)}(2).
	\end{aligned} \qedhere
	\]
\end{proof}
For $m_{\mathrm{free}}\in\{2,4\}$, substitution gives $G^{(3)}(2)\approx0.7998,
G^{(3)}(4)\approx0.9997,G^{(2)}(2)\approx0.7902,G^{(2)}(4)\approx0.9877$. 

\section{Complementary Experiments}
\label{sec:addition-experiments}
\subsection{Effect of the Initialization Scale}
\label{subsec:tau-experiments}

This experiment examines the dependence of the small-$P$ transition on $\tau$ for a fixed target in the spherical biased-ReLU RKHS in Section \ref{sec:specific-activation-functions}.

\mypara{Target Function.} Let $\mu=\operatorname{Unif}(\mathbb S^4)$ and $\rho=\operatorname{Unif}(\mathbb S^5)$. Define
\[
Y_2(x)=\sqrt{\frac{35}{92}}(4x_1^2+2x_2^2+x_3^2-3x_4^2-4x_5^2),~\text{and}~Y_4(x)=\sqrt{3465}\,x_1x_2x_3x_4.
\]
These are orthonormal spherical harmonics of degrees $2$ and $4$. We take the target function
\[
f^*(x)=C_{\xi}\big(\sqrt{0.9}Y_2(x)+\sqrt{0.1}Y_4(x)\big),~\text{with}~C_{\xi}= \Big(\frac{0.9}{\xi_2}+\frac{0.1}{\xi_4}\Big)^{-1/2},
\]
where $\xi_2$ and $\xi_4$ are the corresponding eigenvalues of $T_\rho$. By the spectral characterization of $\mathcal H_\rho$ in \eqref{eq:space-defition-of-RKHS-spericial} and the $L_2(d\mu)$ orthonormality of $Y_2$ and $Y_4$, we have
\[
\norm{f^*}_{\mathcal H_\rho}^{2}=\Big(\frac{0.9}{\xi_2}+\frac{0.1}{\xi_4}\Big)^{-1}\Big(\frac{0.9}{\xi_2}+\frac{0.1}{\xi_4}\Big)=1.
\]
Thus $\norm{f^*}_{\mathcal H_\rho}=1$ and $f^{*}\in\mathcal{H}_{\rho}$. The target and reference RKHS remain unchanged as $\tau$ varies. Training uses $f^*/C_{\xi}$, with both the target and network output rescaled by $C_{\xi}$ for reporting. This common scaling leaves the normalized gain unchanged.

\mypara{Experimental Set-up.}
\begin{itemize}[labelindent=0pt, leftmargin=*, topsep=0pt, itemsep=0pt]
	\item \textbf{Network and Parameter Grid.} We use a two-layer ReLU network of width $64$, with
	\[
	P\in\frac1{64}\Big\{0,~1,~2,~\ldots,~19,~20,~24,~32,~48,~64\Big\},
	\]
	and $\tau\in\{1,~1.2,~1.5,~2,~2.5,~3,~3.5,~4,~5,~7.5,~8,~9,~10\}$.
	\item \textbf{Initialization Distributions.} Fixed neurons are initialized from $\rho_0^{(\tau)}$, while trainable neurons use a numerical approximation to the ridge leverage distribution in \eqref{eq:leverage-score-distribution}. We use $\lambda\approx4.5244\times10^{-4}$ so that $d(\lambda)\approx64/\log64$ for all $P$ and $\tau$. In the spherical setting, $s_{\lambda}(\theta)$ depends only on the bias $b$, so the bias density is proportional to $p(b)s_{\lambda}(\theta)$ and then normalized. The leverage score $s_{\lambda}(\theta)$ is evaluated using harmonic degrees up to 32, with the omitted tail replaced by the remaining feature energy divided by $\lambda$ as an upper bound. The bias density is evaluated on 4097 grid points. We mix the approximate leverage distribution with the uniform reference distribution $\rho$, assigning weight $10^{-3}$ to $\rho$ to ensure positivity. Trainable hidden parameters are unconstrained during training, and all output parameters are also optimized.
	\item \textbf{Training and Repetitions.} Each configuration is trained on the same $12{,}000$ independent samples from $\mu$ for $12{,}000$ full-batch Adam steps and repeated $21$ times. The remaining optimizer settings follow Table \ref{tab:experimental-configuration}. Within each repetition, the two independent initialization pools are shared across $P$ and $\tau$. The full-training result is reused across $\tau$, since no hidden neurons remain fixed at $P=1$.
\end{itemize}

\mypara{Metrics.} The empirical gain uses the same geometric mean training loss and normalization as in Section \ref{sec:numerical-illustration}. The numerical threshold $P_{0.1}$ is the smallest tested $P$ satisfying $G(P)\ge0.9$. Pointwise $95\%$ intervals are obtained from $500$ paired bootstrap resamples across repetitions. For $d=5$, the theoretical curves use \eqref{eq:normalized-gain} with $\alpha=7/4$ and $r=24\sqrt2\tau/5$.

\mypara{Results and Interpretation.} Figure \ref{fig:intro-spherical-phase-transition} is reproduced below. The numerical threshold $P_{0.1}$ decreases overall as $\tau$ increases. The gain curve for $\tau=10$ also rises more rapidly at small $P$ than that for $\tau=1$. These observations are qualitatively consistent with the theoretical trend. The theoretical curves are derived from the approximation upper bound over the RKHS unit ball in Theorem \ref{thm:sphericial-polynomial-rates} with $d=5$, whereas the numerical curves describe training performance for one fixed target $f^{*}$ in that ball under finite sampling and a fixed training budget. Quantitative differences between the curves are therefore expected. Nevertheless, the numerical results exhibit the same overall dependence on $\tau$, providing empirical support for the theoretical prediction within the scaled spherical family.
\begin{figure}[H]
	\centering
	\subfloat[Threshold for $90\%$ gain.]{%
		\includegraphics[width=0.47\linewidth]
		{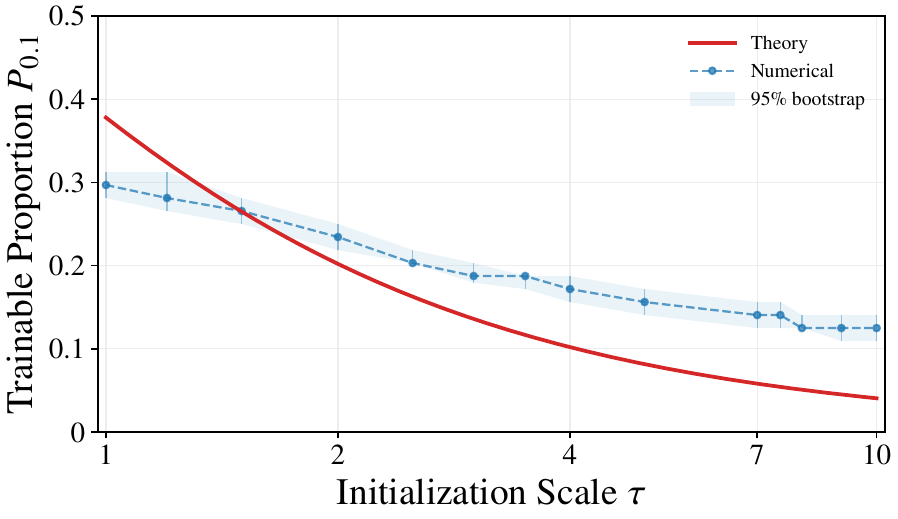}%
	}
	\hfill
	\subfloat[Normalized gain.]{%
		\includegraphics[width=0.47\linewidth]
		{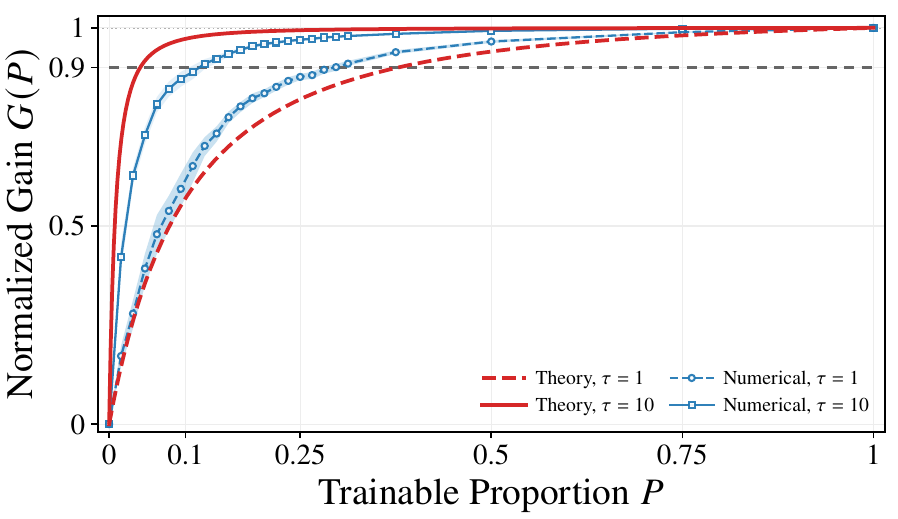}%
	}
	\caption*{ \textbf{Figure \ref{fig:intro-spherical-phase-transition} (reproduced).} Theoretical and numerical phase transitions for two-layer spherical biased-ReLU $P$-trainable networks with $d=5$. Red curves are derived from the approximation bound, while blue curves show illustrative training results for a fixed RKHS target. Threshold $P_{0.1}$ marks $90\%$ normalized gain. Shaded bands indicate $95\%$ bootstrap intervals.}
	\setcounter{subfigure}{0}
\end{figure}

\subsection{Additional Experiments with Other Activation Functions}
This section presents the width and depth experiments with GELU and $\tanh$, using the same targets, architectures, and training settings as in Section \ref{subsubsec:phase-transition-setting}. Figures \ref{fig:GeLU-Target1} to \ref{fig:tanh-target3} below show small-$P$ phase transition patterns similar to those observed with ReLU. For GELU on $f_2^*$, the two-layer network also shows normalized gains slightly above $1$, so we omit this gain curve. Partial training attains a slightly lower loss than full training at some $P$ under the fixed training budget. The corresponding differences in absolute training loss are only of order $10^{-5}$, representing small fluctuations that do not alter the overall phase transition pattern.
\begin{figure}[p]
	\centering
	\subfloat[Depth variation loss.]{
		\includegraphics[width=0.47\linewidth]
		{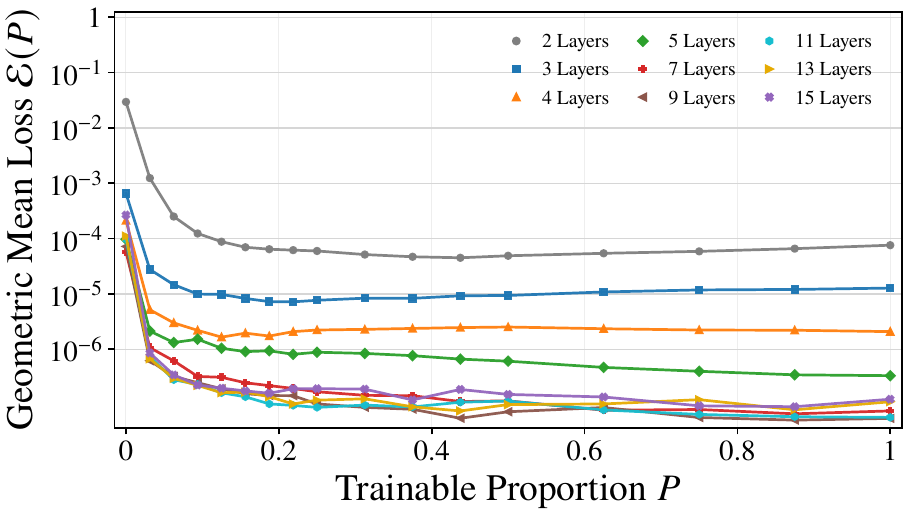}
	}\hfill
	\subfloat[Depth variation gain.]{
		\includegraphics[width=0.47\linewidth]
		{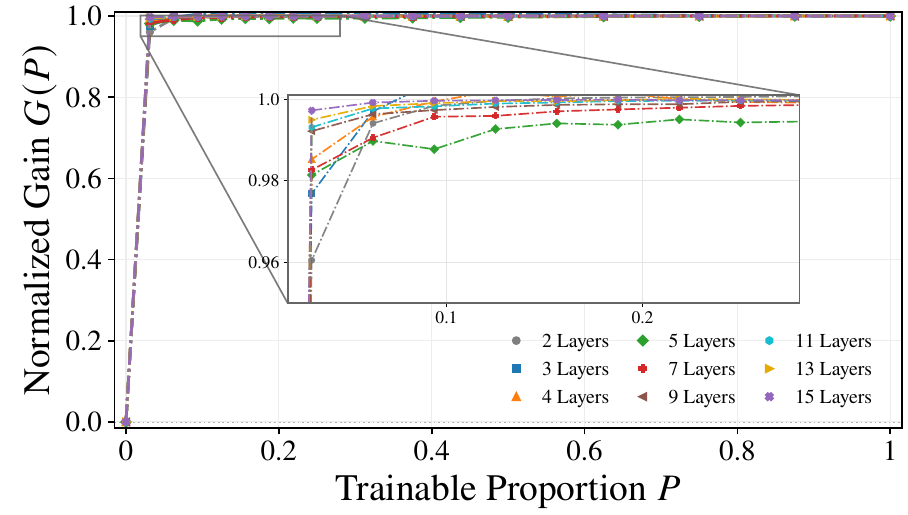}
	}
	
	\subfloat[Width variation loss.]{
		\includegraphics[width=0.47\linewidth]
		{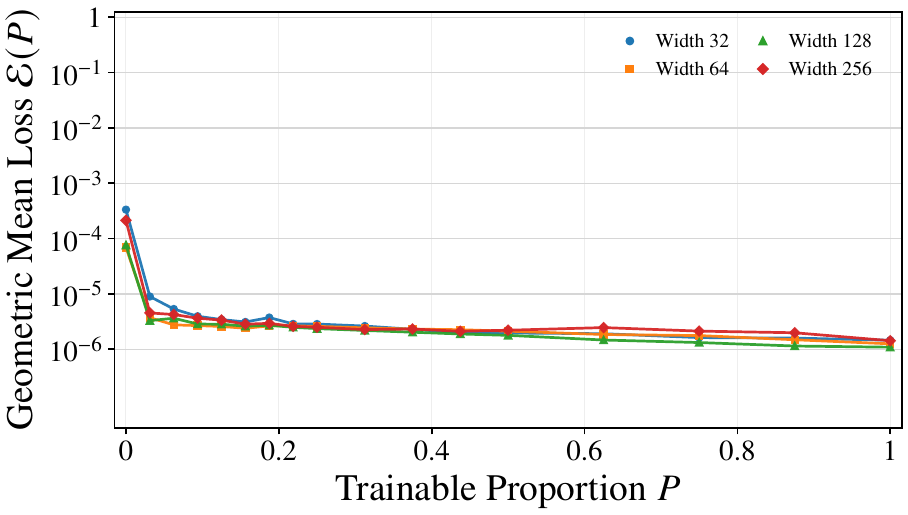}
	}\hfill
	\subfloat[Width variation gain.]{
		\includegraphics[width=0.47\linewidth]
		{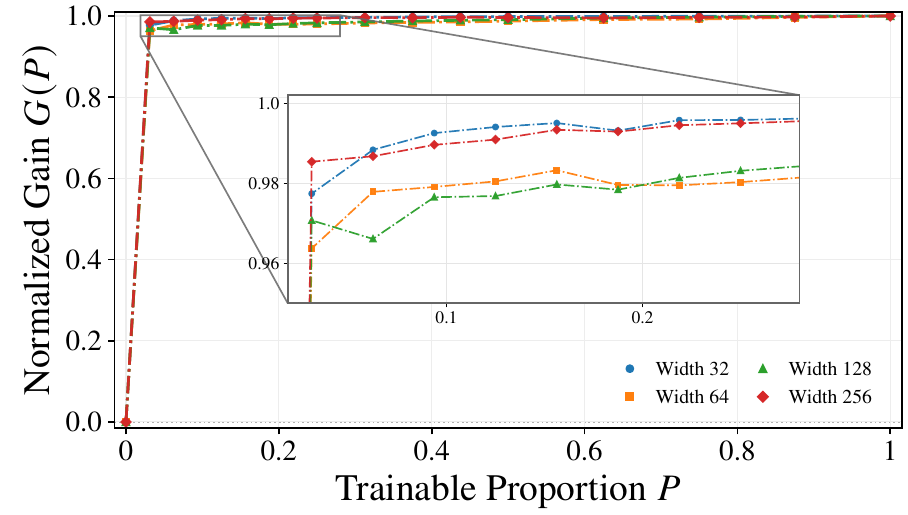}
	}
	\caption{Training results for $f_1^*$ with GELU activation.}
	\label{fig:GeLU-Target1}
	\setcounter{subfigure}{0}
	\subfloat[Depth variation loss.]{
		\includegraphics[width=0.47\linewidth]
		{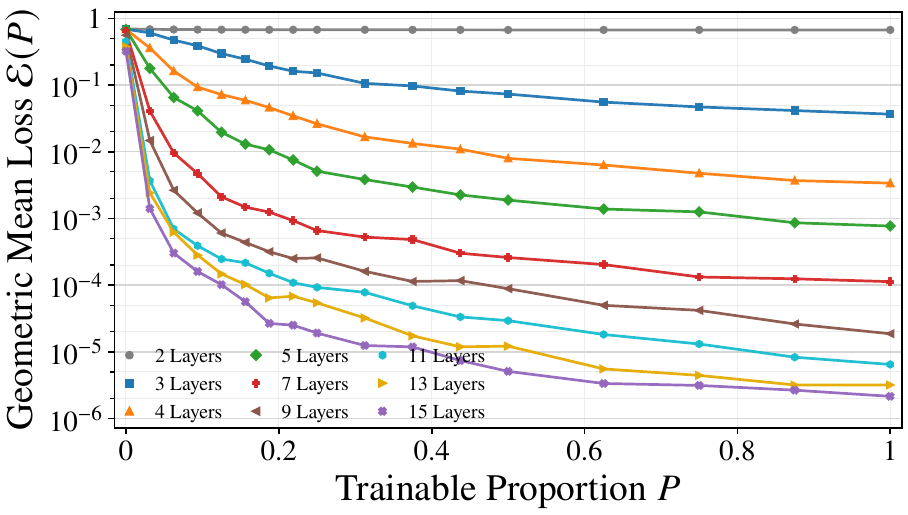}
	}\hfill
	\subfloat[Depth variation gain.]{
		\includegraphics[width=0.47\linewidth]
		{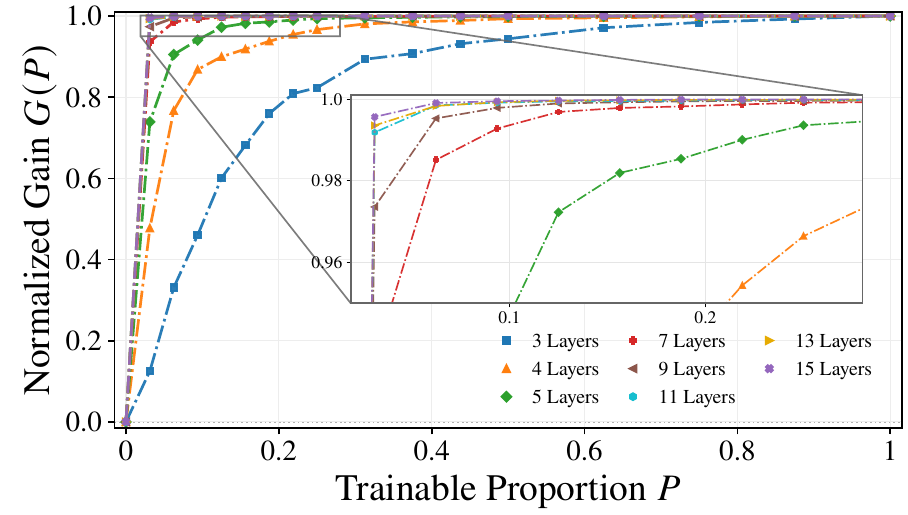}
	}
	
	\subfloat[Width variation loss.]{
		\includegraphics[width=0.47\linewidth]
		{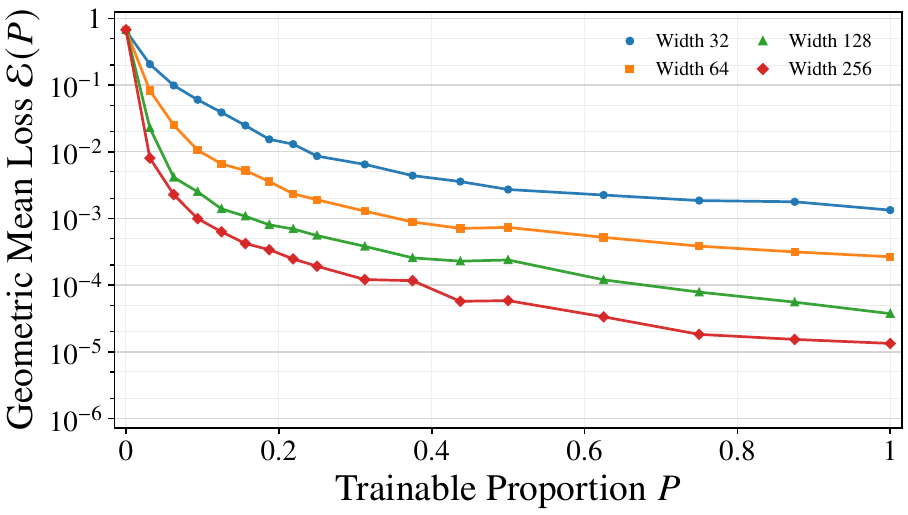}
	}\hfill
	\subfloat[Width variation gain.]{
		\includegraphics[width=0.47\linewidth]
		{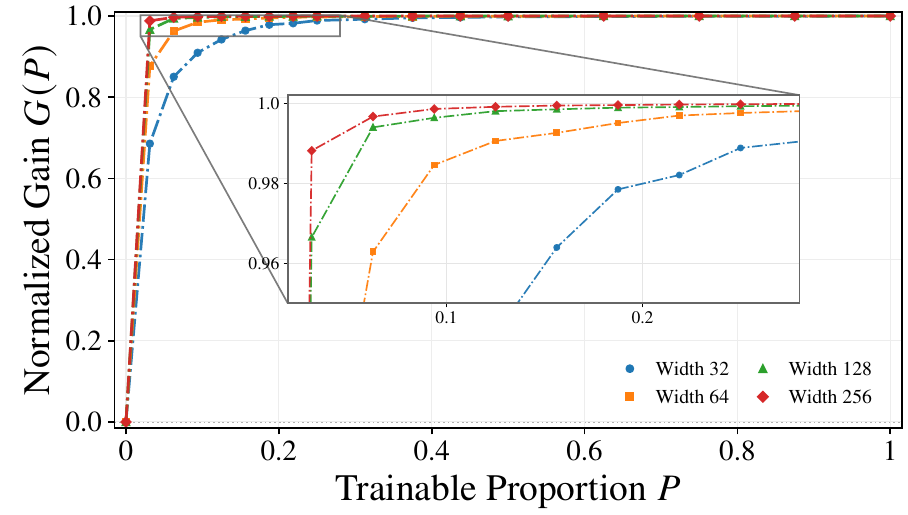}
	}
	\caption{Training results for $f_2^*$ with GELU activation.}
	\label{fig:GeLU-Target2}
\end{figure}

\begin{figure}[p]
	\centering
		\subfloat[Depth variation loss.]{
		\includegraphics[width=0.47\linewidth]
		{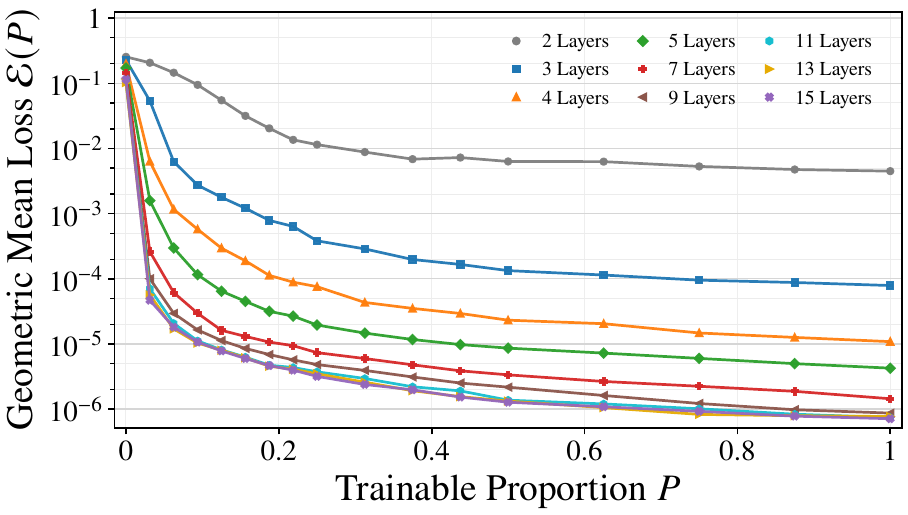}
	}\hfill
	\subfloat[Depth variation gain.]{
		\includegraphics[width=0.47\linewidth]
		{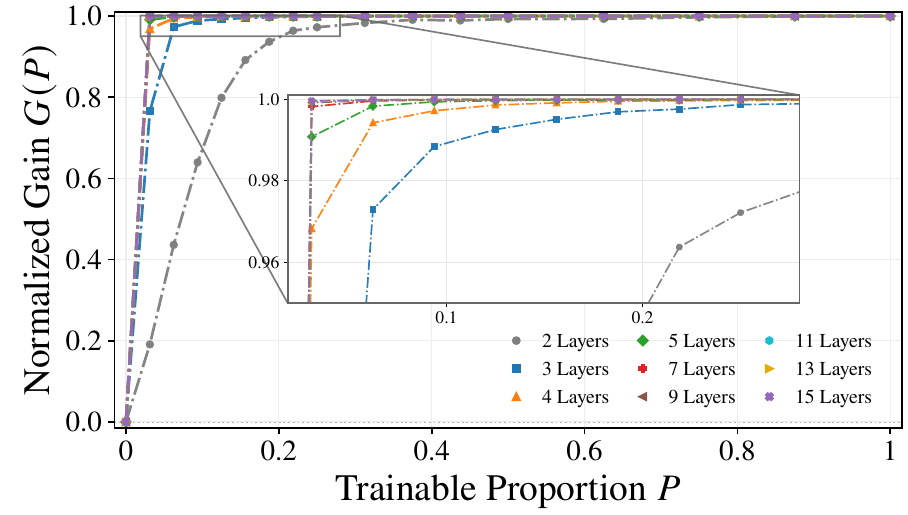}
	}
	
	\subfloat[Width variation loss.]{
		\includegraphics[width=0.47\linewidth]
		{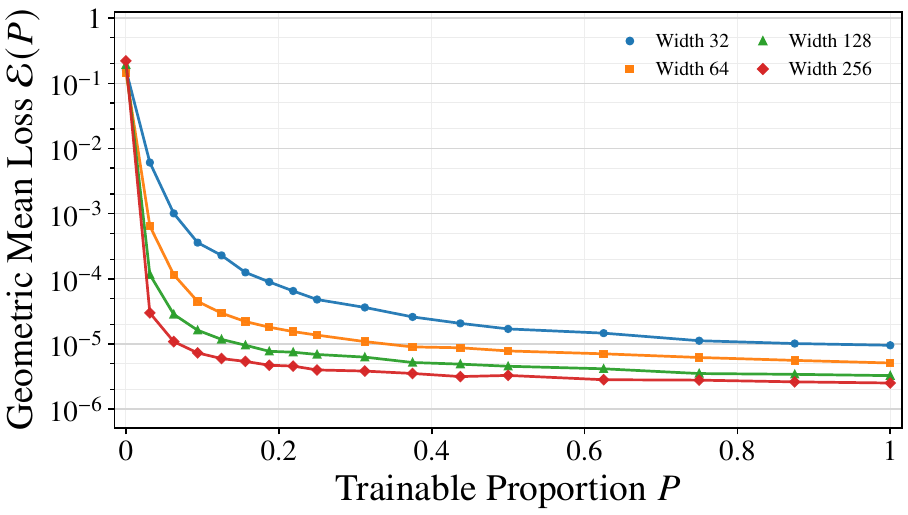}
	}\hfill
	\subfloat[Width variation gain.]{
		\includegraphics[width=0.47\linewidth]
		{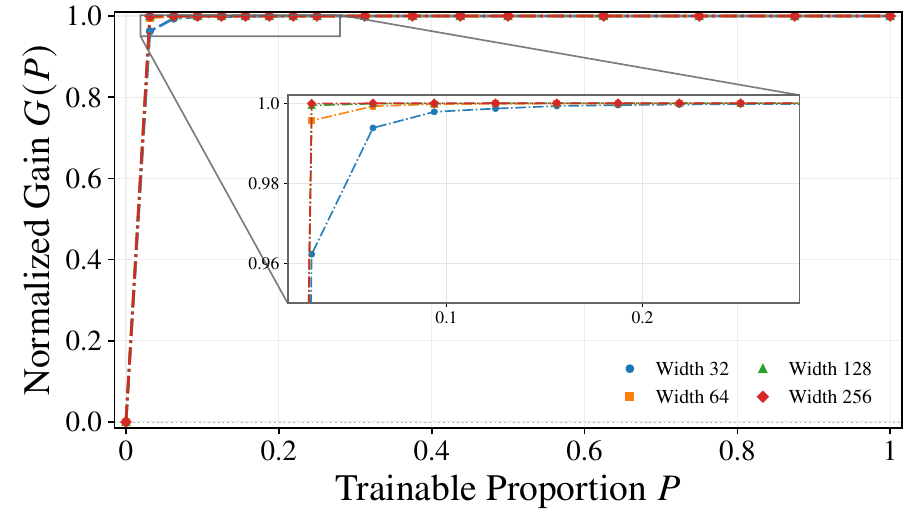}
	}
	\caption{Training results for $f_3^*$ with GELU activation.}
	\setcounter{subfigure}{0}
	\subfloat[Depth variation loss.]{
		\includegraphics[width=0.47\linewidth]
		{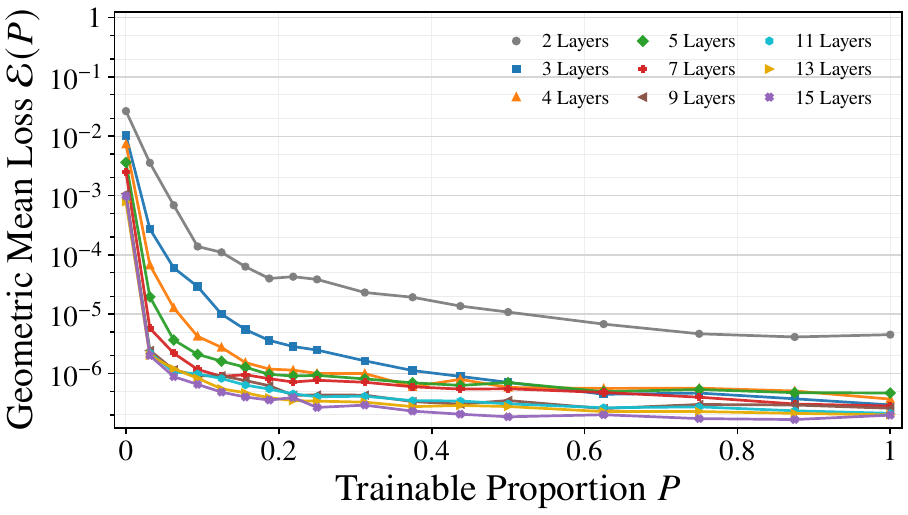}
	}\hfill
	\subfloat[Depth variation gain.]{
		\includegraphics[width=0.47\linewidth]
		{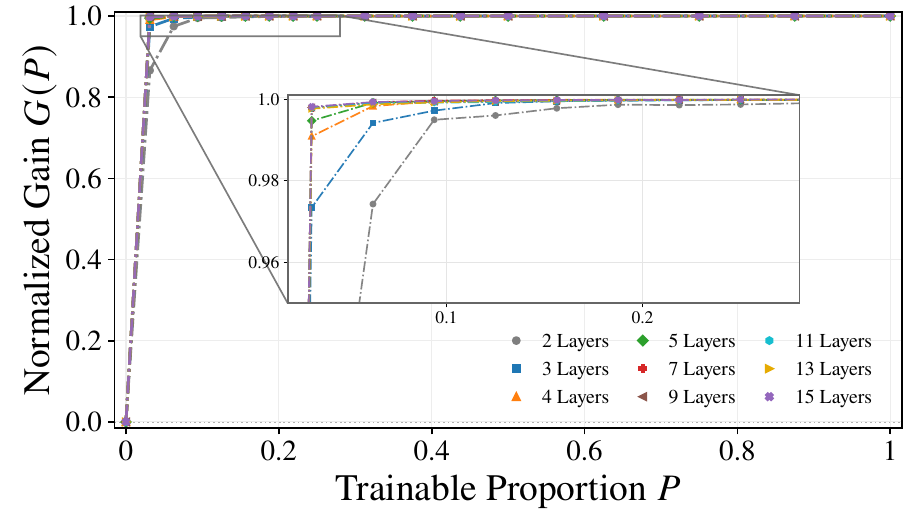}
	}
	
	\subfloat[Width variation loss.]{
		\includegraphics[width=0.47\linewidth]
		{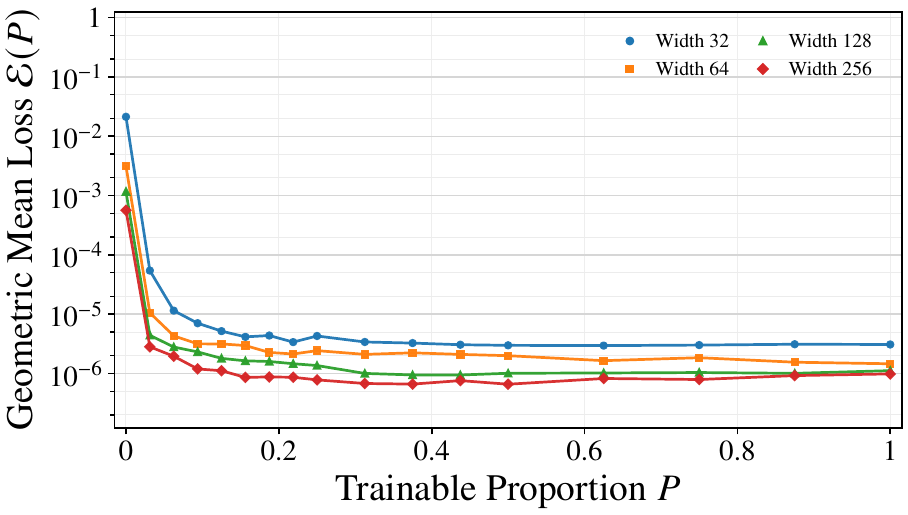}
	}\hfill
	\subfloat[Width variation gain.]{
		\includegraphics[width=0.47\linewidth]
		{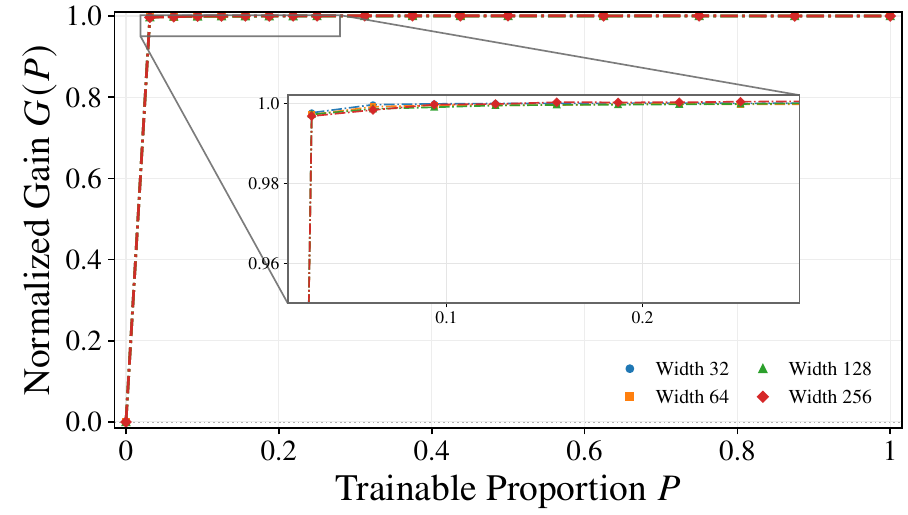}
	}
	\caption{Training results for $f_1^*$ with $\tanh$ activation.}
\end{figure}

\begin{figure}[p]
	\centering
	\subfloat[Depth variation loss.]{
		\includegraphics[width=0.47\linewidth]
		{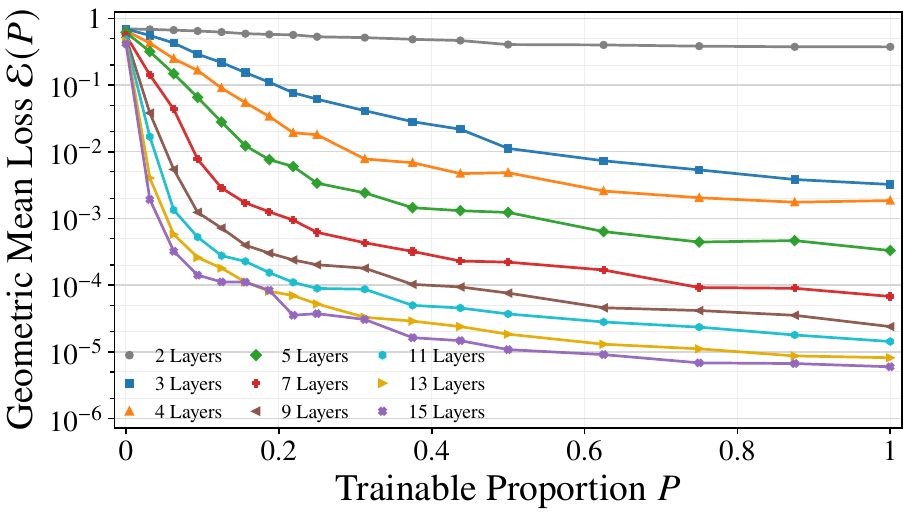}
	}\hfill
	\subfloat[Depth variation gain.]{
		\includegraphics[width=0.47\linewidth]
		{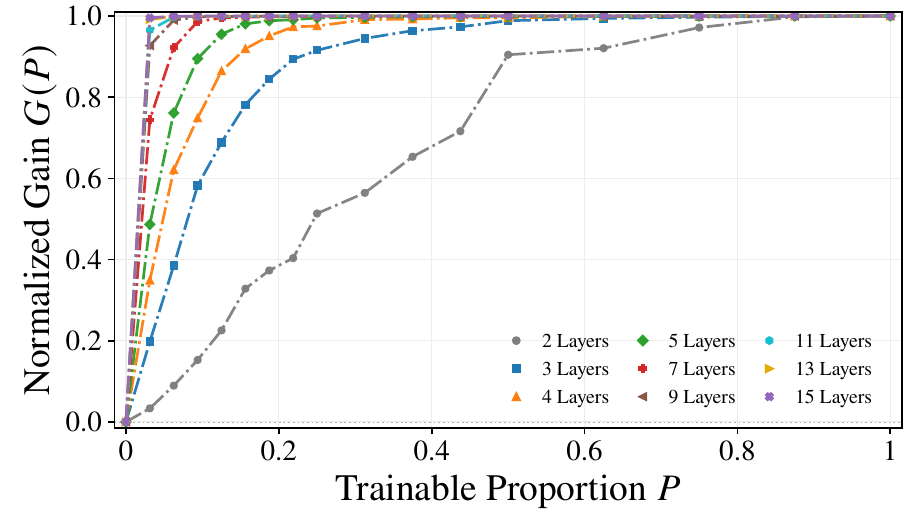}
	}
	
	\subfloat[Width variation loss.]{
		\includegraphics[width=0.47\linewidth]
		{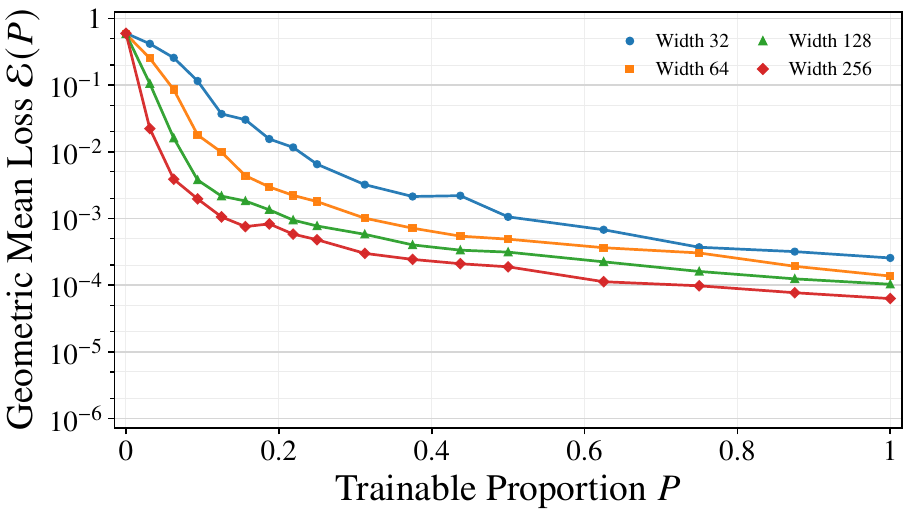}
	}\hfill
	\subfloat[Width variation gain.]{
		\includegraphics[width=0.47\linewidth]
		{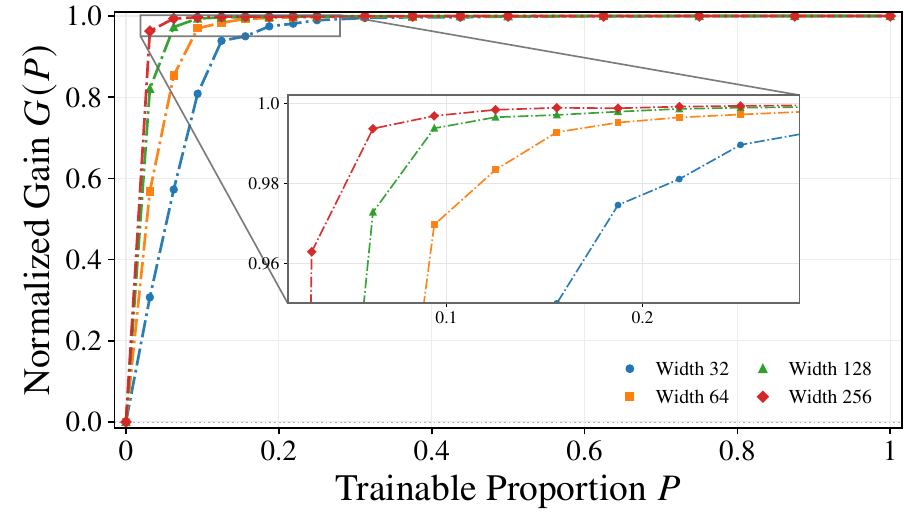}
	}
	\caption{Training results for $f_2^*$ with $\tanh$ activation.}
	\setcounter{subfigure}{0}
	\subfloat[Depth variation loss.]{
		\includegraphics[width=0.47\linewidth]
		{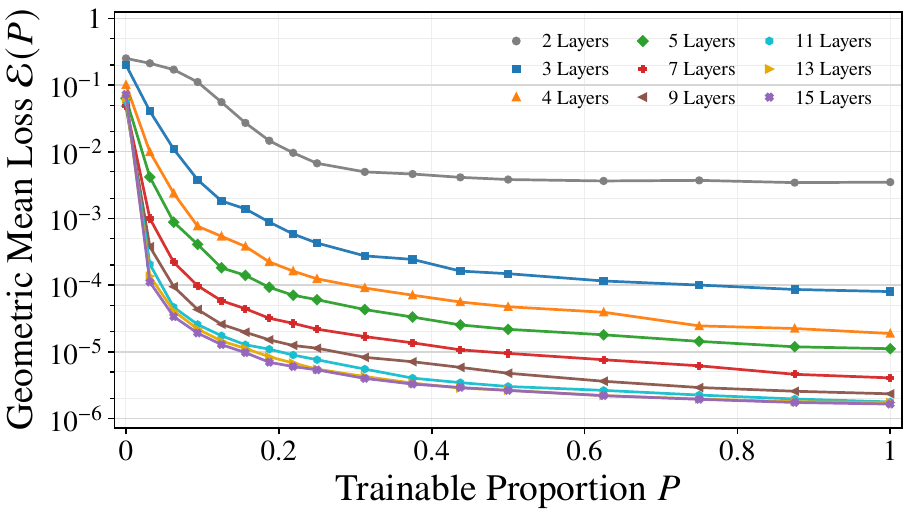}
	}\hfill
	\subfloat[Depth variation gain.]{
		\includegraphics[width=0.47\linewidth]
		{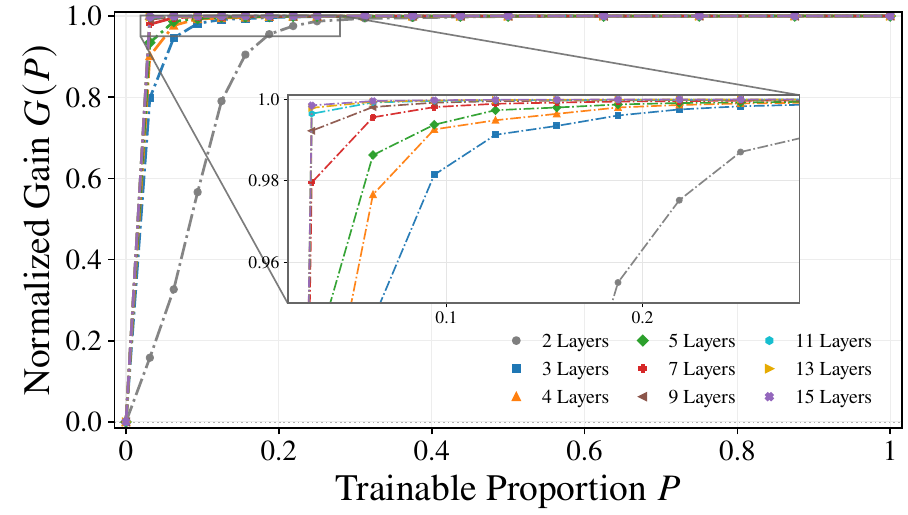}
	}
	
	\subfloat[Width variation loss.]{
		\includegraphics[width=0.47\linewidth]
		{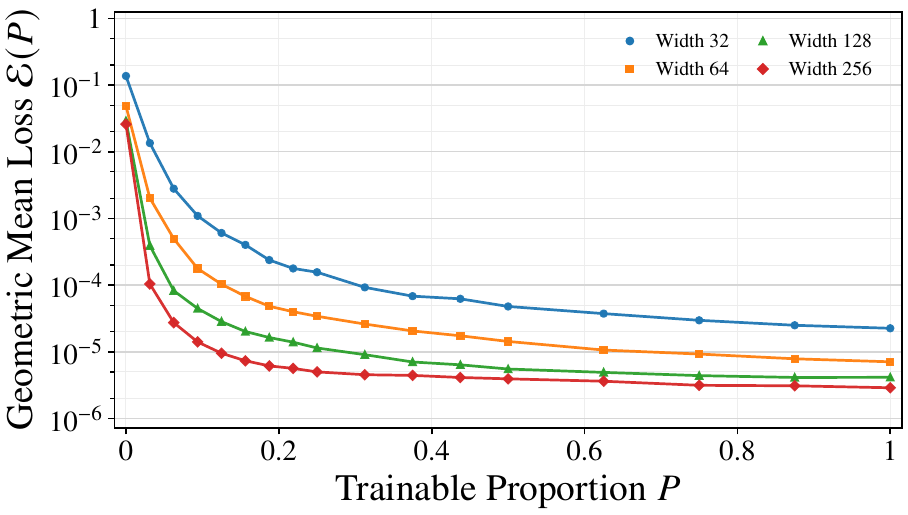}
	}\hfill
	\subfloat[Width variation gain.]{
		\includegraphics[width=0.47\linewidth]
		{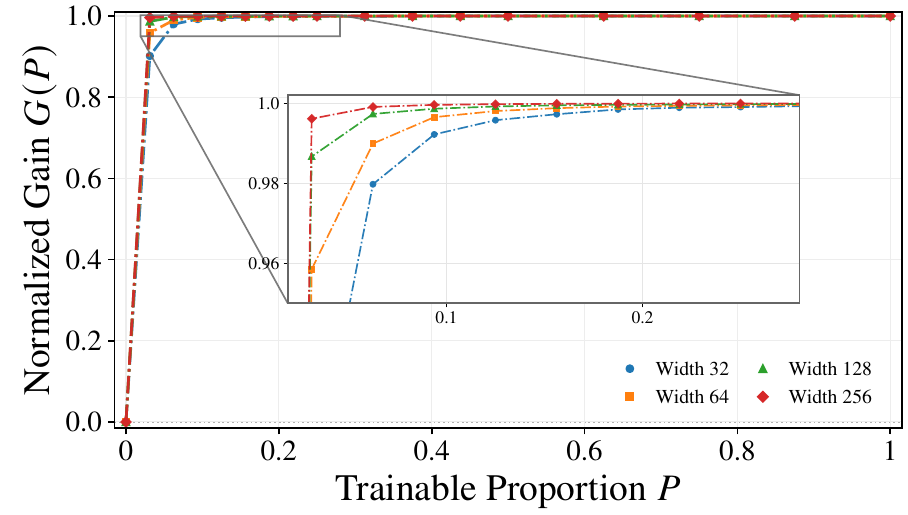}
	}
	\caption{Training results for $f_3^*$ with $\tanh$ activation.}
	\label{fig:tanh-target3}
\end{figure}

	\clearpage
	\begin{ack}
		The authors used ChatGPT (OpenAI) for language polishing and assistance in checking potential errors. The authors take full responsibility for the content of this article.
	\end{ack}
%
	
\bibliographystyle{emss} 
\bibliography{PT}

\end{document}